\tikzstyle arrowstyle=[scale=1]
\tikzstyle directed=[postaction={decorate,
decoration={markings,mark=at position .65 with {\arrow[arrowstyle]{stealth}}}}]
\newcolumntype{L}{>{$}l<{$}} % math-mode version of "l" column type
\newcolumntype{C}{>{$}c<{$}}
\newtheorem{theorem}{Theorem}[section]
\newtheorem{lemma}[theorem]{Lemma}
\newtheorem{cor}[theorem]{Corollary}
\newtheorem{prop}[theorem]{Proposition}
\theoremstyle{definition}
\newtheorem{definition}[theorem]{Definition}
\newtheorem{example}[theorem]{Example}
\newtheorem{obs}[theorem]{Observation}
\newtheorem{notation}[theorem]{Notation}
\newtheorem{construction}[theorem]{Construction}
\newtheorem{chunk}[theorem]{}
\theoremstyle{remark}
\newtheorem{remark}[theorem]{Remark}
\newtheorem{the context}[theorem]{The Context}
\newtheorem{question}[theorem]{Question}
\numberwithin{equation}{theorem}
\numberwithin{equation}{section}
\newcommand{\defi}[1]{{\bf\upshape\sffamily #1}}
\newcommand{\pd}{\operatorname{pd}}
\newcommand{\id}{\operatorname{id}}
\newcommand{\rank}{\operatorname{rank}}
\newcommand{\ann}{\operatorname{Ann}}
\newcommand{\coker}{\operatorname{Coker}}
\newcommand{\im}{\operatorname{Im}}
\newcommand{\proj}{\operatorname{-proj}}
\newcommand{\End}{\operatorname{End}}
\newcommand{\cone}{\operatorname{Cone}}
\newcommand{\Ker}{\operatorname{Ker}}
\newcommand{\ideal}[1]{\mathfrak{#1}}
\newcommand{\m}{\ideal{m}}
\newcommand{\fm}{\ideal{m}}
\newcommand{\unfold}{\operatorname{Unfold}}
\newcommand{\fold}{\operatorname{Fold}}
\renewcommand{\phi}{\varphi}
\newcommand{\bbz}{\mathbb{Z}}
\newcommand{\bbn}{\mathbb{N}}
\newcommand{\xra}{\xrightarrow}
\renewcommand{\Mod}{\operatorname{Mod}}
\newcommand{\har}{\operatorname{HArrow}}
\newcommand{\dm}{\operatorname{DM}}
\renewcommand{\geq}{\geqslant}
\renewcommand{\leq}{\leqslant}
\renewcommand{\ker}{\Ker}
\renewcommand{\hom}{\Hom}
\newcommand{\pflag}{\operatorname{PFlag}}
\newcommand{\flag}{\operatorname{Flag}}
\newcommand{\obj}{\operatorname{Obj}}
\newcommand{\Hom}{\operatorname{Hom}}
\newcommand{\colim}{\operatorname{colim}}
\newcommand{\ch}{\operatorname{Ch}}
\newcommand{\maps}[5]{\xymatrix{#1 \ar[r]^-{#3} & #2 \\
#4 \ar@{|->}[r] & #5 \\}}
\newcommand*{\sheafhom}{\mathscr{H}\text{\kern -3pt {\calligra\large om}}\,}
\newcommand*{\sheafext}{\mathscr{E}\text{\kern -3pt {\calligra\large xt}}\,}
\def\kk{\mathbf{k}}
\def\w{\wedge}
\def\im{\operatorname{im}}
\newcommand{\codim}{\operatorname{codim}}
\title{Flagged Perturbations and Anchored Resolutions}
\author{Keller VandeBogert}
\begin{document}

\begin{abstract}
    In this paper, we take advantage of a reinterpretation of differential modules admitting a flag structure as a special class of perturbations of complexes. We are thus able to leverage the machinery of homological perturbation theory to prove strong statements on the homological theory of differential modules admitting additional auxiliary gradings and having infinite homological dimension. One of the main takeaways of our results is that the category of differential modules is much more similar than expected to the category of chain complexes, and from the K-theoretic perspective such objects are largely indistinguishable. This intuition is made precise through the construction of so-called anchored resolutions, which are a distinguished class of projective flag resolutions that possess remarkably well-behaved uniqueness properties in the (flag-preserving) homotopy category. We apply this theory to prove an analogue of the Total Rank Conjecture for differential modules admitting a $\bbz / 2 \bbz$-grading in a large number of cases.
\end{abstract}

\maketitle

\tableofcontents

\section{Introduction}

A \defi{differential $R$-module} $D$ is an $R$-module $D$ equipped with a squarezero endomorphism $d^D$. Differential modules were introduced under the name ``modules with differentiation" in the classic treatise of Cartan--Eilenberg \cite{cartan2016homological} as a pedagogical tool to introduce singly-graded spectral sequences associated to filtrations. More recently, differential modules have made their appearance in a variety of places in the literature. The work of Avramov--Buchweitz--Iyengar established a differential module analogue of the ``New Intersection Theorem" \cite{avramov2007class}, and also connected the problem of understanding lower bounds on the ranks of certain types of differential modules to longstanding conjectures of Halperin \cite{halperin1985rational} and Carlsson \cite{carlsson1986free} on the rational cohomology of spaces admitting a free action by a large torus.

Differential modules also make their appearance when considering notions of Koszulness/Koszul duality over rings with nonstandard gradings \cite{hawwa2012koszul}. This was one of the motivations of Brown--Erman for developing a notion of minimal resolutions of differential modules \cite{brown2021minimal}, where the authors realized that the natural replacement of a Tate resolution \cite{brown2021tate} in the setting of toric varieties is instead a differential module obtained by adding ``higher components" to the differentials of some chain complex. From a different perspective, the category of \defi{linear factorizations} of $0$ (see for instance \cite{brown2017adams}) is equivalently the category of $\bbz / 2 \bbz$-graded differential modules, which can be viewed as the category of $2$-periodic complexes:
% https://q.uiver.app/#q=WzAsMixbMCwwLCJEXzEiXSxbMiwwLCJEXzAiXSxbMCwxLCJkXzFeRCIsMCx7ImN1cnZlIjotM31dLFsxLDAsImRfMF5EIiwwLHsiY3VydmUiOi0zfV1d
\[\begin{tikzcd}
	{D_1} && {D_0}
	\arrow["{d_1^D}", curve={height=-18pt}, from=1-1, to=1-3]
	\arrow["{d_0^D}", curve={height=-18pt}, from=1-3, to=1-1]
\end{tikzcd}\]
A long open conjecture of Buchweitz--Greuel-Schreyer (see \cite{buchweitz1987cohen}, also \cite{erman2021matrix}) on maximal Cohen--Macaulay modules over hypersurfaces can in fact be reformulated as a question about the minimal rank of certain types of $\bbz / 2 \bbz$-graded differential modules having finite length homology. Indeed, the connection between matrix factorizations and $\bbz / 2 \bbz$-graded differential modules is further supported by work of Oblomkov--Rozansky, where certain functors on categories of matrix factorization cannot be represented by complexes, but are instead represented by $\bbz / 2 \bbz$-graded \emph{flags} (see \cite[Lemma 5.3.6]{oblomkov2020soergel}).

Differential modules with a flag structure are objects that lie somewhere between complexes---whose differentials respect a strict homological grading---and arbitrary differential modules. A flag structure on $D$ is simply an exhaustive filtration
$$0 = D^{-1} \subset D^0 \subset D^1 \subset \cdots$$
such that $d^D (D^i) \subset D^{i-1}$ for all $i \geq 0$. A nonnegatively-graded complex $C$ may be viewed as a differential module with a flag structure by taking the direct sum of all the modules in each homological degree of $C$, equipped with squarezero endomorphism given by the direct sum of all of the differentials. This association is functorial and yields a \defi{folding} functor
$$\fold : \ch(R) \to \dm (R),$$
where a morphism of differential modules is any map of the underlying $R$-modules that commutes with the differentials. One of the interesting pathologies of the functor $\fold$ is that chain complexes that are not isomorphic may become isomorphic after viewing them as differential modules:

\begin{example}\label{ex:nonFullExample}
    Let $R$ be any commutative ring. Consider the two complexes $R \xra{0} R \xra{1} R$ and $R \xra{1} R \xra{0} R$. As objects of $\ch (R)$ these complexes are obviously not isomorphic, but when viewed in the category of differential modules there \emph{is} an isomorphism:
    % https://q.uiver.app/#q=WzAsNixbMCwwLCJSIl0sWzIsMCwiUiJdLFs0LDAsIlIiXSxbMCwyLCJSIl0sWzIsMiwiUiJdLFs0LDIsIlIiXSxbMSwyLCIxIl0sWzAsMSwiMCJdLFs0LDUsIjAiXSxbMyw0LCIxIl0sWzEsMywiIiwwLHsibGV2ZWwiOjIsInN0eWxlIjp7ImhlYWQiOnsibmFtZSI6Im5vbmUifX19XSxbNCwyLCIiLDAseyJsZXZlbCI6Miwic3R5bGUiOnsiaGVhZCI6eyJuYW1lIjoibm9uZSJ9fX1dLFs1LDAsIiIsMCx7ImxldmVsIjoyLCJzdHlsZSI6eyJoZWFkIjp7Im5hbWUiOiJub25lIn19fV1d
\[\begin{tikzcd}
	R && R && R \\
	\\
	R && R && R
	\arrow["1", from=1-3, to=1-5]
	\arrow["0", from=1-1, to=1-3]
	\arrow["0", from=3-3, to=3-5]
	\arrow["1", from=3-1, to=3-3]
	\arrow[Rightarrow, no head, from=1-3, to=3-1]
	\arrow[Rightarrow, no head, from=3-3, to=1-5]
	\arrow[Rightarrow, no head, from=3-5, to=1-1]
\end{tikzcd}\]
     Since $R$ is assumed to be any commutative ring, this example shows that the embedding $\ch (R) \to \dm (R)$ is \emph{never} full (even after descending to homotopy categories).
\end{example}

Of course, the issue with Example \ref{ex:nonFullExample} is that maps in the category $\dm (R)$ no longer need to respect homological degree in a coherent way. It is however not clear which types of objects pick up no ``new" maps under the embedding $\ch (R) \to \dm (R)$. Indeed, a fundamental obstacle for many of the results of \cite{banks2022differential,banks2023differential} is the problem of understanding and controlling the non-fullness of this embedding.

More generally, there is a fundamental dichotomy stemming from the fact that there are two possible ways to define a morphism of flags. The first definition is to just view a flag as a differential module and define morphisms to come from the category $\dm (R)$, but the second definition is to assume the morphism preserves the defining filtrations of the flags. This latter definition is much more restrictive in view of Example \ref{ex:nonFullExample}, but such morphisms are also much more well-behaved. In general, we would like to know when there is no loss of generality when restricting to flag-preserving morphisms; one of the main themes of this paper is that this question can be answered using \defi{homological perturbation theory}.

\subsection{Flagged Perturbations of Complexes}

Let $D$ be a projective flag. After some choice of splittings for the defining filtration, the underlying $R$-module of $D$ decomposes as a direct sum of its associated graded pieces $D  = \bigoplus_{i \geq 0} D_i$ and the differential $d^D$ decomposes as a sum of maps $d^D = \delta_0^D + \delta_1^D + \cdots$, so that $D$ may be represented diagramatically as so:
% https://q.uiver.app/#q=WzAsNyxbNiwwLCJEXzAiXSxbNSwwLCJEXzEiXSxbNCwwLCJEXzIiXSxbMywwLCJEXzMiXSxbMiwwLCJEXzQiXSxbMSwwLCJcXGNkb3RzIl0sWzAsMCwiRDoiXSxbNSw0LCJcXGRlbHRhXzBeRCJdLFs0LDMsIlxcZGVsdGFfMF5EIl0sWzMsMiwiXFxkZWx0YV8wXkQiXSxbMiwxLCJcXGRlbHRhXzBeRCJdLFsxLDAsIlxcZGVsdGFfMF5EIl0sWzQsMiwiXFxkZWx0YV8xXkQiLDEseyJjdXJ2ZSI6M31dLFszLDEsIlxcZGVsdGFfMV5EIiwxLHsiY3VydmUiOjN9XSxbMiwwLCJcXGRlbHRhXzFeRCIsMSx7ImN1cnZlIjozfV0sWzQsMSwiXFxkZWx0YV8yXkQiLDEseyJjdXJ2ZSI6LTV9XSxbMywwLCJcXGRlbHRhXzJeRCIsMSx7ImN1cnZlIjotNX1dXQ==
\[\begin{tikzcd}
	{D:} & \cdots & {D_4} & {D_3} & {D_2} & {D_1} & {D_0}
	\arrow["{\delta_0^D}", from=1-2, to=1-3]
	\arrow["{\delta_0^D}", from=1-3, to=1-4]
	\arrow["{\delta_0^D}", from=1-4, to=1-5]
	\arrow["{\delta_0^D}", from=1-5, to=1-6]
	\arrow["{\delta_0^D}", from=1-6, to=1-7]
	\arrow["{\delta_1^D}"{description}, curve={height=18pt}, from=1-3, to=1-5]
	\arrow["{\delta_1^D}"{description}, curve={height=18pt}, from=1-4, to=1-6]
	\arrow["{\delta_1^D}"{description}, curve={height=18pt}, from=1-5, to=1-7]
	\arrow["{\delta_2^D}"{description}, curve={height=-30pt}, from=1-3, to=1-6]
	\arrow["{\delta_2^D}"{description}, curve={height=-30pt}, from=1-4, to=1-7]
\end{tikzcd}\]

In other words, the sum $\delta_1^D + \delta_2^D + \cdots$ is a \defi{perturbation} of the underlying $R$-module of $D$ equipped with the differential $\delta_0^D$. The perturbation $\delta_1^D + \delta_2^D + \cdots$ has the special property that it drops the homological degree by at least $2$, and we call such a perturbation a \defi{flagged perturbation}. The underlying complex $(D , \delta_0^D)$ is called the \defi{anchor} of $D$, and if the anchor of $D$ is a resolution\footnote{That is, the homology of the complex $(D , \delta_0^D)$ is concentrated in degree $0$.} then $D$ is called an \defi{anchored resolution}.

This reinterpretation of projective flags in terms of perturbations is useful since it allows us study projective flag resolutions in the context of homological perturbation theory, which seeks to transfer properties such as homotopy equivalence between perturbed objects. The foundational result in homological perturbation theory is the Perturbation Lemma (see Lemma \ref{lem:thePertLemma1}), which shows that (small) perturbations can be transferred along deformation retracts (see Definition \ref{def:DeformationRetract}). 

One of our first main results uses this interpretation to show that there are no ``new" morphisms introduced when considering the image of a projective resolution under the inclusion $\ch (R) \to \dm (R)$, at least up to homotopy. Moreover, any morphism of anchored resolutions must in fact be a flagged perturbation of some morphism of the underlying anchors.

\begin{theorem}\label{thm:IntroFlagPreserving}
    Let $D$ and $D'$ be $R$-projective flags anchored on resolutions. Assume that $ \phi : D \to D'$ is any morphism of projective flags that does not necessarily preserve the flag structure. Then $\phi$ is flag-preserving up to homotopy; that is, there exists a flag-preserving morphism $\psi : D \to D'$ and a homotopy $h$ such that
    $$\phi - \psi = d^{D'} h + h d^D.$$
\end{theorem}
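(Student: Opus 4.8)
The plan is to pass to the component description of $\phi$ with respect to a choice of splittings, recognize the highest ``non--flag--preserving'' component as a chain map between the two anchor resolutions, kill it using the comparison theorem for projective resolutions, and iterate. Concretely, choose splittings so that $D=\bigoplus_{i\ge0}D_i$ and $D'=\bigoplus_{i\ge0}D'_i$ with $d^D=\sum_{j\ge0}\delta_j^D$, $d^{D'}=\sum_{j\ge0}\delta_j^{D'}$, where $\delta_j^D(D_i)\subseteq D_{i-j-1}$ and the anchors $(D,\delta_0^D)$, $(D',\delta_0^{D'})$ are projective resolutions of $N:=\HH_0(D,\delta_0^D)$ and $N':=\HH_0(D',\delta_0^{D'})$. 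Write $\phi=\sum_k\phi_k$ with $\phi_k(D_i)\subseteq D'_{i+k}$; then $\phi$ is flag--preserving exactly when $\phi_k=0$ for all $k>0$. Splitting $\phi\,d^D=d^{D'}\phi$ into homogeneous pieces, the part shifting homological degree by $n$ is
$$\sum_{j\ge0}\phi_{n+j+1}\,\delta_j^D=\sum_{j\ge0}\delta_j^{D'}\,\phi_{n+j+1}.$$

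Assume $\phi$ is not already flag--preserving and let $m\ge1$ be largest with $\phi_m\ne0$. Putting $n=m-1$ above, every term with $j\ge1$ involves a vanishing $\phi_{m+j}$, leaving $\phi_m\,\delta_0^D=\delta_0^{D'}\,\phi_m$: thus $\phi_m$ is a morphism of complexes $(D,\delta_0^D)\to(D',\delta_0^{D'})$ raising homological degree by $m$, i.e.\ a chain map into the $m$--fold shift of $(D',\delta_0^{D'})$. Since $m\ge1$ and $(D',\delta_0^{D'})$ is a resolution, that shift is exact in every homological degree $\ge0$, while the source lives in degrees $\ge0$; hence $\phi_m$ is null--homotopic, and a null--homotopy is built inductively from degree $0$ upward, using at each stage projectivity of $D_i$ together with exactness of $(D',\delta_0^{D'})$ in positive degrees (to start, $\delta_0^{D'}(\phi_m|_{D_0})=\phi_m(\delta_0^D|_{D_0})=0$, so $\phi_m|_{D_0}$ factors through $\delta_0^{D'}\colon D'_{m+1}\to D'_m$, and lift by projectivity of $D_0$). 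This produces an $R$--linear map $h^{(1)}\colon D\to D'$, homogeneous of shift $m+1$ (so $h^{(1)}(D_i)\subseteq D'_{i+m+1}$), with $\delta_0^{D'}h^{(1)}+h^{(1)}\delta_0^D=\phi_m$.

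Now set $\phi^{(1)}:=\phi-(d^{D'}h^{(1)}+h^{(1)}d^D)$. Since $(d^D)^2=0=(d^{D'})^2$, the operator $d^{D'}h^{(1)}+h^{(1)}d^D$ commutes with both differentials, so $\phi^{(1)}$ is again a morphism of differential modules, homotopic to $\phi$; and because $h^{(1)}$ has shift $m+1$ this operator has only components of shift $\le m$, with shift--$m$ component equal to $\delta_0^{D'}h^{(1)}+h^{(1)}\delta_0^D=\phi_m$. Hence $\phi^{(1)}_k=0$ for all $k\ge m$: the top layer has been stripped off, and $\phi^{(1)}$ is still a differential--module morphism, so the reduction applies verbatim to it. After $m$ steps we reach a flag--preserving $\psi:=\phi^{(m)}$, and with the homotopies $h^{(1)},\dots,h^{(m)}$ produced along the way, $h:=\sum_{t=1}^{m}h^{(t)}$ satisfies $\phi-\psi=d^{D'}h+h\,d^D$. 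When the flag filtration is unbounded, so that no top shift $m$ exists, one runs the same recursion one filtration piece $D_i$ at a time (on each $D_i$ only finitely many shifts occur, reducing to the case above) and reassembles the pieces into a single $R$--linear $h$, noting that $h(x)$ is a finite sum for every $x\in D$; this bookkeeping could alternatively be organized through the Perturbation Lemma.

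I expect the crux to be the middle step: checking that the leading component $\phi_m$ is a bona fide chain map between the anchors --- this is exactly where the hypothesis that the anchors are \emph{resolutions}, together with the ``$\ge2$'' degree drop built into a flagged perturbation, both enter --- and then producing the null--homotopy as a \emph{homogeneous} map of shift precisely $m+1$, so that subtracting $d^{D'}h^{(1)}+h^{(1)}d^D$ removes $\phi_m$ without creating new components in shifts $>m-1$. Once that is in place the iteration is automatic, since $\phi^{(1)}$ is visibly a differential--module morphism homotopic to $\phi$, and the only remaining care is the convergence issue for unbounded flags noted above.
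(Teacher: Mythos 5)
Your argument is correct when the projective flags are bounded (finite class): the degree-$(m-1)$ piece of $\phi\,d^D=d^{D'}\phi$ does isolate $\phi_m\delta_0^D=\delta_0^{D'}\phi_m$; the comparison-theorem nullhomotopy of shift $m+1$ uses exactly exactness of the anchor $(D',\delta_0^{D'})$ in degrees $\geq m\geq1$ together with projectivity of the $D_i$; and the replacement $\phi\rightsquigarrow\phi-(d^{D'}h^{(1)}+h^{(1)}d^D)$ strips off the top shift without creating higher ones, so the finite iteration closes. This is essentially the argument of \cite{banks2023differential}. The gap is the unbounded case, which the paper explicitly identifies as the new content of this theorem: there is no finiteness hypothesis on $D,D'$, so when $\pd_R H(D)=\infty$ the flags are unbounded and there is no maximal shift $m$ from which to start. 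Your iteration is inherently top-down --- $\phi_m$ is a chain map of anchors precisely because every $\phi_{m+j}$ with $j\geq1$ already vanishes --- and your proposed repair does not actually reduce to the bounded case. The nullhomotopy for $\phi_m$ is built column by column across all of $D_0,D_1,\dots$ (each lift uses the previous one), so it cannot be run on a single $D_i$, and the ``top shift'' one would want to start from varies with $i$ in any event; iterating from below is also blocked, since the shift-$0$ component of $\phi\,d^D=d^{D'}\phi$ reads $\sum_{j\geq0}\phi_{j+1}\delta_j^D=\sum_{j\geq0}\delta_j^{D'}\phi_{j+1}$ and so couples every positive-shift component at once, meaning $\phi_1$ alone is not a chain map of anchors.

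Your final sentence gestures at the perturbation lemma, but that is not a cosmetic reorganization of your induction --- it is where the unbounded case actually gets proved, and the argument is genuinely different. The paper does not argue componentwise at all: it realizes each anchored resolution as a flag-preserving strong deformation retract of a Cartan--Eilenberg resolution (Theorem \ref{thm:anchoredAsDRs} together with Corollary \ref{cor:augmentedIdentity}), lifts $\phi$ through the flag-preserving lifting property of Cartan--Eilenberg resolutions (Lemma \ref{lem:CELiftingProperty}), and transports along the retracts via Lemma \ref{lem:HtpyEquivAndCommutingDiagrams}. All infinite-sum convergence is absorbed once and for all by local nilpotence of the flagged perturbation inside Lemma \ref{lem:thePertLemma1}, with no need to locate a largest shift.
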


Notice that aside from asking that the homology is finitely generated, there are no other finiteness assumptions on the projective flags $D$ and $D'$, and thus extends \cite{banks2023differential} to the case of infinite flag structures. The proof of Theorem \ref{thm:IntroFlagPreserving} uses completely different techniques and takes advantage of both the perturbation lemma and a generalized notion of Cartan--Eilenberg resolutions for differential modules. More precisely, given any differential module $D$ there exists a canonical construction of a projective flag resolution $\widetilde{D}$ due originally to Stai \cite{stai2018triangulated}, which we call a \defi{Cartan--Eilenberg resolution}.\footnote{The reason for this terminology is that in Subsection \ref{subsec:CEResolutions} we extend Stai's construction to respect any kind of ambient $\bbz / d \bbz$-grading, and when $d = 0$ we recover the classical notion of a Cartan--Eilenberg resolution of complexes. The similarity of Stai's construction with Cartan--Eilenberg resolutions has already been observed in \cite{stai2018triangulated,brown2021minimal}.}

Cartan--Eilenberg resolutions have a very well-behaved flag-preserving lifting property (Lemma \ref{lem:CELiftingProperty}), but there is a fundamental problem that the canonical augmentation map $\widetilde{D} \xra{\sim} D$ is \emph{not} flag preserving if $D$ is itself a projective flag. This means that we cannot just na\"ively employ the flag-preserving lifting property of Cartan--Eilenberg resolutions to prove Theorem \ref{thm:IntroFlagPreserving}, so we will have to do something a little more subtle.

Let $D$ denote any anchored resolution. Then we instead show that there is a canonical construction of a Cartan--Eilenberg resolution $\widetilde{D}$ of $D$ in such a way that \emph{another} copy of $D$ appears as a flag-preserving \defi{deformation retract} of $\widetilde{D}$; thus we may employ the flag-preserving lifting property of Cartan--Eilenberg resolutions and restrict this flag-preserving morphism to the respective copies of $D$ and $D'$ appearing as deformation retracts to obtain the map $\psi$ (as in Theorem \ref{thm:IntroFlagPreserving}). The perturbation lemma plays a key role in all of this, not only by ensuring that the deformation retracts exist, but also by providing explicit formulas for the resulting deformation retract datum which we use to show compatibility with the canonical augmentation maps. 

Theorem \ref{thm:IntroFlagPreserving} simplifies and streamlines almost all of the arguments used in \cite{banks2022differential,banks2023differential}, since essentially all of the proofs there can now freely assume without any loss of generality that the morphisms being considered are flag-preserving without any kind of finiteness assumptions.

% One of the other surprising aspects of the flagged perturbation interpretation of anchored resolutions is a connection to $A_\infty$-modules. More precisely, 

% \begin{theorem}
%     Let $t$ denote any indeterminate of homological degree $2$. Then there is an equivalence of categories:
%     $$\Mod_{A_\infty} (R [t]/ (t^2)) \cong \flag^\tau (R),$$
%     where $\Mod_{A_\infty} (R[t]/ (t^2))$ denotes the category of $A_\infty$ $R[t]/(t^2)$-modules (see Subsection \ref{subsec:AinftyInterp} for definitions).
% \end{theorem}
% Our results on anchored resolutions thus provide a homological framework for $A_\infty$-modules over the algebra $R[t]/(t^2)$. We suspect that anchored resolutions should be the shadow of the notion of a \defi{derived $A_\infty$-module}, which should mimic the notion of a derived $A_\infty$-algebra as in \cite{sagave2010dg,cirici2018derived}. Indeed, an alternative perspective on Theorem \ref{thm:IntroFlagPreserving} is that, at least up to quasi-isomorphism, statements on differential modules can now freely be transferred to the $A_\infty$-setting\footnote{This is because, from the $A_\infty$-perspective, the flag-preserving property of morphisms is already baked into the definition of an $A_\infty$-morphism (and this is a nontrivial assumption that cannot always be satisfied).}; this opens up an entirely new set of tools for the study of differential modules.

\subsection{Uniqueness of Anchored Resolutions}

Assume for the moment that $(R , \m , \kk)$ is a local ring (or positively-graded $\kk$-algebra). In \cite{brown2021minimal}, Brown--Erman defined the notion of a minimal free resolution of a differential module. Their goal was to initiate the development a homological theory of differential modules that mimicked the classical theory for modules, motivated by the connection they had noticed between differential modules and the more general theory of Tate resolutions over toric varieties. 

One of the first steps in this program would be to define the notion of a \emph{minimal} free resolution of a differential module in such a way that these resolutions exist and are \emph{unique} up to isomorphism. This turns out to be a nontrivial task: although free flag resolutions always exist, it is not always possible to choose a free flag resolution that is minimal (in the sense that the differential has entries in the maximal ideal $\m$). Brown--Erman's solution to this problem was to instead define a minimal free resolution of a differential module $D$ as a quasi-isomorphism of differential modules $F \xra{\sim} D$, where the differential of $F$ is minimal and the ambient $R$-module is free, but also defined in such a way that there is a diagram
% https://q.uiver.app/#q=WzAsMyxbMCwxLCJGIl0sWzIsMSwiRCJdLFsxLDAsIlxcd2lkZXRpbGRle0Z9Il0sWzAsMSwiXFxzaW0iXSxbMCwyLCIiLDIseyJzdHlsZSI6eyJ0YWlsIjp7Im5hbWUiOiJob29rIiwic2lkZSI6InRvcCJ9fX1dLFsyLDEsIlxcc2ltIl1d
\[\begin{tikzcd}
	& {\widetilde{F}} \\
	F && D
	\arrow["\sim", from=2-1, to=2-3]
	\arrow[hook, from=2-1, to=1-2]
	\arrow["\sim", from=1-2, to=2-3]
\end{tikzcd}\]
where $\widetilde{F}$ is a free flag resolution of $D$, and the inclusion $F \hookrightarrow \widetilde{F}$ is a split inclusion with contractible cokernel. This definition of a minimal free resolution satisfies both of the conditions posed above: they exist and are unique up to isomorphism. However, one potential shortcoming of this definition is that a minimal free resolution of a differential module is not necessarily a free flag anymore. This fact is rather inconvenient since direct summands of free flags can exhibit some pathological behavior that is not possible for free flags themselves (see Example \ref{ex:failureOfIneq}). One of our motivating goals for this paper is to instead define a notion of minimal resolution that exists, is unique, but \emph{also} is a free flag resolution:

\begin{definition}
    An anchored resolution $F$ is \defi{quasiminimal} if the anchor is a minimal free resolution.
\end{definition}

Interestingly, Brown--Erman had already proved in \cite[Theorem 3.2]{brown2021minimal} that anchored resolutions exist, a fact which they called ``Degeneration to the homology". In fact, their proof of existence can alternatively be interpreted as an application of the perturbation lemma. The advantage of using the perturbation lemma to prove this result is that we can prove a functorial version of Brown--Erman's ``Degeneration to the homology", which, combined with Theorem \ref{thm:IntroFlagPreserving} yields the following:

\begin{theorem}\label{thm:IntroQuasiminimal}
    Assume $D$ is a differential module with finitely generated homology. Then $D$ admits a quasiminimal anchored resolution, and any such resolution is unique up to isomorphism. 
    
    Moreover, every anchored resolution is unique up to flag-preserving homotopy and contains the quasiminimal anchored resolution as a (flag-preserving) direct summand.
\end{theorem}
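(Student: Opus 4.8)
The plan is to establish all four assertions of Theorem~\ref{thm:IntroQuasiminimal} together, with the perturbation lemma (Lemma~\ref{lem:thePertLemma1}) carrying the main weight. I work under the standing local (or positively graded) hypothesis of this subsection and set $M := \HH(D)$. The preliminary fact I would record is that the homology of any anchored resolution $F \xra{\sim} D$ equals the module resolved by its anchor: this follows from the spectral sequence of the flag filtration, whose page computing the homology of the anchor collapses precisely because the anchor is a resolution. In particular $\HH(F) \cong M$, and the anchor of \emph{any} anchored resolution of $D$ is a free resolution of $M$.

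For existence I would start from the fact---Brown--Erman's ``Degeneration to the homology'', reinterpretable via the perturbation lemma as in the introduction---that $D$ admits some anchored resolution $F$, whose anchor $G$ is then a free resolution of $M$. I would then minimize: write $G \cong G_{\min} \oplus T$ with $G_{\min}$ the minimal free resolution of $M$ and $T$ contractible, contract $T$ to obtain a special deformation retract $(G,\delta_0^F) \rightleftarrows (G_{\min},\delta_0)$ of complexes (hence of differential modules), and feed the flagged perturbation $t := d^F - \delta_0^F$ into the perturbation lemma. The essential bookkeeping is that $t$ lowers flag degree by at least $2$ while the contracting homotopy of $T$ raises it by exactly $1$, so each term of the perturbation series strictly lowers flag degree; since flags are bounded below these series are locally finite and the transferred perturbation is again flagged. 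The output is an anchored resolution $\bar F$ supported on $G_{\min}$---a quasiminimal one---together with a special deformation retract $(F,d^F) \rightleftarrows (\bar F, d^{\bar F})$, and $\bar F \xra{\sim} F \xra{\sim} D$ completes the existence claim.

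The direct-summand statement comes from running this same construction on an \emph{arbitrary} anchored resolution $F$. The key point is that the perturbed retract maps $i_\infty, p_\infty$ and homotopy $h_\infty$ are all filtered for the flag filtrations: every correction term is a composite of $t$ (lowering flag degree), the original flag-preserving retract data, and a single anchor-homotopy factor, and the net shift never raises flag degree. Hence $e := i_\infty p_\infty$ is a filtered idempotent commuting with $d^F$ (idempotency uses the SDR side conditions, preserved by the perturbation lemma), so $F = eF \oplus (1-e)F$ is a decomposition of \emph{flags}: since $e$ is filtered the flag filtration restricts compatibly, $eF \cong \bar F$ is quasiminimal, and $(1-e)F$---contracted by the restriction of $h_\infty$---is a contractible flag. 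For uniqueness up to isomorphism, given two quasiminimal anchored resolutions $F,F'$ I would lift the identity of $D$ to a quasi-isomorphism $F \to F'$ (using that $F$ is a projective flag and $F' \xra{\sim} D$), replace it up to homotopy by a flag-preserving morphism $\phi$ via Theorem~\ref{thm:IntroFlagPreserving}, note that the associated-graded map $\phi_{(0)}$ is a quasi-isomorphism between minimal free resolutions hence an isomorphism, and conclude that $\phi$ is an isomorphism because a filtered morphism of exhaustive, bounded-below flags that is an isomorphism on associated graded is an isomorphism. Uniqueness up to flag-preserving homotopy for arbitrary $F,F'$ then follows formally: each decomposes as $F_0 \oplus (\text{contractible})$ with $F_0$ the now essentially unique quasiminimal resolution, and the evident flag-preserving maps factoring through $F_0$ are mutually inverse up to ordinary homotopy since the discrepancies are supported on the contractible summands.

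I expect the genuine obstacle to be the degree bookkeeping in the third paragraph---checking carefully that \emph{all} of the perturbed deformation-retract data is filtered, so that $e$ is a morphism of flags and not merely of differential modules. Everything else (the spectral-sequence collapse, the classical lifting and uniqueness facts for minimal free resolutions, and the final formal homotopy-equivalence argument) is routine once that is in hand; the reason it goes through is exactly the defining property of a flagged perturbation, that it drops flag degree by at least $2$, together with boundedness below of flags.
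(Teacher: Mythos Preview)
Your proposal is correct and follows essentially the same strategy as the paper: existence via degeneration to the homology plus minimization by the perturbation lemma (the paper's Theorem~\ref{thm:degenToHomology} and Corollary~\ref{cor:anchoredIsSummand}), uniqueness of quasiminimal via Theorem~\ref{thm:IntroFlagPreserving} followed by triangular inversion (the paper's Corollary~\ref{cor:qMinimalUniqueness} and Lemma~\ref{lem:triangularInversion}), and the direct-summand statement from the filtered SDR data. The one organizational difference is that you deduce ``uniqueness up to flag-preserving homotopy'' of arbitrary anchored resolutions \emph{after} establishing the quasiminimal statements, via the splitting $F \cong F_0 \oplus (\text{contractible flag})$, whereas the paper proves it \emph{first} and directly (Theorem~\ref{thm:mainAnchoredResThm}) by realizing every anchored resolution as a flag-preserving deformation retract of a Cartan--Eilenberg resolution and using that Cartan--Eilenberg resolutions are themselves unique up to flag-preserving homotopy; your route is slightly more economical once Theorem~\ref{thm:IntroFlagPreserving} is in hand, while the paper's route has the advantage of making the CE intermediary explicit and reusable.
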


Thus, Theorem \ref{thm:IntroQuasiminimal} shows that quasiminimal anchored resolutions in the category of differential modules are the direct analogues of the classical notion of minimal free resolutions in the category of $R$-modules. Theorem \ref{thm:IntroQuasiminimal} also removes all finiteness assumptions on the projective dimension of the homology imposed in \cite{brown2021minimal}, and also makes no assumptions on the ambient grading nor the degree of the differential in the homogeneous case. It also shows that one does not need to leave the category of free flags in order to have a well-behaved notion of minimal free resolutions for differential modules, nor make any finiteness assumptions on the projective dimension of the homology.

\begin{example}\label{ex:BEquasiMinimalRes}
    Let $S = \kk [x,y]$ where $\kk$ is some field. As illustrated in \cite[Example 1.1]{brown2021minimal}, the differential module $D$ with underlying $S$-module $D := S^{\oplus 2}$ and squarezero endomorphism
    $$d^D := \begin{pmatrix}
        -x y & -y^2 \\ x^2 & xy
    \end{pmatrix}$$
    has free flag resolution $F$ given by a perturbation of the Koszul complex
    % https://q.uiver.app/#q=WzAsMyxbMCwwLCJLXzIiXSxbMiwwLCJLXzEiXSxbNCwwLCJLXzAiXSxbMCwxLCJkXzJeSyJdLFsxLDIsImRfMV5LIl0sWzAsMiwiMSIsMCx7ImN1cnZlIjo0fV1d
\[\begin{tikzcd}
	{K_2} && {K_1} && {K_0}
	\arrow["{d_2^K}", from=1-1, to=1-3]
	\arrow["{d_1^K}", from=1-3, to=1-5]
	\arrow["1", curve={height=24pt}, from=1-1, to=1-5]
\end{tikzcd}\]
    More precisely, the underlying free module of $F$ is $S^{\oplus 4}$ equipped with the squarezero endomorphism
    $$d^F := \begin{pmatrix}
        0 & x & y & 1 \\
        0 & 0 & 0 & -y \\
        0 & 0 & 0 & x \\
        0 & 0 & 0 & 0
    \end{pmatrix}.$$
    Now, in the context of Brown--Erman, the minimal free resolution of $D$ is actually $D$ itself, whereas the free flag resolution $F$ is a quasiminimal resolution and thus already unique up to isomorphism (even though the defining endomorphism is \emph{not} minimal, the content of Theorem \ref{thm:IntroQuasiminimal} is that $F$ is still unique up to isomorphism).
\end{example}
Another advantage of employing the perturbation lemma to prove Theorem \ref{thm:IntroQuasiminimal} is that quasiminimal resolutions may be constructed in a way that is compatible with any kind of additional gradings, a fact that will be essential for our applications to rank conjectures.

\subsection{A Total Rank Conjecture for Free Flags}

In the work \cite{avramov2007class}, Avramov--Buchweitz--Iyengar recognized the significance of projective flags to understanding and lending new perspectives on the classical rank conjectures arising in the context of algebraic topology \cite{halperin1985rational,carlsson1986free}. One of their main results was the establishment of lower bounds on the \defi{projective class} of a differential module $D$, which can be thought of as the analogue of projective dimension, purely in terms of the height of the annihilator $\ann_R H(D)$. They furthermore conjectured that for any projective flag $D$, there is an inequality
$$\rank_R D \geq 2^{c} , \quad \text{where} \ c := \codim_R \ann_R H(D).$$
This conjecture is now known to be false with counterexamples being provided (even in the category of chain complexes) by Iyengar--Walker \cite{iyengar2018examples}, but it still begs the question: is there an analogue of the Total Rank Conjecture in the category of differential modules?

One of the reasons this question can now be tackled is that the existence/uniqueness of anchored resolutions implies that the category of differential modules is largely indistinguishable from the category of chain complexes, at least from the K-theoretic perspective. This is crucial since the techniques of Walker \cite{walker2017total,vandebogert2023total} used to prove the Total Rank Conjecture for resolutions of modules heavily take advantage of an understanding of certain K-theoretic operations on the full subcategory of perfect complexes with finite length homology.

Before stating the theorem, we need to define two types of objects. Firstly, a $\bbz / 2 \bbz$-graded projective flag is simply a projective flag that also admits a splitting
$$F = F_0 \oplus F_1,$$
where the differential of $F$ has $\bbz / 2 \bbz$-degree $1$ (for the more precise definition see Definition \ref{def:flagDefs}). The homology of any such $\bbz / 2 \bbz$-graded object splits into a degree $0$ and $1$ portion. Secondly, we will say that a differential module is \defi{homologically finite} if $\pd_R H(D) < \infty$. Then we have the following analogue of the Total Rank Conjecture for differential modules:

\begin{theorem}\label{thm:TRCIntro}
    Assume $R$ is either
    \begin{enumerate}
        \item A quasi-Roberts ring (such as any locally complete intersection) with no $2$-torsion, or
        \item any ring of equicharacteristic $\neq 2$. 
    \end{enumerate}
    Let $D$ be any homologically finite $\bbz / 2 \bbz$-graded projective flag with $H(D) = H_0 (D)$.\footnote{One can instead assume $H (D) = H_1 (D)$ and the theorem still holds.} Then there is an inequality
    $$\rank_R D \geq 2^c,$$
    where $c := \codim_R \ann_R H(D)$. 
\end{theorem}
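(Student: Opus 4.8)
The plan is to reduce the statement about $\bbz/2\bbz$-graded projective flags to the known Total Rank Conjecture for $2$-periodic complexes (equivalently, matrix factorizations / linear factorizations of $0$) with finite-length homology, using the uniqueness and K-theoretic transparency of anchored resolutions established in Theorem \ref{thm:IntroQuasiminimal}. First I would localize: since $\codim_R \ann_R H(D) = \codim$ is computed at a prime $\fp$ minimal over $\ann_R H(D)$ realizing the codimension, and since $\rank$ is insensitive to localization, I may replace $R$ by $R_\fp$ and assume $(R,\m,\kk)$ is local with $H(D)$ of finite length (after further quotienting or using that $\pd_R H(D) < \infty$ forces $\depth$ considerations via Auslander--Buchsbaum; the homologically finite hypothesis is exactly what makes the anchor a finite free complex). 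Then $c = \codim_R \ann_R H(D)$ becomes $\dim R$ minus the dimension of the support, and after the standard prime avoidance / Noether normalization-type reduction we arrive at the case $H(D)$ finite length and $c = \dim R$.

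Next I would invoke Theorem \ref{thm:IntroQuasiminimal} in its graded form: $D$ admits a quasiminimal anchored resolution $F$ compatible with the ambient $\bbz/2\bbz$-grading, and $\rank_R F \leq \rank_R D$ is false in general — rather, $F$ is a direct summand of any anchored resolution, but here the relevant direction is that $D$ \emph{itself} need not be anchored, so instead I would first replace $D$ by its quasiminimal anchored resolution, noting $\rank_R D \geq \rank_R F$ because... actually the key point is that we want a lower bound on $\rank_R D$, so I need the inequality the other way. The correct move: $D$ is a $\bbz/2\bbz$-graded projective flag with $H(D) = H_0(D)$; its anchor is a (finite, since homologically finite) free complex $C$ with $H(C) = H(D)$, and the underlying free module of $D$ is the same as that of its anchor, so $\rank_R D = \rank_R C$ where $C$ is a finite free complex resolving $H(D)$ up to the fold. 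Passing to the quasiminimal model only shrinks ranks, so it suffices to prove the bound for the anchor $C$ viewed through the folding functor. The folded object $\fold(C)$ carries the $\bbz/2\bbz$-grading, and $\rank_R D = \rank_R C = \sum_i \rank_R C_i$, while $H(D) = H(C)$ has finite length and $\pd_R H(C) = \pd_R C < \infty$.

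Now apply the genuine input: Walker's theorem \cite{walker2017total} and its extension \cite{vandebogert2023total} give, for a finite free complex $C$ over $R$ (quasi-Roberts with no $2$-torsion, or equicharacteristic $\neq 2$) with $H(C)$ of finite length, the bound $\sum_i \rank_R C_i \geq 2^{\dim R}$, i.e.\ $\sum_i \rank_R C_i \geq 2^c$. The $\bbz/2\bbz$-grading is used precisely here: the folding of $C$ is a $2$-periodic complex / linear factorization, and the Buchweitz--Greuel--Schreyer circle of ideas together with Walker's Adams-operation / Chern-character machinery on $K$-theory with supports applies to $2$-periodic complexes with finite length homology exactly as to bounded complexes, because the relevant Euler-characteristic-type obstruction (the local Chern character landing in $\bigoplus \mathrm{CH}_i$ with the top piece detecting multiplicity) only sees the alternating data, which is unaffected by folding. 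Concretely: $\fold$ induces on $K$-theory of perfect complexes with finite-length homology the identity-type map compatible with the rank function modulo $2$, so the inequality transfers verbatim. The hypotheses (quasi-Roberts + no $2$-torsion, or equicharacteristic $\neq 2$) are imported unchanged from \cite{walker2017total,vandebogert2023total}; the ``no $2$-torsion'' and ``$\neq 2$'' conditions are what let the $\bbz/2\bbz$-grading interact cleanly with the Adams operation $\psi^2$ without collapsing the needed $2$-adic information.

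The main obstacle I expect is the transfer step — making precise that the folding functor does not destroy the $K$-theoretic lower bound. The subtlety is that $\fold$ is not full (Example \ref{ex:nonFullExample}) and does not obviously respect the triangulated/Waldhausen structure needed for Walker's argument. The resolution is exactly the content of Theorems \ref{thm:IntroFlagPreserving} and \ref{thm:IntroQuasiminimal}: up to flag-preserving homotopy, morphisms and resolutions of anchored objects are governed by their anchors, so the subcategory of homologically finite $\bbz/2\bbz$-graded projective flags (with anchored models) is, after passing to the appropriate homotopy/derived category and taking $K_0$ with finite-length support, equivalent to the corresponding subcategory of $2$-periodic perfect complexes, with the rank function matching. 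Granting that dictionary, the numerical inequality is a direct pullback of \cite{walker2017total,vandebogert2023total}, and the footnoted symmetry $H(D) = H_1(D)$ follows by applying the shift $\shift$ in the $\bbz/2\bbz$-grading.
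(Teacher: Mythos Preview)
Your reduction by localization at a minimal prime of $\ann_R H(D)$ to the finite-length case is fine and matches the paper. The gap is in the next step, where you conflate two different objects: the \emph{anchor} $C$ of the given flag $D$ (same underlying module, differential $\delta_0^D$) and an \emph{anchored resolution} $F$ of $D$ (a different flag whose anchor is a free resolution of $H(D)$). You write ``its anchor is a \dots\ free complex $C$ with $H(C)=H(D)$,'' but this is false in general: the spectral sequence of Lemma~\ref{lem:ZZdSS} has $E^2=H(C)\Rightarrow H(D)$, and higher differentials can be nonzero. In particular, the hypothesis $H(D)=H_0(D)$ does \emph{not} force $H(C)$ to be concentrated in even total degree, nor even to have finite length. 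So applying Walker's inequality to $C$ only yields $\rank_R D=\rank_R C\ge 2^{\dim R}\,|\chi(C)|/h(C)$, and the right-hand side can be strictly smaller than $2^{\dim R}$ --- indeed, the Iyengar--Walker counterexamples are exactly bounded free complexes with finite-length homology for which this ratio is small. Passing instead to the quasiminimal anchored resolution $F$ fixes the homology but breaks the rank comparison: Example~\ref{ex:BEquasiMinimalRes} already exhibits a differential module whose Brown--Erman minimal model has strictly smaller rank than its quasiminimal anchored resolution, and for a general flag $D$ there is no reason to expect $\rank_R D\ge\rank_R F$. Your later claim that ``$\fold$ induces on $K$-theory \dots\ so the inequality transfers verbatim'' does not rescue this, because the quantities $h(D)$ and $\rank_R D$ are not $K$-theoretic (not additive on triangles), so the inequality is not a statement that can be pulled back along a $K_0$-isomorphism.

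The paper's proof does \emph{not} reduce the rank question to the anchor. Instead it reruns Walker's argument directly in $\dm_{\bbz/2\bbz}(R)$: the chain $h(D)\cdot\rank_R D\ge h(D\otimes_R D)\ge \chi(\psi^2_{cyc}(D))=2^{\dim R}\chi(D)$, together with $\chi(D)=h(D)$ from $H(D)=H_0(D)$, gives the bound (Theorem~\ref{thm:ZZ2TRC}). The anchored-resolution machinery enters only in the last equality, where Lemma~\ref{lem:derivedEigenAndChi} and Corollary~\ref{cor:AdamsAndPowersEqualities} use the spectral sequence on $\mathbf{t}^k_\zeta(D)$ to identify $\chi\circ\psi^2_{cyc}(D)$ with $\chi\circ\psi^2$ of a genuine perfect complex resolving $H(D)$, so that the known Gillet--Soul\'e/Kurano--Roberts identities apply. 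The first inequality $h(D\otimes_R D)\le h(D)\rank_R D$ requires the flag structure on $D$ itself (Lemma~\ref{lem:tensorInequality}); Example~\ref{ex:failureOfIneq} shows it fails for summands of flags, so you cannot replace $D$ by a smaller homotopy-equivalent model at this step. The hypothesis $H(D)=H_0(D)$ is used only at the very end to get $|\chi(D)|/h(D)=1$, not to control the anchor.
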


Note that if one does not assume $H(D) = H_0 (D)$ then Theorem \ref{thm:TRCIntro} is false with counterexamples being provided by Iyengar--Walker \cite{iyengar2018examples}. If $D$ arises as the folding of a complex then the condition that $H(D) = H_0 (D)$ is equivalent to asking that the homology of the original complex is concentrated in even homological degrees, a case which was already known by Walker \cite{walker2017total}. Finally, the reason we need a $\bbz / 2 \bbz$-grading on our differential modules is that the category $\dm (R)$ has \emph{no} monoidal structure, but differential modules with a $\bbz /2 \bbz$-grading do admit a monoidal structure.

The proof of Theorem \ref{thm:TRCIntro} in the end is strikingly similar to the original proof of Walker, but establishing the necessary K-theoretic facts in this new category of $\bbz / 2 \bbz$-graded differential modules is not straightforward. Indeed, Theorem \ref{thm:TRCIntro} is a consequence of the following more general statement about the interaction of \emph{cyclic} Adams operations $\psi^k_{cyc}$ introduced in \cite{brown2017cyclic,brown2017adams} with Euler characteristic/Dutta multiplicities of differential modules (see Section \ref{sec:KtheoryAndRank} for definitions):

\begin{theorem}\label{thm:IntroAdamsIdentities}
    Let $k \geq 1$ be any integer. Assume $R$ is a local ring containing all $t$-th roots of unity for $t \leq k$ and such that $k!$ is a unit. Then:
    \begin{enumerate}
        \item If $R$ is a quasi-Roberts ring and $D$ is any homologically finite $\bbz / 2 \bbz$-graded free flag with finite length homology, then there is an equality
        $$ \chi \circ \psi_{cyc}^k (D)  = k^{\dim R} \chi (D),$$
        where $\chi$ denotes the Euler characteristic.
        \item If $R$ is an equicharacteristic ring and $D$ is any homologically finite $\bbz / 2 \bbz$-graded free flag with finite length homology, then there is an equality
        $$ \chi_\infty \circ \psi_{cyc}^k  (D) = k^{\dim R} \chi_\infty (D),$$
        where $\chi_\infty$ denotes the Dutta multiplicity \cite{dutta1983frobenius}.
    \end{enumerate}
    If $R$ is a regular local ring, it suffices to assume only that $k$ is a unit and $R$ contains a primitive $k$th root of unity. 
\end{theorem}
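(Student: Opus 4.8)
The plan is to deduce both identities from the corresponding statements for honest perfect complexes --- which are the theorems of Walker \cite{walker2017total,vandebogert2023total}, and in the regular case the Adams--Riemann--Roch theorem of Gillet--Soul\'e --- by transporting everything across the folding functor.

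First I would fix the bookkeeping. The invariants $\chi$ and $\chi_\infty$ are additive on flag-preserving short exact sequences and are invariant under flag-preserving homotopy equivalence, so (with Theorems \ref{thm:IntroFlagPreserving} and \ref{thm:IntroQuasiminimal} guaranteeing that these are the right relations, with no finiteness caveats) they factor through a Grothendieck group $K_0^{\fl}$ of $\bbz/2\bbz$-graded free flags with finite length homology. The cyclic Adams operation $\psi_{cyc}^k$ is likewise additive and descends to $K_0^{\fl}$; this uses the monoidal structure on $\bbz/2\bbz$-graded differential modules (to form $D^{\otimes k}$ with its permutation $\bbz/k\bbz$-action), and it is here that the hypotheses $k!\in R^\times$ and ``$R$ contains all $t$-th roots of unity for $t\le k$'' enter, so that the cyclic eigenspace projectors are defined over $R$ and $\psi_{cyc}^k$ can be assembled via the appropriate universal polynomials. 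Since the asserted equalities are linear in $[D]$, it suffices to verify them on a generating set of $K_0^{\fl}$.

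The next step is the reduction to complexes. Since $\pd_R H(D)<\infty$, I would apply the anchored-resolution machinery of Theorem \ref{thm:IntroQuasiminimal} together with the Perturbation Lemma \ref{lem:thePertLemma1}: the flag filtration on $D$ exhibits it as a flagged perturbation of the folding $\fold(A)$ of its anchor $A=(D,\delta_0^D)$, and a d\'evissage along that filtration, together with the uniqueness/summand statements of Theorem \ref{thm:IntroQuasiminimal}, identifies $[D]$ in $K_0^{\fl}$ with $[\fold(P)]$, where $P$ is the minimal free resolution of the module $H(D)$ --- a perfect complex precisely because $\pd_R H(D)<\infty$. Since folding preserves homology up to the parity regrading, $\chi(\fold(P))=\chi(P)$ and $\chi_\infty(\fold(P))=\chi_\infty(P)$, so it is enough to prove the identities for $D=\fold(P)$ with $P$ a perfect complex of finite length homology carrying the $\bbz/2\bbz$-grading by parity of homological degree. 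The crux is then the comparison $\chi\bigl(\psi_{cyc}^k(\fold(P))\bigr)=\chi\bigl(\psi^k(P)\bigr)$ and its $\chi_\infty$-analogue, $\psi^k$ denoting the classical Adams operation on perfect complexes with support. Both $\psi_{cyc}^k$ and $\psi^k$ are built from power operations via the universal Newton polynomials (the invertibility hypotheses licensing the required arithmetic), so this reduces to a super-trace (Lefschetz-type) identity: for the generator of $\bbz/k\bbz$ acting on $P^{\otimes k}$, the virtual character of the cyclic eigenspace construction must coincide with those of $\bigwedge^k$ and $\operatorname{Sym}^k$, and this character computation is exactly what the $t$-th roots of unity and the invertibility of $1,\dots,k$ make work. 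Granting this, Walker's theorem finishes the proof: for $R$ quasi-Roberts, $\chi(\psi^k(P))=k^{\dim R}\chi(P)$; for $R$ equicharacteristic, $\chi_\infty(\psi^k(P))=k^{\dim R}\chi_\infty(P)$; and for $R$ regular local both follow from Gillet--Soul\'e, needing only $k\in R^\times$ and one primitive $k$-th root of unity. Chaining with the reduction gives $\chi(\psi_{cyc}^k(D))=\chi(\psi^k(P))=k^{\dim R}\chi(P)=k^{\dim R}\chi(D)$, and likewise for $\chi_\infty$.

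I expect the genuine obstacles to be the two middle steps, not the invocation of Walker at the end. The first is verifying that folding, the flagged perturbation, the $\bbz/2\bbz$-grading, and $\psi_{cyc}^k$ all cohere well enough that the reduction to $\fold(P)$ genuinely takes place inside $K_0^{\fl}$ and respects the operation --- in particular that a $\bbz/2\bbz$-graded free flag with finite length homology concentrated in $H_0$ really is, up to flag-preserving homotopy and a contractible summand, the quasiminimal anchored resolution of its homology; this is the sense in which, as advertised, establishing the $K$-theory of this new category ``is not straightforward''. The second is the super-trace comparison between the cyclic power operation and the exterior/symmetric powers, carried out over a general quasi-Roberts or equicharacteristic base rather than over a field: this is the representation-theoretic core of the cyclic Adams operations of \cite{brown2017adams,brown2017cyclic}, and it is where all the arithmetic hypotheses on $k$ are consumed.
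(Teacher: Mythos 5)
Your overall shape---reduce to perfect complexes, then invoke Walker/Kurano--Roberts/Gillet--Soul\'e via the comparison $\psi^k_{cyc}=\psi^k$---matches the paper's, but there is a genuine gap in your reduction to $\fold(P)$. You want a d\'evissage along the flag filtration together with the uniqueness theorem to give $[D]=[\fold(P)]$ in some $K_0^{\fl}$. However, the subquotients $D^i/D^{i-1}$ of the flag filtration are free modules with zero differential, so they do not have finite-length homology, and the d\'evissage exits the category on which $\chi$ and $\psi^k_{cyc}$ are even defined; it produces no relation in $K_0^{\fl}$. Nor does quasi-isomorphism invariance rescue this: $D$ and $\fold(P)$ are in general \emph{not} quasi-isomorphic even though $H(D)\cong H(\fold(P))$. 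For the perturbed Koszul complex $K_\delta$ of Example \ref{ex:perturbedKoszul}, both $K_\delta$ and $\fold(K)$ have homology $\kk$ and are their own quasiminimal anchored resolutions, so by Corollary \ref{cor:qMinimalUniqueness} a quasi-isomorphism would force a flag-preserving isomorphism; but the identity component $K_3\to K_0$ of the perturbation cannot be gauged away, since in any flag-preserving map $\phi=\phi_0+\phi_1+\cdots$ intertwining the two differentials the leading term $\phi_0$ carries that identity to a unit multiple of itself while all the commutator terms $\delta_0\phi_i - \phi_i\delta_0$ have image in $\fm K_0$. So the K-class identification you rely on is false, and the additivity of $\psi^k_{cyc}$ cannot save it.

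The paper's replacement for this step works one cyclic eigenspace at a time: it sets $\mathbf{t}^k_\zeta(D):=T^k(Q)^{(\zeta)}$ for any anchored resolution $Q$ of $D$ (well-defined by Theorem \ref{thm:mainAnchoredResThm}), and the key observation is that $T^k(-)^{(\zeta)}$ takes flags to flags and anchors to anchors, so $T^k(Q)^{(\zeta)}$ is a flag anchored on $T^k(P)^{(\zeta)}$; the spectral sequence of a flag (Lemma \ref{lem:ZZdSS}) then gives $\chi\bigl(T^k(Q)^{(\zeta)}\bigr)=\chi\bigl(T^k(P)^{(\zeta)}\bigr)$, which is Lemma \ref{lem:derivedEigenAndChi} (Corollary \ref{cor:derivedEigenAndDutta} for $\chi_\infty$). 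Summing over $\zeta$ gives $\chi\circ\psi^k_{cyc}(D)=\chi\circ\psi^k_{cyc}(P)$ with no K-class identification, after which Theorem \ref{thm:cyclicVsGS} and the quasi-Roberts/Kurano--Roberts/Gillet--Soul\'e statements close the argument. Your proposal has no counterpart to the observation that $T^k(-)^{(\zeta)}$ respects anchoring, which is the crux. (Smaller point: the super-trace comparison of $\psi^k_{cyc}$ with $\psi^k$ that you flag as a second obstacle does not need to be redone over general bases; it is exactly Theorem \ref{thm:cyclicVsGS}, quoted from \cite{brown2017adams}, and it is only ever applied to the perfect complex $P$.)
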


As previously mentioned, anchored resolutions are an essential ingredient in the above proofs. In the work \cite{brown2017adams} the authors were able to prove Theorem \ref{thm:IntroAdamsIdentities} under the assumption that $R$ is regular, but their proof heavily relied on the fact that $R$ has finite global dimension, so it is not obvious how to extend their techniques to more general rings.\footnote{Indeed, taking advantage of the existence/uniqueness of anchored resolutions significantly streamlines the argument originally given in \cite{brown2017adams}.} A key step in the proof of Theorem \ref{thm:IntroAdamsIdentities} is that we can define certain derived eigenspace operators $\mathbf{t}^k_{\zeta}$ (similar to \cite[Lemma 3.9]{brown2017adams}) for any $k$th root of unity $\zeta$. The existence and uniqueness of anchored resolutions ensure that this operator is well-defined, and we reduce the equalities of Theorem \ref{thm:IntroAdamsIdentities} to the analogous identities on the category of perfect chain complexes using a certain spectral sequence that computes the homology of a free flag. 

\subsection*{Acknowledgments} 

The author thanks Maya Banks, Ben Briggs, Michael Brown, Daniel Erman, and Josh Pollitz for helpful conversations related to this material and/or suggestions on earlier drafts of this paper. Thanks to Srikanth Iyengar for suggesting that the perturbation lemma might be useful for studying differential modules. The author also gives a special thanks to Mark E. Walker for many helpful discussions related to K-theoretic properties of linear factorizations/cyclic Adams operations. The author gratefully acknowledges support from NSF grant DMS-2202871.

\subsection*{Conventions} Throughout this paper, $R$ will denote any commutative Noetherian ring. All of the constructions of this paper still go through when taking into account any kind of auxiliary gradings inherited from the ring $R$. All differential modules will be assumed to have finitely generated homology (though the ambient module itself need not be finitely generated). In the graded setting, all morphisms are assumed to be homogeneous with respect to the ambient grading. 

Given a chain complex of $R$-modules $C$, the notation $C[i]$ is defined to be the chain complex with $C[i]_j := C_{i+j}$, and differential $d^{C[i]} := (-1)^i d^{C}$. 

\subsection*{Notation} We use the following notation throughout the paper:
\begin{itemize}
    \item The notation $\codim_R I$ for an ideal $I \subset R$ is defined via $\codim_R I := \dim R - \dim (R/I)$. 
    \item The notation $\pd_R M$ denotes the projective dimension of an $R$-module $M$. 
    \item The notation $R\proj$ denotes the category of projective $R$-modules. 
    \item The notation $K_0 (R)$ denotes the Grothendieck group of bounded complexes of projective $R$-modules.
    \item The notation $C_{\geq i}$ for a chain complex $C$ denotes the brutal truncation of $C$.
\end{itemize}

% \subsection{Organization of Paper}

% In Section \ref{sec:DMsFlagsDcats} we compare and contrast a couple of formulations of flags as well as some general homological properties of flags and differential modules. In subsection \ref{subsec:theCategoryDM} we introduce the category of $\bbz / d \bbz$-graded differentials modules as well as some of the fundamental operations and constructions associated to such objects. In subsection \ref{subsec:theFlagCats} we introduce the category $\flag_{\bbz / d \bbz} (R)$, which has objects given by all $\bbz / d \bbz$-graded flags equipped with flag-preserving morphisms, and compare this to the category $\pflag_{\bbz/ d \bbz} (R)$ which does not assume that morphisms are flag preserving. In subsection \ref{subsec:flagTaus} we introduce the category $\flag^\tau_{\bbz / d \bbz}$, which is a ``twisted" version of the category $\flag_{\bbz / d \bbz} (R)$. In particular, we show that there is an isomorphism of categories
% $$\flag^\tau_{\bbz / d \bbz} (R\proj) \cong \flag_{\bbz / d \bbz} (R\proj),$$
% where $R\proj$ denotes the category of projective $R$-modules. In subsection \ref{subsec:AinftyInterp} we prove that the category $\flag^\tau_{\bbz / d \bbz} (R)$ is equivalent to the category of $A_\infty$ $R[t]/ (t^2)$-modules, where $t$ is an indeterminate of homological degree $-2$. 

% In Section \ref{sec:flaggedPerts} we begin the process of studying free flag resolutions from the perspective of homological perturbation theory. 

\section{Flagged Perturbations of Complexes}\label{sec:flaggedPerts}

In this section, we initiate our study of projective flags through the lens of homological perturbation theory. We first begin by recalling the notions of perturbation and two versions of the perturbation lemma: the first version (Lemma \ref{lem:thePertLemma1}) is the more well-known version that applies to deformation retracts, but the second version (Lemma \ref{lem:PertLemma2}) originally due to Huebschmann--Kadeishvili \cite{huebschmann1991small} applies to arbitrary homotopy equivalences. We then extend a construction of Stai \cite{stai2018triangulated} to the $\bbz / d \bbz$-graded setting, and in turn use this to define $\bbz / d \bbz$-graded Cartan--Eilenberg resolutions (Definition \ref{def:CEResolution}). These Cartan--Eilenberg resolutions will be a convenient ``standard resolution" that we will continually use as an intermediary object between anchored resolutions and arbitrary differential modules. 

Subsection \ref{subsec:degenToHomology} proves the aforementioned functorial version of the ``Degeneration to the homology" proved by Brown--Erman \cite{brown2021minimal} in the $\bbz / d \bbz$-graded setting by taking advantage of the fact that Cartan--Eilenberg resolutions are obtained as flagged perturbations of particularly simple complexes. For full background on $\bbz / d \bbz$-graded differential modules/flag structures, see the Appendix.

\subsection{The Perturbation Lemma and its Naturality}\label{subsec:perturbationLemmaAndNaturality}

In this subsection, we recall some of the more standard facts of homological perturbation theory. For more details and many different variants of the perturbation lemma, see, for instance, \cite{brown1967twisted,huebschmann1991small,barnes1991fixed,crainic2004perturbation,Stasheff+2010+203+215}.

\begin{definition}\label{def:perturbations}
    Let $D \in \dm_{\bbz / d \bbz} (R)$. A \defi{perturbation} $\delta$ of $D$ is any morphism of the underlying $R$-module of $D$ satisfying
    $$(d^D + \delta)^2 = 0.$$
    A perturbation $\delta$ is $\bbz / d \bbz$-graded if the resulting differential $d^D + \delta$ is $\bbz / d \bbz$-graded. The notation $D_\delta$ will denote the perturbed differential module, equipped with the new differential $d^{D_\delta} := d^D + \delta$. 
\end{definition}

\begin{remark}
    We do not assume as in \cite{crainic2004perturbation} that the perturbation $\delta$ has the same degree as the differential $d^D$ of the differential module $D$. Since the computations involved in the proof of the perturbation lemma are completely formal in nature and only use the fact that the differential squares to $0$, the statements of the perturbation lemmas (Lemmas \ref{lem:thePertLemma1} and \ref{lem:PertLemma2} below) may be stated for differential modules without any loss of generality. There is also a version of the perturbation lemma for curved objects (see, for instance, \cite{hitchcock2022perturbation}), but we will have no need for this level of generality here.
\end{remark}

\begin{example}\label{ex:perturbedKoszul}
    Let $R = \kk [ x_1,x_2 , x_3]$ be a polynomial ring over some field $\kk$ and consider the Koszul complex $K$ on the variables $x_1,x_2,x_3$:
    % https://q.uiver.app/#q=WzAsNSxbMSwwLCJLXzMiXSxbMiwwLCJLXzIiXSxbMywwLCJLXzEiXSxbNCwwLCJLXzAiXSxbMCwwLCJLOiJdLFswLDEsImRfMyJdLFsxLDIsImRfMiJdLFsyLDMsImRfMSJdXQ==
\[\begin{tikzcd}
	{K:} & {K_3} & {K_2} & {K_1} & {K_0}
	\arrow["{d_3}", from=1-2, to=1-3]
	\arrow["{d_2}", from=1-3, to=1-4]
	\arrow["{d_1}", from=1-4, to=1-5]
\end{tikzcd}\]
    We can perturb the Koszul complex by adding an identity map $\id_{R} : K_3 \to K_0$ to obtain a free flag of the form
    % https://q.uiver.app/#q=WzAsNSxbMSwwLCJLXzMiXSxbMiwwLCJLXzIiXSxbMywwLCJLXzEiXSxbNCwwLCJLXzAiXSxbMCwwLCJLX1xcZGVsdGE6Il0sWzAsMSwiZF8zIl0sWzEsMiwiZF8yIl0sWzIsMywiZF8xIl0sWzAsMywiMSIsMix7ImN1cnZlIjozfV1d
\[\begin{tikzcd}
	{K_\delta:} & {K_3} & {K_2} & {K_1} & {K_0}
	\arrow["{d_3^K}", from=1-2, to=1-3]
	\arrow["{d_2^K}", from=1-3, to=1-4]
	\arrow["{d_1^K}", from=1-4, to=1-5]
	\arrow["1"', curve={height=18pt}, from=1-2, to=1-5]
\end{tikzcd}\]
    This perturbation is no longer $\bbz$-graded, but it is $\bbz / 2 \bbz$-graded since the new perturbed differential still changes the parity of the homological degree. More precisely, this perturbed object may be viewed as a $\bbz / 2 \bbz$-graded differential module with
    $$(K_\delta)_0 = K_0 \oplus K_2, \quad (K_\delta)_1 = K_1 \oplus K_3,$$
    $$d_1^{K_\delta} = \begin{pmatrix}
        d_1^K & 1 \\
        0   & d_3^K \\
    \end{pmatrix}, \quad d_0^{K_\delta} = \begin{pmatrix}
        0 & d_2^K \\
        0 & 0 \\
    \end{pmatrix}.$$
\end{example}

\begin{definition}[Flagged Perturbations]\label{def:flaggedPerturbation}
    Let $C$ be a nonnegatively-graded chain complex. A \defi{flagged perturbation} of $C$ is a perturbation $\delta$ of $C$ that admits a decomposition $$\delta = \delta_1 + \delta_2 + \cdots , \quad \text{with} \ \delta_i \in \hom^{i+1} (C,C) \quad \text{for each} \ i \geq 1.$$
\end{definition}

\begin{remark}
    Note that if $\delta$ is any flagged perturbation of a bounded below complex $C$ and $h$ is any homotopy of $C$, then $\delta h$ is locally nilpotent since this map \emph{strictly} drops homological degree. This means that flagged perturbations are always \defi{small} perturbations, which means that $1 - \delta h$ is invertible. This is particularly important for employing the perturbation lemma (see Lemma \ref{lem:thePertLemma1}).
\end{remark}

As mentioned previously, the main point of the perturbation lemma is to be able to transfer perturbations along homotopy equivalences. The most well-known version of the perturbation lemma is actually stated for a particularly well-behaved type of homotopy equivalence, known as a (strong) deformation retract.

\begin{definition}[(Strong) Deformation Retracts]\label{def:DeformationRetract}
     A \defi{deformation retract}
    % https://q.uiver.app/#q=WzAsMixbMCwwLCJDIl0sWzIsMCwiRCJdLFswLDEsInAiLDAseyJjdXJ2ZSI6LTN9XSxbMSwwLCJpIiwwLHsiY3VydmUiOi0zfV1d
\[\begin{tikzcd}
	h, \quad C && D
	\arrow["p", curve={height=-18pt}, from=1-1, to=1-3]
	\arrow["i", curve={height=-18pt}, from=1-3, to=1-1]
\end{tikzcd}\]
    between two differential modules $C$ and $D$ is the data of morphisms $p : C \to D$ and $i : D \to C$ with $p \circ i = \id_D$ and $i \circ p$ homotopic to $\id_{C}$ via some homotopy $h$. In other words, it is a homotopy equivalence with $h' = 0$ as in Notation \ref{not:HtpyEquivalence} (so that $D$ is a direct summand of $C$). 

    A deformation retract is \defi{strong (or special)} if the following additional conditions are satisfied:
    $$h^2 = 0, \quad h i = 0, \quad \text{and} \quad p h = 0.$$
\end{definition}

\begin{remark}\label{rk:modificationForSDR}
    Any deformation retract can be modified in such a way that it becomes a strong deformation retract. This is achieved by the following sequence of modifications:
    \begin{enumerate}
        \item Replace $h$ with $h(d^C d + h d^C)$ to obtain $h \iota = 0$,
        \item then replace $h$ with $(d^C h + h d^C) h$ to obtain $p h = 0$, and
        \item then replace $h$ with $h d^C h$ to obtain $h^2 = 0$.
    \end{enumerate}
\end{remark}

\begin{remark}
    Note that throughout this paper we use the definition of homotopy equivalence as any morphism that is invertible up to homotopy (Notation \ref{not:HtpyEquivalence}). This definition is different from the definition used by Crainic \cite{crainic2004perturbation}, and in fact one must use a stronger formulation of the perturbation lemma (Lemma \ref{lem:PertLemma2}) than that given by Crainic for our definition of homotopy equivalence.
\end{remark}

We can now state the first version of the perturbation lemma:

\begin{lemma}[The Perturbation Lemma: First Version]\label{lem:thePertLemma1}
    Assume that $\delta$ is a small perturbation of some differential module $C$ and there is a deformation retract
    % https://q.uiver.app/#q=WzAsMixbMCwwLCJDIl0sWzIsMCwiRCJdLFswLDEsInAiLDAseyJjdXJ2ZSI6LTN9XSxbMSwwLCJpIiwwLHsiY3VydmUiOi0zfV1d
\[\begin{tikzcd}
	C && D
	\arrow["p", curve={height=-18pt}, from=1-1, to=1-3]
	\arrow["i", curve={height=-18pt}, from=1-3, to=1-1]
\end{tikzcd}\]
    Given a perturbation $\delta$ of $C$ with $1-\delta h$ invertible, define the following data:
    $$p_\infty := p + p (1-\delta h)^{-1} \delta h, \quad i_\infty := i + h (1 - \delta h )^{-1} \delta i, \quad h_\infty := h + h (1-\delta h)^{-1} \delta h, \quad \delta_{\infty} := p(1 - \delta h)^{-1} \delta i.$$
    Then $\delta_\infty$ is a perturbation of $D$ and there is an induced homotopy equivalence:
    % https://q.uiver.app/#q=WzAsMixbMCwwLCJoX1xcaW5mdHkgLCBcXHF1YWQgKEMsIGReQyArIFxcZGVsdGEpIl0sWzIsMCwiKEQsIGReQyArIFxcZGVsdGFfXFxpbmZ0eSkiXSxbMCwxLCJwX1xcaW5mdHkiLDAseyJjdXJ2ZSI6LTR9XSxbMSwwLCJcXGlvdGFfXFxpbmZ0eSIsMCx7ImN1cnZlIjotNH1dXQ==
\[\begin{tikzcd}
	{h_\infty , \quad (C, d^C + \delta)} && {(D, d^D + \delta_\infty)}
	\arrow["{p_\infty}", curve={height=-24pt}, from=1-1, to=1-3]
	\arrow["{\iota_\infty}", curve={height=-24pt}, from=1-3, to=1-1]
\end{tikzcd}\]
    If the original deformation retract is strong, then the resulting perturbed data is also a strong deformation retract. 
\end{lemma}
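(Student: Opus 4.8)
The approach is entirely formal: everything follows from the single identity $(d^C+\delta)^2 = 0$ (equivalently $\delta d^C + d^C\delta + \delta^2 = 0$), the deformation retract relations $p i = \id_D$, $ip - \id_C = d^C h + h d^C$, and (in the strong case) the side conditions $h^2 = hi = ph = 0$. The plan is to introduce the ``correction operator'' $A := (1-\delta h)^{-1}\delta = \delta + \delta h\delta + \delta h\delta h\delta + \cdots$, which is well-defined precisely because $\delta$ is small. One first records the two resolvent-type identities $(1-\delta h)^{-1}\delta = \delta(1-h\delta)^{-1}$ and $A = \delta + A h\delta = \delta + \delta h A$; these let one freely move the resolvent factor across $\delta$. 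All four perturbed data $p_\infty, i_\infty, h_\infty, \delta_\infty$ are then rewritten in terms of $A$, e.g. $\delta_\infty = pAi$, $p_\infty = p(1 + hA) = p + phA$, $i_\infty = i + hAi$, $h_\infty = h + hAh$.

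The verification then proceeds through a fixed checklist, each step a short algebraic manipulation using only the relations above:
\begin{enumerate}
    \item $\delta_\infty$ is a perturbation of $D$: compute $(d^D + \delta_\infty)^2$ and show it vanishes. This is the computational heart; one expands $(d^D)^2 = 0$, $d^D \delta_\infty + \delta_\infty d^D$, and $\delta_\infty^2 = pAi\,pAi$, and uses $ip = \id + d^C h + h d^C$ to convert the $ip$ in the middle of $\delta_\infty^2$ into a commutator with $d^C$, which after applying the resolvent identities telescopes against the cross terms.
    \item $p_\infty (d^C + \delta) = (d^D + \delta_\infty) p_\infty$ and $i_\infty (d^D + \delta_\infty) = (d^C + \delta) i_\infty$: i.e. the new maps are chain maps for the perturbed differentials. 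Same style of bookkeeping.
    \item $p_\infty i_\infty = \id_D$: expand, use $pi = \id_D$, $ph$, $hi$ where available; in the non-strong case one still gets $\id_D$ because the $h$-terms pair up to a commutator that $p\cdots i$ kills modulo the relations. (Strictly, for a plain deformation retract one may first need to note that $\iota_\infty$ in the statement denotes $i_\infty$.)
    \item $\id_C - i_\infty p_\infty = (d^C+\delta)h_\infty + h_\infty(d^C+\delta)$: this exhibits $h_\infty$ as the required homotopy. Expand both sides in terms of $A$ and reduce using $\id_C - ip = d^C h + h d^C$ together with the resolvent identities.
    \item In the strong case: $h_\infty^2 = 0$, $h_\infty i_\infty = 0$, $p_\infty h_\infty = 0$. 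These follow quickly from $h^2 = hi = ph = 0$ once one observes that $h_\infty = h(1+Ah)$ and $A$ always appears flanked appropriately; e.g. $p_\infty h_\infty = p(1+hA)h(1+Ah)$ and every term contains a factor $ph$ or is killed after using $ph=0$ and $h^2=0$ iteratively — here one should be a little careful and may first verify $pA h = 0$ and $hAh$ absorbs into $h\cdot(\text{stuff})\cdot h$, so that $h_\infty^2$ reduces to a sum of terms each with an internal $h^2$.
\end{enumerate}

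I expect step (1) — showing $(d^D+\delta_\infty)^2 = 0$ — together with the closely related chain-map identity in step (2) to be the main obstacle, since these are where the resolvent $(1-\delta h)^{-1}$ genuinely interacts with the non-trivial relation $ip = \id_C + d^C h + h d^C$ (all the other steps are comparatively mechanical). The cleanest way to handle it is the standard trick of writing $A = \delta + \delta h A$ and substituting \emph{once} to split off the ``$\delta$ only'' term, so that the surviving terms all carry an $hA$ (or $Ah$) and can be grouped into a single telescoping commutator $[d^C, \text{something}]$ that dies under $p(-)i$. I would present these as a sequence of displayed equalities rather than prose. One remark worth flagging in passing: since the computations use nothing beyond $d^2 = 0$, the statement and proof are insensitive to the internal degree of $\delta$, which is why we may apply it to flagged perturbations even though those do not preserve the homological grading; and the final ``strong'' clause is exactly what makes the lemma iterable, a point we will use when stacking perturbations in later sections.
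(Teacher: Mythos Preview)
The paper does not give its own proof of this lemma; it is stated as a standard result with references to \cite{brown1967twisted,huebschmann1991small,barnes1991fixed,crainic2004perturbation,Stasheff+2010+203+215}, and your computational verification via the resolvent $A = (1-\delta h)^{-1}\delta$ is exactly the standard argument one finds in those sources.

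One small caution on your step (3): without the side conditions, the identity $p_\infty i_\infty = \id_D$ need \emph{not} hold on the nose --- expanding gives $\id_D + pAhi + phAi + pAh^2Ai$, and the extra terms do not vanish unless $ph = hi = h^2 = 0$. This is harmless here because the lemma as stated only claims an induced \emph{homotopy equivalence} (note only $h_\infty$ appears in the diagram), and the strong-retract conclusion is asserted only under the strong hypothesis; but your parenthetical ``in the non-strong case one still gets $\id_D$'' overclaims and should be softened to ``one still gets a homotopy equivalence''.
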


\begin{remark}
    In more modern language, the perturbation lemma is often viewed as a special case of the more general \defi{homotopy transfer theorem} (see, for instance, \cite[Chapter 10.3]{loday2012algebraic}), which instead transfers homotopy operadic structures on algebras along homotopy equivalences of dg algebras.
\end{remark}

The perturbation lemma is typically stated for deformation retracts, but there is a more general version that applies to arbitrary homotopy equivalences \cite{huebschmann1991small}. The idea of proving this second version is to use the more classical version of the perturbation lemma on an appropriate intermediary object. More precisely, the data of a homotopy equivalence $\phi: C \xra{\sim} D$ induces deformation retracts from the mapping cylinder $\operatorname{Cyl} (\phi)$ onto \emph{both} $C$ and $D$:
% https://q.uiver.app/#q=WzAsMyxbMSwwLCJcXG9wZXJhdG9ybmFtZXtDeWx9IChcXHBoaSkiXSxbMCwxLCJDIl0sWzIsMSwiRCJdLFswLDEsIlxcc2ltIiwyXSxbMCwyLCJcXHNpbSJdXQ==
\[\begin{tikzcd}
	& {\operatorname{Cyl} (\phi)} \\
	C && D
	\arrow["\sim"', from=1-2, to=2-1]
	\arrow["\sim", from=1-2, to=2-3]
\end{tikzcd}\]
There is a natural choice for extending a perturbation of $D$ to the cylinder $\operatorname{Cyl} (\phi)$, and the assumption that $\phi$ is a homotopy equivalence implies that there is a deformation retract of the mapping cylinder onto $C$. The first version of the perturbation lemma applied to this deformation retract yields the desired perturbation on $C$:

\begin{lemma}[The Perturbation Lemma: Second Version]\label{lem:PertLemma2}
     Assume that there is a homotopy equivalence of differential modules:
     % https://q.uiver.app/#q=WzAsMixbMCwwLCJoICwgXFxxdWFkIChDLCBkXkMpIl0sWzIsMCwiKEQsIGReQyApLCBcXHF1YWQgaCciXSxbMCwxLCJwIiwwLHsiY3VydmUiOi00fV0sWzEsMCwiXFxpb3RhIiwwLHsiY3VydmUiOi00fV1d
\[\begin{tikzcd}
	{h , \quad (C, d^C)} && {(D, d^C ), \quad h'}
	\arrow["p", curve={height=-24pt}, from=1-1, to=1-3]
	\arrow["\iota", curve={height=-24pt}, from=1-3, to=1-1]
\end{tikzcd}\]
     Given a small pertubation $\delta$ of $C$, there exists a perturbation $\delta_\infty$ of $D$ and an induced homotopy equivalence:
     % https://q.uiver.app/#q=WzAsMixbMCwwLCJoX1xcaW5mdHkgLCBcXHF1YWQgKEMsIGReQyArIFxcZGVsdGEpIl0sWzIsMCwiKEQsIGReQyArIFxcZGVsdGFfXFxpbmZ0eSksIFxccXVhZCBoJ19cXGluZnR5Il0sWzAsMSwicF9cXGluZnR5IiwwLHsiY3VydmUiOi00fV0sWzEsMCwiXFxpb3RhX1xcaW5mdHkiLDAseyJjdXJ2ZSI6LTR9XV0=
\[\begin{tikzcd}
	{h_\infty , \quad (C, d^C + \delta)} && {(D, d^C + \delta_\infty), \quad h'_\infty}
	\arrow["{p_\infty}", curve={height=-24pt}, from=1-1, to=1-3]
	\arrow["{\iota_\infty}", curve={height=-24pt}, from=1-3, to=1-1]
\end{tikzcd}\]
\end{lemma}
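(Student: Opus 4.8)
The strategy, sketched just before the statement, is to deduce the second version from the first (Lemma~\ref{lem:thePertLemma1}) by factoring the homotopy equivalence $p\colon C\to D$ through its mapping cylinder $\operatorname{Cyl}(p)$, whose underlying module is (a shift of) $C\oplus C\oplus D$ with differential built from $d^C$, $d^D$ and $p$ in the usual way. Two deformation retracts of this cylinder are relevant. First, the canonical inclusion $i_1\colon D\hookrightarrow\operatorname{Cyl}(p)$ together with the evident projection back to $D$ form a \emph{strong} deformation retract of $\operatorname{Cyl}(p)$ onto $D$, with contracting homotopy $H_1$; this exists for any chain map $p$. Second, the canonical inclusion $i_2\colon C\hookrightarrow\operatorname{Cyl}(p)$ — a chain map depending on no choices — admits a chain-map retraction of the schematic form $p_2=\mathrm{pr}_C+h\circ\mathrm{pr}_{C'}+\iota\circ\mathrm{pr}_D$, where the presence of $h$ and $\iota$ is exactly what makes $p_2$ commute with the differential (here the relation $\iota p-\id_C=d^Ch+hd^C$ is used), and one has $p_2i_2=\id_C$. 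That $i_2p_2\simeq\id_{\operatorname{Cyl}(p)}$ requires in addition the homotopy $h'$ on $D$, via an explicit homotopy $H_2$ assembled from $h,h',p,\iota$; after the modifications of Remark~\ref{rk:modificationForSDR} we may assume $(p_2,i_2,H_2)$ is a \emph{strong} deformation retract of $\operatorname{Cyl}(p)$ onto $C$.

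Next I extend the given small perturbation $\delta$ of $C$ to $\operatorname{Cyl}(p)$ by the elementary device $\widetilde\delta:=i_2\,\delta\,p_2$. Since $i_2$ and $p_2$ are chain maps with $p_2i_2=\id_C$, a one-line computation gives $(d^{\operatorname{Cyl}(p)}+\widetilde\delta)^2=i_2\big((d^C+\delta)^2\big)p_2=0$, so $\widetilde\delta$ is a perturbation of $\operatorname{Cyl}(p)$. It is again small: with respect to $H_2$ automatically, since $p_2H_2=0$ forces $\widetilde\delta H_2=0$; with respect to $H_1$ because $1-\widetilde\delta H_1$ is invertible as soon as $1-\delta\,(p_2H_1i_2)$ is invertible on $C$ (one has $(\widetilde\delta H_1)^{n}=i_2(\delta\,p_2H_1i_2)^{n-1}\delta\,p_2H_1$), and the latter holds in every case we need — in particular automatically when $\delta$ is a flagged perturbation of a bounded-below complex, as then that composite is locally nilpotent.

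Now apply Lemma~\ref{lem:thePertLemma1} twice to $\widetilde\delta$. Applied to the strong deformation retract $\operatorname{Cyl}(p)\rightleftarrows C$: because $\widetilde\delta H_2=0$ (and $H_2i_2=0$), the perturbation lemma returns the \emph{unchanged} data — the new perturbed object is $(C,d^C+p_2\widetilde\delta i_2)=(C,d^C+\delta)$ and the new retract data is still $(p_2,i_2,H_2)$ — now exhibiting a strong deformation retract $(\operatorname{Cyl}(p),d^{\operatorname{Cyl}(p)}+\widetilde\delta)\rightleftarrows(C,d^C+\delta)$. Applied to $\operatorname{Cyl}(p)\rightleftarrows D$: it returns a perturbation $\delta_\infty:=p_1(1-\widetilde\delta H_1)^{-1}\widetilde\delta\,i_1$ of $D$ together with a deformation retract $(\operatorname{Cyl}(p),d^{\operatorname{Cyl}(p)}+\widetilde\delta)\rightleftarrows(D,d^D+\delta_\infty)$. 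Composing the inverse of the first equivalence with the second produces a homotopy equivalence $(C,d^C+\delta)\simeq(D,d^D+\delta_\infty)$, whose homotopies $h_\infty$ and $h'_\infty$ are assembled from the two outputs of Lemma~\ref{lem:thePertLemma1}; this is exactly the asserted conclusion.

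The one step carrying real content is the construction of the homotopy $H_2$ witnessing $C$ as a deformation retract of $\operatorname{Cyl}(p)$: this is precisely where the hypothesis that $p$ is invertible \emph{up to homotopy} — through the data $\iota,h,h'$, rather than being merely a chain map or quasi-isomorphism — is used, and where one must keep careful track of signs. Once that retract is in hand the remainder is purely formal: the extension $\widetilde\delta=i_2\delta p_2$ and two invocations of the first perturbation lemma, the second of which collapses trivially by strongness of the retract onto $C$.
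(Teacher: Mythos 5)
Your proof correctly implements the mapping-cylinder strategy that the paper sketches immediately before the lemma and whose details it deliberately omits, instead pointing the reader to Huebschmann--Kadeishvili; so this is the paper's own route with the details supplied (two deformation retracts of $\operatorname{Cyl}(p)$, the natural extension $\widetilde{\delta}=i_2\delta p_2$, and two invocations of Lemma~\ref{lem:thePertLemma1}, the one onto $C$ collapsing by strongness). One point worth keeping fully explicit is that ``$\delta$ small'' in the usual sense (invertibility of $1-\delta h$) does not by itself yield invertibility of $1-\widetilde{\delta}H_1$ on the cylinder; you correctly flag this and observe that it is automatic when $\delta$ is a flagged perturbation of a bounded-below complex, which is the only situation the paper ever invokes.
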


\begin{remark}
    Since we will not use the explicit form of the perturbation or homotopy equivalence data in the above version of the perturbation lemma, we will not record these formulas and instead direct the interested reader to \cite{huebschmann1991small}.
\end{remark}

The following fact is not typically stated explicitly with the perturbation lemma, but seems to be tacitly used in some spots in the literature. In our context, we will often have to refer to this naturality statement for later results, so we state it here for convenience. Recall the notation $C_\delta$ established in Definition \ref{def:perturbations} in the statement below:

\begin{lemma}[Naturality of Perturbation]\label{lem:PerturbationNaturality}
    Assume that there are homotopy equivalences of differential modules
    % https://q.uiver.app/#q=WzAsNSxbMCwwLCJoICwgXFxxdWFkIEMiXSxbMiwwLCJELCBcXHF1YWQgcyJdLFszLDAsIlxcdGV4dHthbmR9Il0sWzQsMCwiaCcsIFxccXVhZCBDJyJdLFs2LDAsIkQnLCBcXHF1YWQgcyciXSxbMCwxLCJwIiwwLHsiY3VydmUiOi0zfV0sWzEsMCwiXFxpb3RhIiwwLHsiY3VydmUiOi0zfV0sWzMsNCwicCciLDAseyJjdXJ2ZSI6LTN9XSxbNCwzLCJcXGlvdGEnIiwwLHsiY3VydmUiOi0zfV1d
\[\begin{tikzcd}
	{h , \quad C} && {D, \quad s} & {\text{and}} & {h', \quad C'} && {D', \quad s'}
	\arrow["p", curve={height=-18pt}, from=1-1, to=1-3]
	\arrow["\iota", curve={height=-18pt}, from=1-3, to=1-1]
	\arrow["{p'}", curve={height=-18pt}, from=1-5, to=1-7]
	\arrow["{\iota'}", curve={height=-18pt}, from=1-7, to=1-5]
\end{tikzcd}\]
     Suppose furthermore that there exist perturbations $\delta^C, \delta^{C'}$ of $C$ and $C'$, respectively and a morphism $\phi : C_{\delta^C} \to C'_{\delta^{C'}}$ such that the following diagram commutes up to homotopy:
    % https://q.uiver.app/#q=WzAsNCxbMCwwLCJDX1xcZGVsdGEiXSxbMiwwLCJDJ197XFxkZWx0YSd9Il0sWzAsMiwiRSJdLFsyLDIsIkUnIl0sWzAsMSwiXFxwaGkiXSxbMCwyXSxbMiwzXSxbMSwzXV0=
\[\begin{tikzcd}
	{C_{\delta^C}} && {C'_{\delta^{C'}}} \\
	\\
	E && {E'}
	\arrow["\phi", from=1-1, to=1-3]
	\arrow[from=1-1, to=3-1]
	\arrow[from=3-1, to=3-3]
	\arrow[from=1-3, to=3-3]
\end{tikzcd}\]
    Then there exist perturbations $\delta^D$ and $\delta^{D'}$ of $D$ and $D'$, respectively, such that the following diagram commutes up to homotopy:
    \[\begin{tikzcd}
	{D_{\delta^D}} && {D'_{\delta^{D'}}} \\
	\\
	E && {E'}
	\arrow["\phi", from=1-1, to=1-3]
	\arrow[from=1-1, to=3-1]
	\arrow[from=3-1, to=3-3]
	\arrow[from=1-3, to=3-3]
\end{tikzcd}\]
\end{lemma}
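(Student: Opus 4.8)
The plan is to deduce this from the second version of the perturbation lemma (Lemma~\ref{lem:PertLemma2}) by a conjugation argument. Note that the first version (Lemma~\ref{lem:thePertLemma1}) does not suffice here: the homotopy equivalences $C\simeq D$ and $C'\simeq D'$ are arbitrary and need not be deformation retracts, which is exactly the situation the Huebschmann--Kadeishvili strengthening was designed for. (Throughout, the perturbations $\delta^C,\delta^{C'}$ are small, as they always are in our setting, being flagged perturbations of bounded-below complexes, cf.\ the remark after Definition~\ref{def:flaggedPerturbation}; this is what is needed to apply Lemma~\ref{lem:PertLemma2}.)

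First I would apply Lemma~\ref{lem:PertLemma2} to the homotopy equivalence $(h,\,C\rightleftarrows D,\,s)$ together with $\delta^C$. This yields a perturbation $\delta^D$ of $D$ and an induced homotopy equivalence given by morphisms $p_\infty\colon C_{\delta^C}\to D_{\delta^D}$ and $\iota_\infty\colon D_{\delta^D}\to C_{\delta^C}$ with $\iota_\infty p_\infty\simeq\id_{C_{\delta^C}}$ and $p_\infty\iota_\infty\simeq\id_{D_{\delta^D}}$. Applying Lemma~\ref{lem:PertLemma2} a second time, now to $(h',\,C'\rightleftarrows D',\,s')$ and $\delta^{C'}$, produces a perturbation $\delta^{D'}$ of $D'$ and morphisms $p'_\infty\colon C'_{\delta^{C'}}\to D'_{\delta^{D'}}$, $\iota'_\infty\colon D'_{\delta^{D'}}\to C'_{\delta^{C'}}$ forming a homotopy equivalence. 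These $\delta^D$ and $\delta^{D'}$ are the perturbations asserted in the statement.

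Next I would transport the morphism and the side maps. Write $a\colon C_{\delta^C}\to E$, $b\colon E\to E'$, $c\colon C'_{\delta^{C'}}\to E'$ for the maps of the hypothesized square, so that $ba\simeq c\phi$. Define $\phi_\infty:=p'_\infty\,\phi\,\iota_\infty\colon D_{\delta^D}\to D'_{\delta^{D'}}$ (a morphism of differential modules, being a composite of such), and take the side maps of the new square to be $a\,\iota_\infty\colon D_{\delta^D}\to E$ and $c\,\iota'_\infty\colon D'_{\delta^{D'}}\to E'$; by abuse of notation these, together with $\phi_\infty$, are the arrows labelled in the conclusion. Homotopy-commutativity is then the chain
$$b\,(a\,\iota_\infty)\;=\;(ba)\,\iota_\infty\;\simeq\;(c\phi)\,\iota_\infty\;\simeq\;c\,(\iota'_\infty p'_\infty)\,\phi\,\iota_\infty\;=\;(c\,\iota'_\infty)\,(p'_\infty\,\phi\,\iota_\infty)\;=\;(c\,\iota'_\infty)\,\phi_\infty,$$
where the first homotopy is the hypothesis $ba\simeq c\phi$ post-composed with $\iota_\infty$, and the second uses $\iota'_\infty p'_\infty\simeq\id_{C'_{\delta^{C'}}}$ conjugated by $c$ and $\phi\iota_\infty$ (homotopy being a congruence for composition). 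As a sanity check, the transported side maps are compatible with the equivalences: $(a\,\iota_\infty)\,p_\infty=a\,(\iota_\infty p_\infty)\simeq a$, and likewise on the primed side.

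Since the perturbation lemma does the real work, I do not anticipate a genuine obstacle. The only points requiring care are (i) confirming $\delta^C,\delta^{C'}$ are small, so that Lemma~\ref{lem:PertLemma2} applies, and (ii) the bookkeeping of which homotopy lives on which side of each equivalence — in particular it is essential that the output of Lemma~\ref{lem:PertLemma2} is a genuine two-sided homotopy equivalence, so that the relation $\iota'_\infty p'_\infty\simeq\id$ used above is available. If in an application one wants the side maps of the conclusion to be prescribed in advance rather than produced as $a\iota_\infty$ and $c\iota'_\infty$, the same computation goes through after composing with $p_\infty$ and $p'_\infty$.
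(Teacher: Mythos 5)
Your proof is correct and follows essentially the same route as the paper's: apply the Huebschmann--Kadeishvili form of the perturbation lemma (Lemma~\ref{lem:PertLemma2}) on each side to produce $\delta^D,\delta^{D'}$, then transport the homotopy-commutative square along the resulting homotopy equivalences. The only difference is packaging: where you compute the chain $b(a\iota_\infty)\simeq(c\iota'_\infty)\phi_\infty$ inline, the paper invokes Lemma~\ref{lem:HtpyEquivAndCommutingDiagrams} (which records the same transported square together with an explicit formula for the transported homotopy), so the two arguments coincide in substance.
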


\begin{proof}
    The existence of the corresponding perturbations $\delta^D$ and $\delta^{D'}$ is a consequence of the perturbation lemma (Lemma \ref{lem:PertLemma2}), and the fact that the evident induced diagram commutes up to homotopy is a consequence of Lemma \ref{lem:HtpyEquivAndCommutingDiagrams}.
\end{proof}

\subsection{Cartan--Eilenberg Resolutions for Differential Modules}\label{subsec:CEResolutions}

In this section, we recall Stai's construction of Cartan--Eilenberg type resolutions with special attention to the case of $\bbz / d \bbz$-gradings. This allows us to define $\bbz / d \bbz$-graded Cartan--Eilenberg resolutions, and the explicitness/simplicity of this construction will allow us to prove directly that morphisms of differential modules can always be canonically extended to Cartan--Eilenberg resolutions in a flag-preserving way (Lemma \ref{lem:CELiftingProperty}). This fact will be one of the first fundamental stepping stones that we will need to prove many of our main results on the existence/uniqueness of anchored resolutions.

\begin{definition}[$\bbz/d\bbz$-graded Projective Flag Resolutions]
    Let $D \in \dm_{\bbz / d \bbz} (R)$ be any $\bbz / d \bbz$-graded differential module. Then a \defi{$\bbz / d \bbz$-graded projective flag resolution} is an object $F \in \flag_{\bbz / d \bbz} (R\proj)$ equipped with a quasi-isomorphism
    $$F \xra{\sim} D.$$
\end{definition}

\begin{remark}
    In the $d = 1$ case, it is well-known that projective flag resolutions always exist. Two such constructions of these resolutions are outlined in \cite{brown2021minimal}, although the authors did not need to consider the $\bbz / d \bbz$-graded setting for their purposes. The iterated mapping cone construction of \cite[Construction 2.8]{brown2021minimal}, which is itself precisely the construction of an Adams resolution \cite{christensen1998ideals}, extends to the $\bbz / d \bbz$-graded setting in a straightforward way. For our purposes it will be more desirable to have a $\bbz / d \bbz$-graded analogue of a construction due to Stai. Keeping track of the $\bbz / d \bbz$-graded pieces requires just a little more bookkeeping.
\end{remark}

\begin{remark}
    For the reader familiar with dg algebras, the definition of a free flag resolution should look very familiar to the notion of a \defi{semifree resolution} \cite{avramov1998infinite}. It is, however, important to note that the theory of free flag resolutions is \emph{not} a subset of the theory of semifree resolutions, since there is no way to view a differential module as a dg module over any dg algebra. 
\end{remark}

\begin{construction}[$\bbz / d \bbz$-graded Stai Construction]\label{cons:StaiRes}
    Let $D$ be a $\bbz / d \bbz$-graded differential module. Use the shorthand notation $Z_i := Z_i (D)$, $B_i := B_i (D)$, and $H_i := H_i (D)$. For each $i = 0 , \dots , d-1$ there are short exact sequences $$ 0 \to Z_i \to D_i \to B_{i-1} \to 0, \quad \text{and} \quad 0 \to B_i \to Z_i \to H_i \to 0.$$
    Choose projective resolutions $F^{B_i}$ and $F^{H_i}$ of $B_i$ and $H_i$, respectively, for each $i = 0 , \dots , d-1$. Use the notation $G^{B_i}$ to denote the same complex as $F^{B_i}$ but with the differential negated; this distinction will be useful since the Stai construction will have multiple copies of $F^{B_i}$ that will be hard to distinguish otherwise.

    Employing the Horsehoe lemma iteratively yields a resolution of $D_j$ for each $j = 0 ,\dots , d-1$. The differential of the resolution of each term $D_j$ has a block form decomposition of the form
    $$\begin{pmatrix}
        d^{F^{B_i}} & \alpha^i & \gamma^i \\
        0 & d^{F^{H_i}} & \beta^i \\ 
        0 & 0 & d^{G^{B_{i-1}}} \\
    \end{pmatrix},$$
    for maps $\alpha^i : F^{H_i} \to F^{B_i}[-1]$, $\beta^i : G^{B_{i-1}} \to F^{H_i}[-1]$, and $\gamma^i : $

    Define the $\bbz / d \bbz$-graded projective flag $G$ with associated graded pieces
    $$G_{i,j} := F_i^{B_{j-i}} \oplus F_i^{H_{j-i}} \oplus G_{i-d}^{B_{j-i-1}},$$
    and differential induced as follows:
    \begin{itemize}
        \item The differential $d^G$ restricted to $F_i^{B_{j-i}}$ has one component and it is just the differential of $F^{B_{j-i}}$.
        \item The differential $d^G$ restricted to $F_{i}^{H_{j-i}}$ has two components: it maps to $F_{i-1}^{H_{j-i}}$ via the differential of $F^{H_{j-i}}$ and it maps to $F_{i-1}^{B_{j-i}}$ via the map $\alpha^{j-i}$. 
        \item The differential $d^G$ restricted to $G_{i-d}^{B_{j-i}}$ has four components: it maps to $G_{i-d-1}^{B_{j-i-1}}$ via the differential of $G^{B_{i-j-1}}$, it maps to $F^{B_{j-i-1}}_{i-d}$ as the identity map, it maps to $F^{H_{j-i}}_{i-d-1}$ via the map $\beta^{j-i}$, and it maps to $F^{B_{j-i}}_{i-d-1}$ via the map $\gamma^{j-i}$. 
    \end{itemize}
    Diagramatically, the differential restricted to $G_{i,j}$ is represented as follows:
 % https://q.uiver.app/#q=WzAsMTgsWzAsMSwiR157Ql97ai1pLTF9fV97aS1kfSJdLFswLDIsIkZfaV57SF97ai1pfX0iXSxbMCwzLCJGXntCX3tqLWl9fV9pIl0sWzIsMSwiR157Ql97ai1pLTF9fV97aS1kLTF9Il0sWzIsMiwiRl57SF97ai1pfX1fe2ktMX0iXSxbMiwzLCJGXntCX3tqLWl9fV97aS0xfSJdLFs0LDAsIkZee0Jfe2otaS0xfX1fe2ktZH0iXSxbMCw0LCJHX3tpLGp9Il0sWzIsNCwiR197aS0xLGotMX0iXSxbMyw0LCJcXGNkb3RzIl0sWzQsNCwiR197aS1kLGotMX0iXSxbNCwxLCJcXGNkb3RzIl0sWzQsMiwiXFxjZG90cyJdLFs0LDMsIlxcY2RvdHMiXSxbNiw0LCJHX3tpLWQtMSxqLTF9Il0sWzYsMywiRl57Ql97ai1pfX1fe2ktZC0xfSJdLFs2LDIsIkZee0hfe2otaX19X3tpLWQtMX0iXSxbNiwxLCJHXntCX3tqLWktMX19X3tqLTJkLTF9Il0sWzIsNSwiZF57Ql97ai1pfX0iLDFdLFsxLDQsImRee0hfe2otaX19Il0sWzAsMywiZF57Ql97ai1pLTF9fSIsMV0sWzAsNiwiIiwyLHsiY3VydmUiOi0xLCJsZXZlbCI6Miwic3R5bGUiOnsiaGVhZCI6eyJuYW1lIjoibm9uZSJ9fX1dLFsxLDUsIlxcYWxwaGFee2otaX0iLDFdLFswLDE1LCJcXGdhbW1hXntqLWl9IiwxXSxbMCwxNiwiXFxiZXRhXntqLWl9IiwxXSxbMyw0LCJcXGJpZ29wbHVzIiwxLHsic3R5bGUiOnsiYm9keSI6eyJuYW1lIjoibm9uZSJ9LCJoZWFkIjp7Im5hbWUiOiJub25lIn19fV0sWzQsNSwiXFxiaWdvcGx1cyIsMSx7InN0eWxlIjp7ImJvZHkiOnsibmFtZSI6Im5vbmUifSwiaGVhZCI6eyJuYW1lIjoibm9uZSJ9fX1dLFsxLDIsIlxcYmlnb3BsdXMiLDEseyJzdHlsZSI6eyJib2R5Ijp7Im5hbWUiOiJub25lIn0sImhlYWQiOnsibmFtZSI6Im5vbmUifX19XSxbMCwxLCJcXGJpZ29wbHVzIiwxLHsic3R5bGUiOnsiYm9keSI6eyJuYW1lIjoibm9uZSJ9LCJoZWFkIjp7Im5hbWUiOiJub25lIn19fV0sWzE2LDE1LCJcXGJpZ29wbHVzIiwxLHsic3R5bGUiOnsiYm9keSI6eyJuYW1lIjoibm9uZSJ9LCJoZWFkIjp7Im5hbWUiOiJub25lIn19fV0sWzE3LDE2LCJcXGJpZ29wbHVzIiwxLHsic3R5bGUiOnsiYm9keSI6eyJuYW1lIjoibm9uZSJ9LCJoZWFkIjp7Im5hbWUiOiJub25lIn19fV1d
\[\begin{tikzcd}
	&&&& {F^{B_{j-i-1}}_{i-d}} \\
	{G^{B_{j-i-1}}_{i-d}} && {G^{B_{j-i-1}}_{i-d-1}} && \cdots && {G^{B_{j-i-1}}_{j-2d-1}} \\
	{F_i^{H_{j-i}}} && {F^{H_{j-i}}_{i-1}} && \cdots && {F^{H_{j-i}}_{i-d-1}} \\
	{F^{B_{j-i}}_i} && {F^{B_{j-i}}_{i-1}} && \cdots && {F^{B_{j-i}}_{i-d-1}} \\
	{G_{i,j}} && {G_{i-1,j-1}} & \cdots & {G_{i-d,j-1}} && {G_{i-d-1,j-1}}
	\arrow["{d^{B_{j-i}}}"{description}, from=4-1, to=4-3]
	\arrow["{d^{H_{j-i}}}", from=3-1, to=3-3]
	\arrow["{d^{B_{j-i-1}}}"{description}, from=2-1, to=2-3]
	\arrow[curve={height=-6pt}, Rightarrow, no head, from=2-1, to=1-5]
	\arrow["{\alpha^{j-i}}"{description}, from=3-1, to=4-3]
	\arrow["{\gamma^{j-i}}"{description}, from=2-1, to=4-7]
	\arrow["{\beta^{j-i}}"{description}, from=2-1, to=3-7]
	\arrow["\bigoplus"{description}, draw=none, from=2-3, to=3-3]
	\arrow["\bigoplus"{description}, draw=none, from=3-3, to=4-3]
	\arrow["\bigoplus"{description}, draw=none, from=3-1, to=4-1]
	\arrow["\bigoplus"{description}, draw=none, from=2-1, to=3-1]
	\arrow["\bigoplus"{description}, draw=none, from=3-7, to=4-7]
	\arrow["\bigoplus"{description}, draw=none, from=2-7, to=3-7]
\end{tikzcd}\]
    In particular, note that $d^G (G_{i,j}) \subset G_{i-1,j-1} \oplus G_{i-d,j-1} \oplus G_{i-d-1,j-1}$.\footnote{When $d = 1$ this just becomes $d^G (G_i) \subset G_{i-1} \oplus G_{i-2}$, as was already noted in \cite[Construction 2.7]{brown2021minimal}.} 

    There is a canonical morphism of differential modules induced by the augmentation map $F^{D_j} \to D_j$; more precisely, the only nonzero components of the augmentation map mapping to $D_j$ come from $G_{0,j} = F_0^{B_j} \oplus F_0^{H_j}$ and the direct summand $G_0^{B_{j-1}}$ of $G_{d,j} = F_d^{B_{j}} \oplus F_d^{H_{j}} \oplus G_0^{B_{j-1}}$, and are induced by the natural augmentation maps coming from the chosen Horseshoe resolutions. 
\end{construction}

\begin{theorem}\label{thm:CEresExists}
    Given any differential module $D$, the projective flag $G$ of Construction \ref{cons:StaiRes} is a $\bbz / d \bbz$-graded projective flag resolution of $D$ (with augmentation map given as in Construction \ref{cons:StaiRes}).
\end{theorem}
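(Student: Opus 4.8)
The plan is to verify directly that the object $G$ constructed in Construction \ref{cons:StaiRes} has the three required properties: (i) it genuinely lies in $\flag_{\bbz/d\bbz}(R\proj)$, i.e. the associated differential $d^G$ squares to zero and drops the flag degree by at least one while respecting the $\bbz/d\bbz$-grading; (ii) the augmentation map $G \to D$ is a morphism of differential modules; and (iii) this augmentation is a quasi-isomorphism. Properties (i) and (ii) are essentially bookkeeping: one reads off the block-matrix description of $d^G$ on each piece $G_{i,j}$ given in the construction and checks that the composites of successive differentials vanish. The relation $d^{G}\circ d^{G}=0$ unwinds into a finite list of identities among the maps $d^{F^{B_\bullet}}, d^{F^{H_\bullet}}, d^{G^{B_\bullet}}, \alpha^\bullet, \beta^\bullet, \gamma^\bullet$; these are exactly the identities guaranteed by the iterated Horseshoe lemma applied to the two families of short exact sequences $0\to Z_i\to D_i\to B_{i-1}\to 0$ and $0\to B_i\to Z_i\to H_i\to 0$, so nothing new needs to be proved there. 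That the perturbation part strictly drops homological degree, and that the parity/residue mod $d$ is preserved, is visible from the displayed diagram (the components land in $G_{i-1,j-1}$, $G_{i-d,j-1}$, $G_{i-d-1,j-1}$). Compatibility of the augmentation with $d^D$ comes from compatibility of each Horseshoe augmentation with the maps in the short exact sequences.

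The substantive point is (iii), exactness of the augmented complex. Here I would argue degreewise. Fix a homological degree $i$; the complex $\cdots \to G_{i+1}\to G_i \to \cdots$ together with its augmentation to $D$ should be shown acyclic. The key observation is that, forgetting the perturbation terms $\alpha^\bullet,\beta^\bullet,\gamma^\bullet$ for a moment, the ``anchor'' complex $(G,\delta_0^G)$ is literally the direct sum, over each $j=0,\dots,d-1$, of the Horseshoe resolution $F^{D_j}$ of $D_j$ built from the two short exact sequences above — this is the content of the classical Cartan--Eilenberg resolution picture, and $F^{D_j}\xra{\sim} D_j$ is a resolution by construction of the Horseshoe lemma. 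Thus the anchor of $G$ is a resolution of $\bigoplus_j D_j$, i.e. of the underlying module of $D$ with its $\delta_0$-part of the differential. Now I would invoke (or reprove) the standard fact that a flagged perturbation of a resolution is again a resolution: the augmentation map $G \to D$ fits into a filtration (by the flag), and on associated graded it is the anchor augmentation, which is a quasi-isomorphism; since the flag is exhaustive and bounded below, a spectral-sequence or direct diagram-chase argument upgrades this to a quasi-isomorphism on the nose. Concretely, one can run the filtration spectral sequence of the flag on $\cone(G\to D)$: its $E_1$-page is the homology of the anchor of the cone, which vanishes, and the spectral sequence converges because the filtration is exhaustive and bounded below in each fixed homological degree, so the cone is acyclic.

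I expect the main obstacle to be the convergence/boundedness argument in step (iii): because the flag structure is infinite, one must be careful that in each fixed homological degree $i$ only finitely many associated-graded pieces $G_{i,j}$ contribute (so the relevant complexes are degreewise of the expected shape and the spectral sequence converges), and that the $\bbz/d\bbz$-indexing of the $j$'s interacts correctly with the perturbation which jumps $j$ by $1$ or by $d,d+1$ relative to the anchor. Checking that $d^G(G_{i,j})\subset G_{i-1,j-1}\oplus G_{i-d,j-1}\oplus G_{i-d-1,j-1}$ — already asserted in the construction — is precisely what makes the perturbation ``flagged'' and hence small, so the anchor is recovered on associated graded; I would make that explicit and then cite the perturbation lemma (Lemma \ref{lem:thePertLemma1}) or the spectral-sequence comparison to conclude. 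The remaining verifications — that $G$ is objectwise projective (direct sums of the projective modules $F_i^{B_\bullet}$, $F_i^{H_\bullet}$), and that everything is homogeneous for any ambient grading on $R$ — are immediate from the corresponding properties of the Horseshoe resolutions.
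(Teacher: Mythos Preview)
Your plan for parts (i) and (ii) is fine and matches the paper's spirit; the paper in fact only explicitly verifies (ii), treating the square-zero property as immediate from the Horseshoe identities.

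The gap is in part (iii). Your identification of the anchor $(G,\delta_0^G)$ as the direct sum of the Horseshoe resolutions $F^{D_j}$ is incorrect. Look again at how far each component of $d^G$ drops the flag degree: the map $\alpha^{j-i}$ drops it by $1$ (so it \emph{is} part of the anchor), whereas for $d\geq 2$ the identity component $G^{B_{j-i-1}}_{i-d}\to F^{B_{j-i-1}}_{i-d}$ drops it by $d$, and $\beta,\gamma$ drop it by $d+1$ --- so these are perturbation terms, not anchor terms. The Horseshoe differential of $F^{D_j}$, by contrast, involves $\alpha,\beta,\gamma$ but \emph{not} the identity map. Thus the anchor is rather $\bigoplus_j\bigl(F^{Z_j}\oplus G^{B_{j-1}}[-d]\bigr)$ (where $F^{Z_j}=\cone(\alpha^j)$), which resolves $\bigoplus_j(Z_j\oplus B_{j-1})$, not $\bigoplus_j D_j$. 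Consequently the $E_2$-page of your spectral sequence on $\cone(\eta)$ does not vanish; you would need to chase the higher differentials coming from the identity map and from $\beta,\gamma$ across $d$ and $d+1$ pages to see the cancellation. (Even for $d=1$, where the identity \emph{is} in the anchor, the anchor still differs from the Horseshoe resolution of $D$: it resolves $H(D)$, not $D$.) A separate issue is that $\eta$ is not flag-preserving --- its component on $G_0^{B_{j-1}}$ lives in flag degree $d$ --- so ``associated graded of the augmentation'' needs care.

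The paper's argument is considerably more direct: after checking that $\eta$ is a morphism (which you also do), it simply observes that $\eta$ induces an isomorphism on homology ``by construction,'' relying on the fact that the pieces $F_0^{B_j}\oplus F_0^{H_j}$ map to cycles and $G_0^{B_{j-1}}$ maps compatibly via $d^D$. If you want a clean self-contained argument closer to yours, it is easier to filter not by the flag but by the two-step filtration $F^B\subset F^B\oplus F^H\subset G$, or to directly note that the identity component makes $F^B\oplus G^B$ contractible inside $G$ (this is exactly the deformation retract exploited later in Theorem~\ref{thm:degenToHomology}), reducing the homology computation to that of $\bigoplus_j F^{H_j}$.
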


\begin{proof}
    The only thing to check is that the augmentation map $\eta: G \to D$ is a well-defined morphism of differential modules. However, the augmentation $\eta$ is chosen in such a way that $\eta (F^{B_{j-1}}) = d^D \circ \eta (G^{B_j})$; this implies that the augmentation restricted to $G_0^{B_j}$ commutes with the differential for each $j=0 , \dots , d-1$. Since $\eta$ restricted to the modules $F_0^{B_j}$ and $F^{H_j}$ maps to the cycles of $D$ by construction, it is trivial that $\eta$ commutes with the differential when restricted to these terms. Thus $\eta : G \xra{\sim} D$ is a well-defined augmentation map, and by construction $\eta$ induces an isomorphism on homology.
\end{proof}

\begin{remark}\label{rk:sameAsCartanEilenberg}
    In the case that $d = 0$ (that is, for the category of complexes), Stai's construction is precisely the construction of a Cartan--Eilenberg resolution, after being suitably interpreted. More precisely, the double indexing on $G$ is a $\bbz \times \bbz$-grading that realizes the corresponding Stai construction as the total complex of the same bicomplex constructed for the Cartan--Eilenberg construction.  
\end{remark}

In view of Remark \ref{rk:sameAsCartanEilenberg}, we define:

\begin{definition}\label{def:CEResolution}
    A projective flag resolution constructed as in Construction \ref{cons:StaiRes} is called a ($\bbz / d \bbz$-graded) \defi{Cartan--Eilenberg resolution} of the differential module $D$. 
\end{definition}

\begin{lemma}[Lifting Property of Cartan--Eilenberg Resolutions]\label{lem:CELiftingProperty}
    Let $ \phi : D \to D'$ be a morphism of differential modules. Then for any two Cartan--Eilenberg resolutions $G$ and $G'$ of $D$ and $D'$, respectively, there exists a flag-preserving morphism $\widetilde{\phi} : G \to G'$ such that the following diagram commutes:
    % https://q.uiver.app/#q=WzAsNCxbMCwwLCJHIl0sWzIsMCwiRyciXSxbMCwyLCJEIl0sWzIsMiwiRCciXSxbMCwyXSxbMSwzXSxbMiwzLCJcXHBoaSIsMl0sWzAsMSwiXFx3aWRldGlsZGV7XFxwaGl9Il1d
\[\begin{tikzcd}
	G && {G'} \\
	\\
	D && {D'}
	\arrow[from=1-1, to=3-1]
	\arrow[from=1-3, to=3-3]
	\arrow["\phi"', from=3-1, to=3-3]
	\arrow["{\widetilde{\phi}}", from=1-1, to=1-3]
\end{tikzcd}\]
where both vertical arrows are the canonical augmentation maps. 
\end{lemma}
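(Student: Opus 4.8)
The plan is to build $\widetilde{\phi}$ by hand from the block data of Construction \ref{cons:StaiRes}, exploiting that a morphism of differential modules is controlled by its effect on cycles, boundaries, and homology.

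First I would record that, since $\phi$ commutes with the differentials and (by our conventions) is homogeneous for the ambient $\bbz/d\bbz$-grading, it carries $Z_i(D)$ into $Z_i(D')$ and $B_i(D)$ into $B_i(D')$ and induces maps on homology; hence it gives a morphism between the short exact sequences $0\to Z_i\to D_i\to B_{i-1}\to 0$, $0\to B_i\to Z_i\to H_i\to 0$ attached to $D$ and the corresponding ones for $D'$. Say $G$ is built from chosen projective resolutions $F^{B_i},F^{H_i}$ of $B_i(D),H_i(D)$ and $G'$ from chosen resolutions $Q^{B_i},Q^{H_i}$ of $B_i(D'),H_i(D')$. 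By the comparison theorem for projective resolutions I would choose chain maps $f^{B_i}\colon F^{B_i}\to Q^{B_i}$ and $f^{H_i}\colon F^{H_i}\to Q^{H_i}$ lifting $\phi|_{B_i}$ and $\phi|_{H_i}$; the negated copies $G^{B_i}$ receive the same underlying maps (negating a differential does not affect being a chain map).

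Next I would propagate these lifts through the Horseshoe construction. The Horseshoe lemma is functorial in the following precise sense: given a map of short exact sequences and chain maps lifting the resolutions of the two outer terms, the Horseshoe resolutions of the middle terms receive a chain map that is block lower triangular, equal to the given lifts on the diagonal, whose single off-diagonal entry is a null-homotopy of the (null-homotopic) discrepancy between the two ways of composing the lifts with the connecting maps; such a null-homotopy preserves homological degree and can be taken compatibly with the chosen splittings of the Horseshoe augmentations. Applying this twice, in the order of Construction \ref{cons:StaiRes} --- first to $0\to B_i\to Z_i\to H_i\to 0$, then to $0\to Z_i\to D_i\to B_{i-1}\to 0$ --- and assembling the results into the flag exactly as there, produces $\widetilde{\phi}\colon G\to G'$: on the summand $F_i^{B_{j-i}}$ of $G_{i,j}$ it is $f^{B_{j-i}}$; on $F_i^{H_{j-i}}$ it is $f^{H_{j-i}}$ plus a correction into the $F^{B}$-summand of $G'$ at the same index; and on $G_{i-d}^{B_{j-i-1}}$ it is the corresponding copy of $f^{B_{j-i-1}}$ together with corrections into the $F^{B}$- and $F^{H}$-summands of lower index and a term coming from the identity component of $d^{G'}$. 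Every block and every correction term preserves (or strictly lowers) the homological index $i$, so $\widetilde{\phi}$ preserves the flag filtration $G^{(n)}=\bigoplus_{i\le n,\,j}G_{i,j}$; that $\widetilde{\phi}$ is a chain map is exactly the system of identities that produced the correction terms; and the augmentations of $G,G'$ being built solely from those of the $F^{B_j},F^{H_j}$ (resp. $Q^{B_j},Q^{H_j}$) together with the Horseshoe splittings, one can arrange $\eta'\widetilde{\phi}=\phi\,\eta$ to hold on the nose by choosing the bottom Horseshoe data compatibly.

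The hard part will be making the functoriality of the Horseshoe lemma --- equivalently, the comparison theorem for Cartan--Eilenberg resolutions --- fully precise in the $\bbz/d\bbz$-graded setting: keeping track of the shifts $[-1]$ and the $i\mapsto i-d$ reindexing so that the correction terms genuinely exist (degreewise acyclicity of the target resolutions) and genuinely respect the flag, and arranging the Horseshoe data near degree zero so that $\widetilde{\phi}$ commutes with the augmentations exactly rather than only up to homotopy. A cleaner but more abstract alternative would be to verify that $\eta'\colon G'\to D'$ is a surjective quasi-isomorphism and that projective flags satisfy the left lifting property against surjective quasi-isomorphisms --- provable by inducting up the flag filtration of $G$ and lifting against projectives one flag step at a time --- which yields $\widetilde{\phi}$ directly as a flag-preserving lift of $\phi\circ\eta$ through $\eta'$, at the cost of first setting up that lifting lemma.
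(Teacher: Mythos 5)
Your primary route is essentially the paper's proof: lift $\phi|_{B}$ and $\phi|_{H}$ to chain maps on the chosen resolutions via the comparison theorem, propagate through the two Horseshoe steps functorially (the paper makes this precise using the cone functor on the category $\har$ of homotopy-commutative squares, Lemma \ref{lem:coneOnHarrow}), and observe that the resulting block-triangular map has every entry preserving or lowering flag degree, with augmentation-compatibility baked in because the construction perturbs a lift of the underlying Horseshoe resolution of $\phi$. Your alternative sketch --- establishing a flag-preserving left lifting property for projective flags against surjective quasi-isomorphisms and lifting $\phi\circ\eta$ through $\eta'$ --- is a genuinely different, more model-categorical route that the paper does not take; it would buy a cleaner statement at the cost of first proving that lifting lemma by inducting up the flag filtration, with some care needed since $d^G$ drops flag degree by more than one.
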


\begin{proof}
    The map $\phi : D \to D'$ induces maps $H_j(\phi) : H_j(D) \to H_j(D')$ and $B_j (\phi ) : B_j (D) \to B_j (D')$ for each $j = 0 ,\dots , d-1$. The comparison theorem ensures that there are induced morphisms of complexes
    $$\nu^j :F^{H_j (D)} \to F^{H_j (D')}, \quad  \psi^j : F^{B_j (D)} \to F^{B_j (D')}$$
    extending the maps $H_j (\phi)$ and $B_j (\phi)$, respectively. The fact that $F^{B_j (D')}$ is a resolution implies that the diagram
    % https://q.uiver.app/#q=WzAsNCxbMCwwLCJGXntIX2ogKEQpfSJdLFswLDIsIkZee0hfaiAoRCcpfSJdLFsyLDIsIkZee0JfaiAoRCcpfVstMV0iXSxbMiwwLCJGXntCX2ogKEQpfVstMV0iXSxbMCwzLCJcXGFscGhhXmpfRCJdLFswLDEsIlxccHNpXmoiLDJdLFszLDIsIi1cXG51XmoiXSxbMSwyLCJcXGFscGhhXmpfe0QnfSIsMl1d
\[\begin{tikzcd}
	{F^{H_j (D)}} && {F^{B_j (D)}[-1]} \\
	\\
	{F^{H_j (D')}} && {F^{B_j (D')}[-1]}
	\arrow["{\alpha^j_D}", from=1-1, to=1-3]
	\arrow["{\psi^j}"', from=1-1, to=3-1]
	\arrow["{-\nu^j}", from=1-3, to=3-3]
	\arrow["{\alpha^j_{D'}}"', from=3-1, to=3-3]
\end{tikzcd}\]
   commutes up to some homotopy $h^j$, whence by Lemma \ref{lem:coneOnHarrow} there is an induced morphism of complexes $\cone (\alpha^j_D) \to \cone (\alpha^j_{D'})$ for each $j = 0 ,\dots , d-1$. Another application of the comparison theorem ensures that there exist maps $\theta^j = \begin{pmatrix} \gamma^j_D \\ \beta^j_D \end{pmatrix} : G^{B_j (D)} \to \cone (\alpha^j_D)$ and $\eta^j = \begin{pmatrix} \gamma^j_{D'} \\ \beta^j_{D'} \end{pmatrix} : G^{B_j (D')} \to \cone (\alpha^j_{D'})$ making the following diagram commute up to some homotopy ${h'}^j = \begin{pmatrix} s^j & {s'}^j \end{pmatrix} : \cone (\alpha^j_{D'}) \to G^{B_{j-1} (D)} [1]$:
   % https://q.uiver.app/#q=WzAsNCxbMCwwLCJHXntCX2ogKEQpfSJdLFswLDIsIkdee0JfaiAoRCcpfSJdLFsyLDIsIlxcY29uZSAoXFxhbHBoYV5qX3tEJ30pWy0xXSJdLFsyLDAsIlxcY29uZSAoXFxhbHBoYV5qX0QpWy0xXSJdLFswLDMsIlxcdGhldGFeaiJdLFswLDEsIlxcbnVfaiIsMl0sWzMsMl0sWzEsMiwiXFxldGFeaiIsMl1d
\[\begin{tikzcd}
	{G^{B_{j-1} (D)}} && {\cone (\alpha^j_D)[-1]} \\
	\\
	{G^{B_{j-1} (D')}} && {\cone (\alpha^j_{D'})[-1]}
	\arrow["{\theta^j}", from=1-1, to=1-3]
	\arrow["{\nu_j}"', from=1-1, to=3-1]
	\arrow[from=1-3, to=3-3]
	\arrow["{\eta^j}"', from=3-1, to=3-3]
\end{tikzcd}\]
   Let $G$ and $G'$ denote the Cartan--Eilenberg resolutions constructed from the resolutions of $H(D)$ and $B(D)$ (resp. $H(D')$ and $B(D')$) above and consider the map whose block matrix decomposition has the following form:
   $$\Phi:=\begin{pmatrix}
       \nu & -s & \nu - s' \\
       0 & \psi & -h \\
       0 & 0 & \nu
   \end{pmatrix} : G \underbrace{=}_{\text{as} \ R\text{-modules}} F^{B(D)} \oplus F^{H(D)}  \oplus G^{B(D)} \to G' \underbrace{=}_{\text{as} \ R\text{-modules}} F^{B(D')} \oplus F^{H(D')}  \oplus G^{B(D')},$$
   where 
   $$h := \bigoplus_{j=0}^{d-1} h^j, \quad s := \bigoplus_{j=0}^{d-1} s^j, \quad  s' := \bigoplus_{j=0}^{d-1} {s'}^j, \quad \nu := \bigoplus_{j=0}^{d-1} \nu^j, \quad \text{and} \quad \psi := \bigoplus_{j=0}^{d-1} \psi^j.$$
   Then note that the differentials of $G$ and $G'$ have block forms given by
   $$d^G = \begin{pmatrix}
       d^{F^{B(D)}} & \alpha_D & \id_{F^{B(D)}} + \gamma_D \\
       0 & d^{F^{H(D)}} & \beta_D \\ 
       0 & 0 & d^{G^{B(D)}} \\
   \end{pmatrix}, \quad d^{G'} = \begin{pmatrix}
       d^{F^{B(D')}} & \alpha_{D'} & \id_{F^{B(D')}} + \gamma_{D'} \\
       0 & d^{F^{H(D)}} & \beta_{D'} \\ 
       0 & 0 & d^{G^{B(D)}} \\
   \end{pmatrix},$$
   where as above $d^{F^{B(D)}} := \bigoplus_{j=0}^{d-1} d^{F^{B_j(D)}}$ and similarly for all other terms. First, let us show that $\Phi : G \to G'$ induces a well-defined morphism of differential modules:
   \begingroup\allowdisplaybreaks
    \begin{align*}
        &\begin{pmatrix}
       d^{F^{B(D')}} & \alpha_{D'} & \id_{F^{B(D')}} + \gamma_{D'} \\
       0 & d^{F^{H(D)}} & \beta_{D'} \\ 
       0 & 0 & d^{G^{B(D)}} \\
   \end{pmatrix} \begin{pmatrix}
       \nu & -s & \nu - s' \\
       0 & \psi & -h \\
       0 & 0 & \nu
   \end{pmatrix}\\
   =& \begin{pmatrix}
       d^{F^{B(D')}} \nu & -d^{F^{B(D')}} s + \alpha_{D'} \psi & d^{F^{B(D')}} \nu -d^{F^{B(D')}} s' - \alpha_{D'} h + \nu + \gamma_{D'} \nu \\ 
       0 & d^{F^{H(D')}} \psi & -d^{F^{H(D')}} h + \beta_{D'} \nu \\
       0 & 0 & d^{G^{B(D)}} \nu \\
   \end{pmatrix} \\
   =& \begin{pmatrix}
       \nu d^{F^{B(D)}} & s d^{G^{B(D)}} + \nu \alpha_D & \nu d^{F^{B(D)}} + \nu  -s \beta_D + \nu \gamma_D - s' d^{G^{B(D)}}  \\
       0 & \psi d^{F^{H(D)}} & -h d^{G^{B(D)}} + \psi \beta_D \\
       0 & 0 & \nu d^{G^{B(D)}} \\
   \end{pmatrix} \\
   =& \begin{pmatrix}
       \nu & -s & \nu - s' \\
       0 & \psi & -h \\
       0 & 0 & \nu
   \end{pmatrix} \begin{pmatrix}
       d^{F^{B(D)}} & \alpha_D & \id_{F^{B(D)}} + \gamma_D \\
       0 & d^{F^{H(D)}} & \beta_D \\ 
       0 & 0 & d^{G^{B(D)}} \\
   \end{pmatrix}.
    \end{align*}
   \endgroup
   Finally, it remains to verify that the morphism $\Phi$ is compatible with the augmentation maps $\eta : G \to D$, $\eta' : G' \to D'$. Recall that the maps $\nu$ and $\psi$ were chosen from the start to be induced by the restriction of the initial map $\phi : D \to D'$ to the cycles and boundaries of $D$ and $D'$. However, the differentials of $G$ and $G'$ are built from perturbing horseshoe resolutions of the underlying modules of $D$ and $D'$ by an additional identity map (as in Construction \ref{cons:StaiRes}). Moreover, the morphism $\Phi$ is just a perturbation of the morphism of horseshoe resolutions:
   $$\begin{pmatrix}
       \nu & -s & \nu - s' \\
       0 & \psi & -h \\
       0 & 0 & \nu
   \end{pmatrix},$$
   and this morphism has been constructed precisely so that it extends the morphism of underlying $R$-modules $\phi : D \to D'$. Thus by construction the diagram of the lemma commutes, completing the proof.
\end{proof}

\begin{cor}[Uniqueness of Cartan--Eilenberg Resolutions Up To Homotopy]\label{cor:uniqueCEHtpy}
    Assume that $G$ and $G'$ are two $\bbz / d \bbz$-graded Cartan--Eilenberg resolutions of a fixed differential module $D$. Then $G$ and $G'$ are homotopy equivalent in a flag-preserving way.
\end{cor}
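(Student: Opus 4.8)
The plan is to deduce Corollary \ref{cor:uniqueCEHtpy} from the lifting property of Cartan--Eilenberg resolutions (Lemma \ref{lem:CELiftingProperty}) by the standard ``comparison theorem'' bootstrapping argument, taking care that all morphisms produced are flag-preserving. First, I would apply Lemma \ref{lem:CELiftingProperty} to the identity morphism $\id_D : D \to D$ twice: once with respect to the pair $(G, G')$ to obtain a flag-preserving $\widetilde{\id} : G \to G'$ commuting with the augmentations, and once with the roles reversed to obtain a flag-preserving $\widetilde{\id}' : G' \to G$ commuting with the augmentations. It then remains to show that $\widetilde{\id}' \circ \widetilde{\id} : G \to G$ and $\widetilde{\id} \circ \widetilde{\id}' : G' \to G'$ are each flag-preservingly homotopic to the respective identity maps.

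For this I would prove (or invoke, if it is recorded elsewhere in the paper) a flag-preserving uniqueness-up-to-homotopy statement for \emph{self-maps} of a Cartan--Eilenberg resolution lying over a fixed map of differential modules: if $G$ is a Cartan--Eilenberg resolution of $D$ and $f, g : G \to G$ are two flag-preserving morphisms with $\eta \circ f = \eta \circ g$ (where $\eta : G \to D$ is the augmentation), then $f$ and $g$ are homotopic via a flag-preserving homotopy. Granting this, both composites $\widetilde{\id}' \circ \widetilde{\id}$ and $\id_G$ are flag-preserving self-maps of $G$ lying over $\eta$ (since each factor commutes with the augmentations), so they are flag-preservingly homotopic; symmetrically for $G'$. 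This exhibits $\widetilde{\id}$ and $\widetilde{\id}'$ as mutually inverse flag-preserving homotopy equivalences, which is exactly the claim.

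The substantive point, and the main obstacle, is the flag-preserving comparison statement in the previous paragraph: the usual proof that lifts of a fixed chain map agree up to homotopy uses the projectivity of the source and exactness of the target in each degree, but here one must additionally arrange that the homotopy $h$ \emph{lowers the flag filtration by exactly one}, i.e.\ $h(G^i) \subset G^{i+1}$ in the flag indexing (equivalently, $h$ has the same ``flag degree'' as an honest chain homotopy), so that $d^G h + h d^G$ is again flag-preserving. I would prove this by running the construction of the homotopy degreewise along the flag filtration: set $h = 0$ on $G^{-1} = 0$, and having defined $h$ compatibly on $G^{i-1}$, use that $f - g - (d^G h + h d^G)$ restricted to $G^i$ maps into the cycles of the acyclic (in positive degrees) complex $G$ together with the projectivity of the flag piece $G_i$ to lift it to a map $G^i \to G^{i+1}$; the inductive hypothesis guarantees the correction term is already flag-preserving on $G^{i-1}$, so the new $h$ is flag-preserving on $G^i$. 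Here one uses crucially that $\eta \circ (f-g) = 0$ so the obstruction actually lands in the acyclic part in degree $0$ as well. Because Cartan--Eilenberg resolutions are built (Construction \ref{cons:StaiRes}) as flagged perturbations of Horseshoe resolutions, one could alternatively phrase this via the naturality of the perturbation lemma (Lemma \ref{lem:PerturbationNaturality}): the unperturbed morphisms of Horseshoe bicomplexes are unique up to honest homotopy by the classical argument, and perturbing transfers this homotopy to a flag-preserving homotopy of the perturbed (Cartan--Eilenberg) resolutions. Either route reduces the corollary to bookkeeping already set up in Subsections \ref{subsec:perturbationLemmaAndNaturality} and \ref{subsec:CEResolutions}.
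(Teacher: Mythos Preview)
Your overall strategy is sound, and your alternative route (b) at the end---using that Cartan--Eilenberg resolutions are flagged perturbations of Horseshoe resolutions and transferring the classical homotopy equivalence of the latter---is in fact exactly what the paper does. The paper's proof is much shorter than yours: it does not apply Lemma~\ref{lem:CELiftingProperty} twice and then separately argue that the composites are homotopic to the identities. Instead it observes directly that the Horseshoe resolutions of the underlying $R$-module $D$ are homotopy equivalent (since all the choices in Construction~\ref{cons:StaiRes}---the resolutions $F^{B_i}$, $F^{H_i}$ and the Horseshoe maps $\alpha^i,\beta^i,\gamma^i$---are unique up to homotopy by classical comparison), and then asserts that this entire homotopy equivalence datum, maps \emph{and} homotopies, can be modified by the same correction-term procedure appearing in the proof of Lemma~\ref{lem:CELiftingProperty} to yield a flag-preserving homotopy equivalence of the perturbed objects $G$ and $G'$. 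So the paper bypasses your intermediate ``flag-preserving uniqueness of self-lifts'' lemma entirely.

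One caution about your route (a): the inductive construction you sketch appeals to ``acyclicity in positive degrees'' of $G$, but $G$ is a differential module quasi-isomorphic to $D$, not an acyclic complex, and the flag filtration is not a homological grading in the usual sense. To make that argument work you would need to pass to the anchor (where the relevant acyclicity statement does hold, since the anchor is built from genuine projective resolutions) and then argue that the homotopy on the anchor extends through the perturbation---which is again your route (b). So in practice both of your proposed routes collapse to the paper's argument; your framing via Lemma~\ref{lem:CELiftingProperty} applied twice is a detour that the paper avoids by working with the homotopy equivalence of Horseshoe data from the start.
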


\begin{proof}
    If $D$ is fixed, any of the choices involved in the data of Construction \ref{cons:StaiRes} will be unique up to homotopy. Thus, the induced homotopy equivalence of Horseshoe resolutions of the underlying $R$-module $D$ can be modified precisely as in the proof of Lemma \ref{lem:CELiftingProperty} to obtain a flag preserving homotopy equivalence of the resulting Cartan--Eilenberg resolutions. 
\end{proof}

\subsection{Degeneration to the Homology via the Homological Perturbation Lemma}\label{subsec:degenToHomology}

In this section we use the perturbation lemma to deduce that anchored resolutions can be obtained explicitly as deformation retracts of Cartan--Eilenberg resolutions. Combining the naturality of the perturbation lemma with Lemma \ref{lem:CELiftingProperty} actually yields the flag-preserving analogue of Lemma \ref{lem:naiveLiftingLemma}. One important subtlety here is that this does \emph{not} imply Theorem \ref{thm:IntroFlagPreserving}, since the statement of Lemma \ref{lem:naiveLiftingLemma} is only an existence statement -- we still need to prove that we can obtain \emph{any} anchored resolution as a deformation retract of a Cartan--Eilenberg resolution in order to deduce Theorem \ref{thm:IntroFlagPreserving}. A dg-analogue of this degeneration to the homology can also be found in work of Keller \cite[Section 3.1, first theorem]{keller1994deriving}.

\begin{theorem}[Brown--Erman Degeneration to the Homology]\label{thm:degenToHomology}
    Let $D$ be a $\bbz / d \bbz$-graded differential module and let $F^{H_j (D)} \to H_j (D)$ denote projective resolutions of $H_j (D)$ for each $j = 0 ,\dots , d-1$. Then there exists a $\bbz / d \bbz$-graded projective flag resolution of $D$ anchored on $\bigoplus_{j=0}^{d-1} F^{H_j (D)}$. Moreover, this projective flag resolution is obtained as a flag-preserving deformation retract of a Cartan--Eilenberg resolution of $D$. 
\end{theorem}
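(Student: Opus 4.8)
The goal is to produce, for any $\bbz/d\bbz$-graded differential module $D$ and any choice of projective resolutions $F^{H_j(D)}$, a projective flag resolution anchored on $\bigoplus_j F^{H_j(D)}$, realized as a flag-preserving deformation retract of a Cartan--Eilenberg resolution. The strategy is to build a Cartan--Eilenberg resolution $G$ of $D$ as in Construction \ref{cons:StaiRes} using the prescribed resolutions $F^{H_j(D)}$, and then to exhibit $\bigoplus_j F^{H_j(D)}$ as a \emph{strong} deformation retract of the \emph{underlying complex} (i.e.\ the anchor) of $G$; then invoke the perturbation lemma (Lemma \ref{lem:thePertLemma1}) to transfer the flagged perturbation along this deformation retract. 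The output $\delta_\infty$ on $\bigoplus_j F^{H_j(D)}$ will be the desired flagged perturbation, and the perturbed deformation retract datum gives the flag-preserving maps witnessing the retraction from $G$.

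\medskip

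\noindent\textbf{Step 1: The anchor of $G$ deformation retracts onto the homology resolutions.} Recall from Construction \ref{cons:StaiRes} that the anchor $\delta_0^G$ of $G$ is, in each homological degree $i$ and each $\bbz/d\bbz$-slot $j$, the complex built from the Horseshoe lemma out of resolutions $F^{B_{j-i}}$, $F^{H_{j-i}}$, and $G^{B_{j-i-1}}$, with the differential component that is \emph{not} part of the anchor being exactly the identity map $G^{B_{j}}_{i-d} \to F^{B_{j}}_{i-d}$ (this is the perturbing identity highlighted in the construction). Thus the anchor $(G,\delta_0^G)$ is the totalization of a bicomplex whose columns are acyclic except for the $F^{H_j}$ part: concretely, the pieces $F^{B_{j-i}}_i$ and $G^{B_{j-i}}_{i-d}$ cancel in homology (they form acyclic ``mapping-cone-like'' pairs once the identity perturbation is switched off — but note the anchor does \emph{not} see the identity), so that in fact the anchor of $G$ is already a direct sum of the $F^{H_j}$ with an acyclic complex built from the $F^{B_j}$ and $G^{B_j}$ blocks. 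More precisely, for each $j$ the boundary resolution $F^{B_{j}}$ appears twice — once via $\alpha^j$ and once via $\gamma^j$ — and the standard contraction of $\operatorname{Cone}(\id_{F^{B_j}})$ gives a strong deformation retract of that acyclic summand onto zero. Assembling these over all $j$ yields a strong deformation retract
\[
h,\quad (G,\delta_0^G) \;\rightleftarrows\; \bigoplus_{j=0}^{d-1} F^{H_j(D)}.
\]
The main work of this step is to write the homotopy $h$ explicitly using the Horseshoe-lemma block structure and check $h^2=0$, $ph=0$, $hi=0$ (or appeal to Remark \ref{rk:modificationForSDR} to upgrade a plain deformation retract to a strong one).

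\medskip

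\noindent\textbf{Step 2: Transfer the flagged perturbation.} The full differential of $G$ is $\delta_0^G + \delta_{\geq 1}^G$ where $\delta_{\geq 1}^G$ is a flagged perturbation (it drops homological degree by at least $2$; this is the content of the ``anchored resolution'' reinterpretation in Section \ref{sec:flaggedPerts}, together with the observation in Construction \ref{cons:StaiRes} that $d^G(G_{i,j})\subset G_{i-1,j-1}\oplus G_{i-d,j-1}\oplus G_{i-d-1,j-1}$ — note the anchor accounts for the $i-1$ part, so the perturbation indeed drops degree by $\geq d\geq 2$... but for $d=1$ one must be slightly more careful and note the perturbation still lands in degree $\leq i-2$). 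Since $G$ is bounded below and $\delta_{\geq 1}^G$ strictly drops homological degree, $\delta_{\geq 1}^G h$ is locally nilpotent, so $1 - \delta_{\geq 1}^G h$ is invertible and the perturbation is small. Apply Lemma \ref{lem:thePertLemma1} to the strong deformation retract of Step 1 with perturbation $\delta_{\geq 1}^G$: we obtain $\delta_\infty$ on $\bigoplus_j F^{H_j(D)}$ making it a differential module, together with $p_\infty, i_\infty, h_\infty$ forming a strong deformation retract
\[
h_\infty,\quad (G,\, d^G) \;\rightleftarrows\; \Bigl(\textstyle\bigoplus_{j} F^{H_j(D)},\, \delta_0 + \delta_\infty\Bigr)
\]
where $\delta_0 = \bigoplus_j d^{F^{H_j(D)}}$ is the anchor. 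By the formula $\delta_\infty = p(1-\delta_{\geq 1}^G h)^{-1}\delta_{\geq 1}^G i$ and the fact that $\delta_{\geq 1}^G$, $h$, $p$, $i$ all drop or preserve homological degree appropriately, $\delta_\infty$ is itself a flagged perturbation of $\bigoplus_j F^{H_j(D)}$ (it drops homological degree by at least $2$); hence the target is a projective flag anchored on $\bigoplus_j F^{H_j(D)}$, and it is a flag by construction.

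\medskip

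\noindent\textbf{Step 3: It resolves $D$, and the retract is flag-preserving.} Composing the augmentation $\eta\colon G\xra{\sim}D$ from Theorem \ref{thm:CEresExists} with the inclusion $i_\infty$ gives a quasi-isomorphism $\bigoplus_j F^{H_j(D)}_{\delta_\infty} \xra{\sim} D$ (both $i_\infty$ and $\eta$ are quasi-isomorphisms — $i_\infty$ because it is part of a deformation retract, $\eta$ by Theorem \ref{thm:CEresExists}), exhibiting the target as a $\bbz/d\bbz$-graded projective flag resolution of $D$ anchored on $\bigoplus_j F^{H_j(D)}$. Finally, that the deformation retract $G \rightleftarrows \bigoplus_j F^{H_j(D)}_{\delta_\infty}$ is flag-preserving: the filtration on $G$ is the homological filtration $G_{\leq n}$, the filtration on the target is likewise $\bigoplus_j (F^{H_j})_{\leq n}$, and the maps $p_\infty, i_\infty$ obtained from the perturbation-lemma formulas, built from $p,i$ (which are visibly filtration-respecting, being the projection/inclusion of a summand compatible with homological degree) corrected by terms involving $h$ and $\delta_{\geq 1}^G$ (which strictly lower homological degree, hence also respect the increasing filtration), preserve these filtrations.

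\medskip

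\noindent\textbf{Main obstacle.} The crux is Step 1: correctly identifying the anchor of the Stai/Cartan--Eilenberg construction and producing the explicit strong deformation retract onto $\bigoplus_j F^{H_j(D)}$. One has to be careful about which of the maps $\alpha^j, \beta^j, \gamma^j$ and which identity maps are part of the \emph{anchor} $\delta_0^G$ versus the \emph{perturbation} $\delta_{\geq 1}^G$ — in Construction \ref{cons:StaiRes} the anchor is the degree-$(-1)$ part of $d^G$, so it includes $d^{F^{B}}$, $d^{F^{H}}$, $d^{G^{B}}$, $\alpha^j$, and the perturbing identity lands from $G^{B_j}_{i-d}$ to $F^{B_j}_{i-d}$, which drops homological degree by $d$; so for $d\geq 2$ the identity is genuinely in the perturbation, whereas for $d=1$ it drops degree by exactly $1$ and is part of the anchor. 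This means the argument bifurcates slightly: for $d\geq 2$ the anchor's acyclic part is contractible ``for free'' (the $\alpha^j$-cone), while for $d=1$ one must incorporate the identity and the cone structure $\operatorname{Cone}(\id)$ more carefully, matching the $d=1$ remark in Construction \ref{cons:StaiRes}. Getting these bookkeeping details right — and checking the strong-deformation-retract identities, possibly via Remark \ref{rk:modificationForSDR} — is where the real effort lies; everything downstream is a formal application of Lemma \ref{lem:thePertLemma1}.
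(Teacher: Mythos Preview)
Your overall architecture (build a Cartan--Eilenberg resolution $G$, exhibit a strong deformation retract onto $\bigoplus_j F^{H_j}$, transfer the remaining perturbation) is the paper's, but Step~1 contains a genuine error that breaks the argument. You claim the \emph{anchor} $(G,\delta_0^G)$ deformation retracts onto $\bigoplus_j F^{H_j}$. It does not. As you yourself note in the ``Main obstacle'' paragraph, the degree-$(-1)$ part of $d^G$ consists of $d^{F^B}+d^{F^H}+d^{G^B}+\alpha$; the identity $G^B\to F^B$ drops degree by $d$ and (for $d\geq 1$) is \emph{not} in the anchor. Hence the anchor is, for each $j$, the direct sum of a resolution of $Z_j$ (the Horseshoe cone of $\alpha^j$) with an isolated copy of $G^{B_j}$ resolving $B_j$. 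Its homology is $\bigoplus_j (Z_j\oplus B_j)$, not $\bigoplus_j H_j$, so there is no deformation retract onto $\bigoplus_j F^{H_j}$. Your appeal to ``the standard contraction of $\cone(\id_{F^{B_j}})$'' is exactly what is unavailable: that cone only forms once the identity map is present, and you have just removed it from the base.

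The paper's fix is to abandon the anchor/flagged-perturbation decomposition of $G$ entirely and use a different splitting. Set $C=\bigoplus_j\bigl(F^{B_j}[-j]\oplus F^{H_j}[-j]\oplus G^{B_{j-1}}[-d-j]\bigr)$ with only the internal differentials $d^{F^B},d^{F^H},d^{G^B}$, and first perturb by the identity maps $\delta$ alone to obtain $C_\delta$. Now $F^B\oplus G^B$ really is $\cone(\id)$, so $C_\delta$ has an obvious strong deformation retract onto $\bigoplus_j F^{H_j}$ with homotopy $h$ the inverse identity $F^B\to G^B$. The Cartan--Eilenberg resolution $G$ is then the further perturbation of $C_\delta$ by $\delta':=\alpha+\beta+\gamma$, and it is \emph{this} $\delta'$ (not the flagged perturbation of $G$) that gets fed into Lemma~\ref{lem:thePertLemma1}. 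Note $\delta'$ is not a flagged perturbation of $C_\delta$ since $\alpha$ drops degree by only $1$; smallness must be checked directly by observing that $h$ lands in $G^B$ and $\delta'|_{G^B}=\beta+\gamma$, so $\delta' h$ still strictly lowers flag degree. This is the ``correctly identifying which maps go where'' issue you flagged as the main obstacle---but the resolution is not a $d=1$ versus $d\geq 2$ bifurcation, it is a wholesale change in which decomposition of $d^G$ one perturbs along.
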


\begin{remark}
    In the case $d=1$, a direct proof of Theorem \ref{thm:degenToHomology} was originally given by Brown--Erman \cite[Theorem 3.2]{brown2021minimal}. Proving Theorem \ref{thm:degenToHomology} in a functorial way for arbitrary $d$ using similar techniques seems to lead to a much more technical argument; this technicality is what led us to approach the more general statement through the lens of the perturbation lemma.
\end{remark}

\begin{proof}
    As in Construction \ref{cons:StaiRes}, choose projective resolutions $F^{B_i}$ and $F^{H_i}$ of $B_i$ and $H_i$, respectively, for each $i = 0 , \dots , d-1$. Use the notation $G^{B_i}$ to denote the same complex as $F^{B_i}$ but with the differential negated.

    Consider the direct sum of complexes $C := \bigoplus_{j=0}^{d-1} \left( F^{B_j}[-j] \oplus F^{H_j}[-j] \oplus G^{B_{j-1}}[-d-j] \right)$ and equip $C$ with the perturbation $\delta$ that simply maps the complex $G^{B_{j-1}}[-d-j]$ to $F^{B_{j-1}}[-j-d]$ via the identity map. There is a strong deformation retract $C_\delta \to \bigoplus_{j=0}^{d-1} F^{H_j}$ where the homotopy $h$ on $C$ reverses the perturbation $\delta$: it maps $F^{B_{j-1}}[-j-d]$ to $G^{B_{j-1}}[-d-j]$ via the identity map. 

    The Cartan--Eilenberg resolution associated to the data $F^{B_i}$ and $F^{H_i}$ is built as a perturbation of $C_\delta$ by the maps $\alpha^j$, $\beta^j$, and $\gamma^j$ for $j=0 , \dots , d-1$ arising from the Horseshoe lemma as in Construction \ref{cons:StaiRes}; let $\delta'$ denote this perturbation. In order to apply the perturbation lemma, we must check that $\delta' h$ is locally nilpotent. Since $h$ only acts nontrivially on the terms $F^{B_j}[-j]$, we assume $f \in F_i^{B_{j-i}}$. Then $h (f) = f \in G^{B_{j-i}}_i$, and applying $\delta'$ yields the element
    $$\delta' h (f) = \underbrace{\beta^{j-i} (f)}_{\in F_{i-1}^{H_{j-i+1}}} + \underbrace{\gamma^{j-i} (f)}_{\in F_{i-1}^{B_{j-i+1}}}.$$
    Thus $\delta'h$ strictly decreases the flag degree and also preserves the $\bbz / d \bbz$-grading, and since $C$ is bounded below, it follows that $\delta' h$ is locally nilpotent (hence small). By the perturbation lemma (Lemma \ref{lem:thePertLemma1}) there exists a $\bbz / d \bbz$-graded flagged perturbation of $\bigoplus_{j=0}^{d-1} F^{H_j} [-j]$ and a strong deformation retract of a Cartan--Eilenberg resolution onto this perturbation. By definition, any such perturbation is a flagged perturbation with anchor $\bigoplus_{j=0}^{d-1} F^{H_j}$, so we are done.
\end{proof}

\begin{remark}\label{rk:pertLemmaFormulas}
    One of the many advantages of the perturbation lemma is that it gives effective and explicit formulas for the resulting perturbation and perturbed deformation retracts. In the context of Theorem \ref{thm:degenToHomology}, the original deformation retract is simple enough that the perturbed data is easy to write concretely. In order to make the formulas easier to read, we will use the following shorthand: let $\alpha^j$, $\beta^j$, and $\gamma^j$ for $j = 0 , \dots , d-1$ denote the data arising from the Horseshoe resolution as in Construction \ref{cons:StaiRes}. We will employ a similar notation to that used in the proof of Lemma \ref{lem:CELiftingProperty} by defining
    $$F^B := \bigoplus_{j=0}^{d-1} F^{B_j}, \quad F^H := \bigoplus_{j=0}^{d-1} F^{H_j}, \quad G^B := \bigoplus_{j=0}^{d-1} G^{B_j},$$
    where recall that the notation $G^B$ is used to distinguish between the two copies of $F^B$ appearing in the Cartan--Eilenberg resolution.

    Given an integer $\ell \geq 0$, the notation $(\gamma^\bullet)^\ell$ denotes the endomorphism of $G^{B} = \bigoplus_{j=0}^{d-1} G^{B_j}$ defined by
    $$(\gamma^\bullet)^\ell (f) = \gamma^{j-\ell} \circ \gamma^{j-\ell+1} \circ \cdots \circ \gamma^j (f), \quad \text{where} \ f \in F^{B_j}.$$
    Implicit in the above formula is the identification $F^{B_j} = G^{B_j }$ for each $j = 0 ,\dots ,d-1$, as well as the convention that the superscripts of $\gamma$ are taken modulo $d$. It will be understood that the map $(\gamma^\bullet)^\ell$ has source and target given by $F^B$ or $G^B$, and this will be specified in the formulas below. We use a similar notational convention for expressions of the form $(\gamma^\bullet)^\ell \alpha$ and $\beta (\gamma^\bullet)^\ell$. 

    With this notation, the projection map $p_\infty : D \to C$ is given by:
    $$p_{\infty}|_{G^{B_j}} = 0, \quad p_\infty |_{F^{H_j}} = \id_{F^{H_j}}, \quad p_\infty |_{F^{B_j}} = \beta( 1 + (\gamma^\bullet) + (\gamma^\bullet)^2 + \cdots),$$
    where the maps $(\gamma^\bullet)^\ell$ in the above have source $F^B$ and target $G^B$. The inclusion $\iota_\infty : C \to D$ is given by the formula
    $$\iota_\infty  = \id_{F^{H_j}} + \underbrace{(1 + (\gamma^\bullet) + (\gamma^\bullet)^2 + \cdots ) \alpha}_{\text{image in} \ G^B}.$$
    The homotopy $h_\infty$ is given by the formula
    $$h_\infty|_{G^{B_j}} = 0, \quad h_\infty|_{F^{H_j}} = 0, \quad h_\infty|_{F^{B_j}} = \underbrace{1 + (\gamma^\bullet) + (\gamma^\bullet)^2 + \cdots}_{\text{image in} \ G^B},$$
    where in this case we must view $(\gamma^\bullet)^\ell$ as having source $F^{B}$ and target $G^{B}$. The induced perturbed differential is given by the formula
    $$d^{C_\delta} = d^C + \beta ( 1 + (\gamma^\bullet) + (\gamma^\bullet)^2 + \cdots ) \alpha.$$
\end{remark}

Finally, we arrive at the aforementioned result that one can choose anchored resolutions in a functorial way:

\begin{theorem}[Functorial Degeneration to the Homology]\label{thm:functorialDegen}
    Let $\phi : D \to D'$ be any morphism of $\bbz / d \bbz$-graded differential modules. Then there exist anchored resolutions $F \xra{\sim} D$ and $F' \xra{\sim} D'$ and a flag-preserving morphism $\widetilde{\phi} : F \to F'$ such that the following diagram commutes up to homotopy:
    % https://q.uiver.app/#q=WzAsNCxbMCwwLCJGIl0sWzIsMCwiRiciXSxbMCwyLCJEIl0sWzIsMiwiRCciXSxbMCwxLCJcXHdpZGV0aWxkZXtcXHBoaX0iXSxbMCwyLCJcXHNpbSIsMl0sWzIsMywiXFxwaGkiLDJdLFsxLDMsIlxcc2ltIl1d
\[\begin{tikzcd}
	F && {F'} \\
	\\
	D && {D'}
	\arrow["{\widetilde{\phi}}", from=1-1, to=1-3]
	\arrow["\sim"', from=1-1, to=3-1]
	\arrow["\phi"', from=3-1, to=3-3]
	\arrow["\sim", from=1-3, to=3-3]
\end{tikzcd}\]
\end{theorem}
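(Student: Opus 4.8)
The plan is to lift $\phi$ to a flag-preserving morphism of Cartan--Eilenberg resolutions by Lemma~\ref{lem:CELiftingProperty}, and then transfer this morphism along the flag-preserving deformation retracts supplied by Theorem~\ref{thm:degenToHomology}. This is the naturality of perturbation (Lemma~\ref{lem:PerturbationNaturality}) made concrete; carrying the transfer out by hand is what makes the flag-preservation of the resulting map transparent.

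First I would fix, for each $j = 0,\dots,d-1$, projective resolutions $F^{H_j(D)} \to H_j(D)$ and $F^{B_j(D)} \to B_j(D)$, together with their analogues for $D'$. Running Construction~\ref{cons:StaiRes} on these data produces $\bbz/d\bbz$-graded Cartan--Eilenberg resolutions $\eta\colon G \xra{\sim} D$ and $\eta'\colon G' \xra{\sim} D'$ (Theorem~\ref{thm:CEresExists}). By Lemma~\ref{lem:CELiftingProperty} there is a flag-preserving morphism $\widetilde{\phi}\colon G \to G'$ with $\eta'\widetilde{\phi} = \phi\eta$ strictly. Moreover, the proof of Theorem~\ref{thm:degenToHomology} is precisely the assertion that running the same construction exhibits $G$ (resp.\ $G'$) as a flag-preserving deformation retract onto an anchored resolution $F$ (resp.\ $F'$); denote the retract data by $i_F\colon F \to G$, $p_F\colon G \to F$, $h_F$ on $G$, with $p_F i_F = \id_F$ and $i_F p_F - \id_G = d^G h_F + h_F d^G$, and similarly $i_{F'}, p_{F'}, h_{F'}$. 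Since $i_F, i_{F'}$ are homotopy equivalences and $\eta, \eta'$ are quasi-isomorphisms, the composites $\varepsilon_F := \eta\, i_F$ and $\varepsilon_{F'} := \eta'\, i_{F'}$ are quasi-isomorphisms; these serve as the augmentations $F \xra{\sim} D$ and $F' \xra{\sim} D'$.

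Now set $\widetilde{\phi}_F := p_{F'}\circ \widetilde{\phi}\circ i_F\colon F \to F'$, which is flag-preserving since $p_{F'}$, $\widetilde{\phi}$, and $i_F$ all are (only these maps, not the homotopies, enter the definition). To verify that the square with rows $\widetilde{\phi}_F$ and $\phi$ and columns $\varepsilon_F$, $\varepsilon_{F'}$ commutes up to homotopy, I would substitute $i_{F'} p_{F'} = \id_{G'} + d^{G'} h_{F'} + h_{F'} d^{G'}$ and use that $\eta'$, $\widetilde{\phi}$, $i_F$ commute with the differentials together with $\eta'\widetilde{\phi} = \phi\eta$ and $\eta\, i_F = \varepsilon_F$:
\[
\varepsilon_{F'}\,\widetilde{\phi}_F - \phi\,\varepsilon_F = \eta'\,(i_{F'}p_{F'})\,\widetilde{\phi}\, i_F - \eta'\,\widetilde{\phi}\, i_F = d^{D'} H + H\, d^{F}, \quad H := \eta'\, h_{F'}\,\widetilde{\phi}\, i_F ,
\]
so $H \colon F \to D'$ is the required homotopy.

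The one substantive ingredient here is Theorem~\ref{thm:degenToHomology}: an anchored resolution sits inside a Cartan--Eilenberg resolution as a flag-preserving deformation retract whose retract data comes from the perturbation lemma, and this is what lets us push $\widetilde{\phi}$ down to $F$ and $F'$; everything after that is formal manipulation of those data. The only point to keep in mind is that this argument exhibits just \emph{one} convenient pair of anchored resolutions fitting into the square --- the pair built from the chosen resolutions of homology --- which is exactly what Theorem~\ref{thm:functorialDegen} asserts; upgrading this to a statement about \emph{arbitrary} anchored resolutions (needed for Theorem~\ref{thm:IntroFlagPreserving}) is a further step that requires recognizing every anchored resolution as such a retract.
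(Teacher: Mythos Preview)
Your proof is correct and follows essentially the same approach as the paper: lift $\phi$ to Cartan--Eilenberg resolutions via Lemma~\ref{lem:CELiftingProperty}, then push down along the deformation retracts provided by Theorem~\ref{thm:degenToHomology}. The only difference is cosmetic---the paper packages your explicit homotopy computation as an invocation of Lemma~\ref{lem:PerturbationNaturality} (which in turn rests on Lemma~\ref{lem:HtpyEquivAndCommutingDiagrams}), whereas you unroll that computation directly; your closing remark distinguishing this existence statement from the stronger uniqueness needed for Theorem~\ref{thm:mainAnchoredResThm} is also exactly the point the paper makes.
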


\begin{proof}
    Theorem \ref{thm:degenToHomology} equips us with strong deformation retracts from Cartan--Eilenberg resolutions $\widetilde{F}$ and $\widetilde{F'}$ of $D$ and $D'$, respectively, to their respective anchored resolutions. By Lemma \ref{lem:CELiftingProperty} there is a morphism of Cartan--Eilenberg resolutions making the diagram 
    % https://q.uiver.app/#q=WzAsNCxbMCwwLCJcXHdpZGV0aWxkZXtGfSJdLFsyLDAsIlxcd2lkZXRpbGRle0YnfSJdLFswLDIsIkQiXSxbMiwyLCJEJyJdLFswLDFdLFswLDIsIlxcc2ltIiwyXSxbMSwzLCJcXHNpbSJdLFsyLDMsIlxccGhpIiwyXV0=
\[\begin{tikzcd}
	{\widetilde{F}} && {\widetilde{F'}} \\
	\\
	D && {D'}
	\arrow[from=1-1, to=1-3]
	\arrow["\sim"', from=1-1, to=3-1]
	\arrow["\sim", from=1-3, to=3-3]
	\arrow["\phi"', from=3-1, to=3-3]
\end{tikzcd}\]
    commute, whence the result follows from Lemma \ref{lem:PerturbationNaturality}.
\end{proof}

When $d = 0$, Theorem \ref{thm:degenToHomology} shows that the classically defined Cartan--Eilenberg resolutions admit a deformation retract onto a complex built from resolutions of the homology.

\begin{example}[The $\bbz$-Graded Case]\label{ex:ZZgradedCE}
    As mentioned in Remark \ref{rk:sameAsCartanEilenberg}, Construction \ref{cons:StaiRes} recovers the classical notion of a Cartan--Eilenberg resolution of complexes. However, the degeneration to the homology of Theorem \ref{thm:degenToHomology} result gives a deformation retract of the Cartan--Eilenberg resolution onto a complex that is obtained as the totalization\footnote{that is, take the direct sum of the modules appearing along a fixed dotted arrow.} of a diagram of the form
\[\begin{tikzcd}
	& \vdots & \vdots & \vdots & \vdots \\
	\cdots & {F_3^{H_3}} & {F_3^{H_2}} & {F_3^{H_1}} & {F_3^{H_0}} \\
	\cdots & {F_2^{H_3}} & {F_2^{H_2}} & {F_2^{H_1}} & {F_2^{H_0}} \\
	\cdots & {F_1^{H_3}} & {F_1^{H_2}} & {F_1^{H_1}} & {F_1^{H_0}} \\
	\cdots & {F_0^{H_3}} & {F_0^{H_2}} & {F_0^{H_1}} & {F_0^{H_0}}
	\arrow[from=2-5, to=3-5]
	\arrow[from=3-5, to=4-5]
	\arrow[from=4-5, to=5-5]
	\arrow[from=4-4, to=5-4]
	\arrow[from=3-4, to=4-4]
	\arrow[from=2-4, to=3-4]
	\arrow[from=2-3, to=3-3]
	\arrow[from=3-3, to=4-3]
	\arrow[from=4-3, to=5-3]
	\arrow[from=4-2, to=5-2]
	\arrow[from=3-2, to=4-2]
	\arrow[from=2-2, to=3-2]
	\arrow[color={rgb,255:red,221;green,14;blue,28}, from=3-5, to=5-4]
	\arrow[color={rgb,255:red,221;green,14;blue,28}, from=2-4, to=4-3]
	\arrow[color={rgb,255:red,221;green,14;blue,28}, from=3-4, to=5-3]
	\arrow[color={rgb,255:red,221;green,14;blue,28}, from=2-3, to=4-2]
	\arrow[color={rgb,255:red,221;green,14;blue,28}, from=3-3, to=5-2]
	\arrow[color={rgb,255:red,9;green,32;blue,200}, from=2-5, to=5-3]
	\arrow[color={rgb,255:red,221;green,14;blue,28}, from=2-5, to=4-4]
	\arrow[color={rgb,255:red,9;green,32;blue,200}, from=2-4, to=5-2]
	\arrow[color={rgb,255:red,9;green,32;blue,200}, from=2-3, to=5-1]
	\arrow[dashed, no head, from=5-4, to=4-5]
	\arrow[dashed, no head, from=5-3, to=4-4]
	\arrow[dashed, no head, from=4-4, to=3-5]
	\arrow[dashed, no head, from=5-2, to=4-3]
	\arrow[dashed, no head, from=4-3, to=3-4]
	\arrow[dashed, no head, from=3-4, to=2-5]
	\arrow[dashed, no head, from=3-3, to=2-4]
	\arrow[dashed, no head, from=4-2, to=3-3]
	\arrow[dashed, no head, from=3-2, to=2-3]
	\arrow[dashed, no head, from=3-2, to=4-1]
	\arrow[dashed, no head, from=4-2, to=5-1]
	\arrow[dashed, no head, from=3-1, to=2-2]
	\arrow[dashed, no head, from=2-4, to=1-5]
	\arrow[dashed, no head, from=2-3, to=1-4]
	\arrow[dashed, no head, from=2-2, to=1-3]
\end{tikzcd}\]
The non-dotted arrows indicate the components of the differentials induced after employing the perturbation lemma. More precisely, the only nonzero components of the differential restricted to the term $F_i^{B_j}$ map to $\bigoplus_{k \geq 1} F_{i-k}^{B_{j+k-1}}$. 

This complex gives a concrete description of the page maps arising in the hyper(co)homology spectral sequence, since the corresponding higher page maps are precisely induced by the higher components of the differentials induced by the perturbation lemma. As will be proved in Section \ref{sec:anchoredRes's}, if the complexes $F^{H_j}$ are chosen to be minimal free resolutions for each $j \geq 0$, then the above complex is actually \emph{unique} up to isomorphism. This result is surprising since the higher components of the differentials are almost certainly nonminimal in general, so despite the complex itself being generally nonminimal, it exhibits minimal behavior. This construction is likely well-known to experts, though we could not find an explicit reference for this type of resolution of complexes.
\end{example}

\begin{remark}
    The pattern of the differentials in the resolution discussed in Example \ref{ex:ZZgradedCE} is also very similar to the pattern of the differentials constructed in the so-called Eisenbud--Shamash construction \cite{shamash1969poincareseries,eisenbud1980homological} over a complete intersection ring; in the setting of the Eisenbud--Shamash construction, these ``higher" portions of the differentials are induced by sequences of higher homotopies, which are also known to be induced by $A_\infty$ structures \cite{burke2015higher,briggs2023koszul}. 
\end{remark}

\begin{remark}
    Some machinery has already been implemented for the computation of free flag resolutions of ungraded differential modules \cite{banks2024multigraded}. Having such explicit formulas for the resulting deformation retract data as outlined in Remark \ref{rk:pertLemmaFormulas} may allow for more efficient and functorial implementations of these types of resolutions, with the benefit of being able to easily handle additional $\bbz / d \bbz$-gradings.
\end{remark}

\subsection{Inheritance Properties of Flagged Perturbations and their Anchors}\label{subsec:inheritanceProperties}

In this section, we explore some additional consequences of the perturbation that seem to be extremely nontrivial to try to prove directly. The following theorem can intuitively be stated as saying that projective flags inherit homotopy equivalences and deformation retracts coming from the anchor. This further supports the terminology ``anchor" as we see that any kind of property (up to homotopy) of the anchor will also be inherited by the projective flag because of the perturbation lemma.

\begin{theorem}\label{thm:htpyEquivalentAnchors}
    Let $D$ be a projective flag anchored on some complex $F$. 
    \begin{enumerate}
        \item If $G$ is any other complex homotopy equivalent to $F$, then there exists a projective flag $D'$ anchored on $G$ such that $D'$ is homotopy equivalent to $D$.
        \item If, furthermore, $G$ is a deformation retract of $F$, then there exists a projective flag $D'$ anchored on $G$ such that $D'$ is a deformation retract of $D$.
    \end{enumerate}
\end{theorem}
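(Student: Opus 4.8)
The plan is to interpret ``anchored on a complex'' as ``equal to that complex perturbed by a flagged perturbation'', and then transfer that flagged perturbation along the given data using the two versions of the perturbation lemma; the only real work is to check that the transferred perturbation is again flagged. To set up, note that after choosing splittings of the flag, the hypothesis that $D$ is anchored on $F$ says exactly that $F = (\bigoplus_i D_i,\, \delta_0^D)$ is a nonnegatively graded complex of projective modules and $D = F_\delta$ in the notation of Definition~\ref{def:perturbations}, where $\delta = \delta_1^D + \delta_2^D + \cdots$ is a flagged perturbation of $F$ (Definition~\ref{def:flaggedPerturbation}); in particular every homogeneous component of $\delta$ strictly lowers homological degree by at least $2$. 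Since $F$ is bounded below, $\delta h$ is locally nilpotent for every homotopy $h$ of $F$, so $1-\delta h$ is invertible and $\delta$ is \emph{small}, which is what is required to run the perturbation lemma. (Throughout I take $G$ to be a nonnegatively graded complex of projective modules, so that ``$D'$ is a projective flag anchored on $G$'' makes sense, and I take the homotopy equivalence/retract between $F$ and $G$ to live in $\ch(R)$.)

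For part~(2): given a deformation retract of $F$ onto $G$, first upgrade it to a strong deformation retract via Remark~\ref{rk:modificationForSDR}, with data $p\colon F\to G$, $\iota\colon G\to F$, $h$. Applying Lemma~\ref{lem:thePertLemma1} to the small perturbation $\delta$ produces a perturbation $\delta_\infty = p(1-\delta h)^{-1}\delta\iota$ of $G$ together with a strong deformation retract of $D = F_\delta$ onto $(G, d^G+\delta_\infty)$. To see $\delta_\infty$ is flagged, expand $(1-\delta h)^{-1} = \sum_{k\geq 0}(\delta h)^k$; the summand $p(\delta h)^k\delta\iota$ is a composite of maps of homological degree $0$ (the $p$ and $\iota$), degree $+1$ (the $k$ copies of $h$), and degree $\leq -2$ (the $k+1$ copies of $\delta$), hence it lowers homological degree by at least $k+2\geq 2$. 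Thus every homogeneous component of $\delta_\infty$ lowers degree by at least $2$, so $\delta_\infty$ is a flagged perturbation and $D':=G_{\delta_\infty}$ is a projective flag anchored on $G$ which is a (strong) deformation retract of $D$.

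For part~(1): apply the second version of the perturbation lemma (Lemma~\ref{lem:PertLemma2}) to $\delta$ along the homotopy equivalence $\phi\colon F\xra{\sim}G$, obtaining a perturbation $\delta_\infty$ of $G$ and a homotopy equivalence $D = F_\delta \xra{\sim} (G, d^G+\delta_\infty)$. Again the issue is that $\delta_\infty$ remain flagged: Lemma~\ref{lem:PertLemma2} is deduced from Lemma~\ref{lem:thePertLemma1} by extending $\delta$ to a small perturbation of the mapping cylinder $\operatorname{Cyl}(\phi)$ and transferring along a strong deformation retract $\operatorname{Cyl}(\phi)\to G$; since $\operatorname{Cyl}(\phi)$ is an honest chain complex ($\phi$ being a degree-$0$ chain map) and the extension of $\delta$ to it is the evident degree-lowering one, the degree bookkeeping of part~(2) applies verbatim. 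Hence $D':=G_{\delta_\infty}$ is a projective flag anchored on $G$ and is homotopy equivalent to $D$.

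The only genuinely non-formal step is the verification, in both parts, that the transferred perturbation is still flagged, i.e.\ still strictly lowers homological degree by at least $2$; for part~(2) this is the explicit degree count above, and for part~(1) it requires unwinding the cylinder construction behind Lemma~\ref{lem:PertLemma2} rather than treating that lemma purely as a black box. Everything else --- smallness of $\delta$, existence of $\delta_\infty$, and existence of the resulting homotopy equivalence/deformation retract between $D$ and $D'$ --- is an immediate appeal to the perturbation lemma.
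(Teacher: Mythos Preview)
Your proposal is correct and follows exactly the paper's approach: part~(2) via Remark~\ref{rk:modificationForSDR} and Lemma~\ref{lem:thePertLemma1}, part~(1) via Lemma~\ref{lem:PertLemma2}. The paper's proof is in fact much terser---it simply cites those two lemmas without further comment---whereas you supply the degree-counting argument showing $\delta_\infty$ is again flagged (and correctly flag that for part~(1) this requires opening up the cylinder construction rather than using Lemma~\ref{lem:PertLemma2} as a pure black box); this is a detail the paper leaves implicit.
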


\begin{proof}
    \textbf{Proof of (1):} This is an immediate consequence of Lemma \ref{lem:PertLemma2}.

    \textbf{Proof of (2):} By Remark \ref{rk:modificationForSDR} is of no loss of generality to assume that $G$ is a strong deformation retract of $F$, whence the result follows from the standard perturbation lemma (Lemma \ref{lem:thePertLemma1}).
\end{proof}

\begin{remark}
    Theorem \ref{thm:htpyEquivalentAnchors} is a particularly nice illustration of the power of the perturbation lemma. The process of modifying the deformation retract data in such a way that it becomes a \emph{strong} deformation retract makes the resulting data significantly more complicated, so a method of proof similar to that employed in \cite{brown2021minimal} would be extremely technical and would also need to assume some kind of finiteness of the ambient $R$-module. In our case, no such finiteness is needed.
\end{remark}

As a consequence, we obtain the following statement. This statement will be useful for the part of Theorem \ref{thm:IntroQuasiminimal} that claims that quasiminimal resolutions appear as direct summands of all other anchored resolutions. 

\begin{cor}\label{cor:anchoredIsSummand}
    Assume $(R , \m , \kk)$ is a local ring. Let $D$ be a free flag anchored on some complex $F$. If $G$ is the minimization of $F$, then there exists a free flag $D'$ anchored on $G$ such that $D'$ is a strong deformation retract of $D$ (in particular $D'$ is a direct summand of $D$). 
\end{cor}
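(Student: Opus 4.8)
The plan is to deduce the corollary as an immediate specialization of Theorem~\ref{thm:htpyEquivalentAnchors}(2), the only external input being the standard behaviour of minimization over a local ring. First I would note that the anchor $F = (D,\delta_0^D)$ attached to the chosen splitting of the flag on $D$ is a nonnegatively graded, hence bounded below, complex of free $R$-modules. Over $(R,\m,\kk)$ such a complex decomposes as $F \cong G \oplus T$, where $G = \min(F)$ and $T$ is a contractible complex, namely a (possibly infinite) direct sum of complexes $R \xra{\id} R$. This decomposition \emph{is} a deformation retract datum: let $p\colon F \to G$ and $\iota\colon G \to F$ be the projection and inclusion, and let $h$ be the contracting homotopy supported on $T$ (extended by $0$). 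By Remark~\ref{rk:modificationForSDR} we may modify $h$ so that $G$ becomes a \emph{strong} deformation retract of $F$; since this alters only $h$, the relation $p\iota = \id_G$ is preserved.

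Next I would invoke Theorem~\ref{thm:htpyEquivalentAnchors}(2) with this strong deformation retract. Concretely, the free flag $D$ is the perturbation of its anchor $F$ by the flagged perturbation $\delta := d^D - \delta_0^D$, and $\delta$ is small with respect to $h$ because it strictly lowers flag degree while $F$ is bounded below, so $\delta h$ is locally nilpotent. The first Perturbation Lemma (Lemma~\ref{lem:thePertLemma1}) then transfers $\delta$ to a perturbation $\delta_\infty = p(1-\delta h)^{-1}\delta\iota$ of $G$, which is again a flagged perturbation since $p$ and $\iota$ preserve flag degree, $\delta$ lowers it by at least two, and $(1-\delta h)^{-1}$ does not raise it; hence $D'$, the perturbation of $G$ by $\delta_\infty$, is a free flag anchored on $G$. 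Because the input deformation retract was strong, so is the induced one on the perturbed objects, and it exhibits $D'$ as a strong deformation retract of $D$; in particular the perturbed data still satisfies $p_\infty \iota_\infty = \id_{D'}$, which shows $D'$ is a direct summand of $D$.

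I do not expect a genuine obstacle here. The homological content is entirely contained in Theorem~\ref{thm:htpyEquivalentAnchors}, so the only thing that must be supplied is the classical fact that $\min(F)$ is a strong deformation retract of a bounded below complex of free modules $F$ over a local ring --- which is presumably already recorded alongside the definition of minimization used in this section. If anything merits an extra sentence in the written proof, it is the bookkeeping that $\delta_\infty$ remains a \emph{flagged} perturbation, but this is the same flag-degree estimate that appears in the proof of Theorem~\ref{thm:degenToHomology}.
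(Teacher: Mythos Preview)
Your proposal is correct and matches the paper's approach: the corollary is stated without proof as an immediate consequence of Theorem~\ref{thm:htpyEquivalentAnchors}(2), the only additional input being that minimization over a local ring exhibits $G$ as a deformation retract of $F$. Your added remarks (upgrading to a strong deformation retract via Remark~\ref{rk:modificationForSDR} and checking that $\delta_\infty$ remains flagged) are exactly the details implicit in the paper's one-line proof of Theorem~\ref{thm:htpyEquivalentAnchors}(2).
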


The following lemma is well-known as it is actually a special case of ``triangular inversion" in the $A_\infty$-setting \cite[Subsection 3.1]{cirici2018derived}; we state it here for convenience. The statement itself can be proved directly, where the only nontrivial aspect comes from showing that the evident choice of inverse is indeed a well-defined morphism of projective flags. 

\begin{lemma}\label{lem:triangularInversion}
    Let $\phi : D \to D'$ be a morphism in $\flag_{\bbz / d \bbz} (R\proj)$ inducing an isomorphism of the underlying anchors. Then $\phi$ is an isomorphism of flags.
\end{lemma}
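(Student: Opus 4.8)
The plan is to treat $\phi$ as a ``lower triangular matrix with invertible diagonal'' relative to the flag filtrations and invert it by a geometric series. First I would fix splittings of the defining filtrations, so that as $\bbz / d \bbz$-graded $R$-modules $D = \bigoplus_{i \geq 0} D_i$ and $D' = \bigoplus_{i \geq 0} D'_i$, and decompose $\phi = \phi_0 + \phi_1 + \phi_2 + \cdots$ where $\phi_j$ is the component with $\phi_j(D_i) \subseteq D'_{i-j}$. Flag-preservation means no components with negative $j$ occur, and by construction $\phi_0$ is exactly the induced morphism of the underlying anchors $(D, \delta_0^D) \to (D', \delta_0^{D'})$. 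The hypothesis says $\phi_0$ is an isomorphism; in particular it is an isomorphism of $\bbz / d \bbz$-graded $R$-modules, so $\phi_0^{-1} \colon D' \to D$ exists as a flag-degree-preserving $R$-linear map.

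Next I would factor $\phi = \phi_0 \circ (\id_D + \rho)$, where $\rho := \phi_0^{-1} \circ (\phi - \phi_0)$ is an $R$-endomorphism of $D$. Since each $\phi_j$ with $j \geq 1$ strictly lowers the flag degree and $\phi_0^{-1}$ preserves it, $\rho$ lowers the flag degree by at least $1$; as $D$ is concentrated in flag degrees $\geq 0$, this forces $\rho^{\,i+1}$ to annihilate $D_i$, so $\rho$ is locally nilpotent. Hence the formal sum $(\id_D + \rho)^{-1} := \sum_{k \geq 0} (-\rho)^k$ is a well-defined $R$-module endomorphism of $D$ and a genuine two-sided inverse of $\id_D + \rho$. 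I would then set $\psi := (\id_D + \rho)^{-1} \circ \phi_0^{-1} \colon D' \to D$; it is an $R$-module isomorphism with $\psi \circ \phi = \id_D$ and $\phi \circ \psi = \id_{D'}$.

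Finally I would check that $\psi$ lands in the right category, which is the only part requiring any care. It is flag-preserving because $\phi_0^{-1}$ preserves the flag degree and each $(-\rho)^k$ with $k \geq 1$ lowers it, so $(\id_D + \rho)^{-1}$ is flag-preserving; it is $\bbz / d \bbz$-graded since it is built out of $\bbz / d \bbz$-graded maps; and it commutes with the differentials by a purely formal manipulation: from $\phi\, d^D = d^{D'}\phi$ one obtains $d^D \psi = \psi\, d^{D'} \phi\, \psi = \psi\, d^{D'}$. I expect the only real obstacle to be the verification of local nilpotence of $\rho$ (which is where boundedness below of the flag enters) together with the bookkeeping needed to see that the ``evident'' inverse is not merely $R$-linear but respects the filtration, the $\bbz / d \bbz$-grading, and the differential; no projectivity or finiteness hypothesis is used, and the argument is exactly the flag incarnation of the $A_\infty$ triangular inversion referenced in the statement.
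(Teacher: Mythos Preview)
Your proof is correct and is precisely the direct argument the paper alludes to: the paper does not give a full proof, only citing the $A_\infty$ triangular inversion in \cite{cirici2018derived} and remarking that the only nontrivial point is checking that the evident inverse is a well-defined morphism of flags---which is exactly what your geometric-series construction and the final checks accomplish. One small correction to your closing remark: projectivity \emph{is} used, namely to split the defining filtrations at the outset (as in the paper's \ref{chunk:flagReformulation}); without it the decomposition $D = \bigoplus_i D_i$ and hence the component decomposition $\phi = \sum_j \phi_j$ need not exist.
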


\begin{remark}
    One can imagine this result as being analogous to the statement that a power series is invertible if and only if its constant term is a unit. Indeed, these perspectives can be unified using the language of operads \cite[Chapter 10.4.1]{loday2012algebraic}.
\end{remark}

% \begin{proof}
%     \keller{finish}
%     Define the map $\psi$ inductively via $\psi_0 := \phi_0^{-1}$, and for $i > 0$ set
%     $$\psi_i := - \sum_{j=1}^i \psi_0 \phi_j \psi_{i-j}.$$
%     Note that it is clear that $\psi$ yields an inverse of $\phi$, but it is a little less clear that $\psi$ is compatible with the differentials. This fact comes down to the following computation:
%     \begingroup\allowdisplaybreaks
%     \begin{align*}
%         \delta_i^{D} \psi_0 - \psi_0 \delta^{D'}_i + \sum_{j=1}^i (\delta_{i-j}^{D} \psi_j - \psi_j \delta^{D'}{j-i}) &= \delta_i^{D} \psi_0 - \psi_0 \delta^D_i - \sum_{j=1}^i \sum_{k=1}^j (\delta_{i-j}^D \psi_0 \phi_k \psi_{j-k} - \\
%     \end{align*}
%     \endgroup
% \end{proof}

\section{Anchored Resolutions and Flag-Preserving Morphisms}\label{sec:anchoredRes's}

In this section, we introduce anchored resolutions and prove that these objects are indeed unique up to flag-preserving homotopy. As mentioned in the Introduction, this gives a notion of quasiminimal resolutions which are the ``best of both worlds" in the sense that they always exist, are unique, and \emph{also} have a flag structure. This uniqueness up to isomorphism comes from a strong statement about how any morphism of anchored resolutions is flag-preserving up to homotopy (Corollary \ref{cor:flagPresToHtpy}), which in turn is the consequence of a more general fact that realizes every anchored resolution as a flag-preserving deformation retract of a Cartan--Eilenberg resolution. 

\subsection{Anchored Resolutions and Uniqueness up to Homotopy}\label{subsec:anchoredResAndHtpy}

In this subsection, we give the definition of an anchored resolution, state the main theorem of this section (Theorem \ref{thm:mainAnchoredResThm}), and give some further discussion of these notions and how they compare with other constructions in the literature. We postpone the proof of Theorem \ref{thm:mainAnchoredResThm} until Subsection \ref{subsec:proofOfAnchoredThm}, which as previously mentioned relies on a more general result about flag-preserving morphisms.

\begin{definition}
    Let $D$ be a differential $R$-module. Then an \defi{anchored projective flag resolution} (or just an \defi{anchored resolution}) of $D$ is any choice of projective flag resolution that is anchored on a projective resolution of the homology $H(D)$. If $(R , \m , \kk)$ denotes a local ring, then an anchored resolution is \defi{quasiminimal} if the anchor is a minimal free resolution. 
\end{definition}

Since the homology of a $\bbz / d \bbz$-graded differential module splits as a direct sum
$$H(D) = H_0 (D) \oplus H_1 (D) \oplus \cdots \oplus H_{d-1} (D),$$
an anchored resolution is equivalently a $\bbz / d \bbz$-graded flagged perturbation of the direct sum $\bigoplus_{j=0}^{d-1} F^{H_j (D)}$, where $F^{H_j (D)}$ denotes a projective resolution of $H_j (D)$ for each $j = 0 ,\dots , d-1$. Recall that with our conventions, the module $F_i^{H_j (D)}$ has $\bbz / d \bbz$-degree $i+j$.

\begin{theorem}\label{thm:mainAnchoredResThm}
    Let $\phi : D \to D'$ be a morphism of $\bbz / d \bbz$-graded differential modules. Then there exist anchored resolutions $F \xra{\sim} D$ and $F' \xra{\sim} D'$ and a flag-preserving morphism $\widetilde{\phi} : F \to F'$ such that the following diagram commutes up to homotopy:
    % https://q.uiver.app/#q=WzAsNCxbMCwwLCJGIl0sWzIsMCwiRiciXSxbMCwyLCJEIl0sWzIsMiwiRCciXSxbMCwxLCJcXHdpZGV0aWxkZXtcXHBoaX0iXSxbMCwyLCJcXHNpbSIsMl0sWzIsMywiXFxwaGkiLDJdLFsxLDMsIlxcc2ltIl1d
\[\begin{tikzcd}
	F && {F'} \\
	\\
	D && {D'}
	\arrow["{\widetilde{\phi}}", from=1-1, to=1-3]
	\arrow["\sim"', from=1-1, to=3-1]
	\arrow["\phi"', from=3-1, to=3-3]
	\arrow["\sim", from=1-3, to=3-3]
\end{tikzcd}\]
Moreover, any two anchored resolutions of a fixed differential module $D$ are homotopy equivalent in a flag-preserving way. 
\end{theorem}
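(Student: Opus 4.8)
The plan is to isolate a single crux statement about how an \emph{arbitrary} anchored resolution interacts with Cartan--Eilenberg resolutions, and then extract both halves of the theorem from it. The first assertion is, verbatim, Theorem~\ref{thm:functorialDegen}, so essentially all of the work goes into the ``Moreover'' clause.

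\textbf{The crux.} First I would prove: if $F$ is any anchored resolution, say anchored on a projective resolution $P$ of $H(F)$ with flagged perturbation $\delta = \delta_1 + \delta_2 + \cdots$, so that $F = P_\delta$ as a $\bbz/d\bbz$-graded differential module, then $F$ itself occurs as a flag-preserving strong deformation retract of a Cartan--Eilenberg resolution $\widetilde F$ of the differential module $F$, in a way compatible up to homotopy with the canonical (Stai) augmentation $\widetilde F \xra{\sim} F$. Since $H(F) = H(P)$ is already resolved by $P$, one runs Construction~\ref{cons:StaiRes} for $F$ using $P$ as the chosen resolution of the homology (together with a suitable choice of resolutions of the boundaries $B_j(F)$, adapted to $P$ and $\delta$) to produce $\widetilde F$; Theorem~\ref{thm:degenToHomology} then exhibits $\widetilde F$ as a flag-preserving strong deformation retract of \emph{some} anchored resolution of $F$ which is again a flagged perturbation of $P$. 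One must then argue, using Lemma~\ref{lem:triangularInversion}, that the choices can be made so that this retract is literally $F$, and finally use the explicit perturbation-lemma formulas recorded in Remark~\ref{rk:pertLemmaFormulas} for $p_\infty$, $\iota_\infty$, $h_\infty$ to check that $\widetilde F \xra{p_\infty} F$ agrees up to homotopy with the Stai augmentation.

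\textbf{The main obstacle} is this crux. Getting the degeneration to land on $F$ on the nose, rather than merely on some anchored resolution sharing the anchor $P$, and then verifying the augmentation compatibility, are precisely the places where the non-fullness phenomenon of Example~\ref{ex:nonFullExample} could otherwise intervene; it is the fact that the perturbation lemma produces closed-form retract data (as in Remark~\ref{rk:pertLemmaFormulas}) that makes these verifications tractable, which is the whole reason for routing the argument through homological perturbation theory rather than arguing directly.

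\textbf{From the crux to the theorem.} Granting this, I would first record that every morphism of projective flags $\sigma : F \to F'$ between two anchored resolutions is flag-preserving up to homotopy: apply the lifting property of Cartan--Eilenberg resolutions (Lemma~\ref{lem:CELiftingProperty}) to $\sigma$, viewed as a morphism of differential modules, to obtain a flag-preserving $\widetilde\sigma : \widetilde F \to \widetilde{F'}$ lying over $\sigma$; then set $\psi := p'_\infty \circ \widetilde\sigma \circ \iota_\infty : F \to F'$, which is flag-preserving since all three factors are, and use the augmentation compatibilities of the crux to conclude $\psi \simeq \sigma$. For uniqueness, let $F$ and $F'$ be two anchored resolutions of a fixed $D$. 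The naive lifting lemma (Lemma~\ref{lem:naiveLiftingLemma}) supplies morphisms $\sigma : F \to F'$ and $\tau : F' \to F$ over $\id_D$ with $\tau\sigma \simeq \id_F$ and $\sigma\tau \simeq \id_{F'}$, and by the previous step we may replace them by flag-preserving $\psi \simeq \sigma$ and $\psi' \simeq \tau$; then $\psi'\psi \simeq \id_F$ and $\psi\psi' \simeq \id_{F'}$, so $\psi : F \to F'$ is a flag-preserving homotopy equivalence with flag-preserving homotopy inverse $\psi'$. Together with Theorem~\ref{thm:functorialDegen} for the first assertion, this proves the theorem.
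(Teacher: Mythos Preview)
Your overall strategy is the paper's: isolate the crux that every anchored resolution $F$ is a flag-preserving deformation retract of a Cartan--Eilenberg resolution $\widetilde F$ of $F$ (this is Theorem~\ref{thm:anchoredAsDRs}) with $\eta \circ \iota_\infty = \id_F$ (Corollary~\ref{cor:augmentedIdentity}), and then deduce both parts of the theorem via the sandwich $p'_\infty \circ \widetilde\sigma \circ \iota_\infty$, exactly as you outline.

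The gap is in your proposed proof of the crux. Invoking Lemma~\ref{lem:triangularInversion} to force the retract to be literally $F$ does not work as stated: triangular inversion requires a flag-preserving morphism between the retract $F''$ and $F$ that is the identity on anchors, and there is no obvious source for such a map before the crux is established---the Stai augmentation $\widetilde F \to F$ is \emph{not} flag-preserving (this is precisely the obstacle flagged in the introduction), so the composite $F'' \hookrightarrow \widetilde F \to F$ is of no use here. The paper instead makes the ``suitable choice of resolutions of the boundaries $B_j(F)$, adapted to $P$ and $\delta$'' completely explicit: it builds $F^{B(F)}$ from the truncations $F^{H(D)}_{\geq i}[i]$ of the anchor together with the perturbation components $\delta_j^D$ (Construction~\ref{cons:BoundariesResolution}, Lemmas~\ref{lem:BoundariesResolution}--\ref{lem:CycleResolution}), obtains from this a canonical $\widetilde F$ (Corollary~\ref{cor:filteredCERes}), and then \emph{computes} directly from the formulas in Remark~\ref{rk:pertLemmaFormulas} that the perturbed differential on the retract is exactly $\delta_0^D + \delta_1^D + \cdots = d^F$ and that $\eta \circ \iota_\infty = \id_F$ on the nose. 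No triangular inversion enters. A smaller point on uniqueness: your route produces flag-preserving $\psi, \psi'$ whose compositions are only \emph{DM}-homotopic to the identities, whereas the paper transfers the flag-preserving homotopy equivalence of Corollary~\ref{cor:uniqueCEHtpy} along the retracts via the explicit homotopy formula of Lemma~\ref{lem:HtpyEquivAndCommutingDiagrams}, keeping the homotopies themselves flag-preserving.
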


\begin{cor}\label{cor:qMinimalUniqueness}
    Assume $(R , \m , \kk)$ is a local ring. Any two quasiminimal resolutions of a differential module are isomorphic by a flag-preserving isomorphism, and every anchored resolution contains the quasiminimal resolution as a flag-preserving direct summand.
\end{cor}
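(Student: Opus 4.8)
The plan is to derive both statements from Theorem \ref{thm:mainAnchoredResThm}, the triangular inversion lemma (Lemma \ref{lem:triangularInversion}), and the summand statement of Corollary \ref{cor:anchoredIsSummand}; recall throughout that a quasiminimal resolution is precisely an anchored resolution whose anchor is a minimal free resolution of $H(D)$.

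\textbf{Uniqueness up to flag-preserving isomorphism.} Let $F$ and $F'$ be quasiminimal resolutions of $D$. By Theorem \ref{thm:mainAnchoredResThm} there is a flag-preserving homotopy equivalence $\phi\colon F\to F'$, with flag-preserving quasi-inverse $\psi\colon F'\to F$ and flag-preserving homotopies. The key point is that passing to the lowest-order (flag-degree-zero) parts of all of this data recovers a homotopy equivalence of the anchors: writing $d^F=\delta_0^F+\delta_1^F+\cdots$ and $\phi=\phi_0+\phi_1+\cdots$ according to how much each summand lowers the flag degree, and using that $\delta_0^F$ lowers the flag degree by exactly one while each $\delta_i^F$ with $i\geq 1$ lowers it by at least two, the part of the identity $\phi\, d^F=d^{F'}\phi$ that lowers the flag degree by exactly one reads $\phi_0\,\delta_0^F=\delta_0^{F'}\,\phi_0$, so $\phi_0$ is a chain map between the anchors $(F,\delta_0^F)$ and $(F',\delta_0^{F'})$; extracting the lowest-order parts of the homotopy identities likewise shows $\phi_0$ and $\psi_0$ are mutually inverse up to homotopy on the anchors. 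Since both anchors are minimal free resolutions of $H(D)$ over the local ring $R$, reducing modulo $\m$ makes them complexes with zero differential and $\phi_0\otimes_R\kk$ a homotopy equivalence between these, hence an isomorphism; thus $\phi_0$ is an isomorphism of free modules by Nakayama, and being a chain map it is an isomorphism of complexes. Therefore $\phi$ is a flag-preserving morphism inducing an isomorphism on anchors, and Lemma \ref{lem:triangularInversion} promotes it to a flag-preserving isomorphism $F\xra{\cong}F'$.

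\textbf{Quasiminimal summand of an arbitrary anchored resolution.} Let $D'$ be any anchored resolution of $D$, anchored on a free resolution $F$ of $H(D)$, and let $G$ be the minimization of $F$ (which exists because $H(D)$ is finitely generated). By Corollary \ref{cor:anchoredIsSummand} there is a free flag $D''$ anchored on $G$ that is a strong deformation retract of $D'$; this retract is flag-preserving, being obtained by transporting the flagged perturbation of $D'$ along the minimization retract of $F$, which respects the homological grading, while the corrective terms supplied by the perturbation lemma strictly lower the flag degree. In particular $D''$ is a flag-preserving direct summand of $D'$. As a deformation retract of $D'$, the flag $D''$ is homotopy equivalent to $D'$, hence quasi-isomorphic to $D$ via $D''\hookrightarrow D'\xra{\sim}D$, so $D''$ is a quasiminimal resolution of $D$. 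By the uniqueness just proved, $D''$ is flag-preservingly isomorphic to every quasiminimal resolution of $D$, and hence the quasiminimal resolution occurs as a flag-preserving direct summand of $D'$.

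\textbf{The main obstacle.} The delicate step is the first one: verifying carefully that extracting the flag-degree-zero component of every piece of the homotopy-equivalence datum from Theorem \ref{thm:mainAnchoredResThm} is compatible with composition and leaves behind legitimate homotopy-equivalence datum on the anchors. Once this is in place, the rigidity of minimal free resolutions over a local ring (homotopy equivalent minimal complexes are isomorphic) together with Lemma \ref{lem:triangularInversion} finishes the uniqueness assertion, and Corollary \ref{cor:anchoredIsSummand} combined with that uniqueness finishes the summand assertion; the remaining checks are routine bookkeeping.
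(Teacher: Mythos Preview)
Your proof is correct and follows essentially the same approach as the paper: invoke Theorem \ref{thm:mainAnchoredResThm} to obtain a flag-preserving homotopy equivalence, pass to the anchor (degree-zero part) to get a map of minimal complexes which must be an isomorphism, then apply Lemma \ref{lem:triangularInversion}; and for the summand statement, invoke Corollary \ref{cor:anchoredIsSummand}. The paper's version is terser---it simply asserts that a flag-preserving quasi-isomorphism induces a quasi-isomorphism of anchors and that a quasi-isomorphism of minimal complexes is an isomorphism---whereas you spell out the extraction of the degree-zero homotopy-equivalence data and the Nakayama argument, and you are more careful than the paper about verifying that the summand produced by Corollary \ref{cor:anchoredIsSummand} is genuinely a quasiminimal resolution of $D$ and that the retract is flag-preserving.
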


\begin{proof}
    Given two quasiminimal resolutions $F$ and $F'$, Theorem \ref{thm:mainAnchoredResThm} ensures that there exists a flag-preserving quasi-isomorphism $\phi : F \to F'$. Since $\phi$ is flag-preserving, it induces a quasi-isomorphism of the underlying anchors. Since the anchors are assumed to be minimal, this means $\phi$ induces an \emph{isomorphism} of the underlying anchors; by Lemma \ref{lem:triangularInversion} the original morphism $\phi$ is an isomorphism (with flag-preserving inverse). The fact that every anchored resolution contains the quasiminimal anchored resolution as a direct summand is an immediate consequence of Corollary \ref{cor:anchoredIsSummand}. 
\end{proof}

One interesting consequence of Theorem \ref{thm:mainAnchoredResThm} is that the functor
$$D_{\dm_{\bbz / d \bbz}} (R) \to K(\flag_{\bbz / d \bbz})$$
induced\footnote{The category $D_{\dm_{\bbz / d \bbz}} (R)$ denotes the derived category of $\bbz / d \bbz$-graded differential $R$-modules (see \cite{stai2018triangulated}), and $K(\flag_{\bbz / d \bbz} (R))$ denotes the homotopy category of flags (with flag preserving homotopies).} by sending a differential module to any choice of anchored resolution is well-defined and fully-faithful (note that it is not essentially surjective in general).

\begin{remark}
In the work \cite{sagave2010dg}, Sagave develops the theory of \emph{derived} $A_\infty$-algebras, which can be thought of as the analogue of a projective resolution for $A_\infty$-algebras. Such resolutions are interesting since the augmentation maps are instead defined to be $E^2$-equivalances, in the sense that the induced augmentation is a quasi-isomorphism on the second page of the associated spectral sequence. In view of Lemma \ref{lem:flagTauEquivAndAinfty}, we suspect that anchored resolutions should be the shadow of the notion of a \defi{derived $A_\infty$-module}, which should mimic the notion of a derived $A_\infty$-algebra as in \cite{sagave2010dg,cirici2018derived}.
\end{remark}

\subsection{Anchored Resolutions as Deformation Retracts}\label{subsec:anchoredResAsDR}

In this section, we prove that all anchored resolutions may be obtained as deformation retracts of Cartan--Eilenberg resolutions. This follows by constructing a particularly explicit Cartan--Eilenberg resolution of an arbitrary anchored resolution, which in turn follows from understanding the $R$-projective resolutions of the cycles and boundaries of an anchored resolution in terms of the defining data of the original anchor (and its flagged perturbation). Throughout this section, we will write proofs as if there is no auxiliary $\bbz / d \bbz$-grading for purposes of easing notation and making the proofs readable. It is however clear by a similar folding process as used in the proof of Lemma \ref{lem:CELiftingProperty} that these proofs carry over immediately to arbitrary $\bbz / d \bbz$-gradings.

We first construct a complex that will end up being an explicit $R$-projective resolution of the boundaries of an achored resolution:

\begin{construction}\label{cons:BoundariesResolution}
    Let $D$ be a projective flag anchored on a resolution $F^{H(D)}$. Given any integer $i$, let $F_{\geq i}^{H(D)}$ denote the na\"ive truncation of $F^{H(D)}$. Consider the complex $F^{B(D)}$ whose underlying projective $R$-module is given by the direct sum
    $$F^{B(D)} \underbrace{=}_{\text{as} \ R\text{-modules}} \bigoplus_{i \geq 1} F^{H(D)}_{\geq i} [i].$$
    The differential of $F^{B(D)}$ restricted to each summand $F^{H(D)}_{\geq i}[i]$ has nonzero components mapping to the summands $F^{H(D)}_{\geq i-j} [i-j-1]$ with $j > 0$ given by the map $\delta_j^D \in \hom_R^{j+1}  (F^{H(D)} , F^{H(D)})$.

    The complex $F^{B(D)}$ comes equipped with a canonical augmentation map $F^{B(D)} \to B(D)$ induced by mapping the component $D_i = F_i^{H(D)} = \left( F_{\geq i}^{H(D)} [i] \right)_0$ to $B(D)$ via the differential $d^D$. 
\end{construction}

Diagramatically, the structure of the differentials of the complex of Construction \ref{cons:BoundariesResolution} may be visualized as follows:
% https://q.uiver.app/#q=WzAsMTksWzAsNiwiRF8xIl0sWzIsNiwiRF8yIl0sWzQsNiwiRF8zIl0sWzAsNCwiRF8yIl0sWzIsNCwiRF8zIl0sWzQsNCwiRF80Il0sWzAsMiwiRF8zIl0sWzIsMiwiRF80Il0sWzQsMiwiRF81Il0sWzQsMSwiXFx2ZG90cyJdLFsyLDEsIlxcdmRvdHMiXSxbMCwxLCJcXHZkb3RzIl0sWzUsMiwiXFxjZG90cyJdLFs1LDQsIlxcY2RvdHMiXSxbNSw2LCJcXGNkb3RzIl0sWzAsMCwiRl97XFxnZXExfV57SChEKX0gWzFdIl0sWzIsMCwiRl97XFxnZXEgMn1ee0goRCl9IFsyXSJdLFs0LDAsIkZfe1xcZ2VxM31ee0goRCl9IFszXSJdLFs1LDAsIlxcY2RvdHMiXSxbMywwLCJcXGRlbHRhXzBeRCIsMV0sWzQsMSwiXFxkZWx0YV8wXkQiLDFdLFs1LDIsIlxcZGVsdGFfMF5EIiwxXSxbNCwwLCJcXGRlbHRhXzFeRCIsMV0sWzUsMSwiXFxkZWx0YV8xXkQiLDFdLFs1LDAsIlxcZGVsdGFfMl5EIiwxLHsibGFiZWxfcG9zaXRpb24iOjMwLCJjdXJ2ZSI6Mn1dLFs2LDMsIlxcZGVsdGFfMF5EIiwxXSxbNyw0LCJcXGRlbHRhXzBeRCIsMV0sWzgsNSwiXFxkZWx0YV8wXkQiLDFdLFs4LDQsIlxcZGVsdGFfMV5EIiwxXSxbNywzLCJcXGRlbHRhXzFeRCIsMV0sWzgsMywiXFxkZWx0YV8yXkQiLDEseyJsYWJlbF9wb3NpdGlvbiI6MzAsImN1cnZlIjoyfV1d
\[\begin{tikzcd}
	{F_{\geq1}^{H(D)} [1]} && {F_{\geq 2}^{H(D)} [2]} && {F_{\geq3}^{H(D)} [3]} & \cdots \\
	\vdots && \vdots && \vdots \\
	{D_3} && {D_4} && {D_5} & \cdots \\
	\\
	{D_2} && {D_3} && {D_4} & \cdots \\
	\\
	{D_1} && {D_2} && {D_3} & \cdots
	\arrow["{\delta_0^D}"{description}, from=5-1, to=7-1]
	\arrow["{\delta_0^D}"{description}, from=5-3, to=7-3]
	\arrow["{\delta_0^D}"{description}, from=5-5, to=7-5]
	\arrow["{\delta_1^D}"{description}, from=5-3, to=7-1]
	\arrow["{\delta_1^D}"{description}, from=5-5, to=7-3]
	\arrow["{\delta_2^D}"{description, pos=0.3}, curve={height=12pt}, from=5-5, to=7-1]
	\arrow["{\delta_0^D}"{description}, from=3-1, to=5-1]
	\arrow["{\delta_0^D}"{description}, from=3-3, to=5-3]
	\arrow["{\delta_0^D}"{description}, from=3-5, to=5-5]
	\arrow["{\delta_1^D}"{description}, from=3-5, to=5-3]
	\arrow["{\delta_1^D}"{description}, from=3-3, to=5-1]
	\arrow["{\delta_2^D}"{description, pos=0.3}, curve={height=12pt}, from=3-5, to=5-1]
\end{tikzcd}\]

\begin{lemma}\label{lem:BoundariesResolution}
    Let $D$ be a projective flag anchored on a resolution $F^{H(D)}$. Then the complex $F^{B(D)}$ of Construction \ref{cons:BoundariesResolution} is an $R$-projective resolution of the boundaries $B(D)$.
\end{lemma}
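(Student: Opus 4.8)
The plan is to show directly that the augmentation $\epsilon\colon F^{B(D)}\to B(D)$ of Construction \ref{cons:BoundariesResolution} is a quasi-isomorphism. Since $F^{B(D)}$ is, by construction, a direct sum of the projective modules $F^{H(D)}_i$ and is concentrated in non-negative homological degrees, this exhibits it as a projective resolution of $B(D)$. Throughout, write $D=\bigoplus_{i\geq 0}D_i$ with $D_i=F^{H(D)}_i$ and $d^D=\delta_0^D+\delta_1^D+\cdots$, so that $\delta_0^D$ restricts, up to sign, to the differential of the anchor $F^{H(D)}$. Two preliminary points are routine. First, the degree-$0$ part of $F^{B(D)}$ is $\bigoplus_{m\geq 1}D_m$ and $\epsilon$ restricts on it to $\bigoplus_{m\geq 1}d^D|_{D_m}$; this is surjective onto $B(D)$ because $B(D)=d^D(D)=d^D\bigl(\bigoplus_{m\geq1}D_m\bigr)$, using $d^D(D_0)\subseteq D^{-1}=0$. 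Second, $\epsilon$ is a chain map onto $B(D)$ (placed in degree $0$): the degree-$1$ part of $F^{B(D)}$ is $\bigoplus_{n\geq 1}D_{n+1}$, the copy of $D_{n+1}$ sitting in column $n$, and for $y_n\in D_{n+1}$ one has $d_1^{F^{B(D)}}(y_n)=\sum_{j=0}^{n-1}\delta_j^D(y_n)=d^D(y_n)-\delta_n^D(y_n)$ with $\delta_n^D(y_n)\in D_0$, whence $\epsilon\bigl(d_1^{F^{B(D)}}(y_n)\bigr)=d^D\bigl(d^D(y_n)-\delta_n^D(y_n)\bigr)=0$ by $(d^D)^2=0$ and $d^D|_{D_0}=0$.

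The heart of the argument is a single claim treating all homological degrees uniformly: \emph{if $z\in F^{B(D)}_n$ satisfies $d_n^{F^{B(D)}}(z)=0$, and moreover $\epsilon(z)=0$ when $n=0$, then $z\in\im d_{n+1}^{F^{B(D)}}$.} I would prove this by induction on the largest index $M$ of a column $F^{H(D)}_{\geq M}[M]$ of $F^{B(D)}$ in which $z$ has a nonzero component $z_M$ (the case $z=0$ being trivial). This induction is legitimate exactly because $F^{B(D)}_n$ is a \emph{direct} sum, so every element has finite support---this is the point at which the possibly-infinite flag structure of $D$ causes no trouble. Since each $\delta_j^D$ with $j\geq 1$ strictly lowers the column index, the only contribution to the column-$M$ part of $d_n^{F^{B(D)}}(z)$ (respectively, to the $D_{M-1}$-component of $\epsilon(z)$ when $n=0$) is $\delta_0^D(z_M)$; thus the vanishing hypothesis forces $z_M$ to be a cycle of the anchor, i.e.\ $z_M\in Z_{M+n}(F^{H(D)})$. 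Since $F^{H(D)}$ is a resolution and $M+n\geq 1$, we have $Z_{M+n}(F^{H(D)})=B_{M+n}(F^{H(D)})$, so $z_M=d^{F^{H(D)}}(w)$ for some $w\in F^{H(D)}_{M+n+1}$, which we regard as an element of the column-$M$, degree-$(n+1)$ part of $F^{B(D)}_{n+1}$. Then $d_{n+1}^{F^{B(D)}}(w)$ has column-$M$ component $z_M$ and all other components in columns $<M$, so $z-d_{n+1}^{F^{B(D)}}(w)$ is supported in strictly lower columns and still satisfies the hypotheses (using $\epsilon\,d_1^{F^{B(D)}}=0$ in the $n=0$ case). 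The inductive hypothesis then applies.

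Granting the claim, the case $n\geq 1$ gives $H_n(F^{B(D)})=0$, and the case $n=0$ gives $\ker\epsilon\subseteq\im d_1^{F^{B(D)}}$; combined with $\epsilon\,d_1^{F^{B(D)}}=0$ this yields $\ker\epsilon=\im d_1^{F^{B(D)}}$, so $\epsilon$ induces an isomorphism $H_0(F^{B(D)})\xra{\sim}B(D)$. Hence $\epsilon$ is a quasi-isomorphism and $F^{B(D)}$ resolves $B(D)$. I expect the main obstacle to be purely organizational: keeping straight which component $\delta_j^D$ maps which column and internal degree of the doubly-indexed object $F^{B(D)}$ to which, and treating the edge cases (degree $0$, the first column) without sign errors. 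The one genuinely load-bearing idea is the finite-support observation, which makes the ``peel off the top column'' induction terminate no matter how long the flag filtration of $D$ is. One could alternatively deduce $H_{\geq 1}(F^{B(D)})=0$, and that $H_0(F^{B(D)})$ has associated graded $\bigoplus_{p\geq1}\Omega^p(H(D))$, from the spectral sequence of the filtration of $F^{B(D)}$ by columns, but identifying that associated graded with $B(D)$ compatibly with $\epsilon$ requires essentially the same direct analysis, so I would not pursue it.
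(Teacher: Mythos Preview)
Your proof is correct and takes a genuinely different route from the paper's. The paper argues structurally: it filters $B(D)$ by the submodules $B(D^i)$, identifies each successive quotient $B_i(D):=B(D^{i+1})/B(D^i)$ as resolved by the single column $F^{H(D)}_{\geq i+1}[i+1]$, applies the Horseshoe lemma inductively to assemble resolutions $F^{B(D^{i+1})}$ whose differentials recover exactly those of Construction~\ref{cons:BoundariesResolution}, and then passes to the directed colimit. Your argument bypasses all of this: you prove acyclicity of the augmented complex directly by an induction on the highest column index, exploiting the upper-triangular shape of the differential (only $\delta_0^D$ preserves the column index) and the exactness of the anchor in positive degrees. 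The paper's approach exposes the filtered structure of $F^{B(D)}$ compatible with the flag filtration on $D$, which makes the ensuing Horseshoe applications for $Z(D)$ and the underlying module of $D$ (Lemma~\ref{lem:CycleResolution}) feel inevitable; your approach is more elementary---no Horseshoe, no colimits---and your finite-support remark cleanly handles the possibly infinite flag length without any limiting argument. Both approaches ultimately rest on the same fact, that $Z_{>0}(F^{H(D)})=B_{>0}(F^{H(D)})$, and your final paragraph correctly notes that the column-filtration spectral sequence is essentially a repackaging of the paper's filtered argument.
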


\begin{proof}
    Let $B(D^i)$ denote the boundaries of $D^i \subset D$; that is, $B(D^i)$ is the image of $\delta^D|_{D^i}$. As in Construction \ref{cons:BoundariesResolution}, the complex $F^{H(D)}$ will denote the $R$-projective resolution that $D$ is anchored on. For every $i \geq 0$ define the modules $B_i (D)$ via the short exact sequence
    $$0 \to B(D^{i}) \to B(D^{i+1}) \to B_{i} (D) \to 0.$$
    \textbf{Step 1:} Concretely, the module $B_i (D)$ is the image of $\delta^D|_{D_{i+1}}$ modulo the image of $d^D|_{D^i}$, and we claim that an $R$-projective resolution of $B_i (D)$ is given by $F^{H(D)}_{\geq i+1} [i+1]$, equipped with the augmentation map
    $$D_{i+1} = \left( F^{H(D)}_{\geq i +1} [i+1] \right)_0 \xra{d^D|_{D_{i+1}}}  B_i (D).$$
    Notice that by selection the complex $F^{H(D)}$ is a resolution, whence $F^{H(D)}_{\geq i+1} [i+1]$ is also a resolution, so it remains only to show that the $3$-term sequence of maps
    $$(*) \quad D_{i+2} \xra{\delta_0^D} D_{i+1} \xra{d^D} B_i (D) \to 0$$
    is a well-defined sequence, exact at the rightmost and middle spots. To see well-definedness it suffices to show that $d^D \delta_0^D \in B(D^i)$; collecting graded pieces of the equality $(d^D)^2 = 0$ implies that there is an equality
    $$(**) \quad d^D \delta_0^D|_{D_{i+1}} = - \sum_{j =1}^i d^D \delta_j^D|_{D_{i+1}} \in B(D^i).$$
    Thus the sequence $(*)$ is a well-defined complex. The augmentation map $D_{i+1} \to B_i (D)$ is surjective by definition, so it remains to show that $(*)$ is exact at the module $D_{i+1}$. Let $f \in D_{i+1}$; if $d^D (f) \in B(D^i)$, then, in particular $\delta_0^D (f) = 0$. Thus $f \in Z_{i+1} (F^{H(D)}) = B_{i+1} (F^{H(D)})$, and we see that $(*)$ is indeed exact in the middle. 

    \textbf{Step 2:} Next, we claim that the $R$-projective resolution of $B(D^{i+1})$ is given by the complex $F^{B(D^{i+1})}$ with
    $$F^{B(D^{i+1})} \underbrace{=}_{\text{as} \ R\text{-modules}} \bigoplus_{j=1}^{i+1} F^{H(D)}_{\geq j} [j], $$
    with differential induced in an identical manner to Construction \ref{cons:BoundariesResolution}. To see this, proceed by induction on $i \geq 1$ with the base case $i = 1$ being clear since $B(D^1) = B_0 (D)$.

    Assume now that $i > 1$ and consider applying the Horseshoe lemma to the defining short exact sequence
    $$0 \to B (D^i) \to B(D^{i+1}) \to B_i (D) \to 0.$$
    The fact that $F^{B(D^{i+1})}$ is a complex implies that the map $\delta_1^D + \cdots + \delta_i^D : F^{H(D)}_{\geq i+1} [i+1] \to F^{B(D^i)}[-1]$ is a well-defined morphism of complexes

    \textbf{Step 3:} Notice that by definition there is a directed system
    $$B (D^1) \subset B(D^2) \subset B (D^3) \subset \cdots,$$
    and the colimit of this directed system is just the union $\bigcup_{i \geq 1} B(D^i) = B(D)$. There is likewise by construction a directed system of complexes
    $$F^{B(D^1)} \subset F^{B(D^2)} \subset F^{B (D^3)} \subset \cdots,$$
    and since taking directed colimits is an exact functor on the category of $R$-modules, the colimit $\colim_{i} F^{B(D^i)}$ is a resolution $\colim_i B(D^i) = B(D)$. Since the colimit $\colim_{i} F^{B(D^i)}$ is precisely the complex $F^{B(D)}$ of Construction \ref{cons:BoundariesResolution}, the result now follows. 
\end{proof}

Iterating the Horseshoe lemma easily allows us to construct a resolution of the cycles, which in turn will be used as in Construction \ref{cons:StaiRes} to build a particularly convenient Cartan--Eilenberg resolution.

\begin{lemma}\label{lem:CycleResolution}
    Let $D$ be a projective flag anchored on a resolution $F^{H(D)}$. 
    \begin{enumerate}
        \item An $R$-projective resolution $F^{Z(D)}$ of $Z(D)$ may be obtained as the mapping cone of the morphism of complexes
            $$F^{H(D)} \to F^{B(D)} [-1]$$
        whose only nonzero component is given by the natural projection $F^{H(D)} \twoheadrightarrow F^{H(D)}_{\geq 1} \subset F^{B(D)}$.
        \item An $R$-projective resolution of the $R$-module $D$ may be obtained as the mapping cone of the morphism of complexes
        $$F^{B(D)} \to F^{Z(D)} [-1]$$
        whose only nonzero components come from the natural projection maps $F^{H(D)}_{\geq i} [i] \twoheadrightarrow F^{H(D)}_{\geq i+1} [i] \subset F^{Z(D)} [-1]$. 
    \end{enumerate}
\end{lemma}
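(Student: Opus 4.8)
The plan is to realize both resolutions as instances of the Horseshoe lemma (mapping cone) construction applied to the two natural short exact sequences attached to $D$. Since $(d^D)^2 = 0$ we have $B(D) \subseteq Z(D)$, and there are short exact sequences
\[ 0 \to B(D) \to Z(D) \to H(D) \to 0 \qquad \text{and} \qquad 0 \to Z(D) \to D \xrightarrow{d^D} B(D) \to 0. \]
For part (1) the relevant resolutions are $F^{B(D)}$ (Lemma \ref{lem:BoundariesResolution}) and $F^{H(D)}$ (the anchor, by hypothesis); for part (2) we use the resolution $F^{Z(D)}$ just produced in part (1) together with $F^{B(D)}$ again. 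Recall that the Horseshoe resolution of a short exact sequence $0 \to A \to B \to C \to 0$ built from chosen resolutions $F^A, F^C$ is, by construction, the mapping cone of a suitable chain map $F^C \to F^A[-1]$, with augmentation to $B$ assembled from $F^A_0 \to A \subseteq B$ and a chosen lift $F^C_0 \to B$ of $F^C_0 \onto C$ along $B \onto C$. So the content of the lemma is that the \emph{specific} connecting maps and augmentations named in the statement are legitimate such choices, after which acyclicity is automatic.

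First I would verify that the stated maps are genuine chain maps, so that the mapping cones are complexes. In (1), the natural projection $F^{H(D)} \onto F^{H(D)}_{\geq 1}$ into (the appropriate shift of) the $i=1$ summand of $F^{B(D)}$ commutes with the differentials because, by Construction \ref{cons:BoundariesResolution}, the differential of $F^{B(D)}$ on that summand is built precisely from the anchor differential $\delta_0^D = d^{F^{H(D)}}$ and the higher components $\delta_j^D$; in (2), the map sending each summand $F^{H(D)}_{\geq i}[i]$ of $F^{B(D)}$ onto $F^{H(D)}_{\geq i+1}[i] \subseteq F^{Z(D)}[-1]$ is a chain map for the same reason. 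Next I would write the augmentations explicitly: for (1), on $F^{B(D)}_0$ take the augmentation of Lemma \ref{lem:BoundariesResolution} followed by $B(D) \subseteq Z(D)$, and on $F^{H(D)}_0 = D_0$ take the inclusion $D_0 \subseteq Z(D)$ (legitimate since $d^D(D_0) \subseteq D^{-1} = 0$); this lifts $F^{H(D)}_0 \onto H(D)$ because the hypothesis that the anchor is a resolution forces $D_0 \cap B(D) = \delta_0^D(D_1)$ and $D_0 \onto H(D)$, which one sees by collecting graded pieces of $(d^D)^2 = 0$ exactly as in the identities $(*)$, $(**)$ in the proof of Lemma \ref{lem:BoundariesResolution}. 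For (2) the augmentation $F^D_0 \to D$ is $F^{Z(D)}_0 \to Z(D) \subseteq D$ on the first component and the inclusion $D_i \subseteq D$ on each summand $F^{H(D)}_i$ of $F^{B(D)}_0$, which visibly lifts the augmentation of $F^{B(D)}$ through $d^D$.

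With chain maps and augmentations in hand the rest is formal: the mapping cone sits in a degreewise split short exact sequence of complexes $0 \to F^{B(D)} \to F^{Z(D)} \to F^{H(D)} \to 0$ (resp. $0 \to F^{Z(D)} \to F^D \to F^{B(D)} \to 0$) which maps, via the augmentations, onto the corresponding short exact sequence of modules; since the outer complexes are resolutions, the long exact homology sequence kills the homology of the middle complex in positive degrees, and the five lemma applied to the induced map of short exact sequences in degree $0$ identifies its $H_0$ with $Z(D)$ (resp. $D$). Each cone is termwise a direct sum of shifts of projectives, hence projective. The one step that requires genuine work — and the main obstacle — is pinning down this connecting-map data: checking that the explicit projections named in the statement really are chain maps compatible with the prescribed augmentations. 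This is a bookkeeping computation with the flagged-perturbation components $\delta_j^D$, carried out via the graded decomposition of $(d^D)^2 = 0$ in the same spirit as Lemma \ref{lem:BoundariesResolution}; having set up $F^{B(D)}$ there, part (1) is essentially one more iteration of that argument, and part (2) then follows by the identical argument with $F^{Z(D)}$ in place of $F^{H(D)}$.
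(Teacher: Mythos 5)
Your proposal is correct and follows the same route as the paper: apply the Horseshoe lemma to the two short exact sequences $0 \to B(D) \to Z(D) \to H(D) \to 0$ and $0 \to Z(D) \to D \to B(D) \to 0$, using the resolutions $F^{B(D)}$ from Lemma \ref{lem:BoundariesResolution} and the anchor $F^{H(D)}$. The paper leaves the verification that the stated projection maps and augmentations are the correct Horseshoe data as "immediate"; your write-up supplies exactly that bookkeeping.
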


\begin{proof}
    Both statements $(1)$ and $(2)$ are an immediate consequence of Lemma \ref{lem:BoundariesResolution} combined with the Horseshoe lemma on the short exact sequences
    $$0 \to B(D) \to Z(D) \to H(D) \to 0, \quad \text{and} \quad 0 \to Z(D) \to D \to B(D) \to 0, \quad \text{respectively.}$$
\end{proof}

\begin{cor}\label{cor:filteredCERes}
    Let $D$ be a projective flag anchored on a resolution $F^{H(D)}$. Construction \ref{cons:StaiRes} applied to the resolution of part $(2)$ of Lemma \ref{lem:CycleResolution} yields a Cartan--Eilenberg resolution of $D$.
\end{cor}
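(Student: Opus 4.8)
The plan is to observe that Construction~\ref{cons:StaiRes} is, by design, nothing more than the following recipe: choose projective resolutions of the boundaries $B(D)$ and of the homology $H(D)$, apply the Horseshoe lemma to the short exact sequences $0 \to B(D) \to Z(D) \to H(D) \to 0$ and $0 \to Z(D) \to D \to B(D) \to 0$ to build projective resolutions of the cycles $Z(D)$ and of the underlying module of $D$, and finally assemble these pieces into a flag with the differential prescribed there. So I would simply verify that the data furnished by Lemmas~\ref{lem:BoundariesResolution} and~\ref{lem:CycleResolution} is a legitimate instance of this recipe.

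First I would take $F^{H(D)}$ --- the resolution $D$ is anchored on --- as the chosen resolution of the homology, and the complex $F^{B(D)}$ of Construction~\ref{cons:BoundariesResolution} as the chosen resolution of the boundaries; the latter is a genuine projective resolution of $B(D)$ by Lemma~\ref{lem:BoundariesResolution}. Next I would recall that for a short exact sequence $0 \to A' \to A \to A'' \to 0$ with chosen resolutions $F^{A'}$ and $F^{A''}$, the Horseshoe lemma produces a resolution of $A$ whose underlying module is $F^{A'} \oplus F^{A''}$ and whose differential is the mapping cone of any lift $F^{A''} \to F^{A'}[-1]$ of the connecting homomorphism, and conversely any such mapping cone (for a valid lift) is an admissible Horseshoe output. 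Lemma~\ref{lem:CycleResolution}(1) exhibits $F^{Z(D)} = \cone(F^{H(D)} \to F^{B(D)}[-1])$ with the indicated natural projection as the structure map, and Lemma~\ref{lem:CycleResolution}(2) exhibits the resolution of the underlying module of $D$ as $\cone(F^{B(D)} \to F^{Z(D)}[-1])$, again via a natural projection; both are therefore admissible choices for the iterated Horseshoe step of Construction~\ref{cons:StaiRes}.

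It then follows that carrying out Construction~\ref{cons:StaiRes} for $D$ with these specific choices produces, by Definition~\ref{def:CEResolution}, a Cartan--Eilenberg resolution of $D$ --- and that it is genuinely a resolution of $D$ is Theorem~\ref{thm:CEresExists}. The one point that requires actual checking is that the natural projection maps appearing in Lemma~\ref{lem:CycleResolution} really do lift the correct connecting homomorphisms of the two short exact sequences above and are chain maps over them, but this is exactly what is already established in the proofs of Lemmas~\ref{lem:BoundariesResolution} and~\ref{lem:CycleResolution}, so nothing more is needed. As usual, the $\bbz/d\bbz$-graded case follows by the same folding bookkeeping used in the proof of Lemma~\ref{lem:CELiftingProperty}, so no separate argument is required there. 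I do not expect a real obstacle here: the substance of the corollary sits entirely in the two preceding lemmas, and what remains is only the bookkeeping of matching their outputs against the inputs demanded by Construction~\ref{cons:StaiRes}.
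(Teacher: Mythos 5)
Your proposal is correct and takes the same route the paper intends: the paper gives no proof for this corollary precisely because it is, as you say, only a matter of recognizing that $F^{H(D)}$, $F^{B(D)}$, and the two mapping-cone resolutions from Lemma~\ref{lem:CycleResolution} are legitimate inputs for the Horseshoe steps of Construction~\ref{cons:StaiRes}, after which Theorem~\ref{thm:CEresExists} and Definition~\ref{def:CEResolution} finish the job.
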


The structure of the differentials of the Cartan--Eilenberg resolution may be encoded in the diagram below. In the following, we adopt the convention of Construction \ref{cons:StaiRes} where the notation $G^{B(D)}$ denotes the second copy of the resolution $F^{B(D)}$ (for the sake of distinguishing the two copies). In this diagram, note that the complex $G^{B(D)}$ maps to $F^{H(D)}$ via the components of the original differentials of the flag $D$, and it maps to $F^{B(D)}$ via the canonical projection maps (these are the identity maps jumping two homological degrees in the below diagram). Similarly, the complex $F^{H(D)}$ maps to $F^{B(D)}$ via the natural projection onto the component $F^{H(D)}_{\geq 1}$, seen by the leftmost two complexes in the diagram.

\[\begin{tikzcd}
	{F^{H(D)}} & {F_{\geq 1}^{H(D)}} & {G_{\geq1}^{H(D)}} & {F_{\geq2}^{H(D)}} & {G_{\geq2}^{H(D)}} & \cdots \\
	&& \vdots && \vdots \\
	\vdots & \vdots & {D_3} & \vdots & {D_4} & \cdots \\
	{D_2} & {D_3} && {D_4} \\
	&& {D_2} && {D_3} & \cdots \\
	{D_1} & {D_2} && {D_3} \\
	&& {D_1} && {D_2} & \cdots \\
	{D_0} & {D_1} && {D_2}
	\arrow["{\delta_0^D}"{description}, from=6-1, to=8-1]
	\arrow["{\delta_0^D}"{description}, from=4-1, to=6-1]
	\arrow[Rightarrow, no head, from=6-1, to=8-2]
	\arrow[Rightarrow, no head, from=4-1, to=6-2]
	\arrow["{\delta_0^D}"{description}, from=4-2, to=6-2]
	\arrow["{\delta_0^D}"{description}, from=6-2, to=8-2]
	\arrow[Rightarrow, no head, from=7-3, to=8-2]
	\arrow[Rightarrow, no head, from=5-3, to=6-2]
	\arrow[Rightarrow, no head, from=3-3, to=4-2]
	\arrow[Rightarrow, no head, from=3-5, to=4-4]
	\arrow[Rightarrow, no head, from=5-5, to=6-4]
	\arrow["{\delta_1^D}"{description, pos=0.2}, from=5-3, to=8-1]
	\arrow["{\delta_0^D}"{description}, from=6-4, to=8-4]
	\arrow[Rightarrow, no head, from=7-5, to=8-4]
	\arrow[Rightarrow, no head, from=5-3, to=8-4]
	\arrow[Rightarrow, no head, from=3-3, to=6-4]
	\arrow["{\delta_0^D}"{description}, from=5-3, to=7-3]
	\arrow["{\delta_0^D}"{description}, from=3-3, to=5-3]
	\arrow["{\delta_0^D}"{description}, from=4-4, to=6-4]
	\arrow["{\delta_0^D}"{description}, from=3-5, to=5-5]
	\arrow["{\delta_0^D}"{description}, from=5-5, to=7-5]
	\arrow["{\delta_1^D}"{description, pos=0.6}, curve={height=18pt}, from=4-4, to=6-2]
	\arrow["{\delta_1^D}"{description}, curve={height=-18pt}, from=6-4, to=8-2]
	\arrow["{\delta_1^D}"{description}, curve={height=-18pt}, from=5-5, to=7-3]
	\arrow["{\delta_2^D}"{description, pos=0.7}, from=5-5, to=8-1]
	\arrow["{\delta_2^D}"{description, pos=0.6}, curve={height=30pt}, from=3-5, to=6-1]
	\arrow["{\delta_1^D}"{description, pos=0.4}, curve={height=-12pt}, from=3-5, to=5-3]
\end{tikzcd}\]

The following theorem is the main result of this subsection, and as previously mentioned shows that any anchored resolution appears as a direct summand of a Cartan--Eilenberg resolution. Actually, this additional deformation retract can be identified as coming from the leftmost complex (the resolution $F^{H(D)}$) in the above diagram.

\begin{theorem}\label{thm:anchoredAsDRs}
    Every $\bbz / d \bbz$-graded anchored projective resolution is a flag-preserving deformation retract of a Cartan--Eilenberg resolution.
\end{theorem}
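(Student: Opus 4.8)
The plan is to run the perturbation-lemma machinery of Subsection \ref{subsec:degenToHomology} against a Cartan--Eilenberg resolution of $D$ chosen explicitly enough that $D$ itself reappears as the retract. Fix a $\bbz/d\bbz$-graded anchored resolution $D$, with anchor $F^{H(D)}$ and flagged perturbation $d^D = \delta^D_0 + \delta^D_1 + \delta^D_2 + \cdots$ (we argue, as elsewhere in this subsection, as if $d=1$, the general case being handled by the folding bookkeeping of Lemma \ref{lem:CELiftingProperty} and the $\bbz/d\bbz$-graded form of Remark \ref{rk:pertLemmaFormulas}). First I would assemble the Cartan--Eilenberg resolution $\widetilde D \xra{\sim} D$ of Corollary \ref{cor:filteredCERes}: feed the $R$-projective resolution of the underlying module of $D$ from Lemma \ref{lem:CycleResolution}$(2)$ into Construction \ref{cons:StaiRes}, using the boundary resolution $F^{B(D)}$ of Construction \ref{cons:BoundariesResolution}, the cycle resolution of Lemma \ref{lem:CycleResolution}$(1)$, and the given anchor as the resolution of $H(D)$. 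The decisive feature---visible in the diagram preceding the statement and guaranteed by Lemmas \ref{lem:BoundariesResolution} and \ref{lem:CycleResolution}---is that every component of $d^{\widetilde D}$ is built out of $\delta^D_0$, the higher components $\delta^D_1, \delta^D_2, \ldots$ of $D$, the Horseshoe identity maps, and the evident truncation/cone projections among the shifted copies $F^{H(D)}_{\geq i}[i]$ of the anchor.

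Next I would split $d^{\widetilde D} = d^E + \delta'$, where $d^E$ is the part lowering the flag degree by exactly one; collecting the flag-degree-$(-2)$ term of $(d^{\widetilde D})^2 = 0$ shows $(d^E)^2 = 0$, so $E := (\widetilde D, d^E)$ is a complex, and the remainder $\delta'$ drops the flag degree by at least two, hence is a flagged and therefore small perturbation (the same local-nilpotence argument as in the proof of Theorem \ref{thm:degenToHomology}). I would then observe that $E$ strongly deformation-retracts onto the leftmost copy of $F^{H(D)}$ in the diagram: the remaining columns occur in twinned pairs $F^{H(D)}_{\geq i}[i]$, $G^{H(D)}_{\geq i}[i]$ joined by the Horseshoe identity map, i.e. as cones on identity maps, and the homotopy is the one reversing those identities---exactly as in the proof of Theorem \ref{thm:degenToHomology}. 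Applying the first perturbation lemma (Lemma \ref{lem:thePertLemma1}) to this strong deformation retract and the small perturbation $\delta'$ now produces a strong deformation retract $\widetilde D = E_{\delta'} \to (F^{H(D)})_{\delta'_\infty}$, and it is flag-preserving because $\delta'$ lowers the flag degree; so $(F^{H(D)})_{\delta'_\infty}$ is an anchored resolution anchored on $F^{H(D)}$.

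It then remains to identify this retract with $D$ itself, which is the substance of the theorem. Here I would use the explicit formulas of Remark \ref{rk:pertLemmaFormulas} for the perturbed differential---of the shape $\delta^D_0 + \beta\bigl(1 + (\gamma^\bullet) + (\gamma^\bullet)^2 + \cdots\bigr)\alpha$---together with the concrete description, coming from Construction \ref{cons:BoundariesResolution} and Lemma \ref{lem:CycleResolution}, of the maps $\alpha$, $\beta$, $\gamma$ in this particular Cartan--Eilenberg resolution (the cone projection of $F^{H(D)}$ into $F^{B(D)}$, the layerwise projections between the truncations, and the assembly of the $\delta^D_j$), to check that the series telescopes to $\delta^D_0 + \delta^D_1 + \delta^D_2 + \cdots = d^D$. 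Hence $(F^{H(D)})_{\delta'_\infty} = D$, and since a deformation retract is stable under a flag-preserving isomorphism of the retracted object, $D$ is a flag-preserving deformation retract of $\widetilde D$, as claimed. The hard part is precisely this last identification: verifying that the perturbation-lemma series reproduces $D$'s flagged perturbation on the nose forces one to track the sign and shift conventions through Constructions \ref{cons:StaiRes} and \ref{cons:BoundariesResolution} and to keep the two copies $F^{B(D)}$ and $G^{B(D)}$ apart; the rest is formal from the perturbation lemma and the deformation-retract datum already produced in this section.
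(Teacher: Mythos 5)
Your overall strategy is the paper's: build the specific Cartan--Eilenberg resolution $\widetilde D$ of Corollary~\ref{cor:filteredCERes}, apply the perturbation lemma to exhibit a flag-preserving deformation retract of $\widetilde D$ onto a flagged perturbation of $F^{H(D)}$, and then verify via the explicit formulas of Remark~\ref{rk:pertLemmaFormulas} that the resulting differential telescopes to $d^D$. The final telescoping step is also exactly the paper's computation. However, there is a genuine mismatch in the decomposition you choose, and it breaks both the deformation-retract claim and the formula you invoke.

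You split $d^{\widetilde D} = d^E + \delta'$ with $d^E$ the flag-degree-$(-1)$ part and $\delta'$ everything that drops the flag degree by at least two. But in Construction~\ref{cons:StaiRes} the Horseshoe map $\alpha$ lowers the flag degree by exactly one (it sends $F_i^{H} \subset G_i$ to $F_{i-1}^{B} \subset G_{i-1}$), so $\alpha$ lands in your $d^E$. This is \emph{not} the decomposition used in Theorem~\ref{thm:degenToHomology}: there, the base complex $C_\delta$ carries only the block-diagonal differentials plus the Horseshoe identity, while the perturbation is $\delta' = \alpha + \beta + \gamma$, and the local-nilpotence check is performed on $\delta' h$ (not on $\delta'$ itself, since $\alpha$ only drops flag degree by one). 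Two problems follow from your decomposition. First, because $\alpha$ sits in $d^E$, the anchor $F^{H(D)}$ is not a subcomplex of your $E$: the naive inclusion $F^{H(D)} \hookrightarrow E$ fails to be a chain map, so $E$ does not retract onto $F^{H(D)}$ ``exactly as in the proof of Theorem~\ref{thm:degenToHomology}.'' One can repair this with a modified inclusion $i = s_0 - \iota \, s \, \alpha$ using the contracting homotopy $s$ of $F^{B(D)} \oplus G^{B(D)}$, but that is a different deformation-retract datum than the one Remark~\ref{rk:pertLemmaFormulas} feeds into the perturbation lemma. Second, precisely because the DR datum is different, the formula $\delta_0^D + \beta\bigl(1 + (\gamma^\bullet) + (\gamma^\bullet)^2 + \cdots\bigr)\alpha$ of Remark~\ref{rk:pertLemmaFormulas} does not describe the perturbed differential coming out of \emph{your} setup; the contribution of $\alpha$ enters through $i$ and $\delta'$ in a different pattern (and the situation is worse for general $d$, where the Horseshoe identity itself drops flag degree by $d \geq 2$ and so leaves your $d^E$, destroying contractibility of $F^{B(D)} \oplus G^{B(D)}$ inside $E$). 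The fix is to use the paper's decomposition: take the base to be $C_\delta$ and keep $\alpha$ inside the perturbation, verifying smallness via $\delta'h$; then $F^{H(D)}$ is honestly a direct summand complex, the naive inclusion works, Remark~\ref{rk:pertLemmaFormulas} applies verbatim, and the telescoping computation you sketch recovers $d^D$ exactly.
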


\begin{proof}
    Let $D$ be any projective flag anchored on a resolution $F^{H(D)}$. Then we may construct a Cartan--Eilenberg resolution of $D$ as in Corollary \ref{cor:filteredCERes}. The proof then follows by showing that the perturbation lemma applied as in the proof of Theorem \ref{thm:degenToHomology} yields \emph{precisely} the anchored resolution $D$.

    To see this, we use the explicit formulas outlined in Remark \ref{rk:pertLemmaFormulas}. Firstly, note that in our notation the map $\gamma : F^{B(D)} \to F^{B(D)}[-1]$ restricted to a component $F_i^{H(D)}$ of $F^{B(D)}$ is simply the identity map, sending $F_i^{H(D)}$ to the same module but with the homological degree dropped by $1$. The map $\alpha : F^{H(D)} \to F^{B(D)}$ is the projection $F^{H(D)} \to F_{\geq 1}^{H(D)} \subset F^{B(D)} [-1]$, which restricted to a component $F_i^{H(D)}$ is again just the identity map but viewed as dropping the homological degree by $1$. Finally, the map $\beta : F^{B(D)} \to F^{H(D)} [-1]$ restricted to a component $F^{H(D)}_i$ of $F^{H(D)}_{\geq j}$ is the map $\delta_{j-1}^D : F^{H(D)}_i \to F^{H(D)}_{i-j}$.

    With these formulas now in hand, the Cartan--Eilenberg resolution of $D$ admits a deformation retract onto a perturbation of $F^{H(D)}$ with differential given by the formula
    $$\delta_0^D + \beta (1 + (\gamma^\bullet) + (\gamma^\bullet)^2 + \cdots)\alpha.$$
    Let $f_i \in D_i$; then $\alpha (f_i) = f_i$ but viewed in homological degree $i$ of $G^{B(D)}$. Applying $(\gamma^\bullet)^\ell$ to $f_i$ again yields the element $f_i$, but now viewed in homological degree $i- \ell$ of $F_{\geq \ell}^{H(D)}$. The map $\beta$ restricted to this component is precisely the map $\delta_{\ell-1}^D$; it thus follows that the differential of the perturbation of $F^{H(D)}$ is the sum
    $$\delta_0^D + \delta_1^D + \cdots.$$
    Since this is precisely the differential of the original flag $D$, the result follows. 
\end{proof}

\begin{cor}\label{cor:augmentedIdentity}
     Let $D$ be any projective flag and $\eta : \widetilde{D} \xra{\sim} D$ a Cartan--Eilenberg resolution constructed as in Corollary \ref{cor:filteredCERes}. Let $\iota_\infty: D \hookrightarrow \widetilde{D}$ denote the split inclusion realizing $D$ as a deformation retract of $\widetilde{D}$, as in Theorem \ref{thm:anchoredAsDRs}. Then the composition
    $$D \xra{\iota_\infty} \widetilde{D} \xra{\eta} D$$
    is equal to the identity on $D$. 
\end{cor}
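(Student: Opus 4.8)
The plan is to prove the identity by a direct computation, combining the explicit formula for $\iota_\infty$ furnished by the perturbation lemma (Remark \ref{rk:pertLemmaFormulas} together with the proof of Theorem \ref{thm:anchoredAsDRs}) with the explicit description of the Cartan--Eilenberg augmentation from Construction \ref{cons:StaiRes}. Since both $\iota_\infty$ and $\eta$ are morphisms of the underlying $R$-modules, it suffices to check $\eta\circ\iota_\infty = \id_D$ on each flag-graded piece $D_i = F_i^{H(D)}$. Recall that the Cartan--Eilenberg resolution $\widetilde D$ from Corollary \ref{cor:filteredCERes} has underlying module $F^{B(D)}\oplus F^{H(D)}\oplus G^{B(D)}$ (with $F^{B(D)} = \bigoplus_{k\geq1}F^{H(D)}_{\geq k}[k]$ as in Construction \ref{cons:BoundariesResolution} and $G^{B(D)}$ a second copy), and that, as computed in the proof of Theorem \ref{thm:anchoredAsDRs}, $\iota_\infty$ is given by $\id_{F^{H(D)}} + \sum_{\ell\geq0}(\gamma^\bullet)^\ell\alpha$, where $\id_{F^{H(D)}}$ maps into the $F^{H(D)}$-summand of $\widetilde D$, the sum maps into the $G^{B(D)}$-summand, $\alpha$ is the natural projection $F^{H(D)}\twoheadrightarrow F^{H(D)}_{\geq1}$, and $(\gamma^\bullet)^\ell\alpha(f)$ for $f\in D_i$ equals $f$ itself viewed in flag degree $i-\ell$ inside $F^{H(D)}_{\geq\ell}\subset G^{B(D)}$ (so the sum is finite, running over $0\leq\ell\leq i$).

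Next I would pin down $\eta$. By Construction \ref{cons:StaiRes} (see also Theorem \ref{thm:CEresExists}) the only nonzero components of $\eta$ come from the flag-degree-zero parts of the three summands; for the particular construction of Corollary \ref{cor:filteredCERes} one checks that $\eta|_{F^{H(D)}_0}$ is the canonical inclusion $D_0\hookrightarrow D$, that $\eta|_{F^{B(D)}_0}$ restricted to the $D_k$-summand of $F^{B(D)}_0 = \bigoplus_{k\geq1}D_k$ is $d^D|_{D_k}$, and that $\eta|_{G^{B(D)}_0}$ restricted to the $D_k$-summand of $G^{B(D)}_0 = \bigoplus_{k\geq1}D_k$ is the canonical inclusion $D_k\hookrightarrow D$ --- the latter being exactly the lift of the $F^{B(D)}$-augmentation along $d^D$ prescribed by the relation $\eta(F^{B(D)}) = d^D\circ\eta(G^{B(D)})$ used in the proof of Theorem \ref{thm:CEresExists}. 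With these in hand, fix $f\in D_i$. If $i>0$, then $\eta$ kills the copy of $f$ in the $F^{H(D)}$-summand (it sits in flag degree $i>0$) and it kills every term $(\gamma^\bullet)^\ell\alpha(f)$ with $\ell<i$ (those lie in positive flag degree), while the single surviving term $(\gamma^\bullet)^i\alpha(f)$ is $f$ viewed in the $D_i$-summand of $G^{B(D)}_0$, which $\eta$ sends to $f\in D$. If $i=0$, then $\alpha(f)=0$ since $\alpha$ factors through $F^{H(D)}_{\geq1}$, so $\iota_\infty(f)=f$ in the $F^{H(D)}$-summand and $\eta(\iota_\infty(f))=f$ by the inclusion $D_0\hookrightarrow D$. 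In all cases $\eta(\iota_\infty(f))=f$, and therefore $\eta\circ\iota_\infty=\id_D$.

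The main obstacle is the bookkeeping in the middle paragraph: one must verify that, inside the Cartan--Eilenberg resolution produced by Corollary \ref{cor:filteredCERes} from the resolutions $F^{H(D)}$, $F^{B(D)}$ and $F^{Z(D)}$ (Lemma \ref{lem:CycleResolution}), the Horseshoe augmentations really are the ``identity-type'' maps listed above, and that the degree count for $(\gamma^\bullet)^\ell\alpha(f)$ is exactly right so that precisely one term of the geometric series lands in flag degree $0$. Once the shape of $\eta$ and of $\iota_\infty$ on these summands is nailed down, the equality is forced. As throughout Subsection \ref{subsec:anchoredResAsDR}, the computation is written with trivial ambient $\bbz/d\bbz$-grading, and the general case follows verbatim after the same folding process used in the proof of Lemma \ref{lem:CELiftingProperty}.
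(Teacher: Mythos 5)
Your proof is correct and takes essentially the same approach as the paper: apply the explicit perturbation-lemma formula for $\iota_\infty$ from Remark \ref{rk:pertLemmaFormulas} together with the description of the Cartan--Eilenberg augmentation $\eta$ from Construction \ref{cons:StaiRes}, and trace the composition through each flag-graded piece $D_i$. The paper's version is considerably terser; you have filled in the $i=0$ case and the bookkeeping that isolates the single term of the geometric series landing in a flag-degree-zero summand where $\eta$ is nonzero, which is exactly what the paper leaves implicit.
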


\begin{proof}
    This is an immediate consequence of the formula for $\iota_\infty$ outlined in Remark \ref{rk:pertLemmaFormulas}; in that notation, notice that $\gamma$ is simply the identity map, but viewed as dropping the homological degree by $1$ at each step. The augmentation map $\eta : \widetilde{D} \to D$ is simply the identity map when restricted to each term of the form $G_i^{B(D)}  = D_i$, thus composing these two maps yields the identity $D \xra{\id_D} D$. 
\end{proof}

To end this section, we prove a result that addresses the question of choosing a flag-preserving augmentation map from a Cartan--Eilenberg type resolution to any other differential module equipped with a flag structure. It turns out that with a rather mild degeneration assumption on the associated spectral sequence, this can always be done:

\begin{theorem}\label{thm:flagPresAugmentation}
    Let $D \in \flag_{\bbz / d \bbz} (R)$ be an arbitrary flag and assume that the spectral sequence of Lemma \ref{lem:ZZdSS} degenerates on the $E^r$ page for some $r \geq 2$. Then there exists a projective flag resolution $\widetilde{F}$ of $D$ equipped with a flag-preserving augmentation map $\widetilde{F} \xra{\sim} D$.
\end{theorem}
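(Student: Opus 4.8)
The plan is to build $\widetilde F$ as a Cartan--Eilenberg-type resolution, but computed with respect to the filtration that the flag structure on $D$ itself induces, rather than the naive (co)cycle/boundary filtration used in Construction~\ref{cons:StaiRes}; the degeneration hypothesis is exactly what guarantees that this filtered resolution can be chosen compatibly with the flag. Concretely, the flag $0 = D^{-1} \subset D^0 \subset D^1 \subset \cdots$ gives a filtration whose associated graded pieces $D^i / D^{i-1}$ carry the induced differential $\delta_0$, and the spectral sequence of Lemma~\ref{lem:ZZdSS} is the one coming from this filtration. First I would resolve each associated graded piece $D^i/D^{i-1}$ (as a differential module with its induced differential) by a projective flag resolution, say via the Stai/Cartan--Eilenberg construction applied term-by-term; this produces a collection of ``column'' resolutions indexed by the flag degree $i$.

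The key step is to assemble these columns into a single projective flag $\widetilde F$ mapping to $D$ in a flag-preserving way. This is where the degeneration assumption enters: the differentials connecting the columns are built inductively, and at each stage one must lift a class living on the $E^r$-page. Because the spectral sequence degenerates at $E^r$, all the higher differentials $d^s$ for $s > r$ vanish, so the inductive construction of the connecting maps terminates after finitely many ``page'' corrections at each spot, and there is no obstruction to extending the partial resolution one flag-degree further while keeping the augmentation compatible with the filtration of $D$. More precisely, I would set up the construction so that the $p$-th filtration level $\widetilde F^{\le p}$ resolves $D^p$ (as a subflag) with a flag-preserving quasi-isomorphism, and then use the Horseshoe lemma together with the degeneration to splice in the resolution of the next graded piece $D^{p+1}/D^p$; the comparison-theorem-style lifting needed here is exactly the finite-length version because $d^s = 0$ for $s > r$. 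The augmentation $\widetilde F \to D$ is then flag-preserving by construction, since at every stage the maps respect the filtration, and it is a quasi-isomorphism because each graded piece is resolved and the spectral sequence comparison is an isomorphism.

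A cleaner way to organize the same idea, which I would actually prefer to write up, is to proceed by induction on the ``length'' of the flag using mapping cones: given a flag-preserving resolution of the subflag $D^{p}$, one resolves the quotient $D/D^p$ (or the graded piece) and realizes $D$ up to filtration $p+1$ as an extension, then uses the naturality of the perturbation lemma (Lemma~\ref{lem:PerturbationNaturality}) and the lifting property of Cartan--Eilenberg resolutions (Lemma~\ref{lem:CELiftingProperty}) to produce the connecting differential. The degeneration at page $r$ ensures the relevant perturbation series—whose terms correspond to the successive page differentials—is a \emph{finite} sum, hence automatically small, so the perturbation lemma applies without any convergence worry and the resulting perturbed differential genuinely drops the flag degree in the required graded fashion.

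\textbf{The main obstacle} I anticipate is precisely controlling the flag-preservation through the inductive extension: na\"ively assembling column resolutions produces an augmentation that is only filtered up to homotopy (the same failure noted after Lemma~\ref{lem:CELiftingProperty}, where $\widetilde D \to D$ is not flag-preserving), and one must argue that the degeneration hypothesis upgrades ``filtered up to homotopy'' to ``strictly filtered.'' Handling this likely requires choosing the homotopies produced by the comparison theorem to themselves be filtered—possible because the obstruction to doing so lives on the pages beyond $E^r$, which vanish—and then folding these choices into the perturbation data as in the proof of Lemma~\ref{lem:CELiftingProperty}. The bookkeeping for the $\bbz/d\bbz$-grading is routine by the same folding argument used throughout Section~\ref{sec:anchoredRes's}, so I would suppress it and work as if $d = 1$.
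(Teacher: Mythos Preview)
Your strategy---resolve the associated-graded pieces and assemble them inductively, invoking degeneration to control the obstructions---is a plausible direction, but it does not match what the paper does, and the place where you invoke degeneration is not quite right.

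The paper does \emph{not} resolve the filtration quotients $D^i/D^{i-1}$ and splice. Instead it reruns the Cartan--Eilenberg construction (Construction~\ref{cons:StaiRes}) with a modified choice of boundaries and homology that is adapted to the flag on $D$. Concretely, it introduces the ``$r$-th page'' filtration
\[
H(D)_{\le i}^r := Z(D^i)\big/\big(B(D^{i+r-1})\cap Z(D^i)\big),
\]
together with the corresponding boundary-type pieces $B^r_i(D)$ and auxiliary quotients $Q^r_i(D)$, and then applies the Horseshoe lemma to three short exact sequences built from $B(D^{i+r-1})\cap Z(D^i)$, $Z(D^i)$, $H(D)^r_{\le i}$, $B(D^{i+r-1})$, and $D^i$. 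The point is that the resulting resolution is literally a Cartan--Eilenberg resolution in the sense of Definition~\ref{def:CEResolution}, but the flag structure on $\widetilde F$ is re-indexed using the parameter $\ell$ coming from this new filtration, and with this indexing the Stai augmentation is flag-preserving by construction.

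Degeneration enters at one very specific spot: it is exactly the condition that the natural map $B^r_{i+1}(D) \to Z_{i+1}(D)$ is injective, which is what makes $H(D)^r_{\le i} \hookrightarrow H(D)^r_{\le i+1}$ an honest inclusion and hence makes the directed system of short exact sequences well-defined. Your description---that degeneration makes ``page corrections terminate'' or the ``perturbation series finite''---is not the mechanism; flagged perturbations are always locally nilpotent regardless of degeneration, and nothing in the construction involves summing higher-page differentials. The obstacle you flagged (upgrading ``filtered up to homotopy'' to ``strictly filtered'') is real, but the paper sidesteps it entirely by never leaving the Cartan--Eilenberg framework: it changes which exact sequences get Horseshoed rather than trying to correct homotopies after the fact. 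If you wanted to push your inductive-splice approach through, you would need to make precise exactly which lifting obstruction lives on $E^{>r}$ and why; as written that step is an assertion rather than an argument.
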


\begin{proof}
    The proof of this result is similar to the construction of a Cartan--Eilenberg resolution, but with slightly more bookkeeping involved. Using the degeneration of the $E^r$-page of the associated spectral sequence, we can construct an alternative filtration of the homology $H(D)$ which will be better suited for defining a Cartan--Eilenberg type resolution equipped with a flag-preserving augmentation map. Firstly, define
    $$H (D)_{\leq i}^r := Z(D^i) /  \left( B (D^{i+r-1}) \cap Z(D^i) \right) , \quad \text{for all} \ i \geq 0.$$
    We then claim that there are induced inclusions $H(D)_{\leq i}^r \hookrightarrow H(D)_{\leq i+1}^r$ for all $i \geq 0$. Indeed, there is a commutative diagram:
    % https://q.uiver.app/#q=WzAsMjEsWzEsMSwiQihEXntpKzF9KSJdLFszLDEsIkgoRClfe1xcbGVxIGl9Il0sWzIsMSwiWihEXmkpIl0sWzEsMiwiQihEXntpKzJ9KSJdLFsyLDIsIlooRF57aSsxfSkiXSxbMywyLCJIKEQpX3tcXGxlcSBpKzF9Il0sWzEsMywiQl97aSsxfSAoRCkiXSxbMywzLCJIKEQpX3tpKzF9Il0sWzIsMywiWl97aSsxfSAoRCkiXSxbMCwxLCIwIl0sWzEsMCwiMCJdLFsyLDAsIjAiXSxbMywwLCIwIl0sWzQsMSwiMCJdLFs0LDIsIjAiXSxbNCwzLCIwIl0sWzMsNCwiMCJdLFsyLDQsIjAiXSxbMSw0LCIwIl0sWzAsMywiMCJdLFswLDIsIjAiXSxbOSwwXSxbMTAsMF0sWzAsMl0sWzExLDJdLFsyLDFdLFsxMiwxXSxbMSwxM10sWzUsMTRdLFs3LDE1XSxbNywxNl0sWzUsN10sWzEsNV0sWzIsNF0sWzQsOF0sWzgsMTddLFs2LDE4XSxbMyw2XSxbMCwzXSxbMjAsM10sWzMsNF0sWzQsNV0sWzgsN10sWzYsOF0sWzE5LDZdXQ==
\[\begin{tikzcd}
	& 0 & 0 & 0 \\
	0 & {B(D^{i+r-1}) \cap Z(D^i)} & {Z(D^i)} & {H(D)_{\leq i}^r} & 0 \\
	0 & {B(D^{i+r})\cap Z(D^{i+1})} & {Z(D^{i+1})} & {H(D)_{\leq i+1}^r} & 0 \\
	0 & {B_{i+1}^r (D)} & {Z_{i+1} (D)} & {H(D)_{i+1}^r} & 0 \\
	& 0 & 0 & 0
	\arrow[from=1-2, to=2-2]
	\arrow[from=1-3, to=2-3]
	\arrow[from=1-4, to=2-4]
	\arrow[from=2-1, to=2-2]
	\arrow[from=2-2, to=2-3]
	\arrow[from=2-2, to=3-2]
	\arrow[from=2-3, to=2-4]
	\arrow[from=2-3, to=3-3]
	\arrow[from=2-4, to=2-5]
	\arrow[from=2-4, to=3-4]
	\arrow[from=3-1, to=3-2]
	\arrow[from=3-2, to=3-3]
	\arrow[from=3-2, to=4-2]
	\arrow[from=3-3, to=3-4]
	\arrow[from=3-3, to=4-3]
	\arrow[from=3-4, to=3-5]
	\arrow[from=3-4, to=4-4]
	\arrow[from=4-1, to=4-2]
	\arrow[from=4-2, to=4-3]
	\arrow[from=4-2, to=5-2]
	\arrow[from=4-3, to=4-4]
	\arrow[from=4-3, to=5-3]
	\arrow[from=4-4, to=4-5]
	\arrow[from=4-4, to=5-4]
\end{tikzcd}\]
    The top two rows and leftmost two columns are exact in the above diagram by definition, so a standard diagram chase shows that it is sufficient to prove that the induced map $B_i^r (D) \to Z_i (D)$ is injective for all $i \geq 0$. This however follows from the degeneration at the $r$th page of the associated spectral sequence: firstly, define $Z_i^r (D) := \{ x \in D^i \mid d^D (x) \in D^{i-r} \}$; then by definition of the spectral sequence associated to a filtration there is an equality
    $$E^r_i = \frac{ Z_i^r (D) }{\left( D^{i-1} + B(D^{i+r-1}) \right) \cap Z_i^r (D)  }.$$
    The statement that the spectral sequence degenerates on the $r$th page translates into the statement that for all $i \geq 0$, if $x \in D^{i+r-1}$ is such that $d^D (x) \in D^{i-1}$, then $d^D (x) \in B(D^{i+r-2})$. This is of course precisely the statement needed for the induced map $B^r_i (D) \to Z_i (D)$ to be injective, whence we obtain a directed system of short exact sequences:
    % https://q.uiver.app/#q=WzAsMTYsWzEsMSwiQihEXntpK3ItMX0pIFxcY2FwIFooRF5pKSJdLFszLDEsIkgoRClfe1xcbGVxIGl9Il0sWzIsMSwiWihEXmkpIl0sWzEsMiwiQihEXntpKzJ9KSBcXGNhcCBaKERee2krMX0pIl0sWzIsMiwiWihEXntpKzF9KSJdLFszLDIsIkgoRClfe1xcbGVxIGkrMX0iXSxbMSwzLCJcXHZkb3RzIl0sWzMsMywiXFx2ZG90cyJdLFsyLDMsIlxcdmRvdHMiXSxbMCwxLCIwIl0sWzEsMCwiXFx2ZG90cyJdLFsyLDAsIlxcdmRvdHMiXSxbMywwLCJcXHZkb3RzIl0sWzQsMSwiMCJdLFs0LDIsIjAiXSxbMCwyLCIwIl0sWzksMF0sWzEwLDBdLFswLDJdLFsxMSwyXSxbMiwxXSxbMTIsMV0sWzEsMTNdLFs1LDE0XSxbNSw3XSxbMSw1XSxbMiw0XSxbNCw4XSxbMyw2XSxbMCwzXSxbMTUsM10sWzMsNF0sWzQsNV1d
\[\begin{tikzcd}
	& \vdots & \vdots & \vdots \\
	0 & {B(D^{i+r-1}) \cap Z(D^i)} & {Z(D^i)} & {H(D)_{\leq i}^r} & 0 \\
	0 & {B(D^{i+r}) \cap Z(D^{i+1})} & {Z(D^{i+1})} & {H(D)_{\leq i+1}^r} & 0 \\
	& \vdots & \vdots & \vdots
	\arrow[from=1-2, to=2-2]
	\arrow[from=1-3, to=2-3]
	\arrow[from=1-4, to=2-4]
	\arrow[from=2-1, to=2-2]
	\arrow[from=2-2, to=2-3]
	\arrow[from=2-2, to=3-2]
	\arrow[from=2-3, to=2-4]
	\arrow[from=2-3, to=3-3]
	\arrow[from=2-4, to=2-5]
	\arrow[from=2-4, to=3-4]
	\arrow[from=3-1, to=3-2]
	\arrow[from=3-2, to=3-3]
	\arrow[from=3-2, to=4-2]
	\arrow[from=3-3, to=3-4]
	\arrow[from=3-3, to=4-3]
	\arrow[from=3-4, to=3-5]
	\arrow[from=3-4, to=4-4]
\end{tikzcd}\]
    where all vertical arrows are inclusions. Taking the colimit thus amounts to taking a nested union, and taking the union of the leftmost terms yields $B(D)$ and $Z(D)$, respectively, whence we deduce
    $$\colim_i H(D)^r_{\leq i} = H(D).$$
    Choose projective resolutions $F^{B_i^r (D)}$ and $F^{H(D)^r_i}$ of $B_i^r (D)$ and $H(D)^r_i$, respectively, for all $i \geq 0$. Applying the Horseshoe lemma it follows that there exist directed systems of resolutions $F^{B(D^{i+r-1}) \cap Z (D^i)}$ and $F^{H(D)_{\leq i}^r}$ with
    $$F^{B(D^{i+r-1}) \cap Z (D^i)} \underbrace{=}_{\text{as} \ R\text{-modules}} \bigoplus_{j =0}^i F^{B_j^r (D)} \quad \text{and} \quad F^{H(D)_{\leq i}^r} \underbrace{=}_{\text{as} \ R\text{-modules}} \bigoplus_{j=0}^i  F^{H(D)^r_j}.$$
    Consider the short exact sequences:
    \begin{equation}\label{eqn:ses1}
         0 \to B(D^{i+r-1}) \cap Z(D^i) \to Z(D^i) \to H(D)_{\leq i}^r \to 0,
    \end{equation}
    \begin{equation}\label{eqn:ses2}
        0 \to B(D^{i+r-1}) \cap Z(D^i) \to B(D^{i+r-1}) \to Q_i^r (D) \to 0
    \end{equation}
    \begin{equation}\label{eqn:ses3}
        0 \to Z(D^i) \to D^i \to B(D^{i-1}) \to 0.
    \end{equation}
    Choosing a directed system of resolutions $F^{Q_i^r (D)}$ (where $Q_i^r (D) := \coker (B(D^{i+r-1}) \cap Z(D^i) \to B(D^{i+r-1}))$) for all $i \geq 0$ and employing the Horseshoe lemma on all of the above short exact sequences, it follows that there exists a directed system of resolutions $F^{D^i}$ of $D^i$ for all $i \geq 0$ with
    $$F^{D^i} \underbrace{=}_{\text{as} \ R\text{-modules}} F^{B(D^{i+r-1}) \cap Z (D^i)} \oplus F^{H(D)^r_{\leq i}} \oplus G^{B(D^{i+r-1}) \cap Z (D^i)} \oplus F^{Q_i^r (D)}.$$
    In the above, we employ the usual convention that $F^{B(D^{i+r-1}) \cap Z (D^i)}$ and $G^{B(D^{i+r-1}) \cap Z (D^i)}$ are the same complex, but we use different notation to distinguish the two copies. Next, consider taking the colimit of system of short exact sequence of \ref{eqn:ses2}. Since the colimits of the leftmost two modules are equal, it follows that $\colim_i Q_i^r (D) = 0$ and thus $\colim_i F^{Q_i^r (D)}$ is a contractible complex. Thus, as $R$-modules the resolution $\colim_i F^{D^i} := F^D$ is homotopy equivalent to a complex with underlying $R$-module $F^{B(D)} \oplus F^{H(D)} \oplus G^{B(D)}$. 

    Define a projective flag $\widetilde{F}$ with flag structure defined via
    $$\widetilde{F}_i := \bigoplus_{j + \ell = i} \left( F_j^{B_\ell^r (D)} \oplus F_j^{H(D)_\ell^r} \oplus G_{j-r}^{B_\ell^r (D)} \right),$$
    and a differential $d^{\widetilde{F}}$ on $\widetilde{F}$ defined in a manner identical to that of Construction \ref{cons:StaiRes}:
    $$d^{\widetilde{F}} := \begin{pmatrix}
        d^{F^{B(D)}} & \alpha & \id_{F^{B(D)}} + \gamma \\
        0 & d^{F^{H(D)}} & \beta \\
        0 & 0 & d^{G^{B(D)}} 
    \end{pmatrix}.$$
    Note that the projective flag $\widetilde{F}$ is precisely a Cartan--Eilenberg resolution of $D$ with the only difference now being that the flag structure has been chosen so that the augmentation map $\widetilde{F} \xra{\sim} D$ of Construction \ref{cons:StaiRes} is flag-preserving. This yields the desired result.

\end{proof}

\begin{remark}
     Notice that the flag resolution obtained in Theorem \ref{thm:flagPresAugmentation} is not necessarily an anchored resolution, and indeed, it is not always possible to construct an anchored resolution that comes equipped with a flag-preserving augmentation map. Moreover, one cannot just apply the perturbation lemma to the projective flag resolution $\widetilde{F}$ na\"ively to obtain a smaller projective flag resolution equipped with a flag-preserving augmentation map. This is because the component $\id_{F_j^{B_\ell^r (D)}} :  G_j^{B_\ell^r (D)} \to F_j^{B_\ell^r (D)}$ drops the filtration degree by $r$, and thus after applying the perturbation lemma we will obtain a differential module $\widetilde{D}$ equipped with a filtration that instead satisfies
     $$d^{\widetilde{D}} (\widetilde{D}^i ) \subset \widetilde{D}^{i+r-2},$$
     and likewise the augmentation map $\eta : \widetilde{D} \to D$ will satisfy $\eta (\widetilde{D}^{i}) \subset D^{i+r-2}$. Thus $\widetilde{D}$ does not necessarily have a flag structure after employing the perturbation lemma. 
\end{remark}

% \begin{remark}
%     It will be convenient to record the additional deformation retract data associated to the Cartan--Eilenberg resolution of a projective flag explicitly. Using Remark \keller{ref} combined with the formulas for $\alpha$, $\beta$, and $\gamma$ described in the proof of Theorem \keller{ref} this is a straightforward task.
    
%     The projection $p_\infty$ is given by
%     $$p_{\infty}|_{G^{B (D)}} = 0, \quad p_{\infty}|_{F^{H (D)}} = \id_{F^{H_j}}, \quad p_\infty |_{F_{\geq j}^{H(D)}} = \delta_{j-1}^D.$$
%     The inclusion $\iota_\infty$ is given by
% \end{remark}

\subsection{Proof of Theorem \ref{thm:mainAnchoredResThm}}\label{subsec:proofOfAnchoredThm}

In this section, we prove Theorem \ref{thm:mainAnchoredResThm}. Indeed, many of the ingredients have already been established above, and the proof is just a matter of combining all of these ingredients.

\begin{proof}[Proof of Theorem \ref{thm:mainAnchoredResThm}]
    \textbf{Existence:} The existence of anchored resolutions is ensured by Theorem \ref{thm:degenToHomology}.

    \textbf{Lifting property:} Let $\phi : D \to D'$ be any morphism of differential modules with projective flag resolutions $F \xra{\sim} D$ and $F' \xra{\sim} D'$. By Lemma \ref{lem:naiveLiftingLemma} there exists a (not necessarily flag-preserving) morphism $\psi : F \to F'$ making the following diagram commute up to homotopy: 
    % https://q.uiver.app/#q=WzAsNCxbMCwwLCJGIl0sWzIsMCwiRiciXSxbMCwyLCJEIl0sWzIsMiwiRCciXSxbMCwxLCJcXHBzaSJdLFswLDIsIlxcc2ltIiwyXSxbMSwzLCJcXHNpbSJdLFsyLDMsIlxccGhpIl1d
\[\begin{tikzcd}
	F && {F'} \\
	\\
	D && {D'}
	\arrow["\psi", from=1-1, to=1-3]
	\arrow["\sim"', from=1-1, to=3-1]
	\arrow["\sim", from=1-3, to=3-3]
	\arrow["\phi", from=3-1, to=3-3]
\end{tikzcd}\]
    Choose Cartan--Eilenberg resolutions $\widetilde{F}$ and $\widetilde{F'}$ of $F$ and $F'$, respectively. Lemma \ref{lem:CELiftingProperty} ensures that there exists a flag-preserving morphism $\widetilde{\psi} : \widetilde{F} \to \widetilde{F'}$ so that the following diagram also commutes up to homotopy:
    % https://q.uiver.app/#q=WzAsNixbMCwyLCJGIl0sWzIsMiwiRiciXSxbMCw0LCJEIl0sWzIsNCwiRCciXSxbMCwwLCJcXHdpZGV0aWxkZXtGfSJdLFsyLDAsIlxcd2lkZXRpbGRle0YnfSJdLFswLDEsIlxccHNpIl0sWzAsMiwiXFxzaW0iLDJdLFsxLDMsIlxcc2ltIl0sWzIsMywiXFxwaGkiXSxbNCwwXSxbNSwxXSxbNCw1LCJcXHdpZGV0aWxkZXtcXHBzaX0iLDJdXQ==
\[\begin{tikzcd}
	{\widetilde{F}} && {\widetilde{F'}} \\
	\\
	F && {F'} \\
	\\
	D && {D'}
	\arrow["\psi", from=3-1, to=3-3]
	\arrow["\sim"', from=3-1, to=5-1]
	\arrow["\sim", from=3-3, to=5-3]
	\arrow["\phi", from=5-1, to=5-3]
	\arrow[from=1-1, to=3-1]
	\arrow[from=1-3, to=3-3]
	\arrow["{\widetilde{\psi}}"', from=1-1, to=1-3]
\end{tikzcd}\]
    Let $\iota_\infty^F : F \to \widetilde{F}$ and $p_\infty^{F'} : \widetilde{F'} \to F'$ denote the flag-preserving deformation retract data ensured by Theorem \ref{thm:degenToHomology} and define the map 
    $$\widetilde{\phi} := p_\infty^{F'} \circ \widetilde{\psi} \circ \iota_\infty^F : F \to F'.$$
    By construction, this is a flag preserving morphism and a combination of Lemma \ref{lem:HtpyEquivAndCommutingDiagrams} and Corollary \ref{cor:augmentedIdentity} ensures that the following diagram commutes up to homotopy:
    % https://q.uiver.app/#q=WzAsNCxbMCwwLCJGIl0sWzIsMCwiRiciXSxbMCwyLCJEIl0sWzIsMiwiRCciXSxbMCwxLCJcXHdpZGV0aWxkZXtcXHBoaX0iXSxbMCwyLCJcXHNpbSIsMl0sWzEsMywiXFxzaW0iXSxbMiwzLCJcXHBoaSJdXQ==
\[\begin{tikzcd}
	F && {F'} \\
	\\
	D && {D'}
	\arrow["{\widetilde{\phi}}", from=1-1, to=1-3]
	\arrow["\sim"', from=1-1, to=3-1]
	\arrow["\sim", from=1-3, to=3-3]
	\arrow["\phi", from=3-1, to=3-3]
\end{tikzcd}\]
   Note: a key point in the above is that the augmentation maps are \emph{the same} as the originally given augmentation maps. This is due to the fact that $\iota_\infty^F$ composed with the augmentation map $\widetilde{F} \to F$ is precisely the identity $\id_F$ (likewise for $F'$) by Corollary \ref{cor:augmentedIdentity}. 
    
    \textbf{Uniqueness up to homotopy:} Let $F$ and $F'$ denote two anchored resolutions of a fixed differential module $D$. Choose Cartan--Eilenberg resolutions $\widetilde{F}$ and $\widetilde{F'}$ of $F$ and $F'$, respectively. Corollary \ref{cor:uniqueCEHtpy} ensures that there exists a flag-preserving homotopy equivalence
    % https://q.uiver.app/#q=WzAsMixbMCwwLCJoLCBcXHF1YWRcXHdpZGV0aWxkZXtGfSJdLFsyLDAsIlxcd2lkZXRpbGRle0YnfSwgXFxxdWFkIGgnIl0sWzAsMSwiXFxwaGkiLDAseyJjdXJ2ZSI6LTN9XSxbMSwwLCJcXHBzaSIsMCx7ImN1cnZlIjotM31dXQ==
\[\begin{tikzcd}
	{h, \quad\widetilde{F}} && {\widetilde{F'}, \quad h'}
	\arrow["\phi", curve={height=-18pt}, from=1-1, to=1-3]
	\arrow["\psi", curve={height=-18pt}, from=1-3, to=1-1]
\end{tikzcd}\]
    Since $F$ and $F'$ arise as flag-preserving deformation retracts of $\widetilde{F}$ and $\widetilde{F'}$, respectively, Lemma \ref{lem:HtpyEquivAndCommutingDiagrams} implies that there is an induced flag-preserving homotopy equivalence between $F$ and $F'$. 
\end{proof}

\begin{remark}
    Notice that there is a subtle shift in logic that does not allow one to deduce Theorem \ref{thm:mainAnchoredResThm} from Theorem \ref{thm:functorialDegen}. In Theorem \ref{thm:functorialDegen}, the input data is a fixed morphism of differential modules, and the output is a \emph{choice} of augmentations and a flag-preserving morphism of anchored resolutions making the relevant diagram commute. Theorem \ref{thm:mainAnchoredResThm} requires a much stricter version of Theorem \ref{thm:functorialDegen} to deduce uniqueness up to homotopy: we need to be able to alter \emph{any} morphism of anchored resolutions without changing the associated augmentation maps, which is why realizing any anchored resolution as a flag-preserving deformation retract of a Cartan--Eilenberg resolution was necessary. 
    
    One of the other interesting aspects of Theorem \ref{thm:mainAnchoredResThm} is that this uniqueness up to homotopy is also guaranteed to hold in the category $\flag_{\bbz / d \bbz} (R)$ and $\pflag_{\bbz / d \bbz} (R)$, so there is no risk of picking up ``new" maps as in Example \ref{ex:nonFullExample} when the flag-preserving assumption is dropped.
\end{remark}

We are able to deduce the following corollary from the proof of Theorem \ref{thm:mainAnchoredResThm}.

\begin{cor}\label{cor:flagPresToHtpy}
    Let $\phi : D \to D'$ be any morphism of anchored resolutions. Then $\phi$ is flag-preserving up to homotopy.
\end{cor}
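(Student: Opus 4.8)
The plan is to run the ``lifting property'' step of the proof of Theorem~\ref{thm:mainAnchoredResThm} with $F=D$, $F'=D'$ and $\psi=\phi$, simply dropping the appeal to Lemma~\ref{lem:naiveLiftingLemma}: since $D$ and $D'$ are already anchored (hence projective flag) resolutions, the morphism $\phi$ is its own ``na\"ive lift'' and there is nothing to construct at that stage. So I would first apply Corollary~\ref{cor:filteredCERes} to produce Cartan--Eilenberg resolutions $\eta^D\colon \widetilde D \xra{\sim} D$ and $\eta^{D'}\colon \widetilde{D'}\xra{\sim} D'$ of the explicit form described there, and then invoke Theorem~\ref{thm:anchoredAsDRs} to obtain flag-preserving deformation retract data, with split inclusions $\iota_\infty^D\colon D\hookrightarrow\widetilde D$, $\iota_\infty^{D'}\colon D'\hookrightarrow\widetilde{D'}$ and retractions $p_\infty^D,p_\infty^{D'}$, realizing $D$ and $D'$ as flag-preserving deformation retracts of $\widetilde D$ and $\widetilde{D'}$.

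Next I would apply Lemma~\ref{lem:CELiftingProperty} to get a flag-preserving morphism $\widetilde\phi\colon\widetilde D\to\widetilde{D'}$ with $\eta^{D'}\widetilde\phi=\phi\,\eta^D$, and then set
$$\psi \;:=\; p_\infty^{D'}\circ\widetilde\phi\circ\iota_\infty^D\colon D\to D'.$$
This $\psi$ is flag-preserving, being a composite of flag-preserving maps. To see that $\psi\simeq\phi$, the one genuinely substantive input is the identity $\eta^{D'}\simeq p_\infty^{D'}$ of maps $\widetilde{D'}\to D'$: combining $\iota_\infty^{D'}p_\infty^{D'}\simeq\id_{\widetilde{D'}}$ (the deformation retract homotopy) with $\eta^{D'}\iota_\infty^{D'}=\id_{D'}$ (Corollary~\ref{cor:augmentedIdentity}) gives $\eta^{D'}\simeq\eta^{D'}\iota_\infty^{D'}p_\infty^{D'}=p_\infty^{D'}$. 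Using this together with compatibility of homotopies with composition (Lemma~\ref{lem:HtpyEquivAndCommutingDiagrams}), the relation $\eta^{D'}\widetilde\phi=\phi\,\eta^D$, and $\eta^D\iota_\infty^D=\id_D$ (again Corollary~\ref{cor:augmentedIdentity}), one computes
$$\psi \;=\; p_\infty^{D'}\widetilde\phi\,\iota_\infty^D \;\simeq\; \eta^{D'}\widetilde\phi\,\iota_\infty^D \;=\; \phi\,\eta^D\iota_\infty^D \;=\; \phi,$$
which is exactly the assertion that $\phi$ is flag-preserving up to homotopy.

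The only non-formal ingredients are Theorem~\ref{thm:anchoredAsDRs} (any anchored resolution is a flag-preserving deformation retract of a Cartan--Eilenberg resolution) and Corollary~\ref{cor:augmentedIdentity} ($\iota_\infty$ composed with the canonical augmentation is the identity), both already established; everything else is bookkeeping with composites and homotopies. The point that requires a moment's care --- and the reason one must use the deformation-retract picture rather than merely a flag-preserving homotopy equivalence $D\simeq\widetilde D$ --- is that the argument must keep the augmentation maps $\eta^D,\eta^{D'}$ fixed on the nose, so that $\psi$ really agrees with $\phi$ up to homotopy and not merely with some homotopic perturbation of $\phi$.
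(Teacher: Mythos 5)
Your proposal is correct and follows essentially the same route as the paper: specialize the lifting-property step of Theorem \ref{thm:mainAnchoredResThm} to the case $F=D$, $F'=D'$ (so the na\"ive lift is $\phi$ itself), pass to Cartan--Eilenberg resolutions via Corollary \ref{cor:filteredCERes}/Theorem \ref{thm:anchoredAsDRs}, lift by Lemma \ref{lem:CELiftingProperty}, and retract using the flag-preserving deformation retract data plus Corollary \ref{cor:augmentedIdentity}. You just make explicit the bookkeeping (the chain $\eta^{D'}\simeq \eta^{D'}\iota_\infty^{D'}p_\infty^{D'}=p_\infty^{D'}$ and the ensuing homotopies) that the paper compresses into the citation of Lemma \ref{lem:HtpyEquivAndCommutingDiagrams} together with the proof of Theorem \ref{thm:mainAnchoredResThm}, and you correctly flag the crucial role of $\eta\circ\iota_\infty=\id$ in keeping the augmentations fixed on the nose.
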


\begin{proof}
    The proof of the lifting property of Theorem \ref{thm:mainAnchoredResThm} combined with Lemma \ref{lem:HtpyEquivAndCommutingDiagrams} imply that the diagram
    % https://q.uiver.app/#q=WzAsNCxbMCwwLCJGIl0sWzIsMCwiRiciXSxbMCwyLCJGIl0sWzIsMiwiRiciXSxbMCwxLCJcXHdpZGV0aWxkZXtcXHBoaX0iLDJdLFswLDIsIiIsMix7ImxldmVsIjoyLCJzdHlsZSI6eyJoZWFkIjp7Im5hbWUiOiJub25lIn19fV0sWzEsMywiIiwwLHsibGV2ZWwiOjIsInN0eWxlIjp7ImhlYWQiOnsibmFtZSI6Im5vbmUifX19XSxbMiwzLCJcXHBzaSJdXQ==
\[\begin{tikzcd}
	F && {F'} \\
	\\
	F && {F'}
	\arrow["{\widetilde{\phi}}"', from=1-1, to=1-3]
	\arrow[Rightarrow, no head, from=1-1, to=3-1]
	\arrow[Rightarrow, no head, from=1-3, to=3-3]
	\arrow["\psi", from=3-1, to=3-3]
\end{tikzcd}\]
    commutes up to homotopy. In other words, $\psi$ is flag preserving up to homotopy.
\end{proof}

In view of the utility of realizing a projective flag as a deformation retract of a Cartan--Eilenberg resolution, we pose the following question to end this subsection:

\begin{question}
    When does a projective flag arise as a deformation retract of a Cartan--Eilenberg resolution? 
\end{question}

\section{Applications to K-Theory of Linear Factorizations and Rank Conjectures}\label{sec:KtheoryAndRank}

In this section, we finally apply the results on anchored resolutions established throughout this paper to the analogue of the total rank conjecture for free flags. We begin by recalling the definition of a cyclic Adams operation as defined in \cite{brown2017adams,brown2017cyclic}, as well as some other basic facts related to free flags and their homological properties. 

\subsection{Cyclic Adams Operations on $\bbz / 2 \bbz$-graded Differential Modules}\label{subsec:cyclicAdamsOps}

We recall the definition of cyclic Adams operations, as well as establish some more basic properties of free flags. We then introduce the operator $\mathbf{t}^k_\zeta$, which can be viewed as a sort of ``derived" eigenspace operator (similar to one introduced in \cite{brown2017adams}). We then prove that, when restricted to homologically finite differential modules, these operators see no difference between a differential module and its homology after taking Euler characteristic. 

We first begin by establishing some standard notation that we will use throughout this section.

\begin{definition}
    Let $D \in \dm_{\bbz / d \bbz} (R)$ be any $\bbz / d \bbz$-graded differential module with finite length homology. We use the notation $h_i (D) := \ell_R (H_i (D))$. The \defi{Euler characteristic} is defined via
    $$\chi (D) := \sum_{i \in \bbz / d \bbz} (-1)^i h_i (D).$$
    The total homology is defined via
    $$h (D) := \sum_{i \in \bbz / d \bbz} h_i (D).$$
\end{definition}

\begin{lemma}\label{lem:tensorInequality}
    Let $D, D' \in \dm_{\bbz / d \bbz} (R\proj)$ be two projective flags of finite class, where $d \neq 1$. If $D$ has finite length homology, then there is an inequality
    $$h (D \otimes_R D') \leq h(D) \cdot \rank_R (D').$$
    In particular, the tensor product $D \otimes_R D'$ also has finite length homology.
\end{lemma}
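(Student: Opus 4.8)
The plan is to reduce to the case of a local ring carrying a \emph{finite} flag on $D'$, and then run a dévissage on the flag length of $D'$.

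First I would localize. Since $H(D)$ has finite length, $\supp_R H(D)$ is a finite set of maximal ideals, and for $\fm \notin \supp_R H(D)$ the localization $D_\fm$ is an acyclic projective flag over $R_\fm$, hence contractible: its anchor is a bounded‑below acyclic complex of projectives (so contractible) and the flagged perturbation is small, so Lemma \ref{lem:thePertLemma1} makes $D_\fm$ itself contractible. Thus $D_\fm \otimes_{R_\fm} D'_\fm$ is contractible, so $\supp_R H(D \otimes_R D') \subseteq \supp_R H(D)$, and using additivity of length over maximal ideals together with $(D\otimes_R D')_\fm = D_\fm \otimes_{R_\fm} D'_\fm$, the inequality over $R$ would follow from the same inequality over each $R_\fm$ (note $\rank_{R_\fm} D'_\fm \le \rank_R D'$). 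So I would assume $R$ is local, whence the projective module $D'$ is free. If $\rank_R D' = \infty$ the asserted inequality is vacuous, and for the final sentence in that case I would replace $D'$ by a quasiminimal anchored resolution $F' \xra{\sim} D'$, which is a \emph{finite} projective flag (Theorem \ref{thm:IntroQuasiminimal}, using that $H(D')$ is finitely generated and $D'$ has finite class); since $D$ is flat, $D\otimes_R F' \xra{\sim} D\otimes_R D'$, reducing to the finite‑rank case. So I would also assume $\rank_R D' < \infty$; then $D'$ is finitely generated, its exhaustive flag filtration $0 = (D')^{-1} \subseteq (D')^0 \subseteq \cdots$ stabilizes, and $D' = (D')^m$ for some $m$.

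Now I would induct on $m$. For $m \le 0$, $D'$ is a free module of rank $r := \rank_R D'$ in a single flag degree with zero differential, so $D \otimes_R D' \cong D^{\oplus r}$ as differential modules (the flag shift does not affect the total sum over $\bbz/d\bbz$), giving $h(D\otimes_R D') = r\cdot h(D)$. For $m \ge 1$, the flag filtration supplies a short exact sequence of $\bbz/d\bbz$‑graded flags
\[ 0 \to (D')^{m-1} \to D' \to D'_m \to 0, \]
where $D'_m := (D')^m/(D')^{m-1}$ is free with zero differential; this sequence is $R$‑split, so $D\otimes_R -$ preserves exactness, yielding a short exact sequence of $\bbz/d\bbz$‑graded differential modules. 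Its associated long exact homology sequence—which wraps around and is $3d$‑periodic in the $\bbz/d\bbz$‑graded setting (a hexagon when $d=2$)—gives, for each $i \in \bbz/d\bbz$,
\[ \ell_R\!\big(H_i(D\otimes_R D')\big) \le \ell_R\!\big(H_i(D\otimes_R (D')^{m-1})\big) + \ell_R\!\big(H_i(D\otimes_R D'_m)\big). \]
Summing over $i \in \bbz/d\bbz$, applying the inductive hypothesis to the projective flag $(D')^{m-1}$ (of flag length $<m$) and the base case to $D'_m$, and using rank‑additivity $\rank_R D' = \rank_R (D')^{m-1} + \rank_R D'_m$ from the split filtration, I would obtain $h(D\otimes_R D') \le h(D)\cdot\rank_R D'$; finiteness of $H(D\otimes_R D')$ is then immediate, proving the "in particular".

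I do not expect a genuinely deep step; the content lies in organizing the reduction to the finite local situation, after which the dévissage is one line. The one place requiring care is the interaction of localization with $\rank_R$: it is cleanest to localize \emph{before} unwinding the flag, so that over the local ring everything in sight is free and "rank" is genuinely additive along the split filtration—arguing globally risks a false additivity of generic rank. A second routine point is writing down the homology long exact sequence correctly in the $\bbz/d\bbz$‑graded setting (it is $3d$‑periodic), and a third is simply noting where the hypothesis $d \neq 1$ is used: only to guarantee that $D\otimes_R D'$ is defined as a $\bbz/d\bbz$‑graded differential module at all, since $\dm(R) = \dm_{\bbz/1\bbz}(R)$ carries no monoidal structure.
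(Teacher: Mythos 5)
Your argument takes a genuinely different route from the paper's. The paper equips $D \otimes_R D'$ directly with a flag structure, setting $(D \otimes_R D')_{t,\ell} = \bigoplus_s D_{i,s} \otimes_R D'_{j,\ell-s}$ with $t = i + jr$ (where $r$ is the class of $D$), and reads the inequality off from the $E^{r+1}$ page of the spectral sequence of Lemma \ref{lem:ZZdSS}, whose entries are $\bigoplus_s H_{i,s}(D) \otimes_R D'_{j,\ell - s}$. You instead localize to reduce to a local ring and then run a d\'evissage over the flag length of $D'$ via the $d$-periodic long exact homology sequence. Both are sound in the finite-rank setting, which is the only one the paper uses. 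Your route is more elementary (no spectral sequence) and, interestingly, never uses finiteness of the class of $D$; the paper's route exhibits the explicit degeneration at $E^{r+1}$, which it needs later in Remark \ref{rk:tensorDiffs} to analyze when equality holds.

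There is, however, one genuine error in the reasoning as written. To show $D_\fm$ is contractible when $\fm \notin \supp_R H(D)$, you assert that the anchor of $D_\fm$ is an acyclic complex of projectives and then invoke Lemma \ref{lem:thePertLemma1}. But acyclicity of a projective flag does \emph{not} imply acyclicity of its anchor: the homology of the anchor is the $E^2$ page of the spectral sequence of Lemma \ref{lem:ZZdSS}, and vanishing of the abutment does not force the $E^2$ page to vanish. A concrete example is $D = R b_0 \oplus R b_2$ with $b_i$ in flag degree $i$ and $d^D(b_2) = b_0$: this is an acyclic projective flag whose anchor $\delta_0^D$ is identically zero. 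The correct justification is Lemma \ref{lem:contractibilityLemma} (that is, \cite[Prop.~6.4]{felix2012rational}): $0 \to D_\fm$ is a quasi-isomorphism of projective flags, hence a homotopy equivalence, so $D_\fm$ is contractible. With that substitution your localization step goes through and the rest of the d\'evissage is correct.

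A smaller issue is your treatment of the ``in particular'' clause when $\rank_R D' = \infty$: replacing $D'$ by a quasiminimal anchored resolution produces a \emph{bounded} flag only when $\pd_R H(D') < \infty$, a hypothesis the lemma does not impose, so the proposed reduction does not actually close that case. Since the lemma is only ever applied with bounded $D'$, this is harmless in practice, but as written the reduction is not valid.
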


\begin{proof}
    Let $r$ denote the class of $D$ and define a flag structure on $D \otimes_R D'$ as follows: given an integer $t \geq 0$, we can uniquely write $t = i + jr$ for $0 \leq i \leq r-1$. Then for any $\ell \in \bbz / d \bbz$ define
    $$(D \otimes_R D')_{t,\ell} := \bigoplus_{s = 0}^{d-1} D_{i,s} \otimes_R D'_{j,\ell-s}.$$
    The grading here is chosen to ``prioritize" the first tensor factor $D$ over $D'$. Moreover, by the class assumption on $D$ combined with the spectral sequence of Lemma \ref{lem:ZZdSS} associated to this filtration, there is a spectral sequence
    $$E^{r+1}_{i + jd , \ell} = \bigoplus_{s=0}^{d-1} H_{i,s} (D) \otimes_R D'_{j,\ell-s} \implies H_\ell (D \otimes_R D').$$
    Thus $H (D \otimes_R D')$ is a subquotient of a finite length $R$-module, and taking $R$-module lengths yields the desired inequality.
\end{proof}

\begin{remark}\label{rk:tensorDiffs}
    Note that even though the $r$th page of the above spectral sequence consists of direct sums of terms of the form $H_{i,s} (D) \otimes_R D'_{j,\ell-s}$, the differentials are not literally of the form $\id_{H_{i,s} (D)} \otimes d^{D'}$.

    This is prominent in the $\bbz / 2 \bbz$-perturbation of the Koszul complex given in Example \ref{ex:perturbedKoszul}. In a little more detail, view $K_\bullet$ as the exterior algebra on a free $R$-module with basis $e_1 , e_2 , e_3$, where the differential is induced by sending $e_i \mapsto x_i$ for $i=1 , 2,3$. At some point in the associated spectral sequence, we must compute the homology of a complex of the form
    % https://q.uiver.app/#q=WzAsNCxbMCwwLCJcXGtrIFxcb3RpbWVzX1IgS18zIl0sWzEsMCwiXFxrayBcXG90aW1lc19SIEtfMiJdLFsyLDAsIlxca2sgXFxvdGltZXNfUiBLXzEiXSxbMywwLCJcXGtrIFxcb3RpbWVzX1IgS18wIl0sWzAsMywiXFxkZWx0YSIsMCx7ImN1cnZlIjotNH1dXQ==
\[\begin{tikzcd}
	{\kk \otimes_R K_3} & {\kk \otimes_R K_2} & {\kk \otimes_R K_1} & {\kk \otimes_R K_0}
	\arrow["\sigma", curve={height=-24pt}, from=1-1, to=1-4]
\end{tikzcd}\]
    If the map $\sigma$ were merely induced by $\kk \otimes \delta$ (where $\delta$ is the original perturbation of $K_\bullet$) then we would find that there is an equality $h(K_\delta \otimes_R K_\delta) = 6$, which would be a contradiction to the proof of Theorem \ref{thm:ZZ2TRC} proved later. Instead, this map is induced by the differential of $K_\delta \otimes_R K_\delta$ applied to the element
    $$e_1 \w e_2 \w e_3 \otimes 1 - \Delta_{2,1} (e_1 \w e_2 \w e_3) + \Delta_{1,2} (e_1 \w e_2 \w e_3) - 1 \otimes e_1 \w e_2 \w e_3 \in (K_\delta \otimes_R K_\delta)_3,$$
    where $\Delta_{i,j} : \bigwedge^{i+j} \to \bigwedge^i \otimes \bigwedge^j$ is a homogeneous component of the exterior algebra comultiplication. The induced page differential would be the image of the component of the above element mapping to $K_0 \otimes_R K_0$, which is precisely 
    $$\delta (e_1 \w e_2 \w e_3) \otimes 1 - 1 \otimes \delta( e_1 \w e_2 \w e_3) = 1 \otimes 1 - 1 \otimes 1 = 0.$$
    Thus $\sigma = 0$ and there is actually an equality $h (K_\delta \otimes K_\delta) = h(K_\delta) \rank_R (K_\delta) = 8$.

    The reason we mention this is that, classically, the equality $h (D \otimes_R D') = h(D) \rank_R D'$ for \emph{complexes} of free modules implies that the homology $H(D)$ must annihilate all entries of the differentials of $D'$. This detail is a key fact used to prove the ``extended" formulation of the Total Rank Conjecture \cite[Conjecture 1.11]{vandebogert2023total}, which ensures that the smallest possible resolution with finite length homology is in fact the Koszul complex on a system of parameters. In the $\bbz / d \bbz$-graded setting (with $d > 1$) the most we can say is that the equality $h(D \otimes_R D') = h(D) \rank_R D'$ implies that the homology $H(D)$ annihilates the differentials of the \emph{anchor} of $D'$.
\end{remark}

\begin{example}\label{ex:failureOfIneq}
    Continuing with the perturbation of the Koszul complex as in Example \ref{ex:perturbedKoszul} (and Remark \ref{rk:tensorDiffs} above), we can also see that Lemma \ref{lem:tensorInequality} fails even when $D$ and $D'$ are assumed to be deformation retracts of free flags. It is straightforward to verify that the perturbation $K_\delta$ admits a deformation retract onto the $\bbz / 2 \bbz$-graded differential module $D$ with
    $$D_1 = R^{\oplus 3} \xra{\begin{pmatrix}
        -x_{2}&-x_{3}&0\\
      x_{1}&0&-x_{3}\\
      0&x_{1}&x_{2}
    \end{pmatrix}} D_0 = R^{\oplus 3} \xra{\begin{pmatrix}
        x_{1}x_{3}&x_{2}x_{3}&x_{3}^{2}\\
      -x_{1}x_{2}&-x_{2}^{2}&-x_{2}x_{3}\\
      x_{1}^{2}&x_{1}x_{2}&x_{1}x_{3}
    \end{pmatrix}} D_1 $$
    Then it is a consequence of the discussion in Remark \ref{rk:tensorDiffs} that $h (D \otimes_R D) = 8$, whereas $h (D) \rank_R (D) = 6$, so the inequality of Lemma \ref{lem:tensorInequality} fails even for well-behaved summands of free flags (these homology computations can also be verified directly using the package \cite{favero2023factorizations} in Macaulay2 \cite{grayson1997macaulay}).
\end{example}

% \begin{lemma}
%     \keller{use the result from Mark to prove the tautological Koszul strands using the Dold-Kan vs classical way yield quasi-isomorphic objects (more precisely: they preserve qisos, )}
% \end{lemma}

% \begin{definition}
%     \keller{hook Schur functors for $\bbz / 2 \bbz$-graded objects. Define this in terms of degree d strands of the Koszul complex on the complexes}
% \end{definition}

% \begin{lemma}
%     \keller{$\bbs^{d,1^i}$ is anchored on $\bbs^{d,1^i}$ for complexes; might be sufficient to just know that cyclic Adams is equal to regular Adams when $k!$ is invertible}
% \end{lemma}

Now we recall the definition of cyclic Adams operations for $\bbz / 2 \bbz$-graded differential modules (that is, linear factorizations of $0$). For full details, see \cite{brown2017adams}:

\begin{definition}
    Let $p > 0$ be any prime. Let $P$ be a $\bbz / 2 \bbz$-graded $R$-complex, where $R$ is any ring such that $p$ is invertible and $R$ contains a primitive $p$th root of unity, denoted $\zeta$. The symmetric group $\Sigma_p$ acts on the $p$th tensor power $T^p (P) := P^{\otimes p}$ by graded commutators:
    $$(i \ i+1 ) \cdot (x_1 \otimes \cdots \otimes x_i \otimes x_{i+1} \otimes \cdots \otimes x_p) = (-1)^{|x_i| \cdot |x_{i+1}|} x_1 \otimes \cdots \otimes x_{i+1} \otimes x_i \otimes \cdots \otimes x_p.$$
    There is an embedding of the cyclic group $C_p \cong \bbz / p \bbz \subset \Sigma_p$ sending $1 \mapsto \sigma :=  (1 \ 2 \ \dots \ p)$; this action induces a well-defined endomorphism of complexes
    $$\sigma : T^p (P) \to T^p (P),$$
    in which case we may consider the $\zeta^i$ eigenspaces of $T^p (P)$ for each $0 \leq i \leq p-1$, denoted $T^p (P)^{(\zeta^i)}$. The $p$th \defi{cyclic Adams operator} $\psi^p_{cyc}$ is the map
    $$\psi^p_{cyc} : K_0 (R) \to K_0 (R), \quad \text{defined by}$$
    $$\psi^p_{cyc} ([P]) := [T^p (P)^{(1)}] - [T^p (P)^{(\zeta)}].$$
\end{definition}

To define the cyclic Adams operations $\psi^k_{cyc}$ for composite integers $k$, one writes $k = p_1^{e_1} \cdot p_2^{e_2} \cdots p_\ell^{e_\ell}$ as a product of prime numbers and sets
$$\psi^k_{cyc} := \left( \psi^{p_1}_{cyc} \right)^{e_1} \circ \left( \psi^{p_2}_{cyc} \right)^{e_2} \circ \cdots \circ \left( \psi^{p_\ell}_{cyc} \right)^{e_\ell}.$$
This is well-defined by \cite[Section 4]{brown2017cyclic}.

In the work \cite{brown2017cyclic}, the authors show that the cyclic Adams operations satisfy the ``Gillet--Soul\'e axioms" for Adams operations, even on categories of linear factorizations. Although their definition of a cyclic Adams operation requires the existence of roots of unity/invertibility of certain elements, it has the significant advantage that one can easily extend these Adams operations to act on more general objects than just chain complexes. Moreover, the requirement that roots of unity exist will be of no real significance for our purposes, since this may always be assumed up to taking some faithfully flat extension.

In the following theorem, the notation $\psi^k$ denotes the Adams operation of Gillet--Soul\'e \cite{gillet1987intersection} and the notation $K_0 (R)$ denotes the Grothendieck group of chain complexes of free $R$-modules.

\begin{theorem}[{\cite[Corollary 6.14]{brown2017adams}}]\label{thm:cyclicVsGS}
    Let $k \geq 1$ be any integer and assume $R$ is a local Noetherian ring containing all primitive $t$-th roots of unity for $t \leq k$, with $k!$ invertible. Then there is an equality of operators on $K_0 (R)$:
    $$\psi^k_{cyc} = \psi^k.$$
\end{theorem}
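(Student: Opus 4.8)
The plan is to reduce the comparison to the case where $k$ is a prime $p$, and then to recognize the eigenspace recipe defining $\psi^p_{cyc}$ as a repackaging of the $\lambda$-ring recipe defining the Gillet--Soul\'e operation $\psi^p$. The hypotheses are exactly what this needs: $k!$ being a unit forces every prime $p\le k$ to be invertible in $R$, and the presence of all primitive $t$-th roots of unity for $t\le k$ makes each $\psi^p_{cyc}$ with $p\le k$ well defined. Writing $k=p_1^{e_1}\cdots p_\ell^{e_\ell}$, the definition of the cyclic operations gives $\psi^k_{cyc}=(\psi^{p_1}_{cyc})^{e_1}\circ\cdots\circ(\psi^{p_\ell}_{cyc})^{e_\ell}$, while on the $\lambda$-ring $K_0(R)$ the Gillet--Soul\'e operations obey $\psi^{mn}=\psi^m\circ\psi^n$; so it suffices to prove $\psi^p_{cyc}=\psi^p$ for each prime $p\le k$.

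For a fixed prime $p$, the first point to establish is that $\psi^p_{cyc}$ is additive on $K_0(R)$. This is the free-orbit cancellation argument of \cite{brown2017cyclic}: in $(P\oplus Q)^{\otimes p}$ the tensor factors indexed by non-constant functions $\mathbb{Z}/p\to\{P,Q\}$ assemble into free $R[C_p]$-modules, a free $R[C_p]$-module contributes $0$ to $[({-})^{(1)}]-[({-})^{(\zeta)}]$ because (as $p$ is invertible and $\zeta\in R$) its two relevant isotypic pieces are isomorphic, and the two constant functions contribute exactly $T^p(P)$ and $T^p(Q)$. The same orbit bookkeeping for $(P\otimes Q)^{\otimes p}$ yields multiplicativity, and $T^p(R)=R$ with trivial $C_p$-action gives the normalization $\psi^p_{cyc}([R])=[R]-0=[R]$. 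Since $R$ is local we have $K_0(R)=\mathbb{Z}\cdot[R]$, so an additive operation fixing $[R]$ is the identity; as $\psi^p$ is a ring endomorphism fixing $[R]$ it is also the identity, and the two agree. (Phrasing this through the splitting principle instead of through the equality $K_0(R)=\mathbb{Z}\cdot[R]$ gives the more robust comparison used when a support condition is imposed, but for the statement as written the rank-one computation already suffices.)

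The step I expect to carry the actual content -- and the one needed if one wants the comparison independently of the shortcut $K_0(R)=\mathbb{Z}\cdot[R]$ -- is identifying the eigenspaces of the cyclic subgroup $C_p\subset\Sigma_p$ with the signed symmetric and exterior constructions, so as to match $\psi^p_{cyc}$ with the universal polynomial in the $\lambda^i$ (signed exterior powers of complexes) prescribed by Newton's identity
\[\psi^p=\lambda^1\psi^{p-1}-\lambda^2\psi^{p-2}+\cdots+(-1)^{p-1}\,p\,\lambda^p.\]
Here the Koszul signs in the $\Sigma_p$-action on $P^{\otimes p}$ must be tracked carefully. When $p$ is odd, the $p$-cycle $(1\,2\,\cdots\,p)$ acts on a complex concentrated in a single degree $n$ with sign $(-1)^{n^2(p-1)}=+1$, so $T^p(P)$ is the unsigned tensor power $P_n^{\otimes p}$ placed in degree $np$; using $(-1)^{np}=(-1)^n$ the expression collapses to $(-1)^n\bigl([P_n^{\otimes p}]^{(1)}-[P_n^{\otimes p}]^{(\zeta)}\bigr)=(-1)^n\psi^p([P_n])=\psi^p([P])$, the middle equality being the classical degree-zero computation of Atiyah. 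When $p=2$, the signed involution splits $P\otimes P$ (as $2$ is invertible) as $\mathrm{S}^2 P\oplus\Lambda^2 P$, so $\psi^2_{cyc}([P])=[\mathrm{S}^2 P]-[\Lambda^2 P]=[P]^2-2\lambda^2([P])$, which is $\psi^2([P])$ by Newton's identity. Reassembling the prime cases via the composition law of the first paragraph gives $\psi^k_{cyc}=\psi^k$ on $K_0(R)$, as claimed.
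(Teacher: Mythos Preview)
The paper does not supply its own proof of this statement; it is quoted verbatim as \cite[Corollary 6.14]{brown2017adams} and used as a black box in the proof of Corollary~\ref{cor:AdamsAndPowersEqualities}. So there is nothing to compare against except the cited source.

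Your argument is correct for the statement exactly as written. The shortcut you give --- $R$ local forces $K_0(R)=\bbz\cdot[R]$, both $\psi^p_{cyc}$ and $\psi^p$ are additive and fix $[R]$, hence both are the identity --- is valid and already settles the claim. You are also right to flag, in your parenthetical, that this trivializes the theorem only because of how it is phrased here: the place it is actually invoked (the proof of Corollary~\ref{cor:AdamsAndPowersEqualities}) applies it to perfect complexes with finite length homology, where one is implicitly working in a Grothendieck group with support and the rank-one shortcut is unavailable. The genuine content, as proved in \cite{brown2017adams}, is the eigenspace--versus--Newton-polynomial comparison you sketch in your final paragraph. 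That sketch is on the right track but incomplete as written: you handle complexes concentrated in a single degree and then would need additivity (which you established) plus a filtration argument to pass to arbitrary bounded complexes. That is essentially how \cite{brown2017adams} proceeds.
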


The above theorem will be essential for the proofs in Subsection \ref{subsec:DuttaMultsandEuler}. This is because, even though we know how to take cyclic Adams operations of $\bbz / 2 \bbz$-graded differential modules, we then need to be able to understand how these operators interact after taking Euler characteristic/Dutta multiplicities. The cases in which these interactions are best understood are for the Gillet--Soul\'e version of the Adams operation, whence Theorem \ref{thm:cyclicVsGS} will be our tool for freely interchanging between the two notions of Adams operation.

Next, we must introduce a ``derived" version of the eigenspace operator $P \mapsto T^p (P)^{(\zeta)}$ for some root of unity $\zeta$. A very similar operator was introduced in \cite{brown2017adams} when the ambient ring was assumed to be regular --- this assumption bypassed any of the subtleties related to the difference between quasi-isomorphism versus homotopy equivalence for linear factorizations over arbitrary local rings. However, now that the notion of an anchored resolution gives us a particularly well-behaved class of free flag resolutions, we can extend the definition to arbitrary rings using this machinery.

\begin{definition}
    Let $k \geq 1$ be any integer and assume $R$ is a local Noetherian ring containing a primitive $k$th root of unity, with $k$ invertible. Given any (not necessarily primitive) $k$th root of unity $\zeta$, let $\mathbf{t}^k_\zeta : D_{\dm}^{\bbz / 2 \bbz} (R) \to K(\flag_{\bbz / d \bbz} (R\proj))$ denote the functor defined as follows:
    $$\mathbf{t}^k_\zeta (D) := T^k (P)^{(\zeta)},$$
    where $P \xra{\sim} D$ is any anchored free resolution of $D$. 
\end{definition}

Note that the uniqueness of anchored resolutions up to homotopy combined with the lifting properties of Theorem \ref{thm:mainAnchoredResThm} guarantee that the above definition of $\mathbf{t}^k_\zeta$ is indeed well-defined and does not depend on the quasi-isomorphism class of the underlying differential module. 

\begin{lemma}\label{lem:derivedEigenAndChi}
     Let $D$ be any $\bbz / 2 \bbz$-graded differential module with finite length homology. Then there is an equality
    $$\chi \left(  \mathbf{t}^k_\zeta (D) \right) = \chi \left( \mathbf{t}^k_\zeta (H(D)) \right). $$
\end{lemma}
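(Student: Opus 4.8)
The plan is to exploit the functoriality of anchored resolutions together with the behavior of Euler characteristic under quasi-isomorphism. Let $P \xra{\sim} D$ be an anchored free resolution of $D$, as in the definition of $\mathbf{t}^k_\zeta$. The anchor of $P$ is by definition a free resolution $F^{H(D)} \xra{\sim} H(D)$ of the homology, which is precisely an anchored resolution of the differential module $H(D)$ (with zero differential, viewed as a flag with trivial flagged perturbation). Thus $\mathbf{t}^k_\zeta(H(D))$ may be computed using the anchor $F^{H(D)}$ in place of an anchored resolution of $H(D)$; the well-definedness of $\mathbf{t}^k_\zeta$ (uniqueness of anchored resolutions up to homotopy, Theorem \ref{thm:mainAnchoredResThm}) guarantees this does not change the homotopy class.

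First I would reduce to comparing $T^k(P)^{(\zeta)}$ with $T^k(F^{H(D)})^{(\zeta)}$. The key observation is that $P$ and $F^{H(D)}$ have the \emph{same underlying graded free module} — $P$ is a flagged perturbation of its anchor $F^{H(D)}$ — and the $k$-th tensor power functor, the $\Sigma_k$-action by graded commutators, and hence the eigenspace decomposition under the cyclic operator $\sigma$ all depend only on the underlying graded module, not on the differential. Consequently $T^k(P)^{(\zeta)}$ and $T^k(F^{H(D)})^{(\zeta)}$ have the same underlying graded free $R$-module, and the differential on $T^k(P)$ is a flagged perturbation (in the appropriate sense for tensor powers) of the differential on $T^k(F^{H(D)})$ — the perturbation strictly drops the flag filtration degree. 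The eigenspace $T^k(P)^{(\zeta)}$ is a direct summand compatible with the flag filtration, so it inherits this structure: $T^k(P)^{(\zeta)}$ is a flagged perturbation of $T^k(F^{H(D)})^{(\zeta)}$, i.e.\ they have the same anchor.

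Next I would use that Euler characteristic is computed from the class in $K_0(R)$, and a flagged perturbation does not change the class: since $T^k(P)^{(\zeta)}$ and $T^k(F^{H(D)})^{(\zeta)}$ have the same underlying graded free module, they represent the same element of the Grothendieck group $K_0(R)$ of (flags of) free $R$-modules, via the spectral sequence of Lemma \ref{lem:ZZdSS} which expresses the homology of a flagged perturbation as a subquotient of the homology of its anchor (with the same alternating sum of lengths in the finite-length case). Therefore
\[
\chi\!\left(\mathbf{t}^k_\zeta(D)\right) = \chi\!\left(T^k(P)^{(\zeta)}\right) = \chi\!\left(T^k(F^{H(D)})^{(\zeta)}\right) = \chi\!\left(\mathbf{t}^k_\zeta(H(D))\right),
\]
where the finite-length hypothesis on $H(D)$ ensures (via Lemma \ref{lem:tensorInequality} and its proof) that all the relevant tensor powers and eigenspace summands have finite-length homology, so $\chi$ is defined throughout.

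The main obstacle I expect is making precise the claim that passing to the $\zeta$-eigenspace of the cyclic $\Sigma_k$-action is compatible with the flag filtration — that is, that the eigenspace projector is a morphism of flags (up to the flag-degree-preserving part) so that $T^k(P)^{(\zeta)}$ genuinely retains a flagged-perturbation structure over $T^k(F^{H(D)})^{(\zeta)}$ rather than merely being quasi-isomorphic to something built from it. The cleanest way around this is to note that the flag filtration on $T^k(P)$ induced from the flag on $P$ is $\Sigma_k$-stable (the symmetric group acts by permuting tensor factors, which respects the total filtration degree), hence the cyclic operator $\sigma$ is filtration-preserving, hence so is the eigenspace projector $\tfrac1k\sum_{j} \zeta^{-ij}\sigma^j$ (here one uses that $k$ is a unit, exactly as assumed). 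Once that filtration-compatibility is in hand, the associated graded of $T^k(P)^{(\zeta)}$ is $T^k(F^{H(D)})^{(\zeta)}$ on the nose, and the equality of Euler characteristics follows formally from the spectral sequence comparison.
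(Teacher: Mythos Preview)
Your proof is correct and follows essentially the same route as the paper: take an anchored resolution $P$ of $D$ with anchor $F^{H(D)}$, observe that $T^k(P)^{(\zeta)}$ is anchored on $T^k(F^{H(D)})^{(\zeta)}$, and use the spectral sequence of Lemma \ref{lem:ZZdSS} to conclude equality of Euler characteristics. You are in fact more careful than the paper about two points it leaves implicit --- the $\Sigma_k$-stability of the flag filtration on $T^k(P)$ (so that the eigenspace projector is flag-preserving and the associated graded is genuinely $T^k(F^{H(D)})^{(\zeta)}$), and the finite-length verification for the eigenspace summands.
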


\begin{proof}
     Let $P \xra{\sim} D$ denote a free flag resolution of $P$ anchored on a finite free resolution $F^{H(D)}$ of $H(D)$. Then the complex $T^k (P)^{(\zeta)}$ is anchored on the complex $T^p (F^{H(D)})^{(\zeta)}$, and the spectral sequence of Lemma \ref{lem:ZZdSS} ensures that 
    $$\chi (T^p (P)^{(\zeta)}) = \chi (T^p (F^{H(D)})^{(\zeta)}).$$
    By definition, $\chi (T^p (F^{H(D)})^{(\zeta)}) = \chi \left( \mathbf{t}^p_\zeta (H(D)) \right)$. 
\end{proof}

\subsection{Interactions with Dutta Multiplicities and Euler Characteristic}\label{subsec:DuttaMultsandEuler}

We first recall/observe some properties of extension of scalars along the Frobenius map when applied
to free flags with finite length homology. In particular, we recall the definition of ``Dutta multiplicity", a notion typically reserved for chain complexes but, as we will see, can be extended to $\bbz / 2 \bbz$-graded differential modules without much difficulty.

Given a free flag of $R$-modules $P$, we write
$F(P)$ for the flag of $R$-modules obtained from $P$ by extension of scalars along the Frobenius endomorphism $r \mapsto r^{p}$ of $R$. 
For an integer $e \geq 0$, we write $F^e(P)$ for the $e$-th iterate of this functor.
Concretely, if bases are chosen for each component of $P$, so that the defining endomorphism $d^P$ is represented by a matrix, then $F^e(P)$ is isomorphic to the free flag
whose component free modules are the same as those of $P$, but with each entry of $d^P$ replaced by its $p^e$ power.

We first need to show that a certain class of limits exists for homologically finite free flags with finite length homology:

\begin{prop}\label{prop:limitsexist} Assume $(R, \m, k)$ is a complete local ring of characteristic $p > 0$ that has a perfect residue field $\kk = R/\fm$. Given a homologically finite $P \in \flag_{\bbz / d \bbz}(R\proj)$ with finite length homology and an integer $i \in \bbz / d \bbz$, the limit
  $$\lim_{e \to \infty} \frac{h_i(F^eP)}{p^{e \cdot \dim R}}$$
  exists.
\end{prop}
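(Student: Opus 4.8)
The plan is to reduce the existence of this limit to the classical theory of Dutta multiplicities by first replacing $P$ by an anchored resolution and then extracting $H(F^eP)$ from the homology of an honest bounded free complex via the flag spectral sequence. Since $\pd_R H(P) < \infty$, the homology $H(P) = \bigoplus_j H_j(P)$ has a finite free resolution, so by Theorem~\ref{thm:degenToHomology} (and Theorem~\ref{thm:mainAnchoredResThm}) the free flag $P$ is homotopy equivalent to a quasiminimal anchored resolution $Q$ whose anchor $C$ is a bounded complex of finite free modules with $H(C) = H(P)$; the map $Q\to P$ is a homotopy equivalence because a quasi-isomorphism of free flags is one. As the Frobenius functor $F^e$ is additive (freshman's dream makes it functorial and additive), it preserves homotopy equivalences, so $h_i(F^eP) = h_i(F^eQ)$ and we may assume $P = Q$. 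Now the anchor of $F^eP$ is $F^eC$, a bounded complex of finite free modules whose homology is $\Tor^R_\ell(H(P),{}^eR)$, and these are of finite length because $H(P)$ is.

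Next I would combine two ingredients. On the classical side, since $R$ is complete with perfect residue field, the theory of Dutta multiplicities applied to the bounded free complex $F^eC$ (in the refined form that treats each homology module separately, as in \cite{dutta1983frobenius} and subsequent work, ultimately reducible to Kunz flatness of Frobenius over a regular ring via a Cohen presentation $R = S/I$) gives that $\lim_e \ell_R(\Tor^R_\ell(H(P),{}^eR))/p^{e\dim R}$ exists for every $\ell$. On the flag side, the spectral sequence of Lemma~\ref{lem:ZZdSS} associated to $F^eP$ has first page $\bigoplus_\ell \Tor^R_\ell(H(P),{}^eR)$ (distributed over bidegrees compatibly with the $\bbz/d\bbz$-grading), has only finitely many nonzero pages because $P$ has finite class, and converges to $H(F^eP)$. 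Consequently $h_i(F^eP)$ equals the corresponding sum of first-page lengths minus twice the total length of the images of the higher differentials $d_r$ touching degree $i$; these image-contributions cancel in the relevant Euler characteristic, which already recovers existence of the Dutta multiplicity of $P$, but for the individual $h_i$ one must control them.

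The main obstacle is exactly this: showing $\lim_e \ell_R(\operatorname{im} d_r)/p^{e\dim R}$ exists for every higher differential, i.e.\ passing from the Euler characteristic to each individual homology module. I would attack it by induction on the page $r$, proving that $\lim_e \ell_R(E^r_{p,q}(F^eP))/p^{e\dim R}$ exists for every bidegree, with the inductive step being precisely the convergence of the image lengths. What makes this plausible is that the higher differentials of the flag spectral sequence are induced by the fixed perturbation chain maps $\delta_1,\delta_2,\dots$ of the anchored resolution $P$, base-changed along Frobenius: using the explicit perturbation-lemma formulas of Remark~\ref{rk:pertLemmaFormulas} one can model each successive page (up to the subquotient bookkeeping) as the homology of a fixed bounded complex of free modules base-changed along ${}^eR$, so the classical Frobenius-limit statement applies again at each stage. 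Equivalently, one can run the localized Chern character / Riemann–Roch argument underlying the existence of Dutta multiplicities directly for the free flag $P$, whose localized Chern character behaves multiplicatively under Frobenius; in either formulation the completeness and perfect-residue-field hypotheses are used only to license this classical input.
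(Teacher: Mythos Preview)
Your approach matches the paper's: reduce to a bounded anchored resolution (homotopy equivalent to $P$, hence $F^e$-equivalent), run the flag spectral sequence of Lemma~\ref{lem:ZZdSS} whose $E^2$ page consists of $\Tor$'s of $H(P)$ against ${}^eR$, and appeal to the classical existence result for Frobenius limits on perfect complexes. The paper dispatches your ``main obstacle'' in a single line---observing that $H_i(F^eP)$ is filtered with associated graded a subquotient of the $E^2$ page and citing Roberts' theorem once more---so your page-by-page induction and localized Chern character alternatives, while reasonable, elaborate a step the paper leaves to the cited reference.
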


\begin{proof}
    Choose any bounded anchored resolution $\widetilde{P}$ of $P$. The flag $P$ is in fact homotopy equivalent to $\widetilde{P}$ and thus it is of no loss of generality to assume that $P$ is itself an anchored resolution. By Lemma \ref{lem:ZZdSS} there is a spectral sequence
    $$E^2_{r,s} = H_r (F^e P_{\bullet , s}) \implies H_{r+s} (F^e P).$$
    Since $D_{\bullet , s}$ is a bounded complex of free modules with finite length homology for any $s = 0 , \dots , d-1$, each of the limits $h_i^\infty (D_{\bullet,j})$ exists (by \cite[Theorem 7.3.3]{RobertsBook}). Since the homology of $F^e P$ has a filtration whose associated graded pieces are subquotients of $F^e$ applied to the anchor, again the result of Roberts implies that the limit $\lim_{e \to \infty} \frac{h_i(F^eP)}{p^{de}}$ exists.
\end{proof}

\begin{definition}

With $R$ as in Proposition \ref{prop:limitsexist} and any $P \in \flag_{\bbz / d \bbz}(R\proj)$ with finite length homology, define 
  $$
  h_i^\infty(P) = \lim_{e \to \infty} \frac{h_i(F^eP)}{p^{de}} \quad \text{and} \quad
  h^\infty(P) = \sum_i h_i^\infty(P). 
  $$
The 
 \defi{Dutta multiplicity} of $P$ is
$$
\chi_\infty(P) = \sum_i (-1)^i h^\infty_i(P) = \lim_{e \to \infty} \frac{\chi(F^e P)}{p^{de}}.
$$
\end{definition}

\begin{cor}\label{cor:derivedEigenAndDutta}
    Let $R$ be any complete local ring of positive characteristic with perfect residue field. Assume $k \geq 1$ is a unit and $R$ contains a primitive $k$th root of unity. Then for any homologically finite differential module $D$, there is an equality
    $$\chi_\infty \left(  \mathbf{t}^k_\zeta (D) \right) = \chi_\infty \left( \mathbf{t}^k_\zeta (H(D)) \right). $$ 
\end{cor}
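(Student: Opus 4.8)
The strategy is to reduce to Lemma~\ref{lem:derivedEigenAndChi} after applying the Frobenius functor and passing to the limit. First I would choose an anchored free resolution $P \xra{\sim} D$ of $D$; by definition $\mathbf{t}^k_\zeta(D) = T^k(P)^{(\zeta)}$, and since $P$ is anchored on a finite free resolution $F^{H(D)}$ of $H(D)$, the flag $T^k(P)^{(\zeta)}$ is itself a free flag (with finite length homology, by Lemma~\ref{lem:tensorInequality} applied iteratively to the tensor powers, or directly by the spectral sequence of Lemma~\ref{lem:ZZdSS}). Hence the Dutta multiplicity $\chi_\infty(\mathbf{t}^k_\zeta(D))$ is defined by Proposition~\ref{prop:limitsexist}. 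The key observation is that the Frobenius functor $F^e$ commutes with the formation of $T^k(-)^{(\zeta)}$: applying $F^e$ to the tensor power $T^k(P)$ and extracting the $\zeta$-eigenspace of the cyclic action yields the same object as forming $T^k(F^e P)^{(\zeta)}$, because $F^e$ is a symmetric monoidal functor and commutes with the $\Sigma_k$-action (the eigenspace decomposition is defined over $R$ provided $\zeta$ and $k$ remain invertible after base change, which they do). Thus $F^e\big(\mathbf{t}^k_\zeta(D)\big) \cong \mathbf{t}^k_\zeta\big(F^e(P)\big)$ as free flags, and likewise $F^e\big(\mathbf{t}^k_\zeta(H(D))\big) \cong T^k(F^e(F^{H(D)}))^{(\zeta)}$.

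Next I would combine this with Lemma~\ref{lem:derivedEigenAndChi}. For each fixed $e$, the flag $F^e(P)$ is an (ungraded or $\bbz/2\bbz$-graded) free flag with finite length homology, anchored on $F^e(F^{H(D)})$, which is a bounded complex of free modules. Applying the computation in the proof of Lemma~\ref{lem:derivedEigenAndChi} — namely that $T^k(P')^{(\zeta)}$ is anchored on $T^k(\text{anchor})^{(\zeta)}$ and the spectral sequence of Lemma~\ref{lem:ZZdSS} forces equality of Euler characteristics — to $P' = F^e(P)$, I obtain
$$\chi\big(F^e(\mathbf{t}^k_\zeta(D))\big) = \chi\big(T^k(F^e P)^{(\zeta)}\big) = \chi\big(T^k(F^e F^{H(D)})^{(\zeta)}\big) = \chi\big(F^e(\mathbf{t}^k_\zeta(H(D)))\big)$$
for every $e \geq 0$. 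Dividing by $p^{d\cdot e}$ (here $d=2$, or more generally the ambient period) and taking the limit $e \to \infty$, the existence of both limits being guaranteed by Proposition~\ref{prop:limitsexist} applied to the free flags $\mathbf{t}^k_\zeta(D)$ and $\mathbf{t}^k_\zeta(H(D))$, yields $\chi_\infty(\mathbf{t}^k_\zeta(D)) = \chi_\infty(\mathbf{t}^k_\zeta(H(D)))$.

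The main obstacle I anticipate is verifying carefully that the Frobenius base change genuinely commutes with the eigenspace projection $T^k(-) \to T^k(-)^{(\zeta)}$ at the level of free flags — i.e., that $F^e$ is exact enough and monoidal enough that the idempotent $\frac{1}{k}\sum_j \zeta^{-j}\sigma^j$ cutting out the eigenspace is preserved. Since $R$ has characteristic $p$ and $k$ is a unit (hence $p \nmid k$) and $\zeta$ is a $k$th root of unity in $R$, this idempotent is defined over $R$ and is simply carried along by the additive, tensor-compatible functor $F^e$; the only subtlety is bookkeeping the flag structure, which behaves exactly as in the $e=0$ case treated in Lemma~\ref{lem:derivedEigenAndChi}. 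A secondary point worth a sentence is that one should note $\mathbf{t}^k_\zeta(D)$ really is a homologically finite free flag with finite length homology so that Proposition~\ref{prop:limitsexist} applies — this follows since its anchor $T^k(F^{H(D)})^{(\zeta)}$ is a bounded free complex with finite length homology (a summand of a tensor power of such).
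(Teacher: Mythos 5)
Your argument is correct and follows the same strategy as the paper's one-line proof: for each $e\geq 0$ the spectral sequence of Lemma~\ref{lem:ZZdSS} forces the Euler characteristic of the Frobenius pullback of the flag $\mathbf{t}^k_\zeta(D)$ to equal that of the pullback of its anchor, and one then divides by $p^{de}$ and passes to the limit, both limits existing by Proposition~\ref{prop:limitsexist}. The one place your route differs is in the treatment of Frobenius: you first identify $F^e(T^k(P)^{(\zeta)})$ with $T^k(F^e P)^{(\zeta)}$ via monoidality of $F^e$ and compatibility with the eigenspace idempotent, and only then run the spectral-sequence argument on $T^k(F^e P)^{(\zeta)}$. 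The paper instead applies $F^e$ directly to the flag $T^k(P)^{(\zeta)}$: since Frobenius pullback replaces matrix entries by their $p^e$-th powers, it visibly preserves the flag filtration, so $F^e(T^k(P)^{(\zeta)})$ is a free flag anchored on $F^e(T^k(F^{H(D)})^{(\zeta)})$, and the spectral sequence for this flag already yields $\chi\big(F^e(T^k(P)^{(\zeta)})\big) = \chi\big(F^e(T^k(F^{H(D)})^{(\zeta)})\big)$ with no need to commute $F^e$ past the eigenspace projection. So the ``main obstacle'' you anticipate is in fact avoidable; your verification of it (the idempotent $k^{-1}\sum_j \zeta^{-j}\sigma^j$ is $R$-linear and carried along by the additive, tensor-compatible functor $F^e$) is nevertheless correct, and both routes are sound.
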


\begin{proof}
    The proof is identical to that of Lemma \ref{lem:derivedEigenAndChi} where one instead applies the $e$th iterate of Frobenius to the spectral sequence of Lemma \ref{lem:ZZdSS} and takes limits to deduce the desired equality.
\end{proof}

Recall in the following corollary that a ring $R$ is \defi{quasi-Roberts} if there is an equality of K-theoretic operators $\chi \circ \psi^k = k^{\dim R} \chi$, where $\psi^k$ denotes the $k$th Adams operator of Gillet--Soul\'e \cite{gillet1987intersection}. Any ring that is locally a complete intersection is quasi-Roberts; work of Miller--Singh \cite{miller2000intersection} constructs an example of a non quasi-Roberts Gorenstein ring.

\begin{cor}\label{cor:AdamsAndPowersEqualities}
     Let $R$ be a local ring and $k >1$ any integer so that $R$ contains all $t$-th roots of unity, for $t \leq k$ and $k!$ is a unit. Assume $D$ is any bounded, homologically finite free flag with finite length homology.

    \begin{enumerate}
        \item Assume that $R$ is a quasi-Roberts ring. Then there is an equality
    $$\chi (\psi^k_{cyc} (D)) = k^{\dim R} \chi (D).$$
        \item Assume that $R$ is a complete local ring of positive characteristic with perfect residue field. Then there is an equality
    $$\chi_\infty (\psi^k_{cyc} (D)) = k^{\dim R} \chi_\infty (D).$$
    \end{enumerate}
\end{cor}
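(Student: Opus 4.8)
The plan is to reduce both equalities to the corresponding statements on the level of the homology $H(D)$, which is an honest $R$-module of finite length, and then invoke the classical results on Adams operations / Dutta multiplicities of perfect complexes. First I would choose an anchored free resolution $P \xra{\sim} D$ (which exists by Theorem \ref{thm:degenToHomology} and is unique up to flag-preserving homotopy by Theorem \ref{thm:mainAnchoredResThm}); since the anchor is a finite free resolution $F^{H(D)}$ of $H(D)$ and $H(D)$ has finite length, $P$ is a bounded free flag. Using the decomposition $\psi^k_{cyc} = (\psi^{p_1}_{cyc})^{e_1}\circ\cdots\circ(\psi^{p_\ell}_{cyc})^{e_\ell}$ and the definition of $\psi^p_{cyc}([P]) = [\mathbf{t}^p_1(P)] - [\mathbf{t}^p_\zeta(P)]$ in terms of the derived eigenspace functors, it suffices to show that applying $\chi$ (resp. $\chi_\infty$) to $\mathbf{t}^k_\zeta(D)$ gives the same answer as applying it to $\mathbf{t}^k_\zeta(H(D))$; this is exactly the content of Lemma \ref{lem:derivedEigenAndChi} (resp. Corollary \ref{cor:derivedEigenAndDutta}), whose proofs go through the spectral sequence of Lemma \ref{lem:ZZdSS} relating the homology of a free flag to the homology of its anchor. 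Iterating the prime-by-prime reductions, one concludes $\chi(\psi^k_{cyc}(D)) = \chi(\psi^k_{cyc}(F^{H(D)}))$ and likewise $\chi_\infty(\psi^k_{cyc}(D)) = \chi_\infty(\psi^k_{cyc}(F^{H(D)}))$, where now $F^{H(D)}$ is a genuine bounded complex of free modules (a perfect complex with finite length homology).

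For part (1), I would then apply Theorem \ref{thm:cyclicVsGS}, which (under the hypotheses that $R$ contains all $t$-th roots of unity for $t\le k$ and $k!$ is a unit) identifies $\psi^k_{cyc}$ with the Gillet--Soul\'e Adams operation $\psi^k$ on $K_0(R)$. The defining property of a quasi-Roberts ring is precisely $\chi\circ\psi^k = k^{\dim R}\chi$ on perfect complexes with finite length homology, so $\chi(\psi^k_{cyc}(F^{H(D)})) = \chi(\psi^k(F^{H(D)})) = k^{\dim R}\chi(F^{H(D)})$. Combining with the reduction above and the fact that $\chi(P) = \chi(F^{H(D)}) = \chi(D)$ (the Euler characteristic only depends on the homology), we obtain $\chi(\psi^k_{cyc}(D)) = k^{\dim R}\chi(D)$.

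For part (2), the same reduction places us at $\chi_\infty(\psi^k_{cyc}(F^{H(D)}))$ for the perfect complex $F^{H(D)}$. Here I would again use Theorem \ref{thm:cyclicVsGS} to replace $\psi^k_{cyc}$ by $\psi^k$, and then invoke the classical theorem of Kurano--Roberts (see \cite[Theorem 7.3.3]{RobertsBook} and the surrounding results on the vanishing of higher localized Chern characters / the behavior of Dutta multiplicity under Adams operations) that $\chi_\infty\circ\psi^k = k^{\dim R}\chi_\infty$ for any complete local ring of positive characteristic with perfect residue field. Since $\chi_\infty(F^{H(D)}) = \chi_\infty(D)$ by Corollary \ref{cor:derivedEigenAndDutta} applied with $k=1$ (or directly from Proposition \ref{prop:limitsexist} and the spectral sequence argument), we conclude $\chi_\infty(\psi^k_{cyc}(D)) = k^{\dim R}\chi_\infty(D)$.

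The main obstacle I anticipate is bookkeeping rather than conceptual: one must be careful that the derived eigenspace functors $\mathbf{t}^k_\zeta$ applied to $D$ and applied to $H(D)$ really are compatible with the spectral sequence of Lemma \ref{lem:ZZdSS} in a way that survives tensoring and Frobenius pushforward, i.e. that $T^k(-)^{(\zeta)}$ commutes appropriately with passage to the anchor and with $F^e$. This is where the uniqueness/lifting properties of anchored resolutions (Theorem \ref{thm:mainAnchoredResThm}) are doing real work: they guarantee $\mathbf{t}^k_\zeta$ is well-defined on the derived category and independent of the chosen anchored resolution, so that the reduction to $H(D)$ is legitimate. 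Once that compatibility is in hand, the rest is an invocation of Theorem \ref{thm:cyclicVsGS}, the definition of quasi-Roberts, and the Kurano--Roberts/Roberts results on Dutta multiplicity.
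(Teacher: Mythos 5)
Your proof follows essentially the same route as the paper's: reduce to the homology via Lemma \ref{lem:derivedEigenAndChi} (resp.\ Corollary \ref{cor:derivedEigenAndDutta}), identify $\psi^k_{cyc}$ with the Gillet--Soul\'e $\psi^k$ via Theorem \ref{thm:cyclicVsGS}, and then invoke the quasi-Roberts condition (resp.\ Kurano--Roberts). The one cosmetic difference is that you invoke the prime decomposition $\psi^k_{cyc} = (\psi^{p_1}_{cyc})^{e_1}\circ\cdots$, whereas the paper works directly with the eigenspace sum $\chi\circ\psi^k_{cyc}(D)=\sum_\zeta \zeta\,\chi(\mathbf t^k_\zeta(D))$; both paths converge, but the eigenspace sum is the cleaner way to apply Lemma \ref{lem:derivedEigenAndChi} for composite $k$ without worrying about what the intermediate K-classes represent.
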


\begin{remark}
     For many K-theoretic purposes, the existence of anchored resolutions implies that the category of differential modules is largely indistinguishable from the category of complexes. A shadow of this fact was used in the work \cite{brown2017adams}, where the authors proved the equality of Corollary \ref{cor:AdamsAndPowersEqualities}$(1)$ holds for \emph{regular} local rings. The regularity assumption ensured that the boundaries of a linear factorization will also have finite projective dimension. The advantage of having Theorem \ref{thm:degenToHomology} combined with the uniqueness of anchored resolutions is that one now only needs the \emph{homology} to have finite projective dimension to prove similar results. 
\end{remark}

\begin{remark}
    If we instead assume that $R$ is a regular local ring, then it suffices to assume that $k$ itself is a unit in $R$. This follows from the fact that $\psi_{cyc}^k ([K]) = k^d [K]$ as proved in \cite{brown2017cyclic} only under the assumption that $k$ is a unit, where $K$ denotes the K-class of the Koszul complex on a system of parameters for $R$.
\end{remark}

\begin{proof}
    \textbf{Proof of (1):} By definition of the cyclic Adams operation, there is an equality
    $$\chi \circ \psi^k_{cyc} (D) =  \sum_\zeta \zeta \cdot \chi  \left(\mathbf{t}_\zeta^k (D) \right),$$
    and by Lemma \ref{lem:derivedEigenAndChi} we have $\chi  \left(\mathbf{t}_\zeta^k (D) \right) = \chi  \left(\mathbf{t}_\zeta^k (H(D)) \right)$ for any $k$th root of unity $\zeta$. Choosing a finite minimal free resolution $P$ of $H(D)$, we thus find that
    $$\chi \circ \psi^k_{cyc} (D) = \chi \circ \psi^k_{cyc} (P),$$
    which, combined with the assumption on $R$ and the fact that $\psi^k_{cyc} = \psi^k$ by Theorem \ref{thm:cyclicVsGS}, implies the equality
    $$ \chi \circ \psi^k_{cyc} (D) = \chi \circ \psi^k (P) = k^{\dim R} \chi (P).$$
    The result thus follows after noting that $\chi (P) = \chi(D)$.

    \textbf{Proof of (2):} The proof in this case is almost identical, except that the Euler characteristic is replaced with the Dutta multiplicity. Recall that by definition there is an equality
    $$\chi_\infty \circ \psi^k_{cyc} (D) =  \sum_\zeta \zeta \cdot \chi_\infty \left(\mathbf{t}_\zeta^k (D) \right),$$
    and by an identical argument to the proof of $(1)$, Corollary \ref{cor:derivedEigenAndDutta} yields the equality
    $$\chi_\infty \circ \psi^k_{cyc} = \chi_\infty \circ \psi^k (P), $$
    where $P \xra{\sim} H(D)$ is any finite free resolution of $H(D)$. Thus the statement reduces to the fact that $\chi_\infty \circ \psi^k = k^{\dim R} \chi_\infty$ on the category of perfect complexes with finite length homology, which is a well-known result of Kurano--Roberts \cite[3.1]{KRAdamsOps}.
\end{proof}

% \begin{proof}
%     The proof is identical to that originally given by Gillet--Soul\'e for complexes \keller{ref}, where we instead use Theorem \keller{ref}. 
% \end{proof}

% \begin{remark}
%     \keller{want a similar result for CI but not as easy (need many more assumptions)}
% \end{remark}

\subsection{The Avramov--Buchweitz--Iyengar Total Rank Conjecture}\label{subsec:TRCforFlags}

In this section, we combine all of the results established thus far to prove a version of the Total Rank Conjecture over the category of $\bbz / 2 \bbz$-graded differential modules. We conclude with some further discussion and questions about methods of relaxing hypotheses, as well as a question about the validity of the ``extended" total rank conjecture for differential modules (see \cite[Conjecture 1.11]{vandebogert2023total}).

\begin{theorem}\label{thm:ZZ2TRC}
    \begin{enumerate}
        \item Assume $R$ is a quasi-Roberts ring in which $2$ is a unit. For any homologically finite $\bbz / 2 \bbz$-graded free flag $D$ having finite length homology, there is an inequality
    $$ \rank_R D \geq 2^{\dim R} \frac{|\chi (D)|}{h(D)}.$$
        \item Assume $R$ is any local ring of positive characteristic $p > 2$. For any homologically finite $\bbz / 2 \bbz$-graded free flag having finite length homology, there is an inequality
    $$ \rank_R D \geq 2^{\dim R} \frac{|\chi_\infty (D)|}{h^\infty (D)}.$$
    \end{enumerate}
\end{theorem}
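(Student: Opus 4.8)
The plan is to adapt Mark Walker's argument for the Total Rank Conjecture, feeding the free flag $D$ into the square cyclic Adams operation $\psi^2_{cyc}$ and then sandwiching the result between Corollary~\ref{cor:AdamsAndPowersEqualities} (which computes its effect on Euler characteristic, resp.\ Dutta multiplicity) and the tensor inequality of Lemma~\ref{lem:tensorInequality} (which bounds the homology of $T^2(D)$). First I would dispose of trivialities: if $\rank_R D = \infty$ the inequality is vacuous, so assume $\rank_R D < \infty$; since a free flag is the direct sum of its associated graded pieces, this forces $D$ to be bounded. We may also assume $H(D)\neq 0$, so that $h(D)$ (resp.\ $h^\infty(D)$) is positive. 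For part (2), standard faithfully flat reductions---passing to the completion $\widehat R$ and then to a complete local flat extension with perfect residue field---leave $\rank_R D$, $\chi_\infty(D)$ and $h^\infty(D)$ unchanged, so we may assume $R$ is complete local with perfect residue field; note that $2$ is a unit in $R$ since $p>2$, so $-1$ is an available primitive square root of unity and the idempotents $\tfrac{1\pm\sigma}{2}$ make sense.

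For part (1): because $2$ is a unit, the order-two operator $\sigma$ on the $\bbz/2\bbz$-graded free flag $T^2(D) = D\otimes_R D$ splits it as a direct sum of $\bbz/2\bbz$-graded free flags $T^2(D) = T^2(D)^{(1)} \oplus T^2(D)^{(-1)}$, and by definition $\psi^2_{cyc}([D]) = [T^2(D)^{(1)}] - [T^2(D)^{(-1)}]$. The hypotheses of Corollary~\ref{cor:AdamsAndPowersEqualities}(1) hold for $k=2$ (all $t$-th roots of unity for $t\le 2$ are $\pm1$, already present; $2!$ is a unit; $R$ is quasi-Roberts; $D$ is a bounded homologically finite free flag of finite length homology), so
\[
2^{\dim R}\,\chi(D) \;=\; \chi\!\big(\psi^2_{cyc}(D)\big) \;=\; \chi\!\big(T^2(D)^{(1)}\big) - \chi\!\big(T^2(D)^{(-1)}\big).
\]
Each eigenspace summand is a free flag of finite length homology---it is a direct summand of $T^2(D)=D\otimes_R D$, which has finite length homology by Lemma~\ref{lem:tensorInequality}---and for any such $\bbz/2\bbz$-graded flag $M$ one has the elementary bound $|\chi(M)| = |h_0(M)-h_1(M)| \le h(M)$. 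Taking absolute values in the displayed equation and using that homology is additive over the eigenspace splitting therefore gives
\[
2^{\dim R}\,|\chi(D)| \;\le\; h\!\big(T^2(D)^{(1)}\big) + h\!\big(T^2(D)^{(-1)}\big) \;=\; h\!\big(T^2(D)\big) \;=\; h(D\otimes_R D) \;\le\; h(D)\cdot \rank_R D,
\]
the final step being Lemma~\ref{lem:tensorInequality} with $D' = D$. Dividing by $h(D)$ yields the asserted inequality.

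Part (2) is formally identical, with $\chi$ replaced by the Dutta multiplicity $\chi_\infty$ and $h$ by $h^\infty$: Corollary~\ref{cor:AdamsAndPowersEqualities}(2) gives $2^{\dim R}\chi_\infty(D) = \chi_\infty(T^2(D)^{(1)}) - \chi_\infty(T^2(D)^{(-1)})$; one has $|\chi_\infty(M)|\le h^\infty(M)$ and $h^\infty$ is additive over the eigenspace splitting (both by taking limits of the corresponding statements after applying Frobenius); and the inequality $h^\infty(D\otimes_R D) \le h^\infty(D)\cdot\rank_R D$ follows by applying the $e$-th Frobenius iterate to Lemma~\ref{lem:tensorInequality}, using $F^e(D\otimes_R D) = F^eD\otimes_R F^eD$ and $\rank_R F^eD = \rank_R D$, and passing to the limit in the defining normalization of $h^\infty$.

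The genuinely substantive inputs---Corollary~\ref{cor:AdamsAndPowersEqualities} and Lemma~\ref{lem:tensorInequality}---are already established, and the remaining points ($|\chi(M)|\le h(M)$, additivity of $h$ over direct sums) are routine. The step I expect to need the most care is the K-theoretic bookkeeping around running a cyclic Adams operation on a free flag rather than on a genuine complex: one must check that $[D]$ defines a class on which $\chi$, $\chi_\infty$ and $\psi^2_{cyc}$ are simultaneously defined, that the eigenspace decomposition of $T^2(D)$ really is a decomposition of $\bbz/2\bbz$-graded free flags inheriting boundedness, finite projective dimension of homology, and finite length homology, and---for part (2)---that completion and residue-field enlargement leave $\chi_\infty$ and $h^\infty$ genuinely unchanged. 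This is precisely where the anchored-resolution machinery of Section~\ref{sec:anchoredRes's} (through Corollary~\ref{cor:AdamsAndPowersEqualities}) is doing the real work.
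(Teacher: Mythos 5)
Your proof is correct and follows essentially the same route as the paper: split $T^2(D)=D\otimes_R D$ into eigenspaces of the swap operator, feed $\chi\circ\psi^2_{cyc}(D)=2^{\dim R}\chi(D)$ (Corollary~\ref{cor:AdamsAndPowersEqualities}) in from one side and $h(D\otimes_R D)\leq h(D)\cdot\rank_R D$ (Lemma~\ref{lem:tensorInequality}) in from the other, then take limits over Frobenius for part (2). If anything your absolute-value bookkeeping $2^{\dim R}|\chi(D)|\leq h(T^2(D)^{(1)})+h(T^2(D)^{(-1)})$ is slightly cleaner than the paper's chain, which silently uses $h(A)\geq|\chi(A)|$ on each eigenpiece with opposite signs after a preliminary shift to ensure $\chi(D)\geq 0$.
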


\begin{proof}
    The proofs of $(1)$ and $(2)$ are essentially verbatim versions of the original arguments given by Walker \cite{walker2017total}, and most of the subtlety comes from the relevant K-theoretic identities proved in the previous subsections.
    
    \textbf{Proof of (1):} We first note that it is of no loss of generality to assume that $R$ contains all $t$-th roots of unity for $t \leq k$. This is because if $R$ does not contain a $t$-th root of unity, there is a faihfully flat extension
    $$R \hookrightarrow R' :=  \left( \frac{R[x]}{(x^t-1)} \right)_{\m'}$$
    where $\m'$ is any maximal ideal lying over $\m$, and for every finite length $R$-module $M$ an equality
    $$\ell_{R'} (M \otimes_R R') = [R' / \m' : R / \m ] \cdot \ell_R (M).$$ 
    Thus in the following string of inequalities, using the ring $R'$ simply results in a factor of the degree of the field extension $[R' / \m' : R / \m ]$ which can be divided in the end. Furthermore, by shifting if necessary we may assume that $\chi (F) \geq 0$:
    \begingroup\allowdisplaybreaks
    \begin{align*}
        h(D) \rank_R (D) \underbrace{\geq}_{\text{Lemma \ref{lem:tensorInequality}}} & h(D \otimes_R D) \\
        \underbrace{=}_{2 \ \text{is a unit}}& h(t^2_{1} (D)) + h (t^2_{-1} (D)) \\
        \geq \qquad &\chi(t^2_{1} (D)) + \chi (t^2_{-1} (D)) \\
        = \qquad &\chi \circ \psi_{cyc}^2 (D) \\
        \underbrace{=}_{\text{Corollary \ref{cor:AdamsAndPowersEqualities}}}& 2^{\dim R} \chi (D).
    \end{align*}
    \endgroup

    \textbf{Proof of (2):} As in the proof of $(1)$ it is no loss of generality to assume that $R$ contains all $t$-th roots of unity for $t \leq k$, and it is furthermore of no loss of generality to assume that $R$ is complete and the residue field $R / \m$ is perfect (as in \cite[Proof of Theorem 2]{walker2017total}). Letting $F^e (D)$ denote the $e$th Frobenius iterate of $D$, an identical sequence of inequalities to part $(2)$ yields the inequalities
    $$h(F^e D) \underbrace{\rank_R (F^e (D))}_{= \rank_R (D)} \geq  |\chi \circ \psi^2_{cyc} (F^e (D))| \quad \text{for all} \ e \geq 0.$$
    Dividing the above inequality by $p^{e \cdot \dim R}$, taking the limit as $e \to \infty$, and noting that the Frobenius iterate commutes with $\psi^2_{cyc}$ yields the (in)equalities
    $$h^\infty (D) \rank_R (D) \geq |\chi_\infty \circ \psi^2_{cyc} (D)| \underbrace{=}_{\text{Corollary \ref{cor:AdamsAndPowersEqualities}}} 2^{\dim R} |\chi_\infty (D)|.$$
\end{proof}

Note that the proof of Theorem \ref{thm:TRCIntro} is an immediate consequence of Theorem \ref{thm:ZZ2TRC}. This follows because if $D$ does not have finite length homology, we can localize at any minimal prime over the annihilator to obtain a differential module with finite length homology over a ring of dimension $= \codim_R \ann_R H(D)$. The property of having homology concentrated in degree $0$ or $1$ is evidently preserved by localization (as well as the rank), so the result follows.

\begin{remark}
      In the work \cite{walker2017total,vandebogert2023total}, an ``extended" version of the Total Rank Conjecture is proved over quasi-Roberts rings that shows that in the case of equality, one must have that the original complex is isomorphic to the Koszul complex on a system of parameters. As indicated in Remark \ref{rk:tensorDiffs}, any equality of the form $h (D) \rank_R (D) = h (D \otimes_R D)$ only indicates that $h (D)$ annihilates the differentials of the anchor of $D$. Similarly, knowing that $h (t^2_{-1} (D)) = 0$ does not necessarily imply that $\bigwedge^2 H(D) = 0$, so we also cannot immediately say that $H(D)$ is a cyclic module. So we still do not know if the extended version of the Total Rank Conjecture holds for free flags, even in the case that $R$ is quasi-Roberts.
\end{remark}

It is also likely that the assumption that $D$ is homologically finite is unnecessary. As noted in \cite[Example 5.12]{brown2021minimal}, there exist differential modules with finite class that are \emph{not} homologically finite. Thus, we pose the following question:

\begin{question}
    Is the assumption that $\pd_R H(D) < \infty$ necessary in Theorem \ref{thm:ZZ2TRC}?
\end{question}

\begin{remark}
     One interpretation of the classical total rank conjecture is that finite length $R$-modules are ``homologically big". Theorem \ref{thm:ZZ2TRC} says that not only do complexes with finite length homology tend to be homologically big, but any complex admitting a flagged perturbation with finite length homology will also tend to be homologically big.
\end{remark}

\begin{remark}
    In characteristic $2$, the classical total rank conjecture is known to be true (for equicharacteristic rings)\footnote{In fact, a strictly stronger version of the total rank conjecture holds in characteristic $2$ \cite{vandebogert2023total} (that is known to be false in odd characteristic)}, but the techniques used to prove this heavily depend on the classical Dold-Kan correspondence \cite{dold1958homology,kan1955abstract,lurie2016higher}. One potential avenue for extending these techniques to the setting of linear factorizations is to formulate the notion of a flagged perturbation of a simplicial $R$-module; this should be done in a way that the appropriate analogue of the normalization functor induces a flagged perturbation of a nonnegative graded complex. Such a correspondence would allow for a more canonical K-theoretic study of categories of linear factorizations that does not depend on invertibility of elements/existence of roots of unity. 
\end{remark}

\appendix
\section{Differential Modules, Flags, and Diagram Categories}\label{sec:DMsFlagsDcats}

In this appendix, we recall and formalize a few different categories, including the categories of $\bbz / d \bbz$-graded differential modules and flags. We also introduce a ``twisted" version of the category of flags, where many of the definitions are essentially identical to the definitions for flags, but twisted by additional signs. This twisted category $\flag^\tau (R)$ will end up being equivalent to the category of $A_\infty$ $R[t]/(t^2)$-modules, where $t$ is an indeterminate of homological degree $2$. Finally, in Subsection \ref{subsec:HarrowCategory} we recall the category of homotopy commutative diagrams (of differential modules). We prove a variety of compatibility theorems between homotopy commutative diagrams and different homotopy equivalent representatives of objects, which we will want to reference in earlier sections.

Though some of the results in this appendix may be considered ``new" (for instance, the equivalence with $A_\infty$-modules of Lemma \ref{lem:flagTauEquivAndAinfty}), some of these results are likely well-known to experts or are tacitly used/observed without proof in other places.

\subsection{The Category $\dm_{\bbz / d \bbz} (R)$}\label{subsec:theCategoryDM}

The purpose of this subsection is to recall the category of $\bbz / d \bbz$-graded differential modules. The category of differential modules (without any additional grading) has been introduced in several other places -- for instance \cite{cartan2016homological,avramov2007class,stai2018triangulated,brown2021minimal}. There are some small differences when taking into account additional auxiliary gradings that are worth specifying, including the monoidal/self-enriched structure when $d \neq  1$ (see Observation \ref{obs:MonoidalStructure}).   

\begin{definition}[$\bbz / d \bbz$-Graded Differential Modules]
    Let $d \geq 1$ be any integer. A \defi{$\bbz / d \bbz$-graded differential $R$-module} $D$ is an $R$-module equipped with a direct sum decomposition
    $$D = \bigoplus_{j=0}^{d-1} D_j$$
    and a square-zero endomorphism (the \defi{differential}) $d^D : D \to D$ of $\bbz / d \bbz$-degree $-1$. In other words, the differential restricts to a map
    $$d^D : D_{j} \to D_{j-1}, \quad \text{with indices taken modulo} \ d.$$
    A \defi{morphism} of $\bbz / d \bbz$-graded differential modules is a map $\phi : D \to D'$ of the underlying $R$-modules with $\bbz / d \bbz$-degree $0$, satisfying
    $$d^{D'} \phi = \phi d^D.$$
    The \defi{mapping cone} of a morphism $\phi : D \to D'$ of differential modules is the differential module $\cone (\phi)$ with
    $$\cone (\phi) := D' \oplus D, \quad \text{and differential} \quad d^{\cone (\phi)} := \begin{pmatrix}
        d^{D'} & -\phi \\
        0 & -d^D 
    \end{pmatrix}.$$
    A morphism $\phi  : D \to D'$ of $\bbz / d \bbz$-graded differential modules is \defi{nullhomotopic} if there exists a map $h : D \to D'$ of the underlying $R$-modules of $\bbz / d \bbz$-degree $1$ satisfying $\phi = d^{D'} h + h d^D = 0$. More precisely, there is a sequence of maps $h_j : D_j \to D'_{j+1}$ satisfying
    $$\phi_j = d^{D'} h_{j} + h_{j-1} d^D \quad \text{for all} \ j=0 , \dots , d-1,$$
    where indices are again taken modulo $d$ in the above. The cycles, boundaries, and homology are all defined in a manner identical to the case of complexes:
    $$Z_j (D) := \ker (d^D : D_j \to D_{j-1}), \quad B_j := \im (d^D : D_{j+1} \to D_j), \quad H_j (D) := \frac{Z_j(D)}{B_j (D)}.$$
    Diagramatically, a $\bbz / d \bbz$-graded differential is represented as
    % https://q.uiver.app/#q=WzAsNSxbMCwwLCJEX3tkLTF9Il0sWzEsMCwiRF97ZC0yfSJdLFsyLDAsIlxcY2RvdHMiXSxbMywwLCJEXzEiXSxbNCwwLCJEXzAiXSxbMCwxLCJkXkQiXSxbMSwyLCJkXkQiXSxbMiwzLCJkXkQiXSxbMyw0LCJkXkQiXSxbNCwwLCJkXkQiLDAseyJjdXJ2ZSI6LTR9XV0=
\[\begin{tikzcd}
	{D_{d-1}} & {D_{d-2}} & \cdots & {D_1} & {D_0}
	\arrow["{d^D}", from=1-1, to=1-2]
	\arrow["{d^D}", from=1-2, to=1-3]
	\arrow["{d^D}", from=1-3, to=1-4]
	\arrow["{d^D}", from=1-4, to=1-5]
	\arrow["{d^D}", curve={height=-24pt}, from=1-5, to=1-1]
\end{tikzcd}\]
\end{definition}

\begin{remark}\label{rk:homogeneousDM}
    If the ring $R$ is graded by some abelian group, then there is a notion of a \defi{homogeneous} differential module of degree $a$. The only difference here is that the differential should be a degree $a$ map
    $$d^D : D \to D(a),$$
    and the definition of homology should take account of this twist:
    $$H(D) := \frac{\ker d^D}{\im d^D (-a)}.$$
    Throughout this paper, all of our results are canonical with respect to any kind of auxiliary grading on the ring $R$. For this reason, we will often not mention that our results work in the presence of auxiliary gradings but it is understood that this will be true. Indeed, an equivalent definition of a $\bbz / d \bbz$-graded differential module is the following: equip $R$ with the structure of a $\bbz / d \bbz$-graded ring by assigning all elements of $R$ to have degree $0$. A homogeneous $R$-module is thus a module $M$ equipped with a direct sum decomposition $M = \bigoplus_{i=0}^{d-1} M_i$, and a $\bbz/ d \bbz$-graded differential module $D$ is a homogeneous $\bbz / d \bbz$-graded differential module of degree $-1$.  
    
    Homogeneous differential modules play a prominent role in the work of Brown--Erman \cite{brown2021minimal}, and multigraded differential modules over a standard graded poynomial ring are the main object of study in work of Boocher--DeVries \cite{boocher2010rank}. 
\end{remark}

\begin{remark}  
    Notice that we allow the cases $d= 1$ and $d=0$. When $d=1$, there is no interesting grading and a differential module is simply a module equipped with a squarezero endomorphism; these are the objects originally studied by Cartan--Eilenberg. When $d = 0$ a $\bbz$-graded differential module is identically a complex; although this case may seem trivial, we will see later that some of our results on the structure of projective flag resolutions are interesting even in this setting. When $d = 2$, the category $\dm_{\bbz / 2 \bbz}$ is precisely the category of \defi{linear factorizations} of $0$, typically denoted $\operatorname{LF} (R,0)$ as in \cite{brown2017adams}.

    In practice, one typically only encounters the cases $d = 0, 1$, or $2$ for differential modules, but our methods easily work for arbitrary nonnegative integers $d$. 
\end{remark}

\begin{remark}
    Regardless of the value of $d$, we will always assume in this paper that the differential of an object $D \in \dm_{\bbz / d \bbz} (D)$ squares to $0$. A $\bbz / d \bbz$-graded differential module should not be confused with the notion of an $N$-complex \cite{iyama2017derived}, which is a complex where the $N$th power of the differential is $0$, where $N \geq 2$.
\end{remark}

\begin{remark}
   For $d > 1$, short exact sequences of $\bbz / d \bbz$-graded differential modules induce long exact sequences of homology but instead with periodicity $d$:
   % https://q.uiver.app/#q=WzAsMTgsWzAsMiwiSF9qKEQpIl0sWzEsMiwiSF9qKEUpIl0sWzIsMiwiSF9qKEYpIl0sWzAsMywiSF97ai0xfShEKSJdLFsxLDMsIkhfe2otMX0gKEUpIl0sWzIsMywiSF97ai0xfSAoRikiXSxbMSw0LCJcXHZkb3RzIl0sWzAsNCwiXFx2ZG90cyJdLFsyLDQsIlxcdmRvdHMiXSxbMCw1LCJIXzAgKEQpIl0sWzEsNSwiSF8wIChFKSJdLFsyLDUsIkhfMChGKSJdLFswLDEsIlxcdmRvdHMiXSxbMSwxLCJcXHZkb3RzIl0sWzIsMSwiXFx2ZG90cyJdLFswLDAsIkhfe2QtMX0gKEQpIl0sWzEsMCwiSF97ZC0xfSAoRSkiXSxbMiwwLCJIX3tkLTF9IChGKSJdLFsxNSwxNl0sWzE2LDE3XSxbMTcsMTJdLFsxNCwwXSxbMCwxXSxbMSwyXSxbMiwzXSxbMyw0XSxbNCw1XSxbNSw3XSxbOCw5XSxbOSwxMF0sWzEwLDExXSxbMTEsMTUsIiIsMCx7ImN1cnZlIjo1fV1d
\[\begin{tikzcd}
	{H_{d-1} (D)} & {H_{d-1} (E)} & {H_{d-1} (F)} \\
	\vdots & \vdots & \vdots \\
	{H_j(D)} & {H_j(E)} & {H_j(F)} \\
	{H_{j-1}(D)} & {H_{j-1} (E)} & {H_{j-1} (F)} \\
	\vdots & \vdots & \vdots \\
	{H_0 (D)} & {H_0 (E)} & {H_0(F)}
	\arrow[from=1-1, to=1-2]
	\arrow[from=1-2, to=1-3]
	\arrow[from=1-3, to=2-1]
	\arrow[from=2-3, to=3-1]
	\arrow[from=3-1, to=3-2]
	\arrow[from=3-2, to=3-3]
	\arrow[from=3-3, to=4-1]
	\arrow[from=4-1, to=4-2]
	\arrow[from=4-2, to=4-3]
	\arrow[from=4-3, to=5-1]
	\arrow[from=5-3, to=6-1]
	\arrow[from=6-1, to=6-2]
	\arrow[from=6-2, to=6-3]
	\arrow[curve={height=30pt}, from=6-3, to=1-1]
\end{tikzcd}\]
For $d = 1$ a short exact sequence of differential modules induces an exact triangle:
% https://q.uiver.app/#q=WzAsMyxbMCwwLCJIKEQpIl0sWzIsMCwiSChFKSJdLFsxLDEsIkgoRikiXSxbMCwxXSxbMSwyXSxbMiwwXV0=
\[\begin{tikzcd}
	{H(D)} && {H(E)} \\
	& {H(F)}
	\arrow[from=1-1, to=1-3]
	\arrow[from=1-3, to=2-2]
	\arrow[from=2-2, to=1-1]
\end{tikzcd}\]
\end{remark}

One of the nice features (and part of our motivation for allowing $\bbz / d \bbz$-gradings) is that the category $\dm_{\bbz / d \bbz} (R)$ has additional structure when $d \neq 1$. This fact is straightforward and well-known to experts, but we state is explicitly for convenience.

\begin{obs}\label{obs:MonoidalStructure}
    For $d \neq 1$, the category $\dm_{\bbz / d \bbz} (R)$ is a symmetric monoidal, self-enriched category\footnote{By self-enriched, we mean that there is corresponding object $\hom_R (D,D')$ between two $\bbz / d \bbz$-graded differential modules that is itself a $\bbz / d \bbz$-graded differential module.}. If $R$ has characteristic $2$, then $\dm_{\bbz / d \bbz} (R)$ is a symmetric monoidal, self-enriched category for all $d \geq 0$. 
\end{obs}

\begin{proof}
    The only subtlety here comes from defining graded pieces of these objects. The tensor product has
    $$(D \otimes_R D)_n := \bigoplus_{i + j \equiv n \mod d} D_i \otimes_R D'_j,$$
    $$\text{with differential} \quad d^{D \otimes_R D'} (a \otimes b) := d^D (a) \otimes b + (-1)^{|a|} a \otimes d^{D'} (b).$$
    In a similar way,
    $$\hom_R^n (D , D') := \prod_{i-j \equiv n \mod d} \hom_R (D_i , D'_j ),$$
    $$\text{with differential} \quad d^{\hom_R (D , D')} (\phi) := d^{D'} \circ \phi - (-1)^{|\phi|} \phi \circ d^{D}.$$
\end{proof}

We will sometimes have a need for the notion of folding and unfolding a differential module. Intuitively, the folding operation should be thought of as an operation that collapses\footnote{In the work \cite{avramov2007class}, the authors even call the Fold functor the ``collapse" functor.} the grading of a $\bbz / d \bbz$-graded differential module, whereas unfolding is something like a ``copy-paste" operation.

\begin{definition}[Folding and Unfolding]
    Let $D \in \dm_{\bbz / d \bbz} (R)$. Then the $\bbz / 2 \bbz$-graded \defi{unfolding} is the $\bbz / 2 \bbz \times \bbz / d \bbz$-graded differential module
   % https://q.uiver.app/#q=WzAsMTIsWzAsMSwiRF97ZC0xfSJdLFsyLDEsIkRfe2QtMX0iXSxbMCwyLCJEX3tkLTJ9Il0sWzIsMiwiRF97ZC0yfSJdLFswLDMsIlxcdmRvdHMiXSxbMiwzLCJcXHZkb3RzIl0sWzAsNCwiRF8xIl0sWzIsNCwiRF8xIl0sWzAsNSwiRF8wIl0sWzIsNSwiRF8wIl0sWzAsMCwiXFxkZWcgMSJdLFsyLDAsIlxcZGVnIDAiXSxbMCwzXSxbMSwyXSxbMiw1XSxbMyw0XSxbNCw3XSxbNSw2XSxbNiw5XSxbNyw4XSxbOSwwLCIiLDAseyJjdXJ2ZSI6LTF9XSxbOCwxLCIiLDAseyJjdXJ2ZSI6MX1dXQ==
\[\begin{tikzcd}
	{\deg 1} && {\deg 0} \\
	{D_{d-1}} && {D_{d-1}} \\
	{D_{d-2}} && {D_{d-2}} \\
	\vdots && \vdots \\
	{D_1} && {D_1} \\
	{D_0} && {D_0}
	\arrow[from=2-1, to=3-3]
	\arrow[from=2-3, to=3-1]
	\arrow[from=3-1, to=4-3]
	\arrow[from=3-3, to=4-1]
	\arrow[from=4-1, to=5-3]
	\arrow[from=4-3, to=5-1]
	\arrow[from=5-1, to=6-3]
	\arrow[from=5-3, to=6-1]
	\arrow[curve={height=-6pt}, from=6-3, to=2-1]
	\arrow[curve={height=6pt}, from=6-1, to=2-3]
\end{tikzcd}\]
Assume $e > d$ are nonnegative integers. The \defi{folding functor} $\fold_{\bbz / e \bbz}^{\bbz / d \bbz} : \dm_{\bbz / e \bbz} (R) \to \dm_{\bbz / d \bbz} (R)$ sends a $\bbz / e \bbz$-graded differential module to the differential module with
$$\fold_{\bbz / e \bbz}^{\bbz / d \bbz} (D)_i := \bigoplus_{j \equiv i \mod d} D_j,$$
where the differential is induced by the original differential $d^D$ of $D$. 
\end{definition}

Notice that with respect to the $\bbz / 2 \bbz \times \bbz / d \bbz$-grading of the $\bbz / 2 \bbz$-grading unfolding of $D$, the induced differential has bidegree $(-1,-1)$. 

\begin{remark}
    In the context of \cite{avramov2007class}, the functor $\unfold$ sends a differential module $D$ to the infinite chain complex
    $$D_\bullet : \quad \cdots \to  D \xra{d^D} D \xra{d^D} D \xra{d^D} \cdots.$$
    For our purposes, we will more frequently want to unfold a differential module to a $2$-periodic object.
\end{remark}

% \begin{lemma}
%     Let $\phi : D \to D'$ be a morphism of differential modules and assume that $\unfold^{\bbz / 2 \bbz} (\phi) \in \obj \dm_{\bbz / 2 \bbz} (R)$ is a nullhomotopy. Then $\phi$ induces the $0$ map on homology, though $\phi$ may not be a nullhomotopy.
% \end{lemma}

% \begin{proof}
%     Define $\psi := \unfold^{\bbz / 2 \bbz} (\phi)$ and assume that $\psi$ is a nullhomotopy in $\dm_{\bbz / 2 \bbz} (R)$. By definition there are maps $h_0 , h_1 : D \to D'$ satisfying
%     $$\psi_0 = d^{D'} h_0 + h_1 d^D, \quad \psi_1 = d^{D'} h_1 + h_0 d^D.$$
%     Since $\psi_0 = \psi_1 = \phi$, this implies that $\phi = d^{D'} h_0 + h_1 d^D$ and hence $\phi$ induces the $0$ map on homology. On the other hand, the map $\phi$ is a nullhomotopy in the original category $\dm_{\bbz / d \bbz} (R)$ if and only if $h_0 = h_1$.
% \end{proof}

\subsection{The Categories $\flag_{\bbz / d \bbz} (R)$ and $\pflag_{\bbz / d \bbz} (R)$}\label{subsec:theFlagCats}

In this section, we introduce two categories: $\flag_{\bbz / d \bbz} (R)$ and $\pflag_{\bbz / d \bbz}$. The objects of these categories are those differential modules admitting a flag structure, and the only difference is whether we assume that the flag structure is preserved when defining morphisms. 

\begin{definition}[The Categories $\flag_{\bbz / d \bbz} (R)$ and $\pflag_{\bbz / d \bbz} (R)$]\label{def:flagDefs}
    The category $\flag_{\bbz / d \bbz} (R)$ of \defi{$\bbz / d \bbz$-graded flags of $R$-modules} has objects given by all $R$-modules equipped with a $\bbz / d \bbz$-graded filtration
    $$0 = D^{-1}_j \subset D^0_j \subset D^1_j \subset \cdots , \quad j = 0 ,\dots , d-1,$$
    and a square-zero endomorphism $d^D : D \to D$ satisfying
    $$d^D (D^i_j) \subset D^{i-1}_{j-1}, \quad \text{for all} \ i \geq 0, \ j \in \bbz / d \bbz.$$
    In the above containment, we use the convention that the index $j$ is taken modulo $d$; for instance $d^D(D^i_0) \subset D^{i-1}_{d-1}$. A \defi{morphism of flags} is a morphism of $R$-modules $\phi : D \to D'$ satisfying 
    $$d^{D'} \phi = \phi d^D, \quad \text{and} \quad \phi(D^i_j) \subset {D'_j}^i.$$
    Two morphisms $\phi , \psi: D \to D'$ of flags are \defi{homotopic} if there is a map of the underlying $R$-modules $h : D \to D'$ such that
    $$\phi - \psi = d^{D'} h + h d^D, \quad \text{and} \quad h(D^i_j) \subset {D'_{j}}^{i+1}.$$
    Every flag may be viewed as a differential module, in which case notions such as cycles, boundaries, and homology are defined in an identical manner. 
    
    The category $\pflag_{\bbz / d \bbz} (R)$ has the same objects as $\flag_{\bbz / d \bbz} (R)$, but with
    $$\hom_{\pflag_{\bbz / d \bbz} (R)} (D,D') := \hom_{\dm_{\bbz / d \bbz} (R)} (D,D').$$
    In other words, morphisms and homotopies need not be filtration preserving.
\end{definition}

\begin{remark}
    The data of the filtration of an object $D \in \flag_{\bbz / d \bbz} (R)$ is called the \defi{flag structure}. When $d = 1$, there is no ambient $\bbz / d \bbz$-grading. In this case a flag is simply an $R$-module equipped with a filtration and a squarezero endomorphism that lowers the filtration degree; this is the definition of a flag as given in Avramov-Buchweitz--Iyengar \cite{avramov2007class}. Flags were introduced as a natural intermediary between the category of chain complexes and the full category of differential modules, since any chain complex tautologically admits such a flag structure. 
\end{remark}

\begin{remark}
    There is a canonical forgetful functor
    $$\flag_{\bbz / d \bbz} (R) \to \dm_{\bbz / d \bbz} (R)$$
    which simply forgets the flag structure on a given flag. This embedding is faithful, but \emph{not} full. On the other hand, since morphisms in the category $\pflag_{\bbz / d \bbz} (R)$ need not be flag-preserving, the canonical forgetful functor
    $$\pflag_{\bbz / d \bbz} (R) \to \dm_{\bbz / d \bbz} (R)$$
    is by construction fully faithful. 
\end{remark}

\begin{remark}
    We will often use the terminology that a morphism or homotopy is \defi{flag-preserving} if it is a morphism or homotopy in the category $\flag (R)$. Indeed, the only difference between the categories $\flag (R)$ and $\pflag(R)$ is whether or not morphisms preserve the flag structure. 
\end{remark}

\begin{chunk}\label{chunk:flagReformulation}
    Let $D \in \flag_{\bbz / d \bbz} (R\proj)$. Since the underlying $R$-module of $D$ is projective, the defining filtration of $D$ is actually split and there is a direct sum decomposition as $R$-modules
    $$D = \bigoplus_{i \geq 0, \ j \in \bbz / d \bbz} D_{i,j},$$
    where $D_{i,j} := D^i_{i+j} / D^{i-1}_{i+j-1}$ and the index $j$ is taken modulo $d$. 
    
    Thus, the differential $d^D$ of $D$ splits into components
    $$d^D = \sum_{i \geq 0} \delta_i^D, \quad \text{where} \ \delta_i^D|_{D_{j,\ell}} : D_{j,\ell} \to D_{j-i+1,\ell+i},$$
    and by collecting graded pieces of the equality $(d^D)^2 = 0$ we find that there are equalities
    $$(\delta_0^D)^2 = 0, \quad \text{and} \quad  \delta_0^D \delta_i^D + \delta_i^D \delta_0^D = - \sum_{j=1}^{i-1} \delta_j^D \delta_{i-j}^D \quad \text{for} \ i \geq 1.$$
    It follows that for each $j = 0 ,\dots , d-1$ there are induced complexes:
    $$D_{\bullet , j} : \quad \cdots \to D_{i,j} \xra{\delta_0^D} D_{i-1,j} \xra{\delta_0^D} \cdots \xra{\delta_0^D} D_{1,j} \xra{\delta_0^D} D_{0,j}.$$
    The direct sum $\bigoplus_{j \in \bbz / d \bbz} D_{\bullet , j}$ is called the \defi{anchor of the projective flag} $D$. 
    
    Given a morphism $\phi : D \to D'$ of projective flags, the map $\phi$ decomposes as a sum $\phi = \sum_{i \geq 0} \phi_i$ with $\phi_i|_{D^j_\ell} : D^j_\ell \to D^{j-i}_\ell$. Again, collecting graded pieces of the equality $d^{D'} \phi = \phi d^D$ yields the equalities
    $$\sum_{j=0}^i \phi_j \delta_{j-i}^D = \sum_{j=0}^i \delta_{j-i}^{D'} \phi_j.$$
    Finally, decomposing the relation $\phi - \psi = d^{D'} \sigma + \sigma d^{D}$ yields the equalities
    $$\phi_i - \psi_i = \sum_{j=0}^i (\delta^{D'}_j \sigma_{i-j} + \sigma_j \delta^D_{i-j}).$$
\end{chunk}

\begin{remark}
    The terminology \emph{anchor} is chosen intentionally: one of the main themes that we will see later in this paper is that the homological properties of a projective flag cannot stray too far from those of the underlying anchor. This question of the relationship between a flag and its anchor was informally posed in work of Brown--Erman \cite{brown2021minimal}, and later stated explicitly in the work \cite{banks2023differential}. 
\end{remark}

\begin{example}
    Assume $D \in \dm_{\bbz / 2 \bbz} (R\proj)$. Then as in the discussion of \ref{chunk:flagReformulation}, upon setting $D_{i,j} := D^i_{i+j} / D^{i-1}_{i+j-1}$ the projective flag $D$ may be viewed diagramatically as follows:
    % https://q.uiver.app/#q=WzAsMTAsWzQsMCwiRF97MCwwfSJdLFs0LDEsIkRfezAsMX0iXSxbMywwLCJEX3sxLDB9Il0sWzMsMSwiRF97MSwxfSJdLFsyLDAsIkRfezIsMH0iXSxbMiwxLCJEX3syLDF9Il0sWzEsMCwiRF97MywwfSJdLFsxLDEsIkRfezMsMX0iXSxbMCwwLCJcXGNkb3RzIl0sWzAsMSwiXFxjZG90cyJdLFs4LDZdLFs2LDRdLFs0LDJdLFsyLDBdLFszLDFdLFs1LDNdLFs3LDVdLFs5LDddLFs0LDFdLFs1LDBdLFs2LDNdLFs2LDAsIiIsMSx7ImN1cnZlIjotM31dLFs3LDEsIiIsMSx7ImN1cnZlIjozfV0sWzgsMiwiIiwxLHsiY3VydmUiOi0zfV0sWzksMywiIiwxLHsiY3VydmUiOjN9XSxbNywyXSxbOSw0XSxbOCw1XV0=
\[\begin{tikzcd}
	\cdots & {D_{3,0}} & {D_{2,0}} & {D_{1,0}} & {D_{0,0}} \\
	\cdots & {D_{3,1}} & {D_{2,1}} & {D_{1,1}} & {D_{0,1}}
	\arrow[from=1-1, to=1-2]
	\arrow[from=1-2, to=1-3]
	\arrow[from=1-3, to=1-4]
	\arrow[from=1-4, to=1-5]
	\arrow[from=2-4, to=2-5]
	\arrow[from=2-3, to=2-4]
	\arrow[from=2-2, to=2-3]
	\arrow[from=2-1, to=2-2]
	\arrow[from=1-3, to=2-5]
	\arrow[from=2-3, to=1-5]
	\arrow[from=1-2, to=2-4]
	\arrow[curve={height=-18pt}, from=1-2, to=1-5]
	\arrow[curve={height=18pt}, from=2-2, to=2-5]
	\arrow[curve={height=-18pt}, from=1-1, to=1-4]
	\arrow[curve={height=18pt}, from=2-1, to=2-4]
	\arrow[from=2-2, to=1-4]
	\arrow[from=2-1, to=1-3]
	\arrow[from=1-1, to=2-3]
\end{tikzcd}\]

The anchor of $D$ is then given by the direct sum of complexes
% https://q.uiver.app/#q=WzAsMTAsWzQsMCwiRF97MCwwfSJdLFs0LDEsIkRfezAsMX0iXSxbMywwLCJEX3sxLDB9Il0sWzMsMSwiRF97MSwxfSJdLFsyLDAsIkRfezIsMH0iXSxbMiwxLCJEX3syLDF9Il0sWzEsMCwiRF97MywwfSJdLFsxLDEsIkRfezMsMX0iXSxbMCwwLCJcXGNkb3RzIl0sWzAsMSwiXFxjZG90cyJdLFs4LDZdLFs2LDRdLFs0LDJdLFsyLDBdLFszLDFdLFs1LDNdLFs3LDVdLFs5LDddXQ==
\[\begin{tikzcd}
	\cdots & {D_{3,0}} & {D_{2,0}} & {D_{1,0}} & {D_{0,0}} \\
	\cdots & {D_{3,1}} & {D_{2,1}} & {D_{1,1}} & {D_{0,1}}
	\arrow[from=1-1, to=1-2]
	\arrow[from=1-2, to=1-3]
	\arrow[from=1-3, to=1-4]
	\arrow[from=1-4, to=1-5]
	\arrow[from=2-4, to=2-5]
	\arrow[from=2-3, to=2-4]
	\arrow[from=2-2, to=2-3]
	\arrow[from=2-1, to=2-2]
\end{tikzcd}\]
\end{example}

\begin{remark}
    An important aspect of anchors to keep in mind is that the way we have index anchors ensures that the module $D_{i,j}$ has $\bbz / d \bbz$-degree $i+j$ and \emph{not} just degree $j$. 
\end{remark}

The following lemma is a good illustration of how $R$-projective flags have homological behavior that closely resembles that of complexes of projectives in the category of chain complexes. For a proof, see \cite[Proposition 6.4]{felix2012rational}.

\begin{lemma}\label{lem:naiveLiftingLemma}
    Let $\phi : D \to D'$ be a morphism of differential modules. Then there exists a (not necessarily flag-preserving) map $\widetilde{\phi} : F \to F'$ of $R$-projective flags equipped with quasi-isomorphisms $F \xra{\sim} D$ and $F' \xra{\sim} D'$ such that the following diagram commutes up to homotopy:
    % https://q.uiver.app/#q=WzAsNCxbMCwwLCJGIl0sWzIsMCwiRiciXSxbMCwyLCJEIl0sWzIsMiwiRCciXSxbMCwxLCJcXHdpZGV0aWxkZXtcXHBoaX0iXSxbMCwyLCJcXHNpbSIsMl0sWzEsMywiXFxzaW0iXSxbMiwzLCJcXHBoaSIsMl1d
\[\begin{tikzcd}
	F && {F'} \\
	\\
	D && {D'}
	\arrow["{\widetilde{\phi}}", from=1-1, to=1-3]
	\arrow["\sim"', from=1-1, to=3-1]
	\arrow["\sim", from=1-3, to=3-3]
	\arrow["\phi"', from=3-1, to=3-3]
\end{tikzcd}\]
\end{lemma}

\begin{remark}
     One of the core motivations for much of the work in subsequent sections is to understand when the morphism $\widetilde{\phi} : F \to F'$ of Lemma \ref{lem:naiveLiftingLemma} may be chosen to be flag-preserving.
\end{remark}

The following lemma shows that quasi-isomorphisms of projective flags must in fact be homotopy equivalences. As explained in Remark \ref{rk:regularAndContractible} below, having a flag structure is essential for any such property to hold if the ambient ring is not regular:

\begin{lemma}\label{lem:contractibilityLemma}
    Assume $\phi : D \to D'$ is a quasi-isomorphism of objects $D, D' \in \flag_{\bbz / d \bbz} (R\proj)$. Then $\phi$ is a homotopy equivalence, but not necessarily in a flag-preserving way.
\end{lemma}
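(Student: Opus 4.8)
The plan is to reduce to the case of bounded-below complexes of projectives, where the corresponding statement is classical. The key observation is that a quasi-isomorphism $\phi\colon D\to D'$ of projective flags induces, via the filtration, a morphism of the associated spectral sequences (the one from Lemma \ref{lem:ZZdSS}, applied to the flag filtration). Since $\phi$ is a quasi-isomorphism, it is an isomorphism on the $E^\infty$-pages; more usefully, I would instead argue directly at the level of anchors and then bootstrap up the flag filtration. Concretely, I would consider the brutal truncations $D^{\leq n}$ of the flag (the $R$-submodules $D^n=\bigoplus_{i\le n}D_{i,\bullet}$ are \emph{not} subcomplexes under $d^D$, but they \emph{are} preserved up to the lower-filtration correction, so one works with the associated graded and lifts inductively).

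\textbf{Key steps.} First I would invoke Lemma \ref{lem:naiveLiftingLemma}-type reasoning in reverse: since $D$ and $D'$ are already projective flags, I want a homotopy inverse $\psi\colon D'\to D$ directly. I would build $\psi$ as a (not necessarily flag-preserving) morphism of differential modules by the standard obstruction-theoretic lifting along the flag filtration — at each filtration stage $D'^{\,\le n}$, the obstruction to extending the partially-defined $\psi$ lives in an $\Ext^1$ or in $H_*$ of a Hom-complex built from the projective anchor pieces, and it vanishes because $\phi$ is a quasi-isomorphism and the anchor components are projective (so the relevant Hom-complexes compute the right derived functors with no higher obstructions). This is exactly the argument that quasi-isomorphisms between bounded-below complexes of projectives are homotopy equivalences, carried out filtration-degree by filtration-degree rather than homological-degree by homological-degree. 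Second, having produced $\psi$, I would show $\phi\psi\simeq \id_{D'}$ and $\psi\phi\simeq\id_D$ by the same inductive lifting of nullhomotopies along the filtration: $\phi\psi-\id_{D'}$ is a quasi-isomorphism-induced self-map that is zero on homology, so it is nullhomotopic by the same obstruction vanishing. Third, I would remark — and this is the content of the final clause "not necessarily in a flag-preserving way" — that nothing in the construction forces $\psi$ or the homotopies to respect the filtration, and Example \ref{ex:nonFullExample} already exhibits flags (foldings of non-isomorphic complexes) that become homotopy equivalent as differential modules without any flag-preserving equivalence between them.

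\textbf{Main obstacle.} The subtle point is that the $R$-submodules defining a flag are \emph{not} subcomplexes of $(D,d^D)$ — the full differential $d^D=\sum_i\delta_i^D$ has components dropping filtration degree by more than one (see \ref{chunk:flagReformulation}), so "restrict to filtration $\le n$" does not produce a subcomplex and one cannot naively run the usual complex-of-projectives induction. The fix, and the step I expect to require the most care, is to run the induction on the \emph{anchor} filtration: the associated graded $\gr D=(D,\delta_0^D)=\bigoplus_j D_{\bullet,j}$ \emph{is} a complex of projectives, the map $\gr\phi\colon\gr D\to\gr D'$ is a quasi-isomorphism (since $\phi$ preserves the filtration the induced map on the $E^1$-page of the flag spectral sequence is $\gr\phi$, and $\phi$ being a quasi-isomorphism forces this to be a quasi-isomorphism after the pages collapse — here one uses that everything is bounded below and projective so the spectral sequence converges), and then one lifts the classical homotopy equivalence $\gr\phi$ together with its homotopy inverse and homotopies back up through the finite-stage filtration quotients by an iterated application of the perturbation lemma (Lemma \ref{lem:thePertLemma1}), treating the higher components $\delta_1^D+\delta_2^D+\cdots$ as a small perturbation exactly as in the proof of Theorem \ref{thm:htpyEquivalentAnchors}. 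In fact, once phrased this way the lemma is almost an immediate corollary of Theorem \ref{thm:htpyEquivalentAnchors}: a quasi-isomorphism of flags restricts to a quasi-isomorphism of anchors, anchors of projective flags are bounded-below complexes of projectives hence the anchor quasi-isomorphism is a homotopy equivalence, and Theorem \ref{thm:htpyEquivalentAnchors}(1) then transfers this homotopy equivalence to the flags themselves — with no claim of flag-preservation, matching the statement.
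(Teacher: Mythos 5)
Your preferred argument at the end --- that the lemma is ``almost an immediate corollary of Theorem \ref{thm:htpyEquivalentAnchors}'' --- fails at its very first step: a quasi-isomorphism of flags does \emph{not} in general restrict to a quasi-isomorphism of anchors. Take $d=1$ and let $D$ be the free flag $R^{\oplus 2}$ with filtration pieces $D^0 = D^1 = R$ (the first summand), $D^2 = R^{\oplus 2}$, anchor differential $\delta_0^D = 0$, and $\delta_1^D$ the identity from the second summand to the first. Then $D$ is contractible (a contracting homotopy is the identity from the first summand to the second), so the zero map $D \to 0$ is a flag-preserving quasi-isomorphism; but the anchor $(D,\delta_0^D)$ is the zero-differential complex $R \to 0 \to R$, which has homology $R^{\oplus 2}$, so the induced map of anchors is nowhere near a quasi-isomorphism. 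The underlying error is a misapplication of the spectral sequence comparison theorem: a quasi-isomorphism of abutments forces an isomorphism on the $E^\infty$-pages, not on the $E^1$-pages (the anchor homology); the implication you are invoking runs only in the opposite direction.

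Your first, obstruction-theoretic strategy is the right one, and it is essentially what the paper's cited reference \cite[Proposition 6.4]{felix2012rational} proves. The tidiest packaging is to show that an acyclic projective flag $F$ is contractible by induction on filtration degree: $d^F$ vanishes on $F^0$, so $F^0 \subset Z(F) = B(F)$, and projectivity of $F^0$ lets the inclusion $F^0 \hookrightarrow B(F)$ lift along $d^F \colon F \to B(F)$ to begin the contracting homotopy $h$; on a chosen projective complement $C_n$ of $F^{n-1}$ in $F^n$ one observes that $x - h(d^F x)$ is a cycle for $x \in C_n$ and lifts again. Applied to $\cone(\phi)$ --- which inherits a projective flag structure from $D$ and $D'$ precisely because $\phi$ is flag-preserving --- this gives the lemma. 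One further small correction: you claim the filtration pieces $D^n$ are ``not subcomplexes under $d^D$,'' but the flag axiom $d^D(D^i) \subset D^{i-1} \subset D^i$ makes each $D^i$ a sub-differential-module of $D$. The obstacle you are sensing is that the successive quotients $D^i/D^{i-1}$ see only $\delta_0^D$, which is exactly why anchor homology does not control flag homology, as the counterexample above illustrates.
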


\begin{proof}
    This is an immediate consequence of \cite[Proposition 6.4]{felix2012rational}.
\end{proof}

\begin{remark}\label{rk:regularAndContractible}
    Over a regular local ring, any quasi-isomorphism between objects whose ambient $R$-modules are free must be a homotopy equivalence. This follows from the fact that any exact differential module whose ambient $R$-module is free is contractible, a consequence of the fact that there is an exact sequence
    $$0 \to B(D) \to D \to \cdots \to D \to B(D) \to 0.$$
    Choosing this sequence to be of length $> \dim R$ forces $B(D)$ to be a free $R$-module, whence $D$ is contractible. Even for non-regular hypersurfaces, this immediately becomes false, since the $\bbz / 2 \bbz$-folding of a $2$-periodic minimal free resolution is exact, but not contractible (see \cite[Example 1.7]{avramov2007class}).
\end{remark}

It turns out that the homology of a differential module equipped with a flag structure also admits its own kind of flag structure. The following definition makes this structure precise:

\begin{definition}
    Let $D \in \obj \flag_{\bbz / d \bbz} (R)$. Then there is a canonical filtration of the homology $H(D)$ defined via
    $$H^i_j (D) := \im \left( H_j(D^i) \to H_j(D) \right) = \frac{Z(D^i)}{B(D) \cap Z (D^i)}.$$
    This filtration is exhaustive; that is:
    $$H_j (D) = \bigcup_{i \geq 0} H^i_j (D).$$
    We define the associated graded pieces of this filtration via
    $$H_{i,j} (D) := \frac{H^i_{i+j} (D)}{H^{i-1}_{i+j-1} (D)}, \quad i \in \bbz, \ j \in \bbz / d \bbz.$$
\end{definition}

\begin{remark}
    Given a short exact sequence of objects in $\flag_{\bbz / d \bbz} (R)$, there is an induced long exact sequence in homology that more closely mimics the induced long exact sequence in the category of complexes:
    % https://q.uiver.app/#q=WzAsMTYsWzAsNCwiSF4wX2ogKEQpIl0sWzEsNCwiSF4wX2ogKEUpIl0sWzIsNCwiSF4wX2ogKEYpIl0sWzAsMywiSF4xX3tqLTF9IChEKSJdLFsxLDMsIkheMV97ai0xfSAoRSkiXSxbMiwzLCJIXjFfe2otMX0gKEYpIl0sWzMsNCwiMCJdLFswLDIsIlxcdmRvdHMiXSxbMCwxLCJIXmlfe2otaX0gKEQpIl0sWzEsMSwiSF5pX3tqLWl9IChFKSJdLFsyLDEsIkheaV97ai1pfSAoRikiXSxbMSwyLCJcXHZkb3RzIl0sWzIsMiwiXFx2ZG90cyJdLFswLDAsIlxcdmRvdHMiXSxbMSwwLCJcXHZkb3RzIl0sWzIsMCwiXFx2ZG90cyJdLFswLDFdLFsxLDJdLFsyLDZdLFs1LDBdLFszLDRdLFs0LDVdLFs4LDldLFs5LDEwXSxbMTAsN10sWzEyLDNdLFsxNSw4XV0=
\[\begin{tikzcd}
	\vdots & \vdots & \vdots \\
	{H^i_{j+i} (D)} & {H^i_{j+i} (E)} & {H^i_{j+i} (F)} \\
	\vdots & \vdots & \vdots \\
	{H^1_{j+1} (D)} & {H^1_{j+1} (E)} & {H^1_{j+1} (F)} \\
	{H^0_j (D)} & {H^0_j (E)} & {H^0_j (F)} & 0
	\arrow[from=5-1, to=5-2]
	\arrow[from=5-2, to=5-3]
	\arrow[from=5-3, to=5-4]
	\arrow[from=4-3, to=5-1]
	\arrow[from=4-1, to=4-2]
	\arrow[from=4-2, to=4-3]
	\arrow[from=2-1, to=2-2]
	\arrow[from=2-2, to=2-3]
	\arrow[from=2-3, to=3-1]
	\arrow[from=3-3, to=4-1]
	\arrow[from=1-3, to=2-1]
\end{tikzcd}\]
\end{remark}

The following spectral sequence will be a key feature for making precise the assertion that complexes and differential modules are essentially indistinguishable for many K-theoretic purposes.

\begin{lemma}\label{lem:ZZdSS}
    Let $D \in \flag_{\bbz / d \bbz} (R)$. There is a $\bbz \times \bbz / d \bbz$-graded spectral sequence with
    $$E^2_{r,s} = H_r (D_{\bullet, s}) \implies H_{r+s} (D),$$   
\end{lemma}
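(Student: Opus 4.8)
The statement to prove is Lemma~\ref{lem:ZZdSS}: for $D \in \flag_{\bbz/d\bbz}(R)$, there is a $\bbz \times \bbz/d\bbz$-graded spectral sequence with $E^2_{r,s} = H_r(D_{\bullet,s}) \Rightarrow H_{r+s}(D)$. Here is how I would approach the proof.

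\textbf{Plan.} The spectral sequence should be the spectral sequence of the filtration of $D$ (as a $\bbz/d\bbz$-graded differential module) coming from the flag structure. Recall from~\ref{chunk:flagReformulation} that after splitting the filtration we may write $D = \bigoplus_{i \geq 0,\, j \in \bbz/d\bbz} D_{i,j}$ with $D_{i,j} := D^i_{i+j}/D^{i-1}_{i+j-1}$, and the differential decomposes as $d^D = \sum_{i \geq 0} \delta_i^D$ where $\delta_i^D$ raises the flag degree's contribution appropriately; in particular $\delta_0^D$ is the differential of the anchor complexes $D_{\bullet,j}$. First I would set up the filtered object: let $\mathcal{F}^i D := D^i$ (more precisely, $\mathcal{F}^i D = \bigoplus_{j} D^i_{i'+j}$ read off correctly from the conventions, but the cleanest formulation is just the increasing filtration $0 = D^{-1} \subset D^0 \subset D^1 \subset \cdots$ on the underlying differential module, which is a filtration by differential submodules since $d^D(D^i) \subset D^{i-1} \subset D^i$). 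This filtration is exhaustive and bounded below, so the standard machinery of spectral sequences of filtered differential modules (with the $\bbz/d\bbz$-grading carried along as an auxiliary grading that every differential respects) applies and produces a convergent spectral sequence $E^r \Rightarrow H(D)$, graded by $(i, j) \in \bbz \times \bbz/d\bbz$ recording filtration degree $i$ and internal $\bbz/d\bbz$-degree.

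\textbf{Identifying the $E^1$ or $E^2$ page.} The $E^0$ page is the associated graded $\bigoplus_i \mathcal{F}^i D / \mathcal{F}^{i-1} D$ with differential induced by $d^D$; by the computation in~\ref{chunk:flagReformulation}, the induced differential on the $i$-th graded piece is exactly $\delta_0^D$, so $E^0_{i,j}$ with its differential is precisely the anchor complex $D_{\bullet,j}$ (reindexed so that row $i$ is the associated graded in flag degree $i$). Hence $E^1_{i,j} = H_i(D_{\bullet,j})$ — the homology of the anchor. Whether the statement wants $E^1$ or $E^2$ depends on a reindexing convention (the $E^1$ page of one common convention is the $E^2$ page of another), and since the lemma as stated writes $E^2_{r,s} = H_r(D_{\bullet,s})$ I would simply adopt the indexing in which the homology of the associated graded appears on the second page — e.g. by regarding the flag filtration as arising from a double-grading and taking the first differential to be the ``vertical'' one $\delta_0^D$ and the first spectral-sequence step to be trivial, or more honestly by just citing the flag analogue of the Cartan--Eilenberg hyperhomology spectral sequence. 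Concretely: view the anchor $\bigoplus_j D_{\bullet,j}$ as a complex, its homology is $E^2$, and the higher perturbations $\delta_1^D, \delta_2^D, \ldots$ induce the higher differentials $d^r$ for $r \geq 2$ (this is exactly the phenomenon discussed in Example~\ref{ex:ZZgradedCE} and Remark~\ref{rk:tensorDiffs}). I would write this page-identification as the core content of the proof.

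\textbf{Main obstacle and convergence.} The main subtlety is convergence, since the flag filtration on $D$ is exhaustive and bounded below ($D^{-1} = 0$) but \emph{not} bounded above in general — $D$ may have an infinite flag structure. So this is not a finite filtration and one must invoke the version of the convergence theorem for exhaustive, bounded-below filtrations. I would argue convergence as follows: the filtration is exhaustive, so $\bigcup_i \mathcal{F}^i D = D$; it is bounded below by $\mathcal{F}^{-1}D = 0$; and crucially, because $\delta_0^D$ strictly drops flag degree by one while the higher $\delta_\ell^D$ raise the ``$D_{i,j}$-row index'' in a controlled way (they map $D_{i,j}$ to $D_{i-\ell-1, j+\ell}$ or similar per~\ref{chunk:flagReformulation}), in each fixed total degree the filtration is eventually stationary on homology — this is essentially the statement that the filtration on $H(D)$ described just before the lemma (the $H^i_j(D)$) is exhaustive. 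Thus the spectral sequence is conditionally convergent and in fact converges strongly in the sense $E^\infty_{i,j} \cong H^i_{i+j}(D)/H^{i-1}_{i+j-1}(D)$, recovering $H_{r+s}(D)$ by reassembling the filtration. I expect writing out the convergence carefully — making sure "bounded below + exhaustive" genuinely suffices here and that no completeness hypothesis is secretly needed — to be the part requiring the most care, though it should go through by the standard Weibel/Boardman treatment of filtered complexes since everything in sight is bounded below. The auxiliary $\bbz/d\bbz$-grading plays no role beyond being carried along, since all maps ($d^D$, the filtration inclusions, and all $d^r$) respect it.
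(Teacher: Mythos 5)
Your proposal is correct and takes essentially the same route as the paper, which simply appeals to the classical spectral sequence of a filtered differential module (citing Cartan--Eilenberg and \cite[Subsection 2.6]{avramov2007class}) and notes that the auxiliary $\bbz/d\bbz$-grading is respected because the differential has degree $-1$. You have filled in the details the paper defers to those references---identification of the relevant page with the anchor's homology, and the convergence argument from exhaustiveness plus boundedness below---which is a reasonable expansion of the same idea rather than a different proof.

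One small clarification on the $E^1$ versus $E^2$ question you raise: because the flag filtration satisfies $d^D(D^i) \subset D^{i-1}$ (strict drop, rather than merely $d^D(D^i) \subset D^i$), the $d^0$ differential on $E^0_{i,j} = D^i/D^{i-1}$ induced into the \emph{same} filtration degree vanishes identically, so $E^1 = E^0$ and the first nontrivial differential is $d^1 = \delta_0^D$. Hence $E^2_{r,s} = H_r(D_{\bullet,s})$ is not merely a reindexing convention but the standard outcome of the standard convention; this is why the paper writes $E^2$.
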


\begin{proof}
    This is an immediate consequence of the spectral sequence associated to a filtration and can be deduced from the classic work of Cartan--Eilenberg (\cite{cartan2016homological}, see also \cite[Subsection 2.6]{avramov2007class}); the addition of the $\bbz / d \bbz$-grading is well-defined since the differential has degree $-1$. 
\end{proof}

\begin{example}
    Suppose that $D \in \flag_{\bbz / 2 \bbz} (R)$. Then the page maps for the spectral sequence of Lemma \ref{lem:ZZdSS} behave in an identical way to a $\bbz / d \bbz$-graded spectral sequence, but with the caveat that the vertical direction is taken ``modulo $d$". For instance the page maps in the $d=2$ case behave as follows: 
    % https://q.uiver.app/#q=WzAsMzQsWzEsMCwiRV4xX3swLDB9Il0sWzIsMCwiRV4xX3sxLDB9Il0sWzMsMCwiRV4xX3syLDB9Il0sWzIsMSwiRV4xX3sxLDF9Il0sWzMsMSwiRV4xX3syLDF9Il0sWzQsMCwiRV4xX3szLDF9Il0sWzEsMSwiRV4xX3swLDF9Il0sWzQsMSwiRV4xX3szLDF9Il0sWzUsMCwiXFxjZG90cyJdLFs1LDEsIlxcY2RvdHMiXSxbMCwwLCJFXjE6Il0sWzAsMiwiRV4yOiJdLFsxLDIsIkVeMl97MCwwfSJdLFsyLDIsIkVeMl97MSwwfSJdLFszLDIsIkVeMl97MiwwfSJdLFs0LDIsIkVeMl97MywxfSJdLFs0LDMsIkVeMl97MywxfSJdLFszLDMsIkVeMl97MiwxfSJdLFsyLDMsIkVeMl97MSwxfSJdLFsxLDMsIkVeMl97MCwxfSJdLFs1LDIsIlxcY2RvdHMiXSxbNSwzLCJcXGNkb3RzIl0sWzAsNCwiRV4zOiJdLFsxLDQsIkVeM197MCwwfSJdLFsyLDQsIkVeM197MSwwfSJdLFszLDQsIkVeM197MiwwfSJdLFs0LDQsIkVeM197MywxfSJdLFsxLDUsIkVeM197MCwxfSJdLFsyLDUsIkVeM197MSwxfSJdLFszLDUsIkVeM197MiwxfSJdLFs0LDUsIkVeM197MywxfSJdLFs1LDQsIlxcY2RvdHMiXSxbNSw1LCJcXGNkb3RzIl0sWzAsNiwiXFx2ZG90cyJdLFsxLDBdLFsyLDFdLFs1LDJdLFs0LDNdLFs3LDRdLFszLDZdLFs4LDVdLFs5LDddLFsxNCwxOV0sWzE3LDEyXSxbMTUsMThdLFsxNiwxM10sWzI2LDIzLCIiLDAseyJjdXJ2ZSI6Mn1dLFszMSwyNCwiIiwwLHsiY3VydmUiOjJ9XSxbMzAsMjcsIiIsMCx7ImN1cnZlIjotMn1dLFszMiwyOCwiIiwwLHsiY3VydmUiOi0yfV1d
\[\begin{tikzcd}
	{E^1:} & {E^1_{0,0}} & {E^1_{1,0}} & {E^1_{2,0}} & {E^1_{3,1}} & \cdots \\
	& {E^1_{0,1}} & {E^1_{1,1}} & {E^1_{2,1}} & {E^1_{3,1}} & \cdots \\
	{E^2:} & {E^2_{0,0}} & {E^2_{1,0}} & {E^2_{2,0}} & {E^2_{3,1}} & \cdots \\
	& {E^2_{0,1}} & {E^2_{1,1}} & {E^2_{2,1}} & {E^2_{3,1}} & \cdots \\
	{E^3:} & {E^3_{0,0}} & {E^3_{1,0}} & {E^3_{2,0}} & {E^3_{3,1}} & \cdots \\
	& {E^3_{0,1}} & {E^3_{1,1}} & {E^3_{2,1}} & {E^3_{3,1}} & \cdots \\
	\vdots
	\arrow[from=1-3, to=1-2]
	\arrow[from=1-4, to=1-3]
	\arrow[from=1-5, to=1-4]
	\arrow[from=2-4, to=2-3]
	\arrow[from=2-5, to=2-4]
	\arrow[from=2-3, to=2-2]
	\arrow[from=1-6, to=1-5]
	\arrow[from=2-6, to=2-5]
	\arrow[from=3-4, to=4-2]
	\arrow[from=4-4, to=3-2]
	\arrow[from=3-5, to=4-3]
	\arrow[from=4-5, to=3-3]
	\arrow[curve={height=12pt}, from=5-5, to=5-2]
	\arrow[curve={height=12pt}, from=5-6, to=5-3]
	\arrow[curve={height=-12pt}, from=6-5, to=6-2]
	\arrow[curve={height=-12pt}, from=6-6, to=6-3]
\end{tikzcd}\]
\end{example}

One fact that may not seem obvious at first glance is that if the anchor of a projective flag $D$ is a resolution, then in fact the homology of $D$ is precisely the $0$th homology of the anchor. A similar result in the absence of a $\bbz / d \bbz$-grading is proved in \cite{banks2023differential}. 

\begin{prop}
    Suppose that $D \in \flag_{\bbz / d \bbz} (R\proj)$ is a projective flag anchored on a resolution. Then
    $$H_j (D) = H_0 (D_{\bullet , j}) \quad \text{for each} \ j=0, \dots , d-1.$$
\end{prop}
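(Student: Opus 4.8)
The plan is to use the spectral sequence of Lemma \ref{lem:ZZdSS}, which has $E^2_{r,s} = H_r(D_{\bullet,s})$ and converges to $H_{r+s}(D)$. Since $D$ is anchored on a resolution, the anchor $\bigoplus_{j} D_{\bullet,j}$ has homology concentrated in homological degree $0$; that is, $H_r(D_{\bullet,s}) = 0$ for all $r > 0$ and all $s \in \bbz/d\bbz$. Thus the $E^2$-page of the spectral sequence is concentrated in the single row $r = 0$, with $E^2_{0,s} = H_0(D_{\bullet,s})$.

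First I would observe that because the $E^2$-page has nonzero entries only in the row $r = 0$, all higher differentials $d^k$ for $k \geq 2$ vanish: the differential $d^k$ on the $E^k$-page changes the homological grading $r$ by a nonzero amount (it goes from position $(r,s)$ to $(r-k, s+k-1)$ in the appropriate indexing), so its source or target always lies in a row $r \neq 0$, hence is zero. Therefore the spectral sequence degenerates at $E^2$, and we get $E^\infty_{0,s} = E^2_{0,s} = H_0(D_{\bullet,s})$ for each $s$.

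Next I would unwind the convergence statement. The spectral sequence converges to $H_n(D)$ with $E^\infty_{r,s}$ being the associated graded pieces of a filtration on $H_{r+s}(D)$ — indeed this is precisely the filtration $H^i_j(D)$ on the homology of a flag described just before the statement, and its associated graded pieces are the $H_{i,j}(D)$. Since the $E^\infty$-page is concentrated in the row $r = 0$, the filtration on $H_n(D)$ has only one nonzero graded piece, namely $E^\infty_{0,n} = H_0(D_{\bullet,n})$, and hence the filtration is trivial: $H_n(D) = H_0(D_{\bullet,n})$. Taking $n = j$ for $j = 0, \dots, d-1$ gives the claim.

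The main obstacle — really the only thing requiring care — will be checking that the filtration on $H_n(D)$ is exhaustive and Hausdorff (i.e. that the spectral sequence genuinely converges), since the anchor may be an unbounded complex and the flag structure may be infinite. For a first-quadrant-type spectral sequence arising from an exhaustive filtration this is standard, but here one should confirm that for fixed total degree $n$ only finitely many filtration steps contribute, which follows from the fact that $D_{i,j}$ sits in $\bbz/d\bbz$-degree $i+j$ and the homology in a fixed degree is built from the $E^\infty_{r,s}$ with $r + s \equiv n$; once the $E^2$-page is a single row, there is literally one term, so convergence is immediate and no boundedness hypothesis is actually needed. I would phrase the argument to make this collapse explicit rather than invoking a convergence theorem with hypotheses that might not obviously apply.
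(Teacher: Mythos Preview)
Your proposal is correct and follows essentially the same approach as the paper: invoke the spectral sequence of Lemma~\ref{lem:ZZdSS}, observe that the hypothesis ``anchored on a resolution'' forces $E^2_{r,s}=0$ for $r>0$, and conclude degeneration at $E^2$ with $H_j(D)=H_0(D_{\bullet,j})$. The paper's proof is a two-sentence version of exactly this argument, without your added discussion of why the higher differentials vanish or of convergence.
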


\begin{proof}
    The spectral sequence of Lemma \ref{lem:ZZdSS} reads
    $$E^2_{r,s} = \underbrace{H_r (D_{ \bullet , s})}_{=0 \ \text{for} \ r > 0} \implies H_{r+s} (D).$$
    Thus the sequence degenerates and there is an equality $H_j (D) = H_0 (D_{\bullet, j})$ for each $j = 0 ,\dots , d-1$. 
\end{proof}

\subsection{Twisting the Categories of Free Flags and an Equivalence with $A_\infty$-modules}\label{subsec:flagTaus}

In this subsection, we introduce a ``twisted" version of the categories $\flag_{\bbz / d \bbz} (R)$ and $\pflag_{\bbz/ d \bbz} (R)$. As previously mentioned, the main difference in these two categories comes from a difference in sign convention for the notions of morphisms and homotopies. While this sign convention may seem inconsequential, it turns out that this change in perspective allows us to identify the category of projective flags with a category of $A_\infty$-modules over a particularly simple algebra (see Lemma \ref{lem:flagTauEquivAndAinfty}). We first introduce the relevant notation for sign conventions that we will use:
\begin{definition}
    Given a homogeneous element $z \in M$ of a graded $R$-module, define
    $$\overline{z} := (-1)^{|z|+1} z, \quad \text{where} \ |z| := \deg z.$$
    For a nonhomogeneous element, the notation $\overline{z}$ simply extends by linearity. 

    Every graded $R$-module comes equipped with an automorphism $\tau$ which is defined by $$\tau (z) := - \overline{z}.$$
\end{definition}

\begin{remark}
    One thing to be careful with is that we will often work with objects that have a $\bbn \times \bbz / d \bbz$-grading, but the notation $\overline{\cdot}$ is defined \emph{only} to take account of the $\bbn$-grading. 
\end{remark}

The following proposition is a straightforward verification:
\begin{prop}\label{prop:tauProperties}
    Let $\phi$ and $\psi$ be graded endomorphisms of some $\bbz$-graded module $M$. Define $\tau_* := \hom_R (\tau , -)$ and $\tau^* := \hom_R ( - , \tau)$. Then there are equalities
    $$\tau^* (\phi \circ \psi) = \phi \circ \tau^*(\psi) = - \tau^*(\phi) \circ \overline{\psi},$$
    $$\tau^*( \phi) \circ \tau^* (\psi) = - \phi \circ \overline{\psi}, \quad \text{and}$$ 
    $$\tau_* \phi = - \tau^* (\overline{\phi}).$$
\end{prop}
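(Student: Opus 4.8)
The plan is to verify each of the three claimed identities by direct computation from the definitions, working with a homogeneous element $z$ and tracking signs carefully; all three are purely formal consequences of the definitions $\bar{z} = (-1)^{|z|+1}z$, $\tau(z) = -\bar{z} = (-1)^{|z|}z$, and the standard conventions $\tau_* = \Hom_R(\tau, -)$, $\tau^* = \Hom_R(-,\tau)$ (so $\tau^*(\phi) = \phi \circ \tau$ and $\tau_*(\phi) = \tau \circ \phi$). First I would record the two basic facts I will use repeatedly: $\tau(z) = (-1)^{|z|}z$, and for a homogeneous endomorphism $\phi$ of degree $|\phi|$ applied to a homogeneous element $z$, one has $\overline{\phi(z)} = (-1)^{|\phi(z)|+1}\phi(z) = (-1)^{|\phi|+|z|+1}\phi(z)$, so $\overline{\phi}(z) := \overline{\phi(z)}$ differs from $\phi(\bar z)$ by the sign $(-1)^{|\phi|}$; that is, $\overline{\phi} = (-1)^{|\phi|}\phi\circ(\text{the sign operator on }M)$, or more cleanly $\phi(\bar z) = (-1)^{|\phi|}\overline{\phi(z)}$.

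For the first line, I would compute $\tau^*(\phi\circ\psi)(z) = (\phi\circ\psi\circ\tau)(z) = (\phi\circ\psi)(\tau z)$. On one hand this equals $\phi\big((\psi\circ\tau)(z)\big) = \big(\phi\circ\tau^*(\psi)\big)(z)$, giving the first equality $\tau^*(\phi\circ\psi) = \phi\circ\tau^*(\psi)$ for free. On the other hand, writing $\tau z = (-1)^{|z|}z$ and pushing the sign through, $(\phi\circ\psi)(\tau z) = (-1)^{|z|}(\phi\circ\psi)(z)$; comparing with $-\tau^*(\phi)\circ\overline{\psi}$, where $(\overline{\psi})(z) = \overline{\psi(z)} = (-1)^{|\psi|+|z|+1}\psi(z)$ and then $\tau^*(\phi)$ applied to that is $\phi\circ\tau$ applied to that, one tracks the composite sign to confirm it matches. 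The cleanest bookkeeping is to note $\overline{\psi} = -\psi\circ\tau\cdot(\text{correction})$; I expect a brief sign chase suffices once I fix whether $\overline{\cdot}$ on operators means post-composition with the sign operator or the operator $z \mapsto \overline{\psi(z)}$ — the paper's convention (``extends by linearity'' on elements) forces the latter reading, which I will state explicitly at the outset. The second identity $\tau^*(\phi)\circ\tau^*(\psi) = (\phi\circ\tau)\circ(\psi\circ\tau) = \phi\circ(\tau\circ\psi\circ\tau)$, and since $\tau\circ\psi\circ\tau$ acting on $z$ gives $(-1)^{|z|}\tau(\psi(z)) = (-1)^{|z|}(-1)^{|\psi|+|z|}\psi(z) = (-1)^{|\psi|}\psi(z)$, one identifies $\tau\circ\psi\circ\tau = -\overline{\psi}$ (checking: $-\overline{\psi}(z) = -(-1)^{|\psi|+|z|+1}\psi(z) = (-1)^{|\psi|+|z|}\psi(z)$, which only matches if $|z|$ contributes trivially — so in fact $\tau\psi\tau = (-1)^{|\psi|}\psi$ and $-\overline\psi = (-1)^{|\psi|+|z|}\psi$; the discrepancy signals that $\overline{\cdot}$ here is being applied with $z$ absorbed, i.e. the identity is meant as composites of operators where the ``$z$'' is a formal placeholder, so $-\overline\psi$ should be read as the operator $-(-1)^{|\psi|+1}\psi = (-1)^{|\psi|}\psi$). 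Making this convention precise is the one genuine subtlety.

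For the third identity, $\tau_*\phi = \tau\circ\phi$, applied to $z$ gives $\tau(\phi(z)) = (-1)^{|\phi(z)|}\phi(z) = (-1)^{|\phi|+|z|}\phi(z)$. Meanwhile $-\tau^*(\overline\phi) = -\overline\phi\circ\tau$, applied to $z$: first $\tau z = (-1)^{|z|}z$, then $\overline\phi$ of that is $(-1)^{|z|}\overline{\phi(z)} = (-1)^{|z|}(-1)^{|\phi|+|z|+1}\phi(z) = (-1)^{|\phi|+1}\phi(z)$, and negating gives $(-1)^{|\phi|}\phi(z)$ — matching once the $(-1)^{|z|}$ factors, which should not be there, are reconciled by the same operator-level reading of $\overline{\cdot}$. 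So the real work, and the \emph{only} genuine obstacle, is not any hard calculation but fixing the precise meaning of $\overline{\phi}$ for an operator $\phi$: I will state at the beginning of the proof that $\overline{\phi}$ denotes the operator $\phi\circ\varepsilon$ where $\varepsilon$ is the sign automorphism $\varepsilon(z) = (-1)^{|z|}z = -\overline{z}$ (equivalently $\overline\phi = -\phi\circ\tau$ up to the degree sign, matching Proposition~\ref{prop:tauProperties}'s own third line as a \emph{definition-consistency check}), after which all three identities reduce to the single computation $\tau^2 = \mathrm{id}$ together with $\tau\circ\phi = (-1)^{|\phi|}\phi\circ\tau$ for homogeneous $\phi$. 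I would then present the three displayed equalities as short two-line derivations each.

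\begin{proof}
    Throughout, for a homogeneous $R$-linear map $\phi$ we write $\overline{\phi}$ for the map $z \mapsto \overline{\phi(z)}$; since $\overline{w} = (-1)^{|w|+1}w$ and $|\phi(z)| = |\phi| + |z|$, we have $\overline{\phi} = (-1)^{|\phi|+1}\phi\circ\varepsilon^{0}$ read on operators, i.e. $\overline{\phi}(z) = (-1)^{|\phi|}\phi(\overline{z})$; equivalently, writing $\tau(z) = -\overline{z} = (-1)^{|z|}z$, one has the basic relations
    $$\tau^2 = \id, \qquad \tau\circ\phi = (-1)^{|\phi|}\,\phi\circ\tau, \qquad \overline{\phi} = -(-1)^{|\phi|}\,\phi\circ\tau.$$
    By definition $\tau_* = \Hom_R(\tau,-)$ sends $\phi$ to $\tau\circ\phi$ and $\tau^* = \Hom_R(-,\tau)$ sends $\phi$ to $\phi\circ\tau$.

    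\emph{First identity.} We compute
    $$\tau^*(\phi\circ\psi) = (\phi\circ\psi)\circ\tau = \phi\circ(\psi\circ\tau) = \phi\circ\tau^*(\psi).$$
    On the other hand, using $\psi\circ\tau = \tau\circ\big((-1)^{|\psi|}\psi\big) = -\tau\circ\big(-(-1)^{|\psi|}\psi\big)$ and $\tau\circ\tau = \id$,
    $$\phi\circ\tau^*(\psi) = \phi\circ\psi\circ\tau = -\big(\phi\circ\tau\big)\circ\big(-(-1)^{|\psi|}\,\psi\circ\tau\big)\circ\tau = -\tau^*(\phi)\circ\overline{\psi},$$
    where the last step uses $-(-1)^{|\psi|}\psi\circ\tau\circ\tau = -(-1)^{|\psi|}\psi = \overline{\psi}$ together with $\tau\circ\phi = (-1)^{|\phi|}\phi\circ\tau$ to move $\tau$ past $\phi$; collecting signs gives exactly $-\tau^*(\phi)\circ\overline{\psi}$.

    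\emph{Second identity.} Using $\tau^2 = \id$ and the commutation relation,
    $$\tau^*(\phi)\circ\tau^*(\psi) = (\phi\circ\tau)\circ(\psi\circ\tau) = \phi\circ(\tau\circ\psi\circ\tau) = \phi\circ\big((-1)^{|\psi|}\psi\big) = -\phi\circ\overline{\psi},$$
    since $\tau\circ\psi\circ\tau = (-1)^{|\psi|}\psi\circ\tau\circ\tau = (-1)^{|\psi|}\psi = -\overline{\psi}$ by the third basic relation.

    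\emph{Third identity.} Directly,
    $$\tau_*\phi = \tau\circ\phi = (-1)^{|\phi|}\,\phi\circ\tau = -\big(-(-1)^{|\phi|}\phi\circ\tau\big) = -\overline{\phi}\circ\tau = -\tau^*(\overline{\phi}),$$
    using the commutation relation in the second step and the third basic relation in the fourth. This completes the proof.
\end{proof}
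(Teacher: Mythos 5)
Your approach --- a direct sign chase from the definitions --- is the right one; the paper gives no proof, calling the result a ``straightforward verification.'' There is, however, a genuine error in the convention you fix for $\overline{\phi}$ at the start of the proof, and the computations that follow quietly contradict it.

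You declare that for an operator $\phi$ the notation $\overline{\phi}$ means the map $z \mapsto \overline{\phi(z)}$, and correctly deduce that this would give $\overline{\phi} = -(-1)^{|\phi|}\,\phi\circ\tau$. But that is not the paper's convention. The bar is defined on homogeneous elements of a graded $R$-module, and $\End_R(M)$ is itself such a module with $\phi$ a homogeneous element of degree $|\phi|$; so $\overline{\phi} = (-1)^{|\phi|+1}\phi$, a scalar multiple with no $\tau$ and no dependence on the argument. This is also the only reading under which the proposition's identities hold: for the third identity, $(\tau_*\phi)(z) = \tau(\phi(z)) = (-1)^{|\phi|+|z|}\phi(z)$, and with $\overline{\phi} = (-1)^{|\phi|+1}\phi$ one gets $-\tau^*(\overline\phi)(z) = -\overline\phi(\tau z) = -(-1)^{|\phi|+1}(-1)^{|z|}\phi(z) = (-1)^{|\phi|+|z|}\phi(z)$, matching; your reading leaves an uncancelled $(-1)^{|z|}$. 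Your write-up is in fact internally inconsistent at exactly this point: in the second identity you assert $(-1)^{|\psi|}\psi = -\overline\psi$, but your stated relation gives $-\overline\psi = (-1)^{|\psi|}\psi\circ\tau$; and in the third identity you assert $-(-1)^{|\phi|}\phi\circ\tau = \overline\phi\circ\tau$, but your stated relation gives $\overline\phi\circ\tau = -(-1)^{|\phi|}\phi\circ\tau\circ\tau = -(-1)^{|\phi|}\phi$. In each step you silently drop the spurious $\tau$ --- that is, you tacitly compute with the correct convention. Replace the stated relation by $\overline\phi = (-1)^{|\phi|+1}\phi = -(-1)^{|\phi|}\phi$; your other two auxiliary facts, $\tau^2 = \id$ and $\tau\circ\phi = (-1)^{|\phi|}\,\phi\circ\tau$, are correct as written, and all three displayed identities then follow by precisely the short computations you had in mind.
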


Before defining the categories $\flag^\tau_{\bbz / d \bbz}$ and $\pflag^\tau_{\bbz / d \bbz} (R)$, it will be useful to define the following variant of the Hom complex (to take into account auxiliary gradings).

\begin{definition}
    Let $C = C_{\bullet, \bullet}$ denote a $\bbz \times \bbz / d \bbz$-graded complex, so that for each $j \in \bbz / d \bbz$ the object
    $$C_{\bullet , j}$$
    is a $\bbz$-graded complex. Given two $\bbz \times \bbz / d \bbz$-graded complexes $C$ and $D$, define the \defi{bigraded Hom complex} $\hom^{\bullet,\bullet} (C , D)$ via
    $$\hom_R^{i,j} (C_{\bullet, \bullet} , D_{\bullet, \bullet}) := \prod_{a \in \bbz , \ b \in \bbz / d \bbz} \hom_R (C_{a,b} , D_{a-i,b-i+j} ),$$
    where the second indices are taken modulo $d$, and the differential is the standard differential of the Hom complex (after ignoring the auxiliary $\bbz / d \bbz$-grading):
    $$d^{\hom_R (C,D)} (\phi) := d^D \circ \phi - (-1)^{i} \phi \circ d^C, \quad \text{where} \ \phi \in \hom^{i,j}_R (C,D).$$
    The \defi{endomorphism complex} $\End_R (C)$ is simply defined via $\End_R (C) := \hom_R (C,C).$ 
\end{definition}

\begin{remark}
    Notice that the additional bigrading coming from the auxiliary $\bbz / d \bbz$-grading is not what one may initially expect it to be. We have chosen this ``totalized" convention for convenience of defining objects in $\flag^\tau_{\bbz / d \bbz} (R)$ and easily seeing the compatibility with anchors of projective flags.
\end{remark}

We can now define the relevant categories of this section:

\begin{definition}[The Categories $\flag^\tau_{\bbz / d \bbz} (R)$ and $\pflag^\tau_{\bbz / d \bbz} (R)$]\label{def:flagTauDef}
    An object  of $\flag^\tau_{\bbz / d \bbz} (R)$ is the data of a $\bbz_{\geq 0} \times \bbz / d \bbz$-graded chain complex $C$ equipped with a sequence of maps $\delta_1^C , \dots , \delta_\ell^C , \dots$ of $\bbz / d \bbz$-degree $-1$ satisfying
    \begin{equation}\label{eqn:pertEqn}
        \delta_i^C \in \End^{i+1,1} (C), \quad \text{and} \quad d^C \delta_i^C -  \overline{\delta_i^C} d^C = -\sum_{j=1}^{i-1} \overline{\delta_j^C} \delta_{i-j}^C \quad \text{for all} \ i \geq 1. 
    \end{equation}
    A morphism $\phi : (C , \delta^C) \to (D , \delta^D)$ is a sequence of elements $\phi_0 , \dots , \phi_\ell, \dots $ that satisfy
    \begin{equation}\label{eqn:morphEqn}
        \phi_i \in \hom^{i,0} (C,D), \quad d^D \phi_i + \overline{\phi_i} d^C = -\sum_{j=1}^{i-1} \left( \overline{\phi_j} \delta_{i-j} + \delta_j \phi_{i-j} \right) \quad \text{for all} \ i \geq 0.
    \end{equation}
    The composition of morphisms $\phi : (C , \delta^C) \to (D , \delta^D)$, $\psi: (D , \delta^D ) \to (E , \delta^E)$ is defined via
    \begin{equation}\label{eqn:compEqn}
        (\psi \circ \phi)_i := \sum_{j=0}^i \psi_j \phi_{j-i} \quad \text{for all} \ i \geq 0.
    \end{equation}
    Two morphisms $\phi , \psi : (C , \delta^C) \to (D , \delta^D)$ are homotopic if there is a sequence of maps $\sigma_0, \dots , \sigma_\ell , \dots$ with
    \begin{equation}\label{eqn:htpyEqn}
        \sigma_i \in \hom^{i-1,-1} (C , D), \quad \phi_i - \psi_i = -\sum_{j=0}^{i} \left( \overline{\delta_{j}^D} \sigma_{i-j} + \overline{\sigma_j} \delta_{i-j}^C \right) \quad \text{for all} \ i \geq 0,
    \end{equation}
    where the above summation uses the convention that $\delta_0^C = d^C$ and $\delta_0^D = d^D$. A nullhomotopy is any morphism homotopic to the $0$ map.
\end{definition}

\begin{remark}
    An alternative perspective on the category $\flag^\tau (R)$ can be given by the theory of Koszul operads \cite{loday2012algebraic}. More precisely, let $D := R[t]/(t^2)$ where $t$ has degree $2$. As described in \cite[Chapter 10.3.7]{loday2012algebraic}, an object of $\flag^\tau (R)$ is equivalently a $D_\infty$-module (that is, a dg module over the cobar complex applied to the quadratic dual coalgebra).
\end{remark}

\begin{remark}\label{rk:reintEqualities}
    One can reinterpret the equalities of Definition \ref{def:flagTauDef} as follows: the equation (\ref{eqn:pertEqn}) can equivalently be rewritten as
    $$d^{\End(C)} (\delta_i) = -\sum_{j=1}^{i-1} \overline{\delta_j^C} \delta_{i-j}^C \quad \text{for all} \ i \geq 1.$$
    Equation (\ref{eqn:morphEqn}) can equivalently be rewritten
    $$d^{\hom(C,D)} (\phi_i) =  -\sum_{j=1}^{i-1} \left( \overline{\phi_j} \delta_{i-j} + \delta_j \phi_{i-j} \right) \quad \text{for all} \ i \geq 0,$$
    and equation (\ref{eqn:compEqn}) may be rewritten as
$$\phi_i - \psi_i = -\sum_{j=1}^{i} \left( \overline{\delta_{j}^D} \sigma_{i-j} + \overline{\sigma_{i-j}} \delta_{j}^C \right) - d^{\hom(C,D)} (\sigma_i) \quad \text{for all} \ i \geq 0.$$
    These reformulations will be particularly useful, since they allow us to rephrase the existence of certain types of morphisms and homotopies as showing that certain elements are boundaries in the Hom complex.
\end{remark}

\begin{lemma}\label{lem:flagTauUnfolding}
    The data of an object $D \in \flag^\tau_{\bbz / d \bbz} (R\proj)$ induces the data of a $\bbz / 2 \bbz \times \bbz / d \bbz$-graded differential module equipped with a projective flag
    % https://q.uiver.app/#q=WzAsMixbMCwwLCJEIl0sWzIsMCwiRCJdLFswLDEsIlxcZGVsdGFeRCIsMix7ImN1cnZlIjotM31dLFsxLDAsIi1cXG92ZXJsaW5le1xcZGVsdGFeRH0iLDAseyJjdXJ2ZSI6LTN9XV0=
\[\begin{tikzcd}
	D && D
	\arrow["{\delta^D}"', curve={height=-18pt}, from=1-1, to=1-3]
	\arrow["{-\overline{\delta^D}}", curve={height=-18pt}, from=1-3, to=1-1]
\end{tikzcd}\]
    where the differentials have bidegree $(-1,-1)$. This correspondence is functorial and converts homotopies into $\bbz / 2 \bbz \times \bbz / d \bbz$-graded homotopies.
\end{lemma}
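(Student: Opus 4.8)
The plan is to unwind the sign conventions encoded in Definition~\ref{def:flagTauDef} and Proposition~\ref{prop:tauProperties} and show they are exactly what is needed for the two-periodic object displayed in the statement to be a genuine differential module with a flag structure. First I would take an object $D = (D_{\bullet,\bullet}, \delta^D_1, \delta^D_2, \dots) \in \flag^\tau_{\bbz/d\bbz}(R\proj)$ and set $\delta^D := \sum_{i \geq 0} \delta^D_i$ where $\delta^D_0 := d^D$, and then consider the proposed two-periodic object with the $\deg 0$ copy of $D$ mapping to the $\deg 1$ copy via $\delta^D$ and the $\deg 1$ copy mapping to the $\deg 0$ copy via $-\overline{\delta^D}$. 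The underlying $\bbz/2\bbz$-graded $R$-module is $D \oplus D$, and since each summand carries the ambient $\bbz/d\bbz$-grading and the flag filtration coming from the $\bbz_{\geq 0}$-grading of $D$ (truncations $D_{\geq i}$), the $\bbz/2\bbz \times \bbz/d\bbz$-grading and the flag structure are immediate; the content is that the differential squares to zero and has the claimed bidegree.

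The key computation is that the composite $D \xra{\delta^D} D \xra{-\overline{\delta^D}} D$ equals zero, i.e. $\overline{\delta^D} \circ \delta^D = 0$, and symmetrically $\delta^D \circ (-\overline{\delta^D}) = 0$. I would expand $\overline{\delta^D} \delta^D = \sum_{n \geq 0} \big( \sum_{i+j = n} \overline{\delta^D_j}\, \delta^D_i \big)$, sorting by the total degree drop $n+1$ (here I use that $\delta^D_j \in \End^{j+1,1}(C)$, so $\delta^D_j \delta^D_i \in \End^{i+j+2, 2}$, and the $\bbz/2\bbz$-degree accounting works out because $\deg$ is taken modulo $2$). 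For $n = 0$ this is $\overline{d^D} d^D$; one checks $\overline{d^D} d^D = d^D d^D = 0$ using that $d^D$ has odd degree so $\overline{d^D} = d^D$ (indeed $\overline z = (-1)^{|z|+1} z$ gives $\overline{d^D} = d^D$ on degree-$1$ maps). For $n \geq 1$, the defining relation (\ref{eqn:pertEqn}), rewritten as $d^C \delta^C_n - \overline{\delta^C_n} d^C = -\sum_{j=1}^{n-1} \overline{\delta^C_j} \delta^C_{n-j}$, is precisely the statement that $\sum_{i+j=n} \overline{\delta^D_j} \delta^D_i = 0$ once one folds in the $j=0$ and $j=n$ terms $\overline{d^D}\delta^D_n + \overline{\delta^D_n} d^D$ (noting $\overline{d^D} = d^D$ and that the sign in $\overline{\delta^D_n} d^C$ vs $- \overline{\delta^C_n} d^C$ must be tracked via $\overline{\cdot}$ on even-degree maps). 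The symmetric identity for $\delta^D (-\overline{\delta^D})$ follows from the same relation together with the manipulations in Proposition~\ref{prop:tauProperties} relating $\tau_*$, $\tau^*$, and $\overline{\cdot}$. This sign bookkeeping is the main obstacle: one has to be careful that $\overline{\cdot}$ is defined using only the $\bbz_{\geq 0}$-grading, not the total $\bbz/2\bbz$-degree, and that the parity used to declare the two-periodic differential squarezero is the $\bbz/2\bbz$-degree.

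Finally I would address functoriality and the homotopy statement. Given a morphism $\phi = (\phi_0, \phi_1, \dots) : (C,\delta^C) \to (D,\delta^D)$ in $\flag^\tau$, set $\phi := \sum_i \phi_i : C \to D$ and define the induced map on the two-periodic objects as $\phi \oplus \phi$ (same map on both copies). One checks this commutes with the two-periodic differentials: on the $\deg 0 \to \deg 1$ side this is $\delta^D \phi = \phi \delta^C$, on the $\deg 1 \to \deg 0$ side it is $(-\overline{\delta^D})\phi = \phi(-\overline{\delta^C})$. Expanding by degree, the first reduces to equation (\ref{eqn:morphEqn}) (after moving the $j=0$ term to the left and using $\overline{d^C} = d^C$), and the second follows by applying $\overline{\cdot}$ to the first and using $\overline{\phi_i} = (-1)^{i+1}\phi_i$ together with the compatibility rules of Proposition~\ref{prop:tauProperties}; both sides also visibly respect the flag filtrations since $\phi_i$ drops $\bbz_{\geq 0}$-degree by $i$. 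Compatibility with composition is immediate from (\ref{eqn:compEqn}) being exactly the Cauchy product defining composition of the folded maps. For homotopies, a homotopy datum $(\sigma_0, \sigma_1, \dots)$ satisfying (\ref{eqn:htpyEqn}) folds to $\sigma := \sum_i \sigma_i : C \to D$ of $\bbz/2\bbz$-degree $1$, and equation (\ref{eqn:htpyEqn}), summed over $i$, rearranges to $\phi - \psi = -(\overline{\delta^D}\sigma + \overline{\sigma}\delta^C)$; unwinding, on the two-periodic object with differentials $(\delta^D, -\overline{\delta^D})$ this is exactly the statement that $\sigma \oplus \sigma$ (with appropriate signs dictated by $\overline{\cdot}$ on the $\deg 1$ piece) is a chain homotopy in the usual sense, and it preserves the flags since $\sigma_i$ raises $\bbz_{\geq 0}$-degree by $-1$ relative to the shift. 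I do not expect any of these verifications to be difficult once the degree-folding convention is fixed; they are all instances of the single principle that the relations in Definition~\ref{def:flagTauDef} are the ``unrolled'' form of $(\delta^D)^2 = 0$, $\delta \phi = \phi \delta$, etc., with the twist $\tau$ absorbing the signs that appear when collapsing a $\bbz_{\geq 0}$-grading down to a $\bbz/2\bbz$-grading.
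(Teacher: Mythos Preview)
Your approach is essentially identical to the paper's: define $\delta^D := \sum_{i\ge 0}\delta_i^D$ and observe that the relations (\ref{eqn:pertEqn}), (\ref{eqn:morphEqn}), (\ref{eqn:htpyEqn}) are precisely the unrolled forms of the square-zero, chain-map, and homotopy conditions on the two-periodic object. The paper's proof is in fact far terser than yours --- it simply asserts that (\ref{eqn:pertEqn}) gives $-\delta^D\overline{\delta^D}=0$ and that functoriality and the homotopy statement hold ``by construction'' --- so your expansion is a faithful fleshing-out of the same idea.

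One small point to watch: your claim that the induced morphism on the two-periodic object is literally $\phi\oplus\phi$ is not quite what (\ref{eqn:morphEqn}) unfolds to. Summing (\ref{eqn:morphEqn}) over $i$ yields an identity of the shape $\delta^{D}\phi + \overline{\phi}\,\delta^{C}=0$ rather than $\delta^{D}\phi=\phi\,\delta^{C}$, so the correct induced map places $\phi$ on one copy of $D$ and $-\overline{\phi}$ (or equivalently the $\tau$-conjugate) on the other. This is exactly the kind of sign bookkeeping you already flagged as ``the main obstacle,'' and it is consistent with the $(\tau,1)$-isomorphism appearing in the proof of Lemma~\ref{lem:flagAndFlagtauEquiv}; it does not change the argument, only the explicit form of the functor on morphisms.
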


\begin{remark}
    Unpacking the data of a $\bbz / 2 \bbz \times \bbz / d \bbz$-graded projective flag as above, this means that $D$ decomposes as a direct sum
    $$D = \bigoplus_{\substack{i \geq 0, \\ j \in \bbz / d \bbz}} D_{i,j},$$
    where the differential $\delta^D$ restricts to maps
    $$\delta^D|_{D_{i,j}} : D_{i,j} \to D_{i-1,j-1}$$
    for all choices of $i \geq 0$, $j \in \bbz / d \bbz$.
\end{remark}

\begin{proof}
    Given an object $D \in \flag^\tau (R)$, it is evident that one may define the differential $\delta^D := d^D + \delta_1^D + \cdots$; the equation (\ref{eqn:pertEqn}) implies that $- \delta^D \overline{\delta^D} = 0$, in which case the complex 
        % https://q.uiver.app/#q=WzAsMixbMCwwLCJEIl0sWzIsMCwiRCJdLFswLDEsIlxcZGVsdGFeRCIsMix7ImN1cnZlIjotM31dLFsxLDAsIi1cXG92ZXJsaW5le1xcZGVsdGFeRH0iLDAseyJjdXJ2ZSI6LTN9XV0=
\[\begin{tikzcd}
	D && D
	\arrow["{\delta^D}"', curve={height=-18pt}, from=1-1, to=1-3]
	\arrow["{-\overline{\delta^D}}", curve={height=-18pt}, from=1-3, to=1-1]
\end{tikzcd}\]
    is well-defined and comes equipped with the claimed flag structure. The functoriality and preservation of homotopy equivalence is ensured by construction.  
\end{proof}

As it turns out, when restricted to the category of projective $R$-modules, the categories $\flag_{\bbz /d \bbz} (R\proj)$ and $\flag^\tau_{\bbz / d \bbz} (R\proj)$ are equivalent (and likewise for $\pflag$). Note that we cannot replace $R\proj$ with $R$ in general, since the filtration on an arbitrary flag may not split.

\begin{lemma}\label{lem:flagAndFlagtauEquiv}
    There are homotopy-preserving isomorphisms of categories $$\Phi : \flag_{\bbz / d \bbz} (R\proj) \to \flag^\tau_{\bbz / d \bbz} (R\proj), \quad \text{and}$$
    $$\Phi : \pflag_{\bbz / d \bbz} (R\proj) \to \pflag^\tau_{\bbz / d \bbz} (R\proj).$$
\end{lemma}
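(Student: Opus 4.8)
The statement to prove is Lemma \ref{lem:flagAndFlagtauEquiv}: there are homotopy-preserving isomorphisms of categories $\Phi : \flag_{\bbz/d\bbz}(R\proj) \to \flag^\tau_{\bbz/d\bbz}(R\proj)$ and similarly for $\pflag$. My approach is to construct $\Phi$ explicitly on objects and morphisms by applying the sign-twisting operator $\tau$ degree-by-degree along the flag filtration, verify that the defining relations of \ref{def:flagTauDef} are transformed into the defining relations recorded in \ref{chunk:flagReformulation}, and check that the construction is invertible with a visibly inverse functor (also built from $\tau$), which simultaneously shows it is a genuine isomorphism of categories and not merely an equivalence.

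First I would set up the object correspondence. Given $D \in \flag_{\bbz/d\bbz}(R\proj)$, the reformulation in \ref{chunk:flagReformulation} splits the underlying module as $D = \bigoplus_{i\ge 0,\,j} D_{i,j}$ with differential $d^D = \sum_{i\ge 0}\delta_i^D$, where $\delta_0^D$ makes each $D_{\bullet,j}$ into a complex and the higher $\delta_i^D$ satisfy $\delta_0^D\delta_i^D + \delta_i^D\delta_0^D = -\sum_{j=1}^{i-1}\delta_j^D\delta_{i-j}^D$. To land in $\flag^\tau$, I define $\Phi(D)$ to have the same bigraded module with $d^{\Phi(D)} := \delta_0^D$ and twisted higher components $\delta_i^{\Phi(D)} := (-1)^{\epsilon(i)}\delta_i^D$ for a suitable sign $\epsilon(i)$ chosen so that \eqref{eqn:pertEqn} holds; concretely one conjugates by the grading automorphism $\tau$ on the homological grading, i.e. $\delta_i^{\Phi(D)} = \tau^{-1}\circ \delta_i^D \circ \tau$ up to the overline bookkeeping. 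The point is that Proposition \ref{prop:tauProperties} converts the "untwisted" Leibniz relation $d^C\delta_i + \delta_i d^C = \cdots$ into the "twisted" relation $d^C\delta_i - \overline{\delta_i}d^C = -\sum \overline{\delta_j}\delta_{i-j}$ of \eqref{eqn:pertEqn}, because the identities $\tau^*(\phi\circ\psi) = -\tau^*(\phi)\circ\overline\psi$ and $\tau_*\phi = -\tau^*(\overline\phi)$ are exactly the substitutions needed. One does the same for morphisms: a flag-preserving $\phi : D\to D'$ decomposes as $\phi = \sum_i \phi_i$ with $\phi_i|_{D^j_\ell} : D^j_\ell \to D^{j-i}_\ell$ satisfying $\sum_{j=0}^i \phi_j\delta_{j-i}^D = \sum_{j=0}^i \delta_{j-i}^{D'}\phi_j$; applying the same sign twist sends this to \eqref{eqn:morphEqn}, and a direct check using \eqref{eqn:compEqn} versus the composition formula $(\psi\phi)_i = \sum \psi_j\phi_{i-j}$ of \ref{chunk:flagReformulation} shows $\Phi$ is functorial. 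Homotopies transform via the same dictionary: the relation $\phi_i - \psi_i = \sum_{j=0}^i(\delta_j^{D'}\sigma_{i-j} + \sigma_j\delta_{i-j}^D)$ maps to \eqref{eqn:htpyEqn}, using once more Proposition \ref{prop:tauProperties}. For the $\pflag$ version nothing changes except that the $\phi_i$ need not preserve filtration degree, i.e. one drops the constraint on indices but keeps the same formulas.

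Next I would exhibit the inverse. Because the sign twist is an involution up to the controlled signs $\overline{(\,\cdot\,)}$ and $\tau^2$-type relations, the same construction with $\tau$ replaced by $\tau^{-1}$ (equivalently with $\overline{(\,\cdot\,)}$ applied in the opposite direction) produces a functor $\Psi : \flag^\tau_{\bbz/d\bbz}(R\proj)\to\flag_{\bbz/d\bbz}(R\proj)$, and one checks $\Psi\circ\Phi = \mathrm{id}$ and $\Phi\circ\Psi = \mathrm{id}$ strictly on objects and morphisms — this is where it matters that we are on $R\proj$, since the splitting of the flag filtration is what lets us work with the explicit bigraded decomposition rather than with abstract filtered objects. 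The compatibility with homotopies (already noted on both sides) upgrades this to the claimed homotopy-preserving isomorphism, and restricting attention to maps that ignore the filtration yields the $\pflag$ statement automatically.

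\textbf{The main obstacle.} The conceptual content is essentially trivial — it is a sign-bookkeeping exercise — so the real work, and the place errors creep in, is pinning down the precise sign $\epsilon(i)$ (equivalently, the precise way the bidegree $(i,0)$ or $(i+1,1)$ interacts with the overline operator, which by the warning after \ref{def:flagTauDef} sees \emph{only} the homological grading and not the $\bbz/d\bbz$ grading) so that all of \eqref{eqn:pertEqn}, \eqref{eqn:morphEqn}, \eqref{eqn:compEqn}, and \eqref{eqn:htpyEqn} come out correctly \emph{simultaneously}. I expect the cleanest route is not to guess $\epsilon(i)$ but to define $\Phi$ abstractly as "apply the grading automorphism $\tau$ on the $\bbn$-grading" (so $\Phi(D)$ has underlying object $\tau^* D$ in the appropriate sense) and then let Proposition \ref{prop:tauProperties} do all the sign arithmetic mechanically; this makes functoriality and invertibility formal consequences of the stated identities $\tau^*(\phi\circ\psi) = \phi\circ\tau^*(\psi) = -\tau^*(\phi)\circ\overline\psi$, $\tau^*(\phi)\circ\tau^*(\psi) = -\phi\circ\overline\psi$, and $\tau_*\phi = -\tau^*(\overline\phi)$, rather than requiring a fresh induction. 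Once the object-level definition is fixed compatibly with \eqref{eqn:pertEqn}, the morphism, composition, and homotopy compatibilities follow from the same three identities applied term-by-term to the sums in \ref{chunk:flagReformulation}, and the whole lemma reduces to one careful verification of \eqref{eqn:pertEqn}.
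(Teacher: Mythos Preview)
Your proposal is correct and matches the paper's proof: the paper defines $\Phi$ on objects by $\delta_i^{D^\tau} := \tau^*\delta_i^D$ and on morphisms by $\Phi(\phi) := -\overline{\phi}$, then uses Proposition~\ref{prop:tauProperties} to convert the relations of \ref{chunk:flagReformulation} into \eqref{eqn:pertEqn}, \eqref{eqn:morphEqn}, and \eqref{eqn:htpyEqn} term-by-term, with the inverse given by the same recipe. The only cosmetic difference is that the paper packages the object-level definition via the $\bbz/2\bbz$-graded unfolding (Lemma~\ref{lem:flagTauUnfolding}) and an explicit isomorphism $\begin{psmallmatrix}\tau & 0\\ 0 & 1\end{psmallmatrix}$, which is exactly your suggestion to ``define $\Phi$ abstractly as apply $\tau$ and let Proposition~\ref{prop:tauProperties} do the sign arithmetic'' rather than guessing $\epsilon(i)$.
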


\begin{proof}
    We will construct the functor $\Phi$ on the category $\flag_{\bbz / d \bbz} (R\proj)$ and prove the relevant properties. The functor $\Phi$ will be defined identically on $\pflag_{\bbz / d \bbz} (R\proj)$ with the only difference being that morphisms are not necessarily flag preserving. Throughout this proof, we will tacitly be using the equalities of Proposition \ref{prop:tauProperties}.

    Consider the unfolding $\unfold^{\bbz / 2 \bbz} (D)$ and notice that there is an isomorphism of $\bbz / 2 \bbz$-graded differential modules that preserves the auxiliary $\bbz / d \bbz$-gradings:
    % https://q.uiver.app/#q=WzAsNCxbMCwwLCJEIl0sWzIsMCwiRCJdLFswLDIsIkQiXSxbMiwyLCJEIl0sWzAsMiwiXFx0YXUiXSxbMSwzLCIxIl0sWzAsMSwiZF5EIiwwLHsiY3VydmUiOi0zfV0sWzEsMCwiZF5EIiwyLHsiY3VydmUiOi0zfV0sWzIsMywiXFx0YXVeKiBkXkQiLDIseyJjdXJ2ZSI6LTN9XSxbMywyLCItXFx0YXVeKiBcXG92ZXJsaW5le2ReRH0iLDAseyJjdXJ2ZSI6LTN9XV0=
\[\begin{tikzcd}
	D && D \\
	\\
	D && D
	\arrow["\tau", from=1-1, to=3-1]
	\arrow["1", from=1-3, to=3-3]
	\arrow["{d^D}", curve={height=-18pt}, from=1-1, to=1-3]
	\arrow["{d^D}"', curve={height=-18pt}, from=1-3, to=1-1]
	\arrow["{\tau^* d^D}"', curve={height=-18pt}, from=3-1, to=3-3]
	\arrow["{-\tau^* \overline{d^D}}", curve={height=-18pt}, from=3-3, to=3-1]
\end{tikzcd}\]
    By Lemma \ref{lem:flagTauUnfolding}, the target of this isomorphism is induced from the data of an object $D^\tau \in \flag^\tau_{\bbz / d \bbz} (R\proj)$, and we define $\Phi (D) := D^\tau$. Concretely, the object $D^\tau$ has underlying complex $(D , \tau^* \delta_0^D)$ and $\delta_i^{D^\tau} := \tau^* \delta_i^D$ for all $i \geq 1$. 

    On morphisms $\phi : D \to D'$, define 
    $$\Phi (\phi) := - \overline{\phi}.$$
    The fact that this is well-defined is a consequence of the following computation: using that $\phi$ is a morphism in $\flag_{\bbz / d \bbz} (R\proj)$ implies that there is an equality
    \begingroup\allowdisplaybreaks
    \begin{align*}
        &\sum_{j=0}^i \phi_j \delta_{j-i}^D = \sum_{j=0}^i \delta_{j-i}^{D'} \phi_j \\
        \text{Apply} \ \tau^* \implies&  \sum_{j=0}^i \phi_j (\tau^* \delta^D_{j-i}) = - \sum_{j=0}^i (\tau^* \delta^{D'}_{j} ) \overline{\phi_{i-j}}\\ 
        \text{Rearrange} \ \implies& (\tau^* d^{D'}) (- \overline{\phi_i}) + \phi_j (\tau^* d^D) = - \sum_{j=1}^{i-1} \left( \phi_j (\tau^* \delta^D_{i-j}) + (\tau^* \delta^{D'}_j) (- \overline{ \phi_{i-j}} ) \right). 
    \end{align*}
    \endgroup
    By definition, the map $\Phi(\phi) := - \overline{\phi}$ is a morphism $D^\tau \to {D'}^\tau$. The fact that $\Phi(\psi \circ \phi) = \Phi(\psi) \circ \Phi(\phi)$ is by construction. 

    Assume now that $\phi : D \to D'$ is a nullhomotopy by some sequence of maps $\sigma_0 , \dots , \sigma_\ell , \dots$. By definition we have the following:
    \begingroup\allowdisplaybreaks
    \begin{align*}
        &\phi_i = \sum_{j=0}^i (\delta_j^{D'} \sigma_{i-j} + \sigma_{i-j} \delta_j^D) \\
        \text{Apply} \ \Phi \implies & - \overline{\phi_i} = \sum_{j=0}^i (\overline{\delta_j^{D'}} \overline{\sigma_{i-j}} + \overline{\sigma_{i-j}} \overline{\delta_j^D} ) \\
        \implies & \Phi (\phi_i) = - \sum_{j=0}^i \left( (\tau^* \overline{\delta^{D'}_j} ) (\tau^* \sigma_{i-j}) + (\tau^* \overline{\sigma_{i-j}}) (\tau^* \delta_j^C) \right).
    \end{align*}
    \endgroup
    By definition, the map $\Phi (\phi)$ is a nullhomotopy in the category $\flag^\tau_{\bbz / d \bbz} (R\proj)$.  
    
    The inverse functor $\Phi^{-1}$ is described as follows: let $(C , \delta^C) \in \flag^\tau_{\bbz / d \bbz}$ and consider the isomorphism of $\bbz / d \bbz$-graded objects
    % https://q.uiver.app/#q=WzAsNCxbMCwyLCJDIl0sWzIsMiwiQyJdLFswLDAsIkMiXSxbMiwwLCJDIl0sWzAsMSwiXFxkZWx0YV5DIiwyLHsiY3VydmUiOi0zfV0sWzEsMCwiLVxcb3ZlcmxpbmV7XFxkZWx0YV5DfSIsMCx7ImN1cnZlIjotM31dLFswLDIsIlxcdGF1Il0sWzEsMywiMSIsMl0sWzIsMywiXFx0YXVeKiBcXGRlbHRhX0MiLDAseyJjdXJ2ZSI6LTN9XSxbMywyLCJcXHRhdV4qIFxcZGVsdGFfQyIsMix7ImN1cnZlIjotM31dXQ==
\[\begin{tikzcd}
	C && C \\
	\\
	C && C
	\arrow["{\delta^C}"', curve={height=-18pt}, from=3-1, to=3-3]
	\arrow["{-\overline{\delta^C}}", curve={height=-18pt}, from=3-3, to=3-1]
	\arrow["\tau", from=3-1, to=1-1]
	\arrow["1"', from=3-3, to=1-3]
	\arrow["{\tau^* \delta_C}", curve={height=-18pt}, from=1-1, to=1-3]
	\arrow["{\tau^* \delta_C}"', curve={height=-18pt}, from=1-3, to=1-1]
\end{tikzcd}\]
    Thus the inverse $\Phi^{-1}$ sends an object $(C , \delta^C)$ to the object whose underlying $R$-module is $C$, equipped with the square-zero endomorphism $\tau^* \delta^C$. On morphisms, the inverse $\Phi^{-1}$ is defined identically: $\Phi^{-1} (\phi) := - \overline{\phi}$. 
\end{proof}

\begin{remark}
    Notice that the functor $\Phi^{-1}$ is always well-defined regardless of whether the ambient $R$-module is projective, but the functor $\Phi$ need not be well-defined since the filtration induced by the flag structure is not necessarily split. 
\end{remark}

We recall some definitions replaced to $A_\infty$-algebras and $A_\infty$-modules. For in-depth details on the theory of $A_\infty$-algebras in general, see, for instance, \cite{lefevre2003infini,keller2001introduction,sagave2010dg,cirici2018derived,letz2023transfer}. For our purposes, we will be looking at $A_\infty$-modules over associative algebras, which still have a surprisingly rich structure relative to just modules. 

\begin{definition}[$A_\infty$-Algebras and Modules]
    A (homologically graded, strictly unital, connected) $A_\infty$ algebra $A$ is a nonnegatively-graded $R$-module
    $$A = \bigoplus_{i \geq 0} A_i$$
    equipped with maps $m_i : A^{\otimes i} \to A$ of degree $i-2$ satisfying the \defi{Stasheff identities}:
    $$\sum_{\substack{i = r+s +t, \\ r,t \geq 0, \ s \geq 1}} (-1)^{r + st} m_{r+t+1} (1^{\otimes r} \otimes m_s \otimes 1^{\otimes t}) = 0, \quad \text{for all} \ i \geq 1.$$
    A left $A_\infty$-module $M$ over an $A_\infty$-algebra $A$ is a nonnegatively-graded $R$-module
    $$M = \bigoplus_{i \geq0} M_i$$
    equipped with maps $m_i^M : A^{\otimes i-1} \otimes_R M \to M$ of degree $i-2$ satisfying a similar set of Stasheff identities:
    \begin{equation}\label{eqn:stasheffIds}
        \sum_{\substack{i = r+s +t, \\ r \geq 0, \ s,t \geq 1}} (-1)^{r + st} m_{r+t+1}^M (1^{\otimes r} \otimes m_s \otimes 1^{\otimes t}) + \sum_{\substack{i=r+s \\ r \geq 0, \ s \geq 1}} (-1)^r m_{r+1}^M (1^{\otimes r} \otimes m_s^M ) = 0, \quad \text{for all} \ i \geq 1.
    \end{equation}
    A \defi{morphism of $A_\infty$-modules} $\phi : M \to N$ is a family of maps
    $$\phi_i : A^{\otimes i-1} \otimes_R M \to N$$
    of degree $i-1$ such that for each $i \geq 1$ there is an equality
    \begin{equation}\label{eqn:AinftyMorphism}
        \sum_{\substack{i = r+s+t \\ r \geq 0, \ t,s \geq 1}} (-1)^{r+st} \phi_{r+t+1} (1^{\otimes r} \otimes m_s \otimes 1^{\otimes r}) + \sum_{\substack{i=r+s \\ r \geq 0, \ s \geq 1}} (-1)^r \phi_{r+1} (1^{\otimes r} \otimes m_s^M) = \sum_{\substack{i = u+v, \\ u \geq 0, \ v \geq 1}} m_{1+u}^N (1^{\otimes u} \otimes \phi_v) \quad \text{for all} \ i \geq 1.
    \end{equation}
    Finally, a morphism of $A_\infty$-modules $\phi : M \to N$ is \defi{nullhomotopic} if there is a sequence of maps
    $$\sigma_i : A^{\otimes i-1} \otimes M \to N$$
    of degree $i$ such that for all $i \geq 1$ there is an equality:
    \begin{equation}\label{eqn:AinftyHtpy}
        \phi_i = \sum_{s=1}^i (-1)^{i-s} m_{i-s+1}^N (1^{\otimes i-s} \otimes \sigma_s) + \sum_{\substack{i = r+s+t \\ r \geq 0, \ s,t \geq 1}} (-1)^{r+st} \sigma_{1+r+t} (1^{\otimes r} \otimes m_s \otimes 1^{\otimes t}) + \sum_{\substack{i = r+s, \\ r \geq 0, \ s \geq 1}} (-1)^r \sigma_{r+1} (1^{\otimes r} \otimes m_s^M).
    \end{equation}
    The category of (left) $A_\infty$-modules over $A$ with the above definition of morphisms is denoted $\Mod_{A_\infty} (A)$. If $A$ and $M$ have a $\bbz \times \bbz / d \bbz$-grading, then all of the above data is assumed to have the same degree with respect to the $\bbz / d \bbz$-grading.
\end{definition}

\begin{remark}
    $A_\infty$-modules are sometimes referred to as ``polydules", which is terminology arising from the thesis of Lefevre-Hasegawa \cite{lefevre2003infini}; see also \cite{jensen2009degeneration}. 
\end{remark}

\begin{remark}
    We assume that all our $A_\infty$-algebras, modules, and morphisms are \defi{unital}, implying that the following identities hold:
    $$m_2^M (1 \otimes m) = m,  \quad \text{and for} \ i>2, \  m_i^M (a_1 \otimes \cdots \otimes a_{i-1} \otimes m) = 0 \quad \text{if} \ a_j = 1 \ \text{for some} \ 1 \leq j <i.$$
    A similar set of identities holds for morphisms.
\end{remark}

As previously noted, there is an equivalence of categories $\Mod_{A_\infty} \left( \frac{R[t]}{(t^2)} \right) \cong \flag^\tau_{\bbz / d \bbz} (R)$. This result also helps explain the origin of the seemingly arbitrary change in the sign convention for defining $\flag^\tau_{\bbz / d \bbz} (R)$. 

\begin{lemma}\label{lem:flagTauEquivAndAinfty}
    View $R[t]/(t^2)$ as a $\bbz \times \bbz / d \bbz$-graded $R$-algebra, where $t$ is an indeterminate of bidegree $(2,1)$. Then there is a homotopy-preserving equivalence of categories
    $$\Mod_{A_\infty} \left( \frac{R[t]}{(t^2)} \right) \cong \flag^\tau_{\bbz / d \bbz} (R).$$
\end{lemma}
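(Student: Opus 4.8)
The plan is to construct explicit functors in both directions and check they are mutually inverse and homotopy-preserving. The key observation is that a left $A_\infty$-module $M$ over $A := R[t]/(t^2)$ with $t$ in bidegree $(2,1)$ carries structure maps $m_i^M : A^{\otimes i-1} \otimes_R M \to M$, and by unitality the only inputs from $A$ that produce nonzero output are powers of $t$. Since $t^2 = 0$ in $A$, the only nonvanishing higher multiplications $m_i$ on $A$ itself (for $i \geq 3$) and the only relevant module operations are those applied to $a_1 \otimes \cdots \otimes a_{i-1} \otimes m$ with each $a_j \in \{1, t\}$, and again by unitality we may assume each $a_j = t$. Thus $M$ is equivalent to the data of a graded $R$-module $M$ together with maps $\delta_i^M := m_{i+1}^M(t^{\otimes i} \otimes -) : M \to M$ for $i \geq 0$, where $\delta_0^M = m_1^M$ is the internal differential. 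I would first carefully work out, using the degree conventions ($m_i$ has bidegree $(i-2, 0)$ with respect to the $\bbz$-grading but the $t$'s contribute: $t^{\otimes i}$ sits in bidegree $(2i, i)$, so $\delta_i^M$ lands in $\hom^{i+1, 1}$ as required), that this identification of underlying data matches Definition \ref{def:flagTauDef}.

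Next I would translate the Stasheff identities \eqref{eqn:stasheffIds} into the defining relations \eqref{eqn:pertEqn}. Feeding $t^{\otimes i}$ into the Stasheff identity of order $i+1$: since $t$ squares to zero, the only surviving terms from the ``$m_s$ acting on the $A$-factors'' part come from $m_1(t) = 0$ (internal differential of $A$, which vanishes as $A$ is a genuine algebra, or contributes the $d^C$ terms after the sign convention $\tau$) and from $m_2(t \otimes t) = t^2 = 0$, so those contributions drop out except for the single term where $m_s$ is the internal differential hitting a single $t$ (zero) --- the upshot is that the surviving terms are exactly $\sum_{j} \pm \delta_j^M \delta_{i-j}^M = 0$, and matching signs with the $(-1)^{r+st}$ bookkeeping against the $\overline{(-)}$ convention of Proposition \ref{prop:tauProperties} gives precisely \eqref{eqn:pertEqn}. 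The sign convention $z \mapsto \overline{z} = (-1)^{|z|+1}z$ is exactly what is needed to absorb the $(-1)^{r+st}$ Koszul signs coming from moving the degree-$2$ (odd-ish, since bidegree $2$ but the parity bookkeeping in $A_\infty$ uses $|t| = 2$) elements past each other; this is the promised explanation of the ``seemingly arbitrary'' sign twist. I would do the analogous translation of \eqref{eqn:AinftyMorphism} into \eqref{eqn:morphEqn}, \eqref{eqn:AinftyHtpy} into \eqref{eqn:htpyEqn}, and check that composition of $A_\infty$-morphisms specializes to \eqref{eqn:compEqn}, defining the functor $\Psi : \Mod_{A_\infty}(A) \to \flag^\tau_{\bbz/d\bbz}(R)$ on objects, morphisms, and homotopies, and an inverse $\Psi^{-1}$ that reverses the dictionary ($\delta_i^C \mapsto m_{i+1}^M(t^{\otimes i} \otimes -)$, extended by unitality to all tensor words). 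Functoriality, bijectivity on objects and morphisms, and the fact that nullhomotopies correspond is then immediate from the construction.

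The main obstacle I anticipate is \textbf{not} the structural bookkeeping but getting every sign exactly right in the translation between the $A_\infty$ Koszul-sign convention $(-1)^{r+st}$ and the paper's $\overline{(-)}$/$\tau$ convention. In particular one must be scrupulous about: (i) the internal parity assigned to $t$ versus the homological shift conventions in the Stasheff identities (homologically graded $A_\infty$-algebras usually place $m_i$ in degree $i-2$, so the effective sign parity of $t$ in the suspension picture matters), and (ii) consistency of the $\bbz/d\bbz$ auxiliary grading, which must be carried through every formula but contributes no signs. I would handle (i) by first verifying the $i=1,2,3$ cases of each identity by hand to pin down the correct sign dictionary, then arguing the general case follows by the same pattern (or, cleanly, by invoking the operadic description in Remark after Definition \ref{def:flagTauDef}: $\flag^\tau(R)$ is the category of $D_\infty$-modules for $D = R[t]/(t^2)$, and $D_\infty$-modules over a Koszul algebra are the same as $A_\infty$-modules, so the equivalence is an instance of Koszul duality \cite[Chapter 10.3.7]{loday2012algebraic}). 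I would present the hands-on dictionary as the main proof and mention the operadic viewpoint as the conceptual reason, since the former is self-contained given what is in the excerpt.
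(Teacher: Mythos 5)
Your proposal follows essentially the same route as the paper's proof: define $\delta_i^M := m_{i+1}^M(t^{\otimes i} \otimes -)$ (and the analogous substitutions for morphisms and homotopies), observe that strict unitality and $t^2=0$ force $m_i(t^{\otimes i})=0$ so only the ``module-action'' terms of the Stasheff identities survive, and then match the resulting relations term-by-term against equations \eqref{eqn:pertEqn}, \eqref{eqn:morphEqn}, \eqref{eqn:compEqn}, \eqref{eqn:htpyEqn}, with the $\overline{(-)}$/$\tau$ convention absorbing the $(-1)^{r+st}$ Koszul signs. The paper carries out the sign bookkeeping in one pass for general $i$ rather than verifying low cases first, but the substance, the key dictionary, and the reliance on the operadic/Koszul-duality picture as conceptual backdrop are all the same.
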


\begin{proof}
    Notice that the algebra $A := R[t]/(t^2)$ being viewed as an $A_\infty$-algebra with $m_i = 0$ for $i=1$ or $i >2$, and $m_2$ is the standard multiplicative structure on this algebra. 

    Let $M$ be a graded left $A_\infty$-module over $A$ and define the maps
    $$\delta_i^M := m_{i+1}^M (t^{\otimes i} \otimes -) : M[2i] \to M.$$
    This map must have degree $i-1$ and thus by definition induces a degree $0$ map
    $$M[2i] \to M[i-1] \implies \delta_i^D \in \End^{i+1} (M).$$
    Moreover, since $m_i (t^{\otimes i}) = 0$ for all $i \geq 1$ on the algebra $R[t]/(t^2)$, the Stasheff identities (\ref{eqn:stasheffIds}) imply that there is an equality
    $$\sum_{\substack{i=r+s \\ r \geq 0, \ s \geq 1}} (-1)^r m_{r+1}^M (t^{\otimes r} \otimes m_s^M (t^{\otimes s-1} \otimes -) ) = 0 \implies \sum_{\substack{i=r+s \\ r \geq 0, \ s \geq 1}} (-1)^r \delta_r^M \circ \delta_{s-1}^M = 0.$$
    Isolating the terms where $r= 0$ and $s=1$ and reindexing the above sum yields the equality of Equation (\ref{eqn:pertEqn}).

    Let $\phi : M \to N$ be a morphism of $A_\infty$-modules over $A$. Define the maps
    $$\psi_i := \phi_{i+1} (t^{\otimes i} \otimes -) : M[2i] \to N.$$
    These maps have degree $i$ and thus induce degree $0$ maps
    $$M[2i] \to N[i] \implies \psi_i \in \hom^i (M,N).$$
    The identity (\ref{eqn:AinftyMorphism}) implies that there is an equality
    $$\sum_{\substack{i=r+s \\ r \geq 0, \ s \geq 1}} (-1)^r \phi_{r+1} (t^{\otimes r} \otimes m_s^M (t^{\otimes s-1} \otimes -)) = \sum_{\substack{i = u+v, \\ u \geq 0, \ v \geq 1}} m_{1+u}^N (t^{\otimes u} \otimes \phi_v (t^{\otimes v-1} \otimes -))$$
    $$\implies \sum_{\substack{i=r+s \\ r \geq 0, \ s \geq 1}} (-1)^r \psi_r \circ \delta_{s-1}^M = \sum_{\substack{i = u+v, \\ u \geq 0, \ v \geq 1}} \delta_u^N \circ \psi_{v-1}.$$
    Again, isolating terms and reindexing the summations yields the equality (\ref{eqn:morphEqn}).

    Finally, assume that the morphism of $A_\infty$-modules $\phi : M \to N$ is a nullhomotopy via some sequence of degree $i$ maps $\sigma_i : A^{\otimes i-1} \otimes M \to N$. Define $$\psi_i := \phi_{i+1} (t^{\otimes i} \otimes -) : M [2i] \to N, \quad h_i := \sigma_{i+1} (t^{\otimes i} \otimes -) : M [2i] \to N.$$
    The map $h_i$ induces a degree $0$ map
    $$M[2i] \to N[i+1] \implies h_i \in \hom^{i-1} (M , N).$$
    Again, a nearly identical argument shows that the identity (\ref{eqn:AinftyHtpy}) reduces to equation (\ref{eqn:htpyEqn}). 
\end{proof}

\subsection{The Category of Homotopy-Commutative Diagrams}\label{subsec:HarrowCategory}

In this section, we define the category of homotopy-commutative diagrams of differential modules. Again, this category serves as a formal construction that is quite helpful for understanding the compatibility between homotopy equivalent representatives of objects under many of the standard operations on the category of differential modules, such as mapping cones. Most of the results here are straightforward computations and extensions of known facts over the category of complexes, but we provide details for sake of completeness/convenience of reference.

\begin{definition}[Homotopy-Commutative Diagrams]\label{def:HarrowCategory}
    Let $R$ be any commutative ring. Define $\har (\ch (R))$ to be the category with objects
    $$\obj \har(\dm_{\bbz / d \bbz} (R)) := \{ \text{morphisms of $\bbz / d \bbz$-graded differential modules} \ \phi : C \to D \}.$$
    Morphisms of $\har (\dm_{\bbz / d \bbz}(R))$ are defined to be triples $(\psi , \nu , h) : (\phi : C \to D) \to (\phi' : C' \to D')$ where 
    \begin{itemize}
        \item $\psi : C \to C'$ and $\nu : D \to D'$ are morphisms of differential modules, and
        \item $h : C \to D'[1]$ is a homotopy with $\phi' \psi - \nu \phi = d^{D'} h + h d^{C}$.  
    \end{itemize}
    In other words, morphisms are represented by diagrams that commute up to some fixed homotopy $h$:
    % https://q.uiver.app/#q=WzAsNCxbMCwwLCJDIl0sWzIsMCwiQyciXSxbMCwyLCJEIl0sWzIsMiwiRCciXSxbMSwzLCJcXHBoaSciXSxbMCwyLCJcXHBoaSIsMl0sWzAsMSwiXFxwc2kiXSxbMiwzLCJcXG51IiwyXV0=
\[\begin{tikzcd}
	C && {C'} \\
	\\
	D && {D'}
	\arrow["{\phi'}", from=1-3, to=3-3]
	\arrow["\phi"', from=1-1, to=3-1]
	\arrow["\psi", from=1-1, to=1-3]
	\arrow["\nu"', from=3-1, to=3-3]
\end{tikzcd}\]
    Given morphisms $(\psi , \nu , h) : (\phi : C \to D) \to (\phi' : C' \to D')$ and $(\psi', \nu' , h') : (\phi' : C' \to D') \to (\phi'' : C'' \to D'')$ the composition is defined as
    $$(\psi' , \nu' , h') \circ (\psi , \nu , h) := (\psi' \circ \psi , \nu' \circ \nu , \nu' \circ h + h' \circ \psi),$$
    and the identity map $\id_{\phi : C \to D}$ is the morphism
    $$\id_{\phi : C \to D} := (\id_C , \id_D , 0).$$
\end{definition}

The following is a straightforward and satisfying verification:

\begin{obs}
    Given any commutative ring $R$, the category $\har (\dm_{\bbz / d \bbz} (R))$ is well-defined. 
\end{obs}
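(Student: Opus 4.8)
The plan is to verify directly that the three defining axioms of a category hold for the data given in Definition~\ref{def:HarrowCategory}: (i) the proposed composite triple $(\psi',\nu',h')\circ(\psi,\nu,h)$ and each identity triple $(\id_C,\id_D,0)$ are indeed morphisms in $\har(\dm_{\bbz/d\bbz}(R))$, i.e.\ the third entry really is a homotopy witnessing the required square; (ii) composition is associative; (iii) the identity triples are two-sided units. All three are formal manipulations with squarezero endomorphisms and homotopies, and the only point requiring genuine (if routine) bookkeeping is (i).

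For the key step (i): to see that $(\id_C,\id_D,0)$ is a morphism, note $\phi\,\id_C-\id_D\,\phi=0=d^D\cdot 0+0\cdot d^C$. For the composite, one expands
$$d^{D''}(\nu'h+h'\psi)+(\nu'h+h'\psi)d^C = d^{D''}\nu'h + d^{D''}h'\psi + \nu'hd^C + h'\psi d^C.$$
Using that $\nu'$ is a morphism of differential modules we replace $d^{D''}\nu'$ by $\nu'd^{D'}$, and using that $\psi$ is a morphism of differential modules we replace $\psi d^C$ by $d^{C'}\psi$; this regroups the right-hand side as $\nu'\bigl(d^{D'}h+hd^C\bigr)+\bigl(d^{D''}h'+h'd^{C'}\bigr)\psi$. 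Now substitute the homotopy-commutativity relations $d^{D'}h+hd^C=\phi'\psi-\nu\phi$ and $d^{D''}h'+h'd^{C'}=\phi''\psi'-\nu'\phi'$ and observe that the cross terms $\nu'\phi'\psi$ cancel, leaving exactly $\phi''(\psi'\psi)-(\nu'\nu)\phi$, which is the relation required for $(\psi'\psi,\nu'\nu,\nu'h+h'\psi)$ to be a morphism.

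Step (ii) is then immediate: computing the third component of the two bracketings $\bigl((\psi_3,\nu_3,h_3)\circ(\psi_2,\nu_2,h_2)\bigr)\circ(\psi_1,\nu_1,h_1)$ and $(\psi_3,\nu_3,h_3)\circ\bigl((\psi_2,\nu_2,h_2)\circ(\psi_1,\nu_1,h_1)\bigr)$ both yield $\nu_3\nu_2 h_1+\nu_3 h_2\psi_1+h_3\psi_2\psi_1$ (the first two components are obviously equal, being honest compositions of morphisms of differential modules), so associativity holds on the nose. Step (iii) is a one-line check: $(\psi,\nu,h)\circ(\id_C,\id_D,0)=(\psi,\nu,\nu\cdot 0+h\circ\id_C)=(\psi,\nu,h)$ and $(\id_{C'},\id_{D'},0)\circ(\psi,\nu,h)=(\psi,\nu,\id_{D'}\circ h+0\circ\psi)=(\psi,\nu,h)$. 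The only ``obstacle'' is keeping the order of composition and the placement of $\nu'$ versus $\psi$ straight in the computation of the composite homotopy in step (i); once that is done the verification is purely mechanical, and the observation follows.
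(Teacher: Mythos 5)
Your verification is correct, and it takes the same (indeed, the only reasonable) route: the paper declares this a ``straightforward and satisfying verification'' and omits the computation entirely, so your write-up supplies exactly the bookkeeping the paper leaves to the reader. The key cancellation of the cross terms $\nu'\phi'\psi$ in step (i), the on-the-nose associativity check in step (ii), and the unit check in step (iii) are all carried out correctly, with the signs as they should be given the paper's convention $\phi'\psi-\nu\phi=d^{D'}h+hd^C$.
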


\begin{remark}
    Notice that $\har (\dm_{\bbz / d \bbz} (R))$ is not equivalently the category of arrows in the homotopy category of $\dm_{\bbz / d \bbz} (R)$, since we keep track of the homotopy that makes the corresponding diagram commute. Indeed, in view of the following lemma, we know that this cannot be true since there is no functorial cone construction on the homotopy category.
\end{remark}

An advantage of using the category $\har (\dm_{\bbz / d \bbz} (R))$ over the standard homotopy category $K ( \dm_{\bbz / d \bbz} (R))$ is that there is a legitimately functorial construction of the mapping cone in $\har (\dm_{\bbz / d \bbz} (R))$:

\begin{lemma}\label{lem:coneOnHarrow}
    There is a well-defined functor
    $$\cone : \har (\dm_{\bbz / d \bbz} (R)) \to \dm_{\bbz / d \bbz} (R)$$
    defined on objects via $\cone (\phi : C \to D) = \cone (\phi)$, the standard mapping cone in the category of differential modules. On morphisms $(\psi , \nu , h) : (\phi : C \to D) \to (\phi' : C' \to D')$ it is defined to be the morphism of differential modules:
    $$\cone (\psi , \nu , h) := \cone (\phi) \xra{\begin{pmatrix}
        \psi & -h \\ 0 & \nu 
    \end{pmatrix}} \cone (\phi').$$
\end{lemma}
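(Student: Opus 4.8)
The plan is to verify directly the two properties needed for $\cone$ to be a functor, since on objects it is simply the mapping cone already defined in $\dm_{\bbz/d\bbz}(R)$ and all the substance is in the assignment on morphisms. The verification breaks into two parts: (i) checking that $\cone(\psi,\nu,h)$ is a bona fide morphism of $\bbz/d\bbz$-graded differential modules, and (ii) checking that $\cone$ preserves identities and composition.

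For (i) I would write out the cone differentials $d^{\cone(\phi)}$ and $d^{\cone(\phi')}$ as the usual $2\times 2$ block maps on $\cone(\phi)=D\oplus C$ and $\cone(\phi')=D'\oplus C'$ (the $C$- and $C'$-summands carrying the internal shift that makes $\phi$, resp.\ $\phi'$, a component of the degree $-1$ differential), and then compare the two composites $d^{\cone(\phi')}\circ\cone(\psi,\nu,h)$ and $\cone(\psi,\nu,h)\circ d^{\cone(\phi)}$ entrywise. The two diagonal entries agree because $\psi$ and $\nu$ are themselves morphisms of differential modules, so they commute with the differentials, and the single nontrivial off-diagonal entry agrees precisely because of the homotopy relation $\phi'\psi-\nu\phi=d^{D'}h+h\,d^{C}$ built into the definition of a morphism in $\har(\dm_{\bbz/d\bbz}(R))$. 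A short degree count---$\psi,\nu$ of $\bbz/d\bbz$-degree $0$, $h$ of the degree of a homotopy, together with the shift on the $C$-summand---shows the block map has $\bbz/d\bbz$-degree $0$, hence is a morphism in $\dm_{\bbz/d\bbz}(R)$.

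For (ii), the image of the identity morphism $(\id_{C},\id_{D},0)$ is visibly the identity block matrix on $D\oplus C$. Given composable morphisms $(\psi,\nu,h)$ and $(\psi',\nu',h')$, whose composite in $\har$ is $(\psi'\psi,\ \nu'\nu,\ \nu'h+h'\psi)$ by Definition~\ref{def:HarrowCategory}, the product of the two corresponding block matrices has diagonal entries $\nu'\nu$ and $\psi'\psi$ and off-diagonal entry exactly $\nu'h+h'\psi$, which is $\cone$ of the composite; hence $\cone(\psi',\nu',h')\circ\cone(\psi,\nu,h)=\cone\big((\psi',\nu',h')\circ(\psi,\nu,h)\big)$. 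I do not expect any genuine obstacle here: each step is a short block-matrix computation formally identical to the classical case over $\ch(R)$, and the only point needing a moment of care is the simultaneous bookkeeping of the $\bbz/d\bbz$-grading and the internal shift inside the cone, which I would settle once at the start so that everything afterwards is purely formal.
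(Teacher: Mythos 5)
Your proposal is correct and takes essentially the same approach as the paper: a direct block-matrix verification that $\cone(\psi,\nu,h)$ intertwines the two cone differentials (with the homotopy relation $\phi'\psi-\nu\phi=d^{D'}h+h\,d^{C}$ accounting for the off-diagonal slot), followed by a block-matrix check that composition in $\har(\dm_{\bbz/d\bbz}(R))$ corresponds to matrix multiplication. One small caution: with the paper's convention $\cone(\phi:C\to D)=D\oplus C$, the morphism should be the block map with $\nu$ in the upper-left and $\psi$ in the lower-right, so the matrices as printed in the lemma and its proof appear to have $\psi$ and $\nu$ transposed—a harmless notational slip that your verbal description would automatically correct once you fix the bookkeeping of summands at the start, as you propose to do.
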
\

\begin{proof}
    The only nontrivial check to perform is that $\cone$ is compatible with the morphisms in $\har (\dm_{\bbz / d \bbz} (R))$. Let us first check that $\cone (\psi , \nu , h)$ is a well-defined morphism of differential modules; this comes down the the following computation:
    \begingroup\allowdisplaybreaks
    \begin{align*}
        \begin{pmatrix}
            d^{D'} & - \phi' \\ 0 & -d^{C'} 
        \end{pmatrix} \begin{pmatrix}
            \psi & -h \\ 0 & \nu 
        \end{pmatrix} &= \begin{pmatrix}
            d^{D'} \psi & - d^{D'} h - \phi' \nu \\ 0 & -d^{C'} \nu 
        \end{pmatrix}\\
        &= \begin{pmatrix}
            \psi d^{D} & h d^C - \psi \phi \\
            0 & - \nu d^C 
        \end{pmatrix} \\
        &= \begin{pmatrix}
            \psi & -h \\
            0 & \nu 
        \end{pmatrix} \begin{pmatrix}
            d^D & - \phi \\ 0 & -d^C
        \end{pmatrix}.
    \end{align*}
    \endgroup
    Next, we show that $\cone$ commutes with compositions; let $(\psi , \nu , h) : (\phi : C \to D) \to (\phi' : C' \to D')$ and $(\psi', \nu' , h') : (\phi' : C' \to D') \to (\phi'' : C'' \to D'')$. Then:
    \begingroup\allowdisplaybreaks
    \begin{align*}
        \cone (\psi' , \nu' , h') \circ \cone (\psi , \nu , h) &= \begin{pmatrix}
            \psi' & -h' \\
            0 & \nu'
        \end{pmatrix} \begin{pmatrix}
            \psi & -h \\
            0 & \nu 
        \end{pmatrix} \\
        &= \begin{pmatrix}
            \psi' \psi & - \psi' h - h' \nu \\
            0 & \nu' \nu 
        \end{pmatrix} \\
        &= \cone ( (\psi' , \nu' , h') \circ (\psi , \nu , h)).
    \end{align*}
    \endgroup
\end{proof}

The following notation will be employed ubiquitously throughout the paper:

\begin{notation}[Homotopy Equivalences]\label{not:HtpyEquivalence}
    We will use the notation
    % https://q.uiver.app/#q=WzAsMixbMCwwLCJoLCBcXHF1YWQgQyJdLFsyLDAsIkQsIFxccXVhZCBoJyJdLFswLDEsInAiLDAseyJjdXJ2ZSI6LTN9XSxbMSwwLCJpIiwwLHsiY3VydmUiOi0zfV1d
\[\begin{tikzcd}
	{h, \quad C} && {D, \quad h'}
	\arrow["p", curve={height=-18pt}, from=1-1, to=1-3]
	\arrow["\iota", curve={height=-18pt}, from=1-3, to=1-1]
\end{tikzcd}\]
    to denote a homotopy equivalence between differential modules $C$ and $D$ via the homotopies $h$ and $h'$. More precisely, the maps $p : C \to D$ and $i : D \to C$ satisfy
    $$\iota \circ p = d^C h + h d^C, \quad p \circ \iota = d^D h' + h' d^D.$$
\end{notation}

\begin{lemma}\label{lem:HtpyEquivAndCommutingDiagrams}
    Assume that there are homotopy equivalences
    % https://q.uiver.app/#q=WzAsNCxbMCwwLCJDIl0sWzIsMCwiRCJdLFszLDAsIkMnIl0sWzUsMCwiRCciXSxbMCwxLCJwIiwwLHsiY3VydmUiOi0zfV0sWzEsMCwiXFxpb3RhIiwwLHsiY3VydmUiOi0zfV0sWzEsMCwiaCciLDIseyJjdXJ2ZSI6LTN9XSxbMCwxLCJoIiwyLHsiY3VydmUiOi0zfV0sWzIsMywicCciLDAseyJjdXJ2ZSI6LTN9XSxbMywyLCJcXGlvdGEnIiwwLHsiY3VydmUiOi0zfV0sWzMsMiwicyciLDIseyJjdXJ2ZSI6LTN9XSxbMiwzLCJzIiwyLHsiY3VydmUiOi0zfV1d
\[\begin{tikzcd}
	h,\quad C && D ,\quad h' & {s,\quad C'} && {D',\quad s'}
	\arrow["p", curve={height=-18pt}, from=1-1, to=1-3]
	\arrow["\iota", curve={height=-18pt}, from=1-3, to=1-1]
	\arrow["{p'}", curve={height=-18pt}, from=1-4, to=1-6]
	\arrow["{\iota'}", curve={height=-18pt}, from=1-6, to=1-4]
\end{tikzcd}\]
and that the diagram
    % https://q.uiver.app/#q=WzAsNCxbMCwwLCJDIl0sWzIsMCwiQyciXSxbMCwyLCJFIl0sWzIsMiwiRSciXSxbMCwyLCJcXG51IiwyXSxbMSwzLCJcXG51JyJdLFswLDEsIlxccGhpIl0sWzIsMywiXFxwc2kiLDJdXQ==
\[\begin{tikzcd}
	C && {C'} \\
	\\
	E && {E'}
	\arrow["\nu"', from=1-1, to=3-1]
	\arrow["{\nu'}", from=1-3, to=3-3]
	\arrow["\phi", from=1-1, to=1-3]
	\arrow["\psi"', from=3-1, to=3-3]
\end{tikzcd}\]
commutes up to some homotopy $\sigma$ (that is, $(\phi , \psi , \sigma) : (\nu : C \to E) \to (\nu' : C' \to E')$). Then the diagram
% https://q.uiver.app/#q=WzAsNCxbMCwwLCJEIl0sWzIsMCwiRCciXSxbMCwyLCJFIl0sWzIsMiwiRSciXSxbMCwyLCJcXG51IFxcaW90YSIsMl0sWzEsMywiXFxudScgXFxpb3RhJyJdLFswLDEsInAgXFxwaGkgXFxpb3RhIl0sWzIsMywiXFxwc2kiLDJdXQ==
\[\begin{tikzcd}
	D && {D'} \\
	\\
	E && {E'}
	\arrow["{\nu \iota}"', from=1-1, to=3-1]
	\arrow["{\nu' \iota'}", from=1-3, to=3-3]
	\arrow["{p' \phi \iota}", from=1-1, to=1-3]
	\arrow["\psi"', from=3-1, to=3-3]
\end{tikzcd}\]
commutes up to the homotopy $(\sigma + \nu' h \phi) \iota$ (that is, there is a well-defined morphism $(p' \phi \iota , \psi , (\sigma + \nu' s' \phi ) \iota) : (\nu \iota : D \to E ) \to (\nu' \iota' : D' \to E')$). 
\end{lemma}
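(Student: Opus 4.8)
The plan is to verify directly that the triple $(p'\phi\iota,\psi,(\sigma+\nu's'\phi)\iota)$ satisfies the defining relation of a morphism in $\har(\dm_{\bbz/d\bbz}(R))$, namely that
\[
\nu'\iota' \circ (p'\phi\iota) - \psi \circ (\nu\iota) = d^E \bigl((\sigma+\nu's'\phi)\iota\bigr) + \bigl((\sigma+\nu's'\phi)\iota\bigr) d^D.
\]
First I would record the standing identities: $\iota' p' = d^{C'}s' + s'd^{C'}$, $p'\iota' = \mathrm{id}$ (or, in the general homotopy-equivalence setup of Notation \ref{not:HtpyEquivalence}, $p'\iota' = d^{D'}s' + s'd^{D'}$ — one should keep the relevant one), $\iota p = d^Ch + hd^C$, and the hypothesis $\phi\nu - \nu'\psi$... wait, more precisely $(\phi,\psi,\sigma)$ being a morphism $(\nu:C\to E)\to(\nu':C'\to E')$ means $\nu'\phi - \psi\nu = d^{E'}\sigma + \sigma d^C$. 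Also $\phi$, $\psi$, $\nu$, $\nu'$ are chain maps, $p,p',\iota,\iota'$ are chain maps, and $s'$ is a homotopy on $D'$.

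The computation then runs as follows. Start from $\nu'\iota'(p'\phi\iota)$. Replace $\iota'p'$ by $\mathrm{id}_{C'} + d^{C'}s' + s'd^{C'}$ (absorbing the homotopy term), getting $\nu'\phi\iota + \nu'(d^{C'}s' + s'd^{C'})\phi\iota = \nu'\phi\iota + (d^{E'}\nu's' + \nu's'd^{C'})\phi\iota$, using that $\nu'$ commutes with differentials. Since $s'\phi\iota$ is the homotopy candidate up to the $\sigma$-correction, push $d^{C'}$ past $\phi$ (chain map) and then $\iota$ is a chain map so $d^{C'}\phi\iota = \phi\iota d^D$; this produces the term $\nu's'\phi\iota d^D$, i.e. $((\nu's'\phi)\iota)d^D$ up to sign bookkeeping. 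For the term $\nu'\phi\iota$ use the hypothesis $\nu'\phi = \psi\nu + d^{E'}\sigma + \sigma d^C$ to rewrite as $\psi\nu\iota + d^{E'}\sigma\iota + \sigma d^C\iota$, and $\iota$ chain map gives $d^C\iota = \iota d^D$ so $\sigma d^C\iota = \sigma\iota d^D = (\sigma\iota)d^D$. Collecting: $\nu'\iota'p'\phi\iota = \psi\nu\iota + d^{E'}(\sigma\iota) + (\sigma\iota)d^D + d^{E}(\nu's'\phi\iota) + (\nu's'\phi\iota)d^D$ — here one must be careful that $d^{E'}\nu's' = \nu'd^{D'}s'$... no, $\nu'$ maps $D'\to E'$ so actually $d^{E'}\nu' = \nu'd^{D'}$, and we want the $\nu's'd^{C'}\phi$ piece handled — I would be meticulous about which of $C',D',E'$ each differential lives on. After cancellation this is exactly $\psi(\nu\iota) + d^{E'}\bigl((\sigma + \nu's'\phi)\iota\bigr) + \bigl((\sigma+\nu's'\phi)\iota\bigr)d^D$, which rearranges to the required identity. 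Finally I would note that the degree/filtration bookkeeping is automatic: $p',\phi,\iota$ have $\bbz/d\bbz$-degree $0$, $s'$ has degree $1$, so $(\sigma+\nu's'\phi)\iota$ has degree $1$ as a homotopy should, and all composites are well-defined.

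The main obstacle I anticipate is purely clerical rather than conceptual: keeping the many differentials ($d^C$, $d^D$, $d^{C'}$, $d^{D'}$, $d^E$, $d^{E'}$) straight as one slides chain maps past them, and correctly invoking the right homotopy identity for $\iota'p'$ versus $p'\iota'$ depending on whether we are in the strict deformation-retract situation of the lemma's intended application or the fully general homotopy-equivalence situation. Since the statement as written uses the general homotopy-equivalence notation from Notation \ref{not:HtpyEquivalence}, I would present the proof using $\iota p = d^Ch + hd^C$ and $p'\iota' = d^{D'}s' + s'd^{D'}$, and check that the unused homotopies $h,h'$ indeed do not enter — only $s'$ appears in the final homotopy, which is consistent with the statement. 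If it turns out the intended reading is the deformation-retract one (so $p\iota = \mathrm{id}$, $p'\iota' = \mathrm{id}$), the proof simplifies but the same chain of substitutions works verbatim with the homotopy terms set to zero.
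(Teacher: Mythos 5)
Your proof is correct and takes essentially the same route as the paper's: substitute $\iota'p' = \id_{C'} + d^{C'}s' + s'd^{C'}$, apply the hypothesis $\nu'\phi - \psi\nu = d^{E'}\sigma + \sigma d^C$, and slide the remaining differentials across the chain maps $\phi$ and $\iota$ to land on $\psi\nu\iota + d^{E'}\bigl((\sigma + \nu's'\phi)\iota\bigr) + \bigl((\sigma + \nu's'\phi)\iota\bigr) d^D$. Your clerical worries resolve just as you expect (it is $d^{E'}$, not $d^E$, that appears, and $\nu':C'\to E'$), and neither $h$, $h'$, nor $p$ are needed — only the homotopy witnessing $\iota'p'\simeq\id_{C'}$ enters, which by the paper's own Notation \ref{not:HtpyEquivalence} should strictly be called $s$ rather than $s'$, a label swap that the paper's statement and proof both make and that your computation harmlessly inherits.
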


\begin{proof}
    This is a straightforward computation:
    \begingroup\allowdisplaybreaks
    \begin{align*}
        \nu' \circ \underbrace{\iota' p'}_{= \id_{C'} + d^{C'} s' + s' d^{C'}} \circ \phi \iota - \psi \nu \iota =& \underbrace{\nu' \phi}_{= \psi \nu + d^{E'}\sigma + \sigma d^{C}}  \circ \iota + \nu' d^{C'} s' \phi \iota + \nu' s' d^{C'} \phi \iota - \psi \nu \iota \\
        =& d^{E'} \sigma \iota + \sigma d^{C} \iota + d^{E'} \nu' s' \phi \iota + \nu' s' \phi \iota d^D \\
        =& d^{E'} (\sigma + \nu' s' \phi) \iota + (\sigma + \nu' s' \phi) \iota d^D.
    \end{align*}
    \endgroup
\end{proof}

\bibliographystyle{amsalpha}
\bibliography{biblio}

\end{document}